\documentclass[reqno,11pt]{amsart}

\usepackage[
    language		=	english,
    uniquename		=	init,
    backend			=	biber,
    maxcitenames	=	3]
{biblatex}

\usepackage{amsmath}
\usepackage{amsthm}
\usepackage{graphicx}
\usepackage{amsfonts}
\usepackage{amssymb}
\usepackage{mathtools}

\usepackage{bm}
\usepackage{color}
\usepackage{hyperref}
\usepackage{enumitem}
\usepackage[margin=1.2in]{geometry}
\usepackage{subfiles}

\numberwithin{equation}{section}

\newtheorem{theorem}{Theorem}[section]
\newtheorem{lemma}[theorem]{Lemma}

\newtheorem{proposition}[theorem]{Proposition}
\newtheorem{corollary}[theorem]{Corollary}

\theoremstyle{definition}

\newtheorem{remark}[theorem]{Remark}

\def\E{{\mathbb E}}

\def\R{{\mathbb R}}

\def\N{{\mathbb N}}
\def\PP{{\mathbb P}}

\def\P{{\mathcal P}}

\def\B{{\mathcal B}}
\def\V{{\mathcal V}}

\def\G{{\mathcal G}}

\def\F{{\mathcal F}}
\def\FF{{\mathbb F}}

\def\C{{\mathcal C}}
\def\Var{\mathrm{Var}}
\def\Cov{\mathrm{Cov}}

\DeclareMathOperator*{\esssup}{ess\,sup}

\DeclareMathOperator*{\argmin}{argmin}
\DeclareMathOperator{\sgn}{sgn}

\def\cV{\mathcal{V}}

\def\eps{\epsilon}
\def\bx{\bm{x}}
\def\bz{\bm{z}}
\def\bZ{\bm{Z}}
\def\cP{\mathcal{P}}
\def\bd{\mathbf{d}}
\def\wt{\widetilde}
\def\cL{\mathcal{L}}
\def\by{\bm{y}}
\def\ba{\bm{a}}
\newcommand{\bP}{\mathbb{P}}
\newcommand{\cU}{\mathcal{U}}

\newcommand{\cC}{\mathcal{C}}
\newcommand{\cF}{\mathcal{F}}
\newcommand{\cG}{\mathcal{G}}
\newcommand{\cN}{\mathcal{N}}
\newcommand{\bzero}{\bm{0}}
\newcommand{\bX}{\bm{X}}
\newcommand{\bY}{\bm{Y}}
\newcommand{\cA}{\mathcal{A}}
\newcommand{\bbF}{\mathbb{F}}
\newcommand{\bbG}{\mathbb{G}}

\newcommand{\cstar}{c_0}
\newcommand{\bzeta}{\boldsymbol{\zeta}}
\newcommand{\bG}{\bm{G}}

\renewcommand{\epsilon}{\varepsilon}

\DeclarePairedDelimiter\floor{\lfloor}{\rfloor}

\makeatletter
\let\save@mathaccent\mathaccent
\newcommand*\if@single[3]{%
  \setbox0\hbox{${\mathaccent"0362{#1}}^H$}%
  \setbox2\hbox{${\mathaccent"0362{\kern0pt#1}}^H$}%
  \ifdim\ht0=\ht2 #3\else #2\fi
  }
\newcommand*\rel@kern[1]{\kern#1\dimexpr\macc@kerna}
\newcommand*\widebar[1]{\@ifnextchar^{{\wide@bar{#1}{0}}}{\wide@bar{#1}{1}}}
\newcommand*\wide@bar[2]{\if@single{#1}{\wide@bar@{#1}{#2}{1}}{\wide@bar@{#1}{#2}{2}}}
\newcommand*\wide@bar@[3]{%
  \begingroup
  \def\mathaccent##1##2{%
    \let\mathaccent\save@mathaccent
    \if#32 \let\macc@nucleus\first@char \fi
    \setbox\z@\hbox{$\macc@style{\macc@nucleus}_{}$}%
    \setbox\tw@\hbox{$\macc@style{\macc@nucleus}{}_{}$}%
    \dimen@\wd\tw@
    \advance\dimen@-\wd\z@
    \divide\dimen@ 3
    \@tempdima\wd\tw@
    \advance\@tempdima-\scriptspace
    \divide\@tempdima 10
    \advance\dimen@-\@tempdima
    \ifdim\dimen@>\z@ \dimen@0pt\fi
    \rel@kern{0.6}\kern-\dimen@
    \if#31
      \overline{\rel@kern{-0.6}\kern\dimen@\macc@nucleus\rel@kern{0.4}\kern\dimen@}%
      \advance\dimen@0.4\dimexpr\macc@kerna
      \let\final@kern#2%
      \ifdim\dimen@<\z@ \let\final@kern1\fi
      \if\final@kern1 \kern-\dimen@\fi
    \else
      \overline{\rel@kern{-0.6}\kern\dimen@#1}%
    \fi
  }%
  \macc@depth\@ne
  \let\math@bgroup\@empty \let\math@egroup\macc@set@skewchar
  \mathsurround\z@ \frozen@everymath{\mathgroup\macc@group\relax}%
  \macc@set@skewchar\relax
  \let\mathaccentV\macc@nested@a
  \if#31
    \macc@nested@a\relax111{#1}%
  \else
    \def\gobble@till@marker##1\endmarker{}%
    \futurelet\first@char\gobble@till@marker#1\endmarker
    \ifcat\noexpand\first@char A\else
      \def\first@char{}%
    \fi
    \macc@nested@a\relax111{\first@char}%
  \fi
  \endgroup
}
\makeatother

\author[C.\ Fiedler]{Christian Fiedler}
\address{C.\ Fiedler,
IEOR Department, Columbia University,
\newline\hphantom{\quad \ \ C.\ Fiedler}
500 W. 120th Street, New York, NY 10027, USA
}
\email{christian.fiedler@columbia.edu}

\author[J.\ Jackson]{Joe Jackson}
\address{J.\ Jackson,
Department of Mathematics, University of Chicago,
\newline\hphantom{\quad \ \ J. Jackson}
5734 S.~University Avenue, Chicago, IL 60637, USA
}
\email{jsjackson@uchicago.edu}

\author[D.\ Lacker]{Daniel Lacker}
\address{D.\ Lacker,
IEOR Department, Columbia University,
\newline\hphantom{\quad \ \ D.\ Lacker}
500 W. 120th Street,  New York, NY 10027, USA
}
\email{daniel.lacker@columbia.edu}

\author[J.\ Niles-Weed]{Jonathan Niles-Weed}
\address{J.\ Niles-Weed,
Courant Institute for Mathematical Sciences, New York University,
\newline\hphantom{\quad \ \ J. Niles-Weed}
251 Mercer Street, New York, N.Y. 10012, USA
}
\email{jnw@cims.nyu.edu}

\thanks{J. Jackson was supported by the NSF under Grant No. DMS-2302703. J. Niles-Weed was supported by the NSF CAREER award DMS-2339829. D. Lacker and C. Fiedler were partially supported by the NSF CAREER award DMS-2045328. D. Lacker also acknowledges support from an Alfred P.\ Sloan Fellowship.}

\title{The mean-field limit of online stochastic vector balancing}
  \dedicatory{Dedicated to the memory of Robert V.\ Kohn}

\begin{document}

\begin{abstract}
We study an online vector balancing problem, in which $n$ independent Gaussian random vectors $\boldsymbol{\zeta}(1),\dots,\boldsymbol{\zeta}(n) \sim \mathcal{N}(0, I_n)$, each of dimension $n$, arrive one at a time. The goal is to choose signs $\varepsilon(1),\dots,\varepsilon(n) \in \{\pm 1\}$ with $\varepsilon(k)$ depending only on $\boldsymbol{\zeta}(1),\dots,\boldsymbol{\zeta}(k)$, so as to minimize the expected $\ell^{\infty}$ norm of the signed sum $\frac{1}{\sqrt{n}}\sum_{k = 1}^n \varepsilon(k) \boldsymbol{\zeta}(k)$. Prior work showed that the optimal value $V^n$ is $O(1)$, at least for Rademacher $\boldsymbol{\zeta}(k)$'s, by constructing specific algorithms. Our main contribution is to determine the exact limit $V^{\infty} = \lim_{n\to\infty} V^n$ as the value of a nonstandard stochastic control problem of mean-field type: find the narrowest terminal interval into which a Brownian motion can be adaptively steered under a uniform-in-time $L^2$ constraint on the drift. The proof of the lower bound $V^{\infty} \leq \liminf_{n \to \infty} V^n$ uses probabilistic compactness arguments, and is very flexible. In fact, we show that the lower bound is universal, in that it holds as long as the entries of the $\boldsymbol{\zeta}(k)$ vectors are i.i.d.\ with mean zero, variance 1, and finite fourth moment. The proof of the upper bound $\limsup_{n \to \infty} V^n \leq V^{\infty}$ is more delicate, relying on dynamic programming principles and a priori bounds obtained from a coupling procedure involving the Föllmer drift, which makes explicit use of the Gaussian structure. In addition to our main convergence result, we provide some analysis and asymptotics for the limiting mean-field control problem.
\end{abstract}

\maketitle

\tableofcontents

\section{Introduction}

\subsection{Vector balancing}
\emph{Vector balancing} (or \emph{discrepancy minimization}) is a fundamental combinatorial optimization problem, with connections to convex geometry, approximation algorithms, differential privacy, and statistical physics. Given $m$ vectors $\bzeta(1),\dots,\bzeta(m)$ in $\R^n$, the goal is to choose signs $\eps(1),\dots,\eps(m) \in \{\pm 1\}$ in order to minimize
\begin{align*}
    \Big| \sum_{k = 1}^m \eps(k) \bzeta(k) \Big|_{\infty} = \max_{i = 1,\dots,n} \Big| \sum_{k = 1}^m \eps(k) \zeta_i(k) \Big|, 
\end{align*}
where each $\bzeta(k)$ is written in coordinates as $\bzeta(k) = (\zeta_1(k),\dots,\zeta_n(k))$, and $|\cdot|_{\infty}$ denotes the $\ell^{\infty}$ norm of a vector. The majority of prior work on this question focuses on the setting when $n$ is large and $m$ is proportional to $n$, where the asymptotics for the problem are the most delicate. In this paper, we focus for simplicity on the case $n = m$ but, as discussed below, the arguments easily generalize to the case that $m = m(n)$ satisfies $m(n)/n \to T \in (0,\infty)$.

There are several variants of the vector balancing problem, depending on how the vectors $\bzeta(k)$ and the signs $\eps(k)$ are chosen. For example, in the \textit{worst-case} version of the problem, $\bzeta(k)$ is an arbitrary vector lying in some set (e.g., $[-1,1]^n$), while in the \textit{random} version, the vectors $\bzeta(k)$ are drawn from some fixed distribution. Meanwhile, the term \textit{offline} indicates that $\eps(1),\dots,\eps(n)$ can be chosen arbitrarily after viewing all $n$ vectors $\bzeta(1),\dots,\bzeta(n)$, while \textit{online} means that $\eps(k)$ must be chosen after viewing only $\bzeta(1),\dots,\bzeta(k)$.

The classical theory of vector balancing goes back to the seminal results of Spencer~\cite{Spencer1977} (and, independently, Gluskin~\cite{Gluskin1989}). They analyzed the \textit{offline}, \textit{worst-case} version of the problem, in which the vectors $\bzeta(1), \dots, \bzeta(n) \in [-1, 1]^n$ are arbitrary, and the player may see the whole collection before deciding the signs. In this case, a random choice of signs achieves expected $\ell^\infty$ norm $O(\sqrt{n \log n})$, but there exists a choice of signs for which the norm of the signed sum is bounded by $K \sqrt n$ for some universal $K \leq 6$; in Spencer's memorable formulation, ``six standard deviations suffice.'' Later work developed algorithmic approaches to such bounds, notably Bansal~\cite{Bansal2010} and Lovett--Meka~\cite{LovettMeka2015}. An online version of the vector balancing problem was considered as early as 1977 in \cite{Spencer1977}, but it originally attracted little attention because the worst-case version of the problem is trivial (see the discussion of Game~I in \cite{Spencer1977}).

Here, we are interested in a \textit{random} and \textit{online} version of the problem. We fix random vectors $\bzeta(1),\dots,\bzeta(n)$ such that $(\zeta_i(k))_{i,k = 1,\dots,n}$ are i.i.d.\ standard Gaussian random variables. For notational simplicity, we rescale the vectors by $1/\sqrt{n}$, and set
\begin{align*}
    \bZ(k) \coloneqq \frac{1}{\sqrt{n}} \bzeta(k),
\end{align*}
so that $(Z_i(k))_{i,k = 1,\dots,n}$ are i.i.d.\ Gaussians with mean $0$ and variance $1/n$. With this parameterization, the optimization problem of interest is
\begin{align} \label{def.vn.intro}
    V^n \coloneqq \inf_{\eps(1),\dots,\eps(n)} \E \Big| \sum_{k = 1}^n \eps(k) \bZ(k) \Big|_{\infty} = \inf_{\eps(1),\dots,\eps(n)} \frac{1}{\sqrt{n}} \E \Big| \sum_{k = 1}^n \eps(k) \bzeta(k) \Big|_{\infty},
\end{align}
with the infimum taken over all $\{\pm 1\}$-valued random variables $\eps(1),\dots,\eps(n)$ such that $\eps(k)$ is a function of $\bZ(1),\dots,\bZ(k)$ (or equivalently, the discrete stochastic process $\eps(\cdot)$ is adapted with respect to the filtration generated by $\bZ(\cdot)$).

How large is $V^n$? Notice that if $\eps(k) = +1$ for all $k$ (or indeed, if the player uses any deterministic or random sequence of signs independent of the vectors), then the entries of $\sum_{k=1}^n \eps(k) \bZ(k)$ are independent standard Gaussians, and the sum's expected $\ell^\infty$ norm is, to leading order, $\sqrt{2\log n}$.
A striking result of Bansal and Spencer~\cite{BansalSpencer2020} shows that (in the case that $(\zeta_i(k))_{i,k = 1,\dots,n}$ are Rademacher random variables, rather than standard Gaussians) there are much better strategies: there is a simple algorithm for selecting the signs that achieves $\E \left| \sum_{k=1}^n \eps(k) \bZ(k)\right|_\infty \leq K$ for some $K$ independent of $n$. In particular, this means that the sequence $(V^n)_{n \in \N}$ is bounded.

The work of Bansal and Spencer reignited interest in generalizations and variants of the online vector balancing problem.
These include versions in which the vectors $\bzeta(k)$ are adversarial but ``oblivious'' (i.e., are fixed before the signs $\eps(k)$ are chosen), as in \cite{AlweissLiuSawhney,Kulkarni2024}, or ``smoothed'' (i.e., are convolved with small isotropic noise), as in \cite{Haghtalab2024}.
Other randomized variants focus on vectors with non-independent entries (for example, uniform $d$-sparse binary vectors), for which the asymptotics of $V^n$ can be different~\cite{bansal2021online,Altschuler2025}, or on stronger control over the partial sums, for instance, when the $\ell^\infty$ norm must be controlled uniformly across time, as opposed to just the final time~\cite{Bansal2019OnlineVB}.

\subsection{The mean-field limit}

Most existing results on vector balancing focus on designing and analyzing algorithms which achieve the optimal cost (or a nearly optimal cost), up to constant factors and with high probability. In the setting of \eqref{def.vn.intro}, such results imply that $V^n$ is bounded, uniformly in $n$. Here we take a different perspective, and aim to understand the asymptotic behavior of $V^n$ as $n \to \infty$. Our main result identifies the exact limit of $V^n$ as the value of a nonstandard stochastic control problem of mean-field type:
\begin{align} \label{def.vinf.intro}
    V^{\infty} \coloneqq \inf \bigg\{ \Big\| \int_0^1 \alpha(t)\, dt + B(1) \Big\|_\infty : \|\alpha(t)\|_2 \leq \sqrt{\frac{2}{\pi}} \text{\; for all $t \in [0,1]$} \bigg\}.
\end{align}
This optimization problem is set on a fixed probability space hosting a (one-dimensional) Brownian motion $B = (B(t))_{0 \leq t \leq 1}$. The infimum is taken over processes $\alpha = (\alpha(t))_{0 \leq t \leq 1}$ which are progressively measurable with respect to the Brownian filtration and satisfy the constraint $\|\alpha(t)\|_2=\E[\alpha(t)^2]^{1/2} \leq \sqrt{2/\pi}$ for all $t\in[0,1]$. Here and throughout, $\|\cdot\|_\infty$ denotes the $L^\infty$ norm of a random variable, i.e., $\|X\|_\infty = \esssup |X|$.

\begin{theorem} \label{th:intro}
If $(\zeta_i(k))_{i,k = 1,\dots,n}$ are independent standard Gaussians, and $V^n$ is defined as in \eqref{def.vn.intro}, then $\lim_{n\to\infty} V^n = V^\infty$. If instead $\zeta_i(k)$ are i.i.d.\ with mean zero, variance 1, and finite fourth moment, then we still have $\liminf_{n\to\infty} V^n \ge V^\infty$.
\end{theorem}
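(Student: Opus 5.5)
Both inequalities rest on the same heuristic. Write $X_i(k)=\sum_{j\le k}\eps(j)Z_i(j)$ and embed time as $t=k/n$. Since the coordinates are exchangeable, a typical coordinate $i$ feels the increment $\eps(k)Z_i(k)$. Write $Z_i(k)=n^{-1/2}g$, with $g$ standard. The sign $\eps(k)$ depends on all $n$ coordinates, so it is only weakly coupled to any single one. In the limit it acts on coordinate $i$ as a drift $\alpha_i(t)\,dt$ plus a Brownian increment. The drift is the conditional mean of $\eps(k)\sqrt n Z_i(k)$ given coordinate $i$'s past.

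The $L^2$ constraint comes from a linear-algebra bound: for any unit-norm coefficients,
\[
\frac1n\sum_i \E\bigl[\eps(k)\sqrt n Z_i(k)\,\big|\,\mathcal F_{k-1}\bigr]\,\beta_i \le \E\Bigl|\sum_i\beta_i\zeta_i(k)\Bigr|/\sqrt n .
\]
In the Gaussian case the right side equals $\sqrt{2/\pi}\,|\beta|_2/\sqrt n$. Taking $\beta_i$ proportional to the drifts gives $\frac1n\sum_i\alpha_i^2\le 2/\pi$. This is exactly the empirical analogue of $\|\alpha(t)\|_2\le\sqrt{2/\pi}$. Finite fourth moments suffice to get $\limsup$ of this bound via the CLT.

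\textbf{Lower bound.} Pick a near-optimal strategy for each $n$ and form the empirical measure of coordinate paths,
\[
\mu^n=\frac1n\sum_i\delta_{(X_i(\lfloor n\cdot\rfloor),\,A_i)},
\]
where $A_i$ is the cumulative compensator (drift part) of coordinate $i$. The steps are as follows.
\begin{enumerate}
\item Prove tightness in path space from the a priori bounds $\E|\Delta X_i|^4\lesssim n^{-2}$ and the $L^2$ drift bound above. The drift paths are $L^2$-Lipschitz on average, so Aldous-type criteria apply.
\item Show that every limit point is supported on laws of $X=\int\alpha\,dt+M$. Here $M$ is a martingale with quadratic variation $t$, hence a Brownian motion, since $\E[\eps^2 Z_i^2]=1/n$ exactly. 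Also $\int_0^1\E[\alpha(t)^2]\,dt$ restricted to any subinterval is bounded by $(2/\pi)|I|$ in the limit, which gives the pointwise-in-time constraint after differentiation.
\item The cost satisfies $\E|X(n)|_\infty\ge$ roughly the $(1-\delta)$-quantile of the empirical distribution of $|X_i(n)|$. Under weak convergence this is at least the $(1-\delta)$-quantile of $|X(1)|$ under the limit. Letting $\delta\to0$ gives the essential sup.
\item It remains to show the limiting law is admissible for $V^\infty$, i.e.\ a weak, possibly relaxed, control. To close this step I expect to need a lemma that relaxed/weak controls do not lower the value. I would prove it by a mimicking/Markovian-projection argument: replace $\alpha$ by $\E[\alpha\mid X]$, which preserves the $L^2$ bound by Jensen.
\end{enumerate}

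\textbf{Upper bound, Gaussian case.} This is the hard part. Take a nearly optimal control for $V^\infty$, which can be chosen Markovian: $\alpha(t)=a(t,X(t))$ with $a$ smooth and $\E a(t,X_t)^2<2/\pi$. At step $k$ the algorithm chooses
\[
\eps(k)=\sgn\Bigl(\sum_i a(t,X_i)\,\zeta_i(k)\Bigr).
\]
Then each coordinate gets drift $\approx a(t,X_i)\sqrt{2/\pi}/\|a\|_{2,\mathrm{emp}}$, which is at least $a$. Propagation of chaos should show that the empirical measure tracks the McKean–Vlasov law. The difficulty is that the cost is an $\ell^\infty$ norm over $n$ coordinates. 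Convergence in law of typical coordinates only controls quantiles, not the maximum.

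To handle the maximum, I would add a confining correction near the boundary: push coordinates with $|X_i|>V^\infty+\eta$ back strongly, using the slack in the $L^2$ budget. I would then need a uniform tail estimate showing that
\[
\PP\bigl(|X_i(n)|>V^\infty+2\eta\bigr)=o(1/n),
\]
followed by a union bound. I would obtain this tail estimate via a coupling with the Föllmer drift, which realizes the target terminal law as an adapted drift with explicit bounds. Dynamic programming on the limiting value function handles the last part. This tail control is where I expect the main obstacle.
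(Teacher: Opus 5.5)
\textbf{Lower bound.} Your lower bound follows the paper's route: empirical measures of the coordinate drift and martingale paths, tightness, identification of limit points as weak controls in which $B$ is a Brownian motion in the joint filtration, lower semicontinuity of the $L^\infty$ cost, and the fact that weak controls do not beat strong ones. One step, however, is false as stated.

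For non-Gaussian entries you cannot get $\limsup_n \frac1n\sum_i\alpha_i(t)^2\le 2/\pi$ at the level of the $n$-particle system. Take Rademacher entries and $\eps(k)=\zeta_1(k)$. All the drift then sits on coordinate $1$, and $\frac1n\sum_i\alpha_i^2=n|\bm{b}(k)|_2^2=1$ for every $n$. Lindeberg fails because the weights $\beta_i\propto\alpha_i$ can be concentrated on a few coordinates. The constant $2/\pi$ holds only for limit points, and proving it takes three steps:
\begin{enumerate}
\item truncate the drifts at a slowly growing level such as $\log n$, which does not change the limit points thanks to the averaged $L^2$ bound;
\item test the untruncated drift against the truncated weights;
\item apply Lindeberg to these spread-out weights, as in Lemma~\ref{lemma:limit-points-drift:fixed-path}.
\end{enumerate}

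\textbf{Upper bound: the gap.} Here there is a genuine gap. The decisive estimate $\PP(|X_i(n)|>V^\infty+2\eta)=o(1/n)$ is not proved; you name it as the main obstacle and list tools without a mechanism.

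The premise behind it is also not available. A bounded drift makes the law of $X$ equivalent to Wiener measure by Girsanov, so $X(1)$ has full support and infinite cost. Hence every near-optimal feedback $a$ must blow up near the edge of the support or as $t\to1$. That is exactly where no propagation-of-chaos result is available, and exactly where the maximum over coordinates is decided. The paper states explicitly that it cannot establish the regularity of $\hat\alpha$ this route requires.

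Your sign rule has a further problem. $\sgn\bigl(\sum_i a(t,X_i)\zeta_i(k)\bigr)$ gives coordinate $i$ the drift $c_0\,a(t,X_i)/\|a\|_{2,\mathrm{emp}}$, which is strictly larger than $a$ when there is slack. So the particles follow a different nonlinear dynamics from the one you analyze. Realizing a prescribed drift exactly needs a threshold rule like the one in Lemma~\ref{lem.coupling.discrete}.

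\textbf{How the paper avoids this.} The paper never needs tail bounds for a near-optimal control.
\begin{enumerate}
\item Using the sign-flipping coupling, it proves $\cV^n(k,\bx)\le\cU_p(k/n,m^n_{\bx})+C_pn^{\frac{1}{2p}-\frac14}$ for $p>4$. This is done only for F\"ollmer drifts (mean-zero martingales bounded in $L^p$, whose state processes end a.s.\ in $\operatorname{supp} f$), plus a deterministic drift with an averaged $\ell^p$ budget. The $\ell^\infty$ coupling error is controlled through $\ell^{2p}$ moments.
\item Your ``confining correction'' is what $\cU_p^{\mathrm{det}}$ accomplishes: a few far-out coordinates are cheap to move within the averaged $\ell^p$ budget, even though they are far away in $\ell^\infty$.
\item It then works with the upper half-relaxed limit $\cV_p^+$ in $\bd_p$. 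This function is upper semicontinuous and satisfies $\cV_p^+(1-\eps,m)\le[m]_\infty+\kappa(\eps)$ with $\kappa(\eps)\to0$. It also obeys a dynamic programming inequality for bounded controls, in which the particle state is tracked only through its empirical measure in $\bd_p$.
\item It approximates an optimizer on $[0,1-\eps]$ by bounded controls in $\bd_p$. This requires the odd, inward-pointing optimizer obtained from the regularized duality theory.
\item It concludes with the monotonicity of $t\mapsto\cV^\infty(t,\delta_0)$.
\end{enumerate}
Without this bootstrap, or an actual proof of your tail estimate for a singular drift, your upper bound is not established.
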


We conjecture that $\lim_{n\to\infty} V^n = V^\infty$ remains true beyond the Gaussian setting, for a broad class of distributions for $\zeta_i(k)$, but we have not been able to prove the upper bound $\limsup_{n\to\infty} V^n \le V^\infty$ beyond the Gaussian setting.

The same arguments also apply when the number of vectors is allowed to differ from the ambient dimension, provided the two remain proportional. More precisely, for $m \in \N$, define
\begin{equation} \label{def.vnm}
    V^{n,m} \coloneqq  \inf_{\eps(1),\dots,\eps(m)} \E \Big| \sum_{k = 1}^m \eps(k) \bZ(k) \Big|_{\infty},
\end{equation}
with $(Z_i(k))_{i = 1,\dots,n, k = 1,\dots,m}$ being i.i.d.\ Gaussians with variance $1/n$. Our arguments show that
\begin{equation} \label{Tconvergence}
    \lim_{n \to \infty} \frac{m(n)}{n} = T \in (0,\infty) \quad\implies\quad
    \lim_{n \to \infty} V^{n,m(n)} = V_T^{\infty},
\end{equation}
with
\begin{equation} \label{def.vTinf}
    V_T^{\infty} \coloneqq \inf \bigg\{ \Big\| \int_0^T \alpha(t)\, dt + B(T) \Big\|_\infty : \|\alpha(t)\|_2 \leq \sqrt{\frac{2}{\pi}}\text{\; for all $t \in [0,T]$} \bigg\}.
\end{equation}
In addition to the intrinsic interest in uncovering a scaling limit of the online vector balancing problem \eqref{def.vn.intro}, there are also potential algorithmic implications; see Subsection~\ref{subsec.algo}.
We also emphasize that, to the best of our knowledge, the bound $V^n = O(1)$ has so far only been proved by Bansal and Spencer in the case that $(\zeta_i(k))_{i,k = 1,\dots,n}$ are Rademacher; in particular our results give the first proof that $V^n = O(1)$ in the Gaussian case.

We are aware only of two other results on exact scaling limits for discrepancy problems.
The concurrent work of Huang--Sellke--Sun and Huang--Sellke~\cite{HuangSellkeSun2026,HuangSellke2026} derives a similar control-theoretic limit for the \emph{offline} version of the problem. Their results apply to a more general formulation, where the signs are chosen so that the coordinates of the signed sum $\sum_{k=1}^m \eps(k)\bZ(k)$ either all lie in a given subset of $\R$, or have an empirical measure which is close in 2-Wasserstein distance to a given probability measure.
Less closely related is that of Kohn--Serfaty \cite{kohn2006deterministic}, in which the vectors $\bzeta(k)$ are deterministic unit vectors, chosen adversarially, and the controller gets to choose a sign and move a length $h$ along the signed vector. The goal is to exit from a given domain as quickly as possible, and the limiting value as $h\downarrow 0$ is characterized in terms of the mean curvature flow of the domain. This is, of course, a very different sort of objective and scaling limit.

\subsection{Results on the mean-field problem}

As mentioned above, the limiting optimization problem \eqref{def.vinf.intro} is a mean-field control (MFC) problem, i.e., a stochastic control problem in which the objective is a nonlinear function of the law of the controlled process. MFC problems and their competitive counterparts, mean field games (MFG), have been studied extensively over the past two decades. We refer to the two-volume monograph \cite{CD1, CD2} for an overview of the theory, and in particular to Chapter 6 of \cite{CD1} for an introduction to mean-field control theory. To make the connection to MFC more transparent, observe that the optimization criterion is a \emph{nonlinear} functional of the law $\cL(X(1))$ of $X(1)$:
\begin{align} \label{mfc}
    \inf_{\alpha \in \mathbb{A}} \big[\cL(X(1))\big]_\infty, \quad \text{ where } [m]_{\infty} \coloneqq \inf \big\{ r > 0 : m([-r,r]^c) = 0 \big\},
\end{align}
where $\mathbb{A}$ is the set of progressively measurable processes satisfying $\|\alpha(t)\|^2_2 \leq 2/\pi$ for all $t \in [0,1]$,
and $X$ is the state process determined from $\alpha$ by
\begin{align*}
    dX(t) = \alpha(t)\,dt + dB(t), \quad X(0) = 0.
\end{align*}
The problem \eqref{mfc} falls outside the scope of standard MFC theory in two ways. First, the condition $\|\alpha(t)\|^2_2 \leq 2/\pi$ is a constraint on the law of the control, which is not typical. Second, the cost function $[m]_{\infty}$ is very singular; it is not even obvious a priori how to find an admissible control $\alpha$ which achieves a finite cost.

Because $[m]_\infty$ is lower semicontinuous (but not upper semicontinuous) with respect to weak convergence, it is straightforward (at least once we know that $V^{\infty}$ is finite) to prove the existence of an optimizer to \eqref{mfc}. Unfortunately, because of the singularity of $[m]_\infty$, we do not have a complete understanding of the structure of optimizers. It is not even clear to us if the optimizer is unique. However, we do have a duality result which gives an alternative characterization of $V^{\infty}$; see Theorem \ref{thm:duality-regularized}. Moreover, if we soften the terminal condition $[m]_\infty$ in an obvious way by replacing it with  $( \int |x|^p\,m(dx))^{1/p}$,
then we can show that there is a unique optimizer $\alpha$, and we can use convex duality to rigorously obtain necessary and sufficient conditions for optimality; see Theorem \ref{thm:duality-regularized} and Proposition \ref{prop:primal-optimizers-finite-p}. One nice way to express these results is to say that $\alpha$ takes the form
\begin{align*}
    \alpha(t) = - \sqrt{\frac{2}{\pi}} \frac{Y(t)}{\|Y(t)\|_2},
\end{align*}
where $(X,Y,Z)$ is the unique solution of the forward-backward stochastic differential equation
\begin{equation*}
        \left\{
            \begin{alignedat}{2}
                dX(t) &= -\sqrt{\frac{2}{\pi}} \frac{Y(t)}{\|Y(t)\|_{2}}\,dt + dB(t),\qquad&& X(0) = 0,\\
                dY(t) &= Z(t)\, dB(t),\qquad&& Y(1) = \frac{X(1) |X(1)|^{p-2}}{\|X(1)\|_p^{p-1}},
                \qquad \smash{\raisebox{\normalbaselineskip}{$0 \le t \le 1$.}}
            \end{alignedat}
        \right.
\end{equation*}
Understanding the structure of this softened version of the problem is interesting in its own right, but also allows us to infer some properties of optimizers of the limiting problem; for example, we show in Proposition \ref{lem.optproperties} that there exists at least one optimizer for \eqref{mfc} with the property that $\alpha(t) = \hat{\alpha}(t,X_t)$, with $\hat{\alpha}(t,\cdot)$ an odd function which points inwards, in the sense that $\hat{\alpha}(t,x) x \leq 0$. This property turns out to be crucial in the proof of the upper bound in Theorem \ref{th:intro}.

While we cannot solve the problem \eqref{def.vinf.intro} explicitly, we do have several explicit upper and lower bounds for $V^{\infty}$, and more generally for $V_T^{\infty}$. (We also explain briefly in Section \ref{sec:l2-problem} that the analogous and much easier problem with $L^2$ instead of $L^\infty$ objective is explicitly solvable.)
These bounds,  presented in Section \ref{sec:bounds_and_asymptotics}, imply the following:
\begin{itemize}
    \item For $T = 1$, we have
    \begin{align*}
       1.09 \leq \Phi^{-1}\Big(\frac{1 + e^{-1/\pi}}{2} \Big) \leq V^{\infty} \leq \Big(\frac{\pi}{2}\Big)^{3/2} \leq 1.97
    \end{align*}
    where $\Phi : \R \to (0,1)$ is the CDF of the standard Gaussian.
    In particular, for $n$ large enough, we know that $V^n$ is between $1$ and $2$. The lower bound, shown in Proposition \ref{prop.firstlowerbound}, essentially comes from relaxing the constraint from $\sup_{t \in [0,1]}\E[\alpha(t)^2] \le 2/\pi$ to $\int_0^1 \E[\alpha(t)^2]\,dt \le 2/\pi$, which results in an optimization problem which is explicitly solvable via a least-entropy principle and Girsanov's theorem. The upper bound comes from the fact $T \mapsto V_T^{\infty}$ is nondecreasing (shown in Lemma \ref{lem.nondecreasing}), and Proposition \ref{prop.firstupperbound} bounds the limit as $T\to\infty$ by $(\pi/2)^{3/2}$.
    This is shown using an explicit choice of control which happens to correspond to Brownian motion conditioned to remain in an interval for all time (Remark \ref{re:conditionedBM}).
    \item As $T\to 0$, we have $V_T^{\infty} \sim \sqrt{2T \log(1/T) }$,
    in the sense that the ratio converges to 1.
    This is obtained by combining the aforementioned lower bound of Proposition \ref{prop.firstlowerbound} (see in particular Remark \ref{rmk.phiinverse.taylor}) with upper bounds of Propositions \ref{prop.secondupperbound} and \ref{prop.smalltimeasymptotics}. The upper bound comes from explicit choices of controls, corresponding to the F\"ollmer drift associated with any terminal distribution having small enough relative Fisher information with respect to a Gaussian. Among this class of controls, the optimal value can be computed in terms of the leading Dirichlet eigenvalue of the Ornstein--Uhlenbeck operator restricted to a bounded interval.
    \item Qualitatively, we observe in Proposition \ref{pr:scaling-dec+convex} that $V^\infty_T$ is continuous in $T$, and $T^{-1/2}V_T^{\infty}$ is a nonincreasing convex function of $\sqrt{T}$.
\end{itemize}

\subsection{Heuristic derivations of the mean-field limit}
The heuristic link between~\eqref{def.vn.intro} and~\eqref{def.vinf.intro} follows from a ``mean-field'' viewpoint.
Consider the $n$-dimensional state process associated with the discrete control problem $V^n$ in~\eqref{def.vn.intro}:
\begin{equation} \label{def:intro:Y(k)}
	\bY(k) \coloneqq  \sum_{\ell=1}^k \eps(\ell) \bZ(\ell),\qquad k=0,1,\dots,n
\end{equation}
Letting $\cG(k) \coloneqq \sigma(\bZ(\ell) : \ell \leq k)$ denote the canonical filtration, we can write the Doob decomposition
\begin{equation}
	\bY(k) - \bY(k-1) = \bm{b}(k-1) + \bm{\sigma}(k)\,, \label{intro:Doob}
\end{equation}
where $\bm{b}(k-1) \coloneqq \E[\eps(k) \bZ(k)\,|\,\cG(k-1)]$ is a predictable drift and $\bm{\sigma}(k) \coloneqq \eps(k) \bZ(k) - \bm{b}(k-1)$ is the martingale increment.
For any unit vector $\bm{v} \in \R^n$, the fact that $\eps(k)$ takes values in $\{\pm 1\}$ implies
\[\bm{v} \cdot \bm{b}(k-1) = \E\big[\eps(k) \bm{v} \cdot \bZ(k) \, \big|\, \cG(k-1) \big] \leq  \E\big[|\bm{v} \cdot \bZ(k)| \, \big|\, \cG(k-1) \big] = \sqrt{\frac{2}{\pi n}} \quad \text{a.s.}\]
Taking a supremum over $\bm{v}$ shows that the drift satisfies $|\bm{b}(k-1)|_2 \leq \sqrt{2/(\pi n)}$ almost surely.
Conversely, one can show that for any $\cG(k-1)$-measurable random vector $\bm{u}$ satisfying $|\bm{u}|_2 \leq \sqrt{2 / (\pi n)}$, there exists a $\cG(k)$-measurable $\eps(k) \in \{\pm 1\}$ such that $\bm{b}(k-1) = \bm{u}$.
We reach the first important observation in our heuristic derivation: \textit{the constraint that $\eps(k) \in \{\pm 1\}$ is equivalent to an $\ell^2$ bound on the drift}.

Similarly, the martingale increment $\bm{\sigma}(k)$ satisfies
\begin{equation}
	\Var(\bm{v} \cdot \bm{\sigma}(k)\,|\, \cG(k-1)) = \E \big[\eps(k)^2 (\bm{v} \cdot \bZ(k))^2 \,|\, \cG(k-1)\big] - (\bm{v} \cdot \bm{b}(k-1))^2 = \frac 1n - (\bm{v} \cdot \bm{b}(k-1))^2\,.
\end{equation}
Since this quantity is order $n^{-1}$, the projection of the martingale increment in any direction has the same scaling as the increments of a Brownian motion under the time reparameterization $t = k/n$.
We reach the second observation: \textit{The one-dimensional marginals satisfy the scaling assumptions of the martingale central limit theorem.}

Combining these observations, we are led to consider the behavior of a typical coordinate of $\bY(k)$: letting $U \sim \mathrm{Unif}\{1,\dots,n\}$ be a random coordinate, independent of everything else, we can define a new one-dimensional process $\widebar X(t) \coloneqq Y_U(\lfloor nt \rfloor)$ representing the evolution of the ``typical entry'' in our state vector.
The above considerations imply that, for $k = \lfloor nt \rfloor$,
\begin{equation}\label{discretesde.intro}
	\widebar X\left(t + \frac 1n\right) - \widebar X(t) = (n b_U(k)) \cdot \frac 1n + \sigma_U(k+1)\,,
\end{equation}
where $\alpha(t) \coloneqq n b_U(k)$ satisfies
\begin{equation}\label{alphat.intro}
	\E\, \alpha(t)^2 = \frac 1n \sum_{i=1}^n \E (n b_i(k))^2 \leq n \cdot \frac{2}{ \pi n} = \frac 2 \pi
\end{equation}
and where
\begin{equation}\label{martingale.intro}
	\Var\big(\sigma_U(k+1)\,|\,U, \cG(k)\big) = \frac 1n - b_U(k)^2 = \frac 1n \left(1- \frac 1n \alpha(t)^2\right) \approx \frac 1n \,.
\end{equation}
Combining~\eqref{discretesde.intro} with~\eqref{alphat.intro} and~\eqref{martingale.intro} suggests that, in the limit, the typical coordinate $\widebar X$ evolves according to
\begin{equation}
	d \widebar X(t) = \alpha(t)\, dt + d B(t)\,,
\end{equation}
where $B$ is a standard Brownian motion and $\alpha$ satisfies $\E\,\alpha(t)^2\leq 2/\pi$ for every $t$. The $\ell^\infty$ norm of the vector $\bY(n)$ in the objective of $V^n$ is then naturally replaced by the $L^\infty$ norm of the random variable $\widebar X(1)$.
This is the control problem $V^\infty$ appearing in~\eqref{def.vinf.intro}.

\subsubsection{Algorithmic implications}\label{subsec.algo}
This derivation of~\eqref{def.vinf.intro} also suggests a method for algorithmically extracting a strategy for the finite-$n$ problem from a strategy for its limiting counterpart.
Suppose that a control $\alpha$ of~\eqref{def.vinf.intro} is given in feedback form as $\alpha(t) = \hat \alpha(t, X(t))$.
The above derivation suggests coupling the state $\bY(k)$ in the discrete control problem with $n$ independent copies $X_1(t), \dots, X_n(t)$ of $X(t)$ with $t = k/n$, and choosing $\eps(k)$ to satisfy
\begin{equation}\label{eq:intro-alg-goal}
b_i(k-1) = \E\big[\eps(k) Z_i(k)\,\big|\,\cG(k-1)\big] = \frac{1}{n} \hat \alpha(t, X_i(t)).
\end{equation}
Lemma~\ref{lem.coupling.discrete} shows there is such a choice of $\eps(k)$ so long as the resulting vector $\bm{b}(k-1)$ satisfies $|\bm{b}(k-1)|_2 \leq \sqrt{2/(\pi n)}$.
For example, in the simple case where $|\bm{b}(k-1)|_2 = \sqrt{2/(\pi n)}$, we can take $\eps(k) = \sgn(\bm{b}(k-1) \cdot \bZ(k))$.
Since $X_1(t), \dots, X_n(t)$ are independent, the law of large numbers implies that $n|\bm{b}(k-1)|_2^2 = \frac{1}{n} \sum_{i=1}^n \hat \alpha(t, X_i(t))^2 \to \E\, \alpha(t)^2 \leq 2/\pi$.
Therefore, up to a possible negligible modification of $\bm{b}(k-1)$ to ensure it satisfies the norm bound,~\eqref{eq:intro-alg-goal} is achievable.

The result is a simple prescription: given an optimal continuous control in feedback form $\hat \alpha(t, X(t))$ and a current state $\bY(k)$ for the discrete problem, choose $\eps(k)$ by projecting the vector $(\hat \alpha(k/n, Y_i(k)))_{i=1}^n$ to the $\ell^2$ ball in $\R^n$ of radius $\sqrt{2/\pi}$, and use Lemma~\ref{lem.coupling.discrete} to ensure that~\eqref{eq:intro-alg-goal} holds.
Though we expect the resulting discrete-time strategy to achieve a value close to $V^\infty$, we stress that our proof of the upper bound does not take this approach, since making it rigorous requires rather stringent regularity assumptions on the function $\hat \alpha$, which we cannot establish for optimal controls of~\eqref{def.vinf.intro}.
As we describe below, our upper bound proof uses an indirect argument that does not immediately yield an algorithmic guarantee.

\subsubsection{A PDE viewpoint}
Alternatively, one can formally derive the limiting problem \eqref{def.vinf.intro} in a more analytical way, by considering the dynamic value function $\cV^n : \{0,1,\dots,n\} \times \R^n \to \R$ for the $n$-dimensional optimization problem. This function records the value of the same optimization problem from different initial conditions, i.e.,
\begin{align*}
    \cV^n(k,\bx) \coloneqq \inf_{\eps(\cdot)} \E \, \Big| \bx + \sum_{\ell = k+1}^n \eps(\ell) \bZ(\ell)\Big|_{\infty}.
\end{align*}
It can alternatively be characterized by a dynamic programming principle
\begin{align*}
    \cV^n(k, \bx) = \E\Big[ \inf_{\eps \in \{\pm 1\}} \cV^n\big(k+1, \bx + \eps \bZ \big) \Big],
\end{align*}
where $\bZ$ is a centered Gaussian with covariance $n^{-1} I_n$,
together with the terminal condition $\cV^n(n, \bx) = |\bx|_{\infty}$. Now, suppose that there is a smooth function $\cU^n : [0,1] \times \R^n \to \R$ such that
\begin{align*}
    \cU^n(k/n, \bx) = \cV^n(k,\bx), \qquad k =0,\dots,n.
\end{align*}
Then, we can perform a Taylor expansion to conclude that
\begin{align*}
    \cV^n\big(k+1,\bx &+ \eps  \bZ \big) - \cV^n(k,\bx) = \cU^n\big( (k+1)/n, \bx + \eps  \bZ \big) - \cU^n\big( k/n, \bx \big)
    \\
    & \approx \frac{1}{n} \partial_t \cU^n(k/n, \bx) + \eps \sum_{i = 1}^n \partial_{x_i} \cU^n(k/n,\bx) Z_i + \sum_{i,j = 1}^n \partial_{x_ix_j} \cU^n(k/n,\bx) Z_i Z_j,
\end{align*}
and so optimizing over $\eps \in \{\pm 1\}$,
\begin{align*}
    \inf_{\eps \in \{\pm 1\}} &\cV^n(k+1,\bx + \eps \bZ) - \cV^n(k,\bx)
    \\
    &\approx \frac{1}{n} \partial_t \cU^n(k/n, \bx) -  \Big|  \sum_{i = 1}^n \partial_{x_i} \cU^n(k/n,\bx) Z_i \Big| + \sum_{i,j = 1}^n \partial_{x_ix_j} \cU^n(k/n, \bx) Z_i Z_j.
\end{align*}
Now, if the $Z_i\sim \mathcal{N}(0,1/n)$ are independent, then we can take expectations and use the fact that
\begin{align*}
    \sum_{i = 1}^n \partial_{x_i} \cU^n(k/n,\bx) Z_i \sim \cN\Big(0,  \frac{1}{n} \sum_{i = 1}^n |\partial_{x_i} \cU^n(k/n,\bx)|^2 \Big)
\end{align*}
to deduce that
\begin{align*}
    &0 = \E\Big[ \inf_{\eps \in \{\pm 1\}} \cV^n\big(k+1,\bx + \eps  \bZ \big) \Big] - \cV^n(k,\bx)
    \\
    &\quad \approx \frac{1}{n} \partial_t \cU^n(k/n, \bx) - \sqrt{\frac{2}{\pi}} \bigg( \frac{1}{n} \sum_{i = 1}^n |\partial_{x_i} \cU^n(k/n,\bx) |^2 \bigg)^{1/2} + \frac{1}{n} \sum_{i = 1}^n \partial_{x_ix_i} \cU^n(k/n,\bx).
\end{align*}
Multiplying by $n$, we find that $\cU^n$ should approximately solve a Hamilton--Jacobi--Bellman equation:
\begin{align*}
    - \partial_t \cU^n - \sum_{i = 1}^n \partial_{x_ix_i} \cU^n + \sqrt{\frac{2 n}{\pi}} \Big(\sum_{i = 1}^n |\partial_{x_i} \cU^n |^2 \Big)^{1/2} \approx 0, \quad \cU^n(1,\bx) = |\bx|_{\infty}.
\end{align*}
Recognizing the control problem whose value function solves this HJB equation, this suggests that
\begin{align} \label{vn.npartcontrol}
    V^n \approx \inf_{\bm{\alpha}} \E\, \Big| \int_0^1 \bm{\alpha}(t)\, dt + \bm{B}(1) \Big|_{\infty} = \inf_{\bm{\alpha}} \E\Big[ \max_{i = 1,\dots,n} \Big| \int_0^1 \alpha_i(t)\, dt + B_i(1) \Big|\Big],
\end{align}
where $\bm{B}$ is a standard $n$-dimensional Brownian motion, and the infimum is taken over all processes $\bm{\alpha}$ which are progressively measurable with respect to the filtration generated by $\bm{B}$ and satisfy
\begin{align*}
    \frac{1}{n} \sum_{i = 1}^n \alpha_i(t)^2 \leq \frac{2}{\pi}, \quad \text{a.s., for all } 0 \leq t \leq 1.
\end{align*}
Finally, to get to \eqref{def.vinf.intro}, we can note that the stochastic control problem appearing in \eqref{vn.npartcontrol} is of mean-field type, and its formal mean-field limit is exactly \eqref{def.vinf.intro}.

\subsection{Overview of the proof}

We prove the lower and upper bounds
\begin{align} \label{lower-upperbounds.intro}
    V^{\infty} \le \liminf_{n \to \infty} V^n, \quad \text{and} \quad
    \limsup_{n \to \infty} V^n \leq V^{\infty},
\end{align}
through completely separate arguments.\medskip

\noindent
\textit{The proof of the lower bound.} We prove the lower bound under the ``universal'' assumption that the coordinates $\zeta_i(k)$ are i.i.d.\ with mean zero, variance one, and finite fourth moment. As above, we work with the rescaled variables $Z_i(k)=n^{-1/2}\zeta_i(k)$. Under this assumption, the proof proceeds by compactness and weak convergence arguments, drawing inspiration from the classical proof of Donsker's invariance principle and from compactness methods used in the limit theory for (continuous-time) mean-field stochastic control~\cite{lacker2017limit}. We show that if we choose for each $n \in \N$ \textit{any} admissible control $\eps^n$ for the problem defining $V^n$, then we have
\begin{equation}
    \label{eq:thm-lower-bound:to-show}
    V^\infty \leq \liminf_{n\to\infty} \E\,\Big|\sum_{k=1}^n \epsilon^n(k) \bm{Z}^n(k)\Big|_{\infty}.
\end{equation}
To do this, we work with a weak formulation of the limiting control problem, in which the ``weak control" is the joint law of $(\int_0^{\cdot} \alpha(t)\, dt, B(\cdot))$, viewed as a measure on the space $\cC^2$ of continuous paths $[0,1] \to \R^2$. More precisely, denoting by $A$ and $B$ the canonical processes on the space $\cC^2$, we denote by $\cA^\mathrm{w}$ the set of measures $m$ on $\cC^2$ such that, under $m$,
\begin{gather}
	A=(A(t))_{t\in[0,1]} \text{ has absolutely continuous paths and } A(0)=0, \label{eq:thm:limit-points:ac0-paths}\\
	\E\,\dot{A}(t)^2 \leq \frac{2}{\pi},\quad \text{for a.e.\ } t\in[0,1],\label{eq:thm:limit-points:l2-constraint}\\
	B=(B(t))_{t\in[0,1]} \text{ is an $\mathbb{F}^{A,B}$-Brownian motion}.\label{eq:thm:limit-points:bm}
\end{gather}
Here, $\mathbb{F}^{A,B}$ denotes the filtration generated by $(A,B)$. As shown in Lemma \ref{lem.equivstrong}, the possible extra randomness in $A$ which is not adapted to the Brownian filtration cannot improve the value; that is,
\begin{equation*}
	V^\infty = \inf_{m \in \mathcal{A}^\mathrm{w}} \|A(1) + B(1)\|_{L^\infty(m)}.
\end{equation*}
With this weak formulation of the limiting problem in hand, we prove \eqref{eq:thm-lower-bound:to-show} through the following compactness argument:
\begin{enumerate}
	\item Fix a sequence $(\eps^n)_{n\in\N}$ such that $\eps^n$ is an admissible control for $V^n$. For each $n$, the associated state process
	\begin{equation*}
        \bm{Y}^n(k) \coloneqq \sum_{\ell=1}^k \eps^n(\ell) \bm{Z}^n(\ell),\qquad k=0,1,\dots,n,
    \end{equation*}
    is embedded in the time interval $[0,1]$ by time rescaling and linear interpolation. The resulting continuous-time process $\hat{\bm{Y}}^n=(\hat{\bm{Y}}^n(t))_{t\in[0,1]}$ is then decomposed into two additive terms, a ``drift'' term $\bm{A}^n$ and a ``martingale'' term $\bm{M}^n$, based on the Doob decomposition of $\bm{Y}^n$ from \eqref{intro:Doob}.
	\item We analyze $\hat{\bm{Y}}^n$ through the joint empirical measure $\hat{\mu}_n$ of the $n$ coordinate trajectories $(A_1^n, M_1^n),\dots, (A_n^n,M_n^n)$. This is a random element of $\cP(\C^2)$, the space of probability measures on $\C^2$, equipped with weak convergence. We show that the laws $\cL(\hat \mu_n)$ form a tight sequence in $\P(\P(\C^2))$.
	\item We then characterize the limit points of $\cL(\hat{\mu}_n)$ in Theorem~\ref{thm:limit-points}, showing that any limit point is supported on the set $\cA^\mathrm{w}$.
	\item The proof concludes using the lower semicontinuity of the map $\P(\C^2)\to[0,\infty]$ which sends $m=\cL(A,B)$ to $\|A(1)+B(1)\|_{L^\infty(m)}$.
\end{enumerate}\bigskip

\noindent
\textit{The proof of the upper bound.} For the upper bound, we return to the Gaussian setting, where the increments $Z_i(k)$ are independent $\mathcal N(0,1/n)$ random variables. The proof is significantly more challenging because the function $m \mapsto [m]_{\infty}$ is lower semicontinuous, but not upper semicontinuous with respect to weak convergence or any topology in which we can expect to obtain compactness. A natural idea is to embed controls for the limiting problem into the $n$-dimensional problem in an appropriate way. More precisely, we would like a procedure which starts with an admissible control $\alpha$ for $V^\infty$, and gives an admissible control $\eps$ for $V^n$ such that
\begin{align} \label{embedding}
    \E\, \Big| \sum_{k = 1}^n \eps(k) \bZ(k) \Big|_{\infty} \leq \Big\| \int_0^1 \alpha(t)\,dt + B(1) \Big\|_{\infty} + o_n(1).
\end{align}
In Subsection \ref{subsec.coupling}, we design such a procedure, relying on the Gaussian assumption in an essential way. The starting point is to take $n$ independent copies $(\alpha_i,X_i,B_i)_{i=1,\dots,n}$ of the triple $(\alpha,X, B)$, where $\alpha$ is a control for the limiting problem, $X$ is the corresponding state process, and $B$ is the driving Brownian motion. In particular,
\begin{equation*}
    X_i(t) = \int_0^t \alpha_i(s)\, ds + B_i(t), \qquad 0 \leq t \leq 1,
\end{equation*}
for each $i=1,\dots,n$. We then construct random vectors $(\bZ(k))_{k = 1,\dots,n}$ with independent $\mathcal{N}(0,1/n)$ coordinates, together with a suitable control $(\eps(k))_{k = 1,\dots,n}$, so that the state process $(\bY(k))_{k = 1,\dots,n}$ defined as in \eqref{def:intro:Y(k)}
satisfies 
\begin{align} \label{embedding2}
    \E  | \bY(n) - \bX(1) |_{\infty} = o_n(1),\qquad\text{where } \bX(1) \coloneqq (X_1(1),\dots,X_n(1)).
\end{align}
Since $|X_i(1)| \leq \|X(1)\|_{\infty}$ a.s., this implies \eqref{embedding}. 

We emphasize that the obvious ``synchronous" coupling, in which we take $Z_i(k) = B_i\big(\frac{k}{n}\big) - B_i \big(\frac{k-1}{n}\big)$, does not seem to lead to an estimate like \eqref{embedding2}. Instead, we take $Z_i(k) = W_i\big(\frac{k}{n}\big) - W_i \big(\frac{k-1}{n}\big)$, where $(W_i)_{i = 1,\dots,n}$ is an auxiliary $n$-dimensional Brownian motion  which is coupled with $(B_i)_{i = 1,\dots,n}$ through a delicate ``sign-flipping" procedure involving the controls $(\alpha_i)_{i = 1,\dots,n}$. 

While the coupling we construct can be implemented for general admissible controls $\alpha$, unfortunately we can only prove the estimate \eqref{embedding} for a restricted class of sufficiently regular controls: roughly speaking, controls which are mean-zero martingales and bounded in $L^p$ for some $p > 4$. Since we do not expect the optimal controls to have this structure, this means that our coupling argument does not give the upper bound by itself.

Nevertheless, the argument is still useful. Using the F\"ollmer drift, we can build admissible controls which have these special properties and achieve a finite cost (see Lemma \ref{lem.follmer}). These controls then serve as the basis for a more indirect proof, organized as follows:
\begin{enumerate}
    \item First, by using the coupling procedure mentioned above, we prove for each $p > 4$ a non-asymptotic upper bound on the value function $\cV^n : \{0,1,\dots,n\} \times \R^n \to \R$ for the $n$-dimensional problem, of the form
    \begin{align} \label{nvalue.upper.intro}
        \cV^n(k,\bx) \leq \cU_p(k/n, m_{\bx}^n) + o_n(1), \qquad m_{\bx}^n \coloneqq \frac{1}{n}\sum_{i=1}^n\delta_{x_i},
    \end{align}
    where $\cU_p : [0,1] \times \cP(\R) \to \R$ is, roughly speaking, the value function for the limiting problem when controls $\alpha$ are restricted to a class of ``$L^p$ F\"ollmer drifts". The upper bound \eqref{nvalue.upper.intro} is the most technically demanding step in our argument, and is carried out in Section \ref{subsec.coupling}. The precise definition of $\cU_p$ is given in \eqref{def.up}. We expect that $\cU_p$ is strictly larger than $\cV^{\infty}$, so \eqref{nvalue.upper.intro} is not tight, but it provides a tractable non-asymptotic upper bound. The F\"ollmer drift has become a common tool for proving functional inequalities since \cite{lehec2013representation}; for our purposes, it just provides a convenient way to construct controls which lead to a given target distribution $m=\cL(X(1))$ and whose moments can be controlled  tractably in terms of $m$.
    \item For some $p > 4$, we consider the ``upper half-relaxed limit" of $\cV^n$, with respect to the $p$-Wasserstein metric $\bd_p$, i.e., the function $\cV^+_p : [0,1) \times \cP_p(\R) \to \R$ given by
    \begin{align*}
      \cV^+_p(t,m) \coloneqq \sup_{(k^n, \bx^n)} \limsup_{n \to \infty} \cV^n(k^n, \bx^n),
    \end{align*}
    where the first supremum is taken over all $k^n \in \{0,\dots,n\}$ and $\bx^n \in \R^n$ such that
    \begin{align*}
        k^n/n \to t, \quad m_{\bx^n}^n \xrightarrow{ \bd_p } m.
    \end{align*}
    This upper half-relaxed limit is a natural object in the theory of viscosity solutions, and is automatically upper semicontinuous. The upper bound \eqref{nvalue.upper.intro} from the first step allows us to see that
    \begin{equation*}
        \cV^+_p(t,m) \leq \cU_p(t,m),
    \end{equation*}
    which is useful because we can obtain tractable upper bounds for $\cU_p$ (see Lemmas \ref{lem.follmervalue} and \ref{lem.updet.upperbound}). In particular, we can deduce that for each $t \in [0,1)$, $\cV^+_p(t,\cdot)$ is locally bounded on $\cP_p(\R)$, and satisfies
    \begin{align} \label{vplusp.term}
        \cV^+_p(1 - \eps, m) \leq [m]_{\infty} + \kappa(\eps),
    \end{align}
    for some $\kappa$ with $\kappa(\eps) \to 0$ as $\eps \to 0$.
    \item We show in Proposition \ref{prop.dpi} that $\cV_p^+$ satisfies a ``dynamic programming inequality", at least for \textit{bounded controls}. In particular, for each $t_0 \in [0,1)$ and $h \in (0,1 - t_0)$, any \textit{bounded} process $\alpha$ satisfying $\|\alpha(t)\|_2^2 \leq 2/\pi$, and any process $X$ with dynamics of the form
    \begin{align*}
        dX(t) = \alpha(t)\,dt + dB(t), \qquad t_0 \leq t \leq t_0 + h,
    \end{align*}
    we have
    \begin{align} \label{dpp.intro}
        \cV_p^+\Big( t_0, \cL\big(X(t_0)\big) \Big) \leq \cV_p^+\Big( t_0 + h, \cL\big(X(t_0 + h)\big) \Big).
    \end{align}
    We note that the proof of \eqref{dpp.intro} also uses in a crucial way the a priori estimate \eqref{nvalue.upper.intro}.
    \item We fix a control $\alpha^{\eps}$ which is optimal for the problem defining $V^{\infty}$, but on a slightly shorter time horizon $[0,1- \eps]$. This means that
    \begin{align} \label{xeps.value}
        \cV^{\infty}(\eps, \delta_0) &= \big\| X^{\eps}(1- \eps) \big\|_{\infty},
    \end{align}
    where $X^\eps$ denotes the corresponding state process. We note that the need to restrict to a shorter time horizon comes from the fact that $\cV_p^+$ only admits the bound \eqref{vplusp.term} on $[0,1) \times \cP_p(\R)$. Next, we use some properties of $\alpha^{\eps}$ proved in Section \ref{sec:duality-and-optimizers} to approximate $\alpha^{\eps}$ by admissible controls $\alpha^{\eps, \delta}$ which are \textit{bounded}, and such that the corresponding state processes $X^{\eps, \delta}$ satisfy
    \begin{align*}
        \cL\big( X^{\eps, \delta}(1-\eps) \big) \xrightarrow{\bd_p} \cL\big(X^{\eps}(1-\eps)\big),\qquad \text{ as $\delta\to0$}.
    \end{align*}
    We apply the definition of $\cV^+_p$, then \eqref{dpp.intro} to get
    \begin{align*}
        \limsup_{n \to \infty} V^n \leq \cV^+_p(0, \delta_0) \leq \cV_p^+\big(1- \eps, \cL\big(X^{\eps, \delta}(1-\eps)\big) \big).
    \end{align*}
    We then use upper semicontinuity of $\cV_p^+$ with respect to $\bd_p$ to send $\delta \to 0$, apply \eqref{vplusp.term}, and finally \eqref{xeps.value} to obtain
    \begin{align*}
        \limsup_{n \to \infty} V^n &\leq \cV_p^+\big(1 - \eps, \cL\big(X^{\eps}(1-\eps)\big) \big) \\
        &\leq \big\| X^{\eps}(1-\eps) \big\|_{\infty} + \kappa(\eps) = \cV^{\infty}(\eps, \delta_0) + \kappa(\eps).
    \end{align*}
    Finally, we use the convexity of the limiting problem to show in Lemma \ref{lem.nondecreasing} that $\eps \mapsto \cV^{\infty}(\eps, \delta_0)$ is nonincreasing. This yields
    \begin{align*}
        \limsup_{n \to \infty} V^n \leq \cV^{\infty}(0,\delta_0) + \kappa(\eps) = V^{\infty} + \kappa(\eps),
    \end{align*}
    and we send $\eps \to 0$ to complete the proof.
\end{enumerate}

\subsection{Further discussion and open problems}

There are several interesting questions left open by this work. The most immediate is the question of universality of the main convergence result Theorem \ref{th:intro}. As mentioned above, we establish the lower bound $\liminf_{n \to \infty} V^n \ge V^{\infty}$ for any increment distribution with mean zero, unit variance, and finite fourth moment, but prove the upper bound $\limsup_{n \to \infty} V^n \le V^{\infty}$ only in the Gaussian case. Generalizing the upper bound would be extremely interesting, but seems to be beyond the techniques of the present paper. The main challenge is that our argument relies on the coupling in Section \ref{sec.upperbounds} (specifically, Lemma~\ref{lem.coupling.discrete}), and it is not clear how to generalize it beyond the Gaussian case.

There are also several natural questions about the limiting control problem \eqref{def.vinf.intro}. The most ambitious goal would be to solve the problem explicitly. Even short of this, it would be interesting to better understand the structure of optimal controls. For example, while we prove in Proposition \ref{lem.optproperties} that there exist optimizers $\alpha$ satisfying several natural properties, we cannot at present rule out the possibility that there exist other optimizers which do not have these properties. Moreover, one heuristically expects optimizers to take the form $\alpha(t) = \hat{\alpha}(t,X(t))$ where $\hat\alpha : [0,1] \times \R \to \R$ is smooth away from time $1$. Verifying that this is the case, and characterizing the blow-up at time $1$, could provide further insight.

Finally, it would be interesting to study the long-time behavior of $V_T^{\infty}$, as defined in \eqref{def.vTinf}. As discussed above, we know that $T \mapsto V_T^{\infty}$ is nondecreasing, and converges as $T \to \infty$ to a constant which is between 1 and 2. But at present, we do not have a more exact description (e.g., in a variational form) of this constant.

\subsection{Acknowledgements}

We would like to thank Brice Huang, Mark Sellke, and Nike Sun for discussions about their forthcoming work~\cite{HuangSellkeSun2026,HuangSellke2026}. DL and JJ also thank Pierre-Louis Lions for several enlightening discussions, and in particular for his interesting suggestions about a possible PDE approach to the problem solved in the present paper.
JNW is grateful for conversations with Robert V.\ Kohn about an early version of this project.

\subsection{Organization of the paper}

In Section~\ref{sec:prelim}, we define dynamic value functions associated with the optimization problems studied here and develop their basic properties. In Section~\ref{sec:lower-bound} we prove the lower bound of Theorem \ref{th:intro}, and in Section~\ref{sec:upper-bound} we prove the upper bound. The final step of the proof of the upper bound relies on a lemma, whose proof is deferred to the subsequent Section~\ref{sec:duality-and-optimizers}, which establishes that there exists an optimizer of the problem~\eqref{def.vinf.intro} satisfying certain nice properties. In order to prove this lemma in Section~\ref{sec:duality-and-optimizers}, we must first develop duality results for the limiting problem and approximations of it. Section~\ref{sec:bounds_and_asymptotics} is devoted to upper and lower bounds on the value $V_T^{\infty}$ and related asymptotics. Finally, Section~\ref{sec:l2-problem} explicitly solves an $L^2$ variant of the problem $V_T^\infty$, which arises as the scaling limit of the $\ell^2$-version of the vector balancing problem. The appendices collect technical results and proofs used in the body of the paper.

\subsection{Notation}
For a Polish space $\mathcal{X}$, we denote by $\cP(\mathcal{X})$ the set of Borel probability measures on $\mathcal{X}$, endowed with the topology of weak convergence. For $p \geq 1$, we let $\cP_p(\mathcal{X})$ denote the subset of $\cP(\mathcal{X})$ consisting of measures with finite $p$th moment, endowed with the $p$-Wasserstein distance $\bd_p$. For a random variable $X$, defined on some probability space $(\Omega,\mathcal{F},\mathbb{P})$, we let $\cL(X)$ denote its law. If $X$ is real-valued, we use $\|X\|_p$ for its $L^p$ norm, or $\|X\|_{L^p(\mathbb{P})}$ to emphasize the underlying probability space. We use the shorthand $\E[X;A]$ for $\E[X\mathbf{1}_A]$. We reserve boldface for vectors and denote their components by subscripts; for example, $\bx = (x_1,\dots,x_n)\in\R^n$. The $\ell^p$ norm of a vector $\bx\in\R^n$ is denoted by $|\bx|_p$.

\section{Preliminaries}
\label{sec:prelim}

\subsection{Definitions of the value functions}

Fix a probability measure $\nu \in \cP(\R)$, representing the law of the coordinates of the vectors $\bzeta(1),\dots,\bzeta(n)$ from \eqref{def.vn.intro}. In our main results, we either take $\nu= \gamma$, where $\gamma$ denotes the standard Gaussian on $\R$, or we assume that
\begin{align} \label{nu.conditions}
    \int_{\R} x\, \nu(dx) = 0, \qquad \int_{\R} x^2\, \nu(dx) = 1, \qquad \int_{\R} x^4\, \nu(dx) < \infty.
\end{align}
For notational simplicity, we set
\begin{align*}
    \nu_{\lambda} \coloneqq (x \mapsto \lambda^{1/2} x)_{\#} \nu.
\end{align*}
In other words, if $\zeta \sim \nu$, then $\sqrt{\lambda} \zeta \sim \nu_{\lambda}$. In particular, if \eqref{nu.conditions} holds, then $\nu_{\lambda}$ has mean zero and variance $\lambda$.

We are now going to define value functions
\begin{align*}
    \cV^n : \{0,1,\dots,n\} \times \R^n \to \R, \qquad  \cV^{\infty} : [0,1] \times \cP_2(\R) \to \R.
\end{align*}
We start with $\cV^n$. For technical reasons, we will first define $\cV^n$ through a dynamic programming equation, and then verify below that it is the value function of the vector balancing problem \eqref{def.vn.intro}, regardless of the particular filtered probability space used to set up this optimization problem. More precisely, we define $\cV^n$ via the recursion
\begin{align}
    \cV^n(k,\bx) &= \E\Big[ \min \Big\{ \cV^n\big(k+1,\bx + \bZ \big), \cV^n\big(k+1, \bx -  \bZ \big) \Big\} \Big], \quad \bZ \sim \nu_{1/n}^{\otimes n}  \label{def.vn.dpp}   \\
    &= \int_{\R^n} \min \big\{ \cV^n\big(k+1,\bx + \bz \big), \cV^n\big(k+1, \bx - \bz \big) \big\}\, \nu_{1/n}^{\otimes n} (d\bz), \quad \text{for $k = 0,\dots,n-1$}, \nonumber
\end{align}
together with the terminal condition
\begin{align} \label{def.vn.term}
    \cV^n(n,\bx) = | \bx |_{\infty} = \max_{i = 1,\dots,n} |x_i|.
\end{align}

We next turn to the definition of $\cV^{\infty}$. We begin with a PDE formulation, which might be considered the \emph{Eulerian} or \emph{closed-loop} formulation; the control is chosen as a function of time and the current state. In contrast, below we will show that this is equivalent to a \emph{Lagrangian} or \emph{open-loop} formulation, in which controls are adapted processes on a given filtered probability space. For each $(t_0,m_0) \in [0,1] \times \cP_2(\R)$, we denote by $\cA(t_0,m_0)$ the set of all pairs $(m, \alpha)$ consisting of a (continuous) curve $[t_0,1] \ni t \mapsto m_t \in \cP_2(\R)$ and a measurable function $\alpha : [t_0,1] \times \R \to \R$, satisfying the Fokker--Planck equation
\begin{align}\label{eq.prelim.vinf.fpe}
    \partial_t m = \frac{1}{2}\partial_{xx} m - \partial_x ( m \alpha ), \quad (t,x) \in (t_0,1) \times \R, \quad m_{t_0} = m_0
\end{align}
in the sense of distributions, as well as the constraint\footnote{It makes no difference if this constraint \eqref{eq.prelim.vinf.l2constraint} is imposed for \emph{every} $t$ or just for \emph{almost every} $t$. In the latter case, we could simply modify $\alpha$ on a null set such that \eqref{eq.prelim.vinf.l2constraint} holds for every $t$, and the Fokker--Planck equation \eqref{eq.prelim.vinf.fpe} still holds (in the sense of distributions) for the same $(m_t)_{t_0 \le t \le 1}$.}
\begin{align}\label{eq.prelim.vinf.l2constraint}
    \int_{\R} \alpha(t,x)^2\, m_t(dx) \leq \frac{2}{\pi}, \qquad t_0 \leq t \leq 1,\text{ a.e.}
\end{align}
We then define
\begin{equation}
    \cV^{\infty}(t_0,m_0) \coloneqq \inf \Big\{[ m_1 ]_{\infty}: (m,\alpha) \in \cA(t_0, m_0)\Big\}, \label{eq:Vinf-originaldef}
\end{equation}
where for $\mu \in \cP(\R)$, we set
\begin{align*}
    [\mu]_{\infty} \coloneqq \inf\big\{ r > 0 : \text{supp}(\mu) \subset [-r,r] \big\} = \inf \big\{ r > 0 : \mu\big([-r,r]^c\big) = 0\big\}.
\end{align*}
While we introduce the value functions in order to benefit from dynamic programming, we are primarily interested in the values $\cV^n(0,\bzero)$ and $\cV^{\infty}(0,\delta_0)$, which correspond to the numbers $V^n$ and $V^\infty$ defined in \eqref{def.vn.intro} and \eqref{def.vinf.intro}, respectively. Indeed, it is explained in Lemmas \ref{lem.dpp} and \ref{lem.equivstrong} below that
\[
V^n = \cV^n(0,\bzero), \qquad V^{\infty} = \cV^{\infty}(0,\delta_0).
\]
Here, $\bzero$ denotes the zero vector in $\R^n$.

\subsection{First properties of \protect\texorpdfstring{$\cV^n$}{Vn}}

We now state and prove two important lemmas about the value functions $\cV^n$. The first states the $\cV^n$ inherits a Lipschitz bound from its terminal condition.

\begin{lemma} \label{lem.vn.1lip}
    For each $n \in \N$ and $k \in \{0,1,\dots,n\}$, $\cV^n(k,\cdot)$ is $1$-Lipschitz with respect to $|\cdot|_{\infty}$, i.e., for all $\bx,\by\in\R^n$
    \begin{align*}
        |\cV^n(k,\bx) - \cV^n(k,\by)| \leq |\bx - \by|_{\infty}.
    \end{align*}
\end{lemma}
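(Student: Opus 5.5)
We argue by backward induction on $k$, starting from $k=n$ and moving down to $k=0$. The recursion \eqref{def.vn.dpp} is built only from translations, pointwise minima and integration against a probability measure. Each of these operations preserves the $1$-Lipschitz property with respect to $|\cdot|_\infty$, and the induction simply records this.

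\emph{Base case.} For $k=n$ we have $\cV^n(n,\bx)=|\bx|_\infty$ by \eqref{def.vn.term}. The reverse triangle inequality for the norm $|\cdot|_\infty$ gives
\[
\big||\bx|_\infty-|\by|_\infty\big|\le|\bx-\by|_\infty .
\]

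\emph{Inductive step.} Suppose $\cV^n(k+1,\cdot)$ is $1$-Lipschitz with respect to $|\cdot|_\infty$. Fix $\bx,\by\in\R^n$ and $\bz\in\R^n$. The translates $\bx\pm\bz$ and $\by\pm\bz$ differ by exactly $\bx-\by$. The inductive hypothesis therefore gives
\[
|\cV^n(k+1,\bx\pm\bz)-\cV^n(k+1,\by\pm\bz)|\le|\bx-\by|_\infty
\]
for each choice of sign.

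We next use the elementary inequality
\[
|\min\{a,b\}-\min\{c,d\}|\le\max\{|a-c|,|b-d|\}.
\]
Applied with $a,b$ the two values at $\bx\pm\bz$ and $c,d$ the two values at $\by\pm\bz$, it shows that the integrands in \eqref{def.vn.dpp} at $\bx$ and at $\by$ differ by at most $|\bx-\by|_\infty$, pointwise in $\bz$.

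Finally we integrate against the probability measure $\nu_{1/n}^{\otimes n}$. This gives $|\cV^n(k,\bx)-\cV^n(k,\by)|\le|\bx-\by|_\infty$, which completes the induction.

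\emph{Main technical point: well-definedness of the integrals.} We must check that the integrals in \eqref{def.vn.dpp} are finite and that the integrands are measurable. We carry a second claim along in the same induction: $\cV^n(k,\cdot)$ is continuous and satisfies
\[
|\cV^n(k,\bx)|\le|\bx|_\infty+C_k
\]
for some finite constant $C_k$.

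Continuity of $\cV^n(k+1,\cdot)$ makes the integrand $\bz\mapsto\min\{\cV^n(k+1,\bx+\bz),\cV^n(k+1,\bx-\bz)\}$ Borel measurable. The growth bound dominates it by
\[
|\bx|_\infty+|\bz|_\infty+C_{k+1}\le|\bx|_\infty+|\bz|_1+C_{k+1}.
\]
This is integrable because $\nu$ has a finite first moment; a finite second moment holds both for the Gaussian $\gamma$ and under \eqref{nu.conditions}. Integrating the bound yields the growth estimate at level $k$, with
\[
C_k=C_{k+1}+\int|\bz|_1\,\nu_{1/n}^{\otimes n}(d\bz).
\]
Continuity at level $k$ follows from the Lipschitz bound just proved. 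Once this bookkeeping is in place, the Lipschitz estimate itself is the routine computation above.
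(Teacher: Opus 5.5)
Your proof is correct and follows the same route as the paper. It runs a backward induction from the terminal condition $|\cdot|_\infty$, applies $|\min\{a,b\}-\min\{c,d\}|\le\max\{|a-c|,|b-d|\}$ pointwise in $\bz$, and then integrates against the probability measure $\nu_{1/n}^{\otimes n}$. The extra bookkeeping you carry (continuity and the growth bound $|\cV^n(k,\bx)|\le|\bx|_\infty+C_k$, which make the integrals in the recursion well defined) is sound and simply makes explicit what the paper leaves implicit.
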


\begin{proof}
    Obviously $\cV^n(n,\cdot) = |\cdot|_{\infty}$ is 1-Lipschitz with respect to $|\cdot|_{\infty}$. Suppose that $\cV^n(k,\cdot)$ is 1-Lipschitz for some $k \in \{1,\dots,n\}$. Then, from \eqref{def.vn.dpp}, we have
    \begin{align*}
        &\cV^n(k-1,\bx) - \cV^n(k-1,\by)
        \\
        &\quad = \int_{\R^n} \Big(  \min \big\{ \cV^n(k,\bx + \bz), \cV^n(k, \bx - \bz) \big\} - \min \big\{ \cV^n(k,\by + \bz), \cV^n(k, \by - \bz) \big\} \Big)\, \nu_{1/n}^{\otimes n}(d\bz)
        \\
        &\quad \leq \int_{\R^n} \max\Big\{ \big| \cV^n(k,\bx + \bz) - \cV^n(k,\by + \bz)\big|, \, \big| \cV^n(k,\bx - \bz) - \cV^n(k,\by - \bz)\big| \Big\} \, \nu_{1/n}^{\otimes n}(d\bz)
        \\
        &\quad \leq \int_{\R^n} |\bx - \by|_{\infty}\, \nu_{1/n}^{\otimes n}(d\bz) = |\bx - \by|_{\infty}.
    \end{align*}
    The proof is completed by induction.
\end{proof}
Next, we explain that $\cV^n$ represents the value function of \eqref{def.vn.intro}, regardless of the exact filtered probability space on which this problem is defined.

\begin{lemma} \label{lem.dpp}
     Fix integers $0\leq k_0 < k_1 \leq n$ and $\bx \in \R^n$. Let $(\Omega, \cF, \bP)$ be a probability space, and $\bbG = (\cG(k))_{k = k_0,\dots,k_1}$ be a filtration of $\cF$. Furthermore, let $(\bZ(k))_{k = k_0+1,\dots,k_1}$ be a sequence of $\bbG$-adapted, $\R^n$-valued random vectors such that $\bZ(k) \sim \nu_{1/n}^{\otimes n}$ is independent of $\cG(k-1)$ for all $k = k_0+1,\dots,k_1$.
    Then, we have
    \begin{align*}
        \cV^n(k_0,\bx) =  \inf_{\eps(k_0+1),\dots,\eps(k_1)} \E\bigg[ \cV^n\Big(k_1, \bx + \sum_{k = k_0+1}^{k_1} \eps(k) \bZ(k)\Big) \bigg],
    \end{align*}
    where the infimum is taken over all $\bbG$-adapted $\{\pm 1\}$-valued processes $(\eps(k))_{k = k_0+1,\dots,k_1}$. In particular, if $k_1 = n$, we have
    \begin{align*}
    \cV^n(k_0,\bx) = \inf_{\eps(k_0+1),\dots,\eps(n)} \E\,\Big| \bx + \sum_{k = k_0+1}^n \eps(k) \bZ(k) \Big|_{\infty},
    \end{align*}
    and thus, $V^n = \cV^n(0,\bzero)$, where $V^n$ was defined in \eqref{def.vn.intro}.
\end{lemma}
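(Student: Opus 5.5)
The plan is to argue by backward induction on $k_0$ for fixed $k_1$. More precisely, we establish two inequalities: $\cV^n(k_0,\bx) \le \E[\cV^n(k_1,\cdot)]$ for every adapted sign process, and achievability via a measurable selection. Everything rests on the one-step identity \eqref{def.vn.dpp}, the independence of $\bZ(k)$ from $\cG(k-1)$, and the continuity of $\cV^n(k,\cdot)$ given by Lemma~\ref{lem.vn.1lip}.

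\emph{Lower bound (any control does no better).} Fix an adapted $\{\pm1\}$-valued process $\eps$ and write $\bY(k) = \bx + \sum_{\ell=k_0+1}^{k}\eps(\ell)\bZ(\ell)$, which is $\cG(k)$-measurable. For $k_0 < k \le k_1$, the vector $\bY(k-1)$ is $\cG(k-1)$-measurable and $\bZ(k)$ is independent of $\cG(k-1)$. Combining pointwise minimization with the freezing lemma therefore gives
\begin{align*}
\E\big[\cV^n(k,\bY(k))\,\big|\,\cG(k-1)\big] \ge \E\Big[\min_{\pm}\cV^n\big(k,\bY(k-1)\pm\bZ(k)\big)\,\Big|\,\cG(k-1)\Big] = \cV^n(k-1,\bY(k-1)).
\end{align*}
Taking expectations and iterating from $k=k_1$ down to $k_0+1$ yields $\E[\cV^n(k_1,\bY(k_1))] \ge \cV^n(k_0,\bx)$. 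Integrability is not an issue, since by Lemma~\ref{lem.vn.1lip} we have $|\cV^n(k,\by)| \le \cV^n(k,\bzero) + |\by|_\infty$, and $\bZ$ has finite first moments because $\nu$ has finite second moment (assumed or Gaussian).

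\emph{Upper bound (a greedy control attains it).} Define the greedy feedback recursively by
\[
\eps(k) = 1 \quad\text{if } \cV^n\big(k,\bY(k-1)+\bZ(k)\big) \le \cV^n\big(k,\bY(k-1)-\bZ(k)\big),
\]
and $\eps(k) = -1$ otherwise. Since $\cV^n(k,\cdot)$ is continuous, hence Borel, $\eps(k)$ is a measurable function of $\bY(k-1)$ and $\bZ(k)$, both of which are $\cG(k)$-measurable. The process $\eps$ is therefore adapted, and all the inequalities above become equalities. This shows that the infimum is attained, which gives equality in the lemma.

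\emph{Consequences.} For $k_1 = n$, the terminal condition \eqref{def.vn.term} gives the second display. Taking $k_0 = 0$, $\bx = \bzero$, and $\cG(k) = \sigma(\bZ(1),\dots,\bZ(k))$ recovers \eqref{def.vn.intro}, so $V^n = \cV^n(0,\bzero)$. The only slightly delicate point is justifying the conditional-expectation identity, which is where the freezing lemma is needed: since $\bZ(k)$ is independent of $\cG(k-1)$, we have $\E[f(\bY(k-1),\bZ(k))\mid\cG(k-1)] = g(\bY(k-1))$ with $g(\by) = \E f(\by,\bZ)$, valid for measurable $f$ that is bounded below by an integrable function. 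Once this is in place, the rest of the argument is routine.
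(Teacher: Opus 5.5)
Your proposal is correct and follows essentially the same route as the paper: you derive a one-step inequality from the dynamic programming relation \eqref{def.vn.dpp} and the independence of $\bZ(k)$ from $\cG(k-1)$, iterate it to bound the cost of any adapted control, and then show a measurable greedy sign selection attains equality. Your remarks on integrability via Lemma~\ref{lem.vn.1lip} and on the freezing lemma spell out details the paper leaves implicit.
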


\begin{proof}
    Let $(\eps(k))_{k = k_0+1,\dots,k_1}$ be an arbitrary $\bbG$-adapted $\{\pm 1\}$-valued process. Define an $\R^n$-valued process $(\bY(k))_{k = k_0,\dots,k_1}$ by
    \begin{align*}
       \bY(k) = \bx + \sum_{\ell = k_0+1}^k \eps(\ell) \bZ(\ell), \qquad k = k_0,\dots,k_1.
    \end{align*}
    We claim that for each $k = k_0,\dots,k_1-1$,
    \begin{align} \label{inductive}
        \E\big[ \cV^n\big(k, \bY(k)\big) \big] \leq \E\big[ \cV^n\big(k + 1, \bY(k + 1)\big) \big].
    \end{align}
    Indeed, using the independence of $\bZ(k+1)$ from $\cG(k)$, together with the dynamic programming relation \eqref{def.vn.dpp}, we have
    \begin{align} \label{comp.verification}
        \E\big[ \cV^n\big(k, \bY(k) \big) \big]
        &=
        \E\Big[ \int_{\R^n} \Big(\min_{\eta\in\{\pm 1\}} \cV^n\big(k+1,\bY(k) + \eta\bz \big)\Big)\, \nu_{1/n}^{\otimes n}(d\bz) \Big]
       \nonumber \\
        &= \E\Big[ \E\Big[ \min_{\eta\in\{\pm 1\}} \cV^n\big(k+1,\bY(k) + \eta\bZ(k+1) \big) \, \Big|\, \cG(k) \Big] \Big]
      \nonumber   \\
        &= \E\Big[ \min_{\eta\in\{\pm 1\}} \cV^n\big(k+1,\bY(k) + \eta\bZ(k+1) \big) \Big]
      \nonumber   \\
        &\leq \E\big[ \cV^n\big(k+1, \bY(k+1) \big) \big].
    \end{align}
    Iterating \eqref{inductive}, we obtain
    \begin{align*}
        \cV^n(k_0,\bx_0) = \E\big[ \cV^n\big(k_0,\bY(k_0)\big) \big] \leq \E\big[  \cV^n\big(k_1, \bY(k_1) \big) \big],
    \end{align*}
    and so taking an infimum over $\eps$ and recalling the definition of $\bY$, we get
    \begin{align} \label{ver.upperbound}
         \cV^n(k_0,\bx) \leq \inf_{\eps(k_0+1),\dots,\eps(k_1)}  \E\bigg[ \cV^n\Big(k_1, \bx + \sum_{k = k_0+1}^{k_1} \eps(k) \bZ(k) \Big)\bigg].
    \end{align}
    On the other hand, define for $k \in \{k_0+1,\dots,k_1\}$ and $\by, \bz \in \R^n$
    \begin{equation*}
        \hat{\eps}(k,\by, \bz) =
        \begin{cases}
            1 &\text{if } \cV^n( k, \by + \bz) \leq \cV^n( k, \by - \bz), \\
            -1 &\text{if } \cV^n( k, \by + \bz) > \cV^n( k, \by - \bz).
        \end{cases}
    \end{equation*}
    Then, $\hat{\eps}(k,\cdot,\cdot) : \R^n \times \R^n \to \{ \pm 1 \}$ is measurable and
    \begin{align*}
        \hat{\eps}(k,\by, \bz)  \in  \mathrm{argmin} \Big( \{\pm 1\} \ni \eta \mapsto \cV^n\big( k, \by + \eta \bz \big)  \Big).
    \end{align*}
    Set $\bY^*(k_0) = \bx$, and define $(\bY^*(k))_{k = k_0+1,\dots,k_1}$ and $(\eps^*(k))_{k = k_0+1,\dots,k_1}$ inductively via
    \begin{align*}
       \eps^*(k+1) = \hat{\eps}\big(k+1, \bY^*(k), \bZ(k+1) \big), \quad \bY^*(k+1) = \bY^*(k) + \eps^*(k+1) \bZ(k+1).
    \end{align*}
    Repeating the computations in \eqref{comp.verification} shows that this particular control satisfies
    \begin{align*}
        \E\big[ \cV^n\big(k, \bY^*(k)\big) \big] = \E\big[ \cV^n\big(k + 1, \bY^*(k + 1)\big) \big], \qquad k = k_0,\dots,k_1 - 1,
    \end{align*}
    and iterating this equality gives
     \begin{align} \label{ver.lowerbound}
         \cV^n(k_0,\bx) = \E\bigg[ \cV^n\Big(k_1, \bx + \sum_{k = k_0 + 1}^{k_1} \eps^*(k) \bZ(k) \Big)\bigg].
    \end{align}
    Combining \eqref{ver.upperbound} and \eqref{ver.lowerbound} completes the proof.
\end{proof}

\subsection{First properties of \protect\texorpdfstring{$\cV^{\infty}$}{V-infinity}}

We now turn to some key properties of $\cV^{\infty}$. First, it is worth recording the fact that $\cV^{\infty}$ is locally bounded on (i.e., bounded on each compact subset of) $[0,1) \times \cP_2(\R)$, a fact which is not obvious at first glance. This is a consequence of Proposition \ref{prop.ulequp} below, so we do not give a proof now.
\begin{proposition} \label{prop.upperbound.uinf}
    The function $\cV^\infty$ is locally bounded on $[0,1) \times \cP_2(\R)$.
\end{proposition}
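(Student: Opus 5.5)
The lower bound is trivial: $\cV^\infty \ge 0$ because $[\mu]_\infty \ge 0$. So the whole proof is an upper bound on compact sets $K \subset [0,1) \times \cP_2(\R)$. The plan is the route the paper indicates: prove Proposition~\ref{prop.ulequp}, i.e.\ $\cV^\infty \le \cU_p$ for the value function $\cU_p$ of \eqref{def.up} built from $L^p$ F\"ollmer drifts. Then bound $\cU_p$ uniformly on compacts using the explicit estimates of Lemmas~\ref{lem.follmer}, \ref{lem.follmervalue} and \ref{lem.updet.upperbound}. I have not checked whether the finiteness of $\cU_p$ is established in those lemmas or has to be proved here; the step I flag as hardest below is exactly that finiteness.

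\emph{Step 1: $\cV^\infty \le \cU_p$.} This is essentially a verification. Given $(t_0,m_0)$ and a F\"ollmer-type control admissible for $\cU_p$, I would realize it on a filtered probability space carrying $\xi \sim m_0$ and an independent Brownian motion. Set $X(t_0)=\xi$ and $dX = \alpha\,dt + dB$, with $\alpha(t)$ the F\"ollmer drift steering $X$ from its starting point to the prescribed terminal law. The constraint $\|\alpha(t)\|_2^2 \le 2/\pi$ is part of the definition of the admissible class. Taking $m_t = \cL(X(t))$ and the Markovian projection $\hat\alpha(t,x) = \E[\alpha(t)\mid X(t)=x]$, the pair $(m,\hat\alpha)$ solves \eqref{eq.prelim.vinf.fpe}. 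Jensen's inequality preserves \eqref{eq.prelim.vinf.l2constraint}. The terminal law is the prescribed target, so its $[\cdot]_\infty$ is what $\cU_p$ records, and the inequality follows.

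\emph{Step 2: an explicit admissible F\"ollmer control.} On the remaining horizon $T = 1-t_0 > 0$, I would pick, for each starting point $x$, a smooth compactly supported target $\mu_x$. A natural choice is a $\cos^2$ profile on an interval, or the ground state of the Dirichlet OU operator mentioned in the introduction. The F\"ollmer drift $u^x$ from $\delta_x$ to $\mu_x$ has $t \mapsto \E|u^x_t|^2$ nondecreasing, with terminal value the relative Fisher information
\begin{align*}
I\big(\mu_x \,\big|\, \cN(x,T)\big) = \int \Big| \frac{\rho_x'}{\rho_x}(y) + \frac{y-x}{T}\Big|^2 \mu_x(dy).
\end{align*}
Averaging over $x \sim m_0$ gives $\sup_t \|\alpha(t)\|_2^2 \le \int I(\mu_x|\cN(x,T))\, m_0(dx)$. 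The terminal support is contained in $\bigcup_x \mathrm{supp}\,\mu_x$. On a compact $K$, $T$ is bounded below and second moments are uniformly bounded and uniformly integrable. So it suffices to choose the family $(\mu_x)$ so that this averaged Fisher information is at most $2/\pi$ and the union of supports is bounded, both uniformly over $K$.

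\emph{Main obstacle.} Step 2 is the main obstacle, and there are two competing effects. A bump of width $R$ at a fixed location has Fisher term $\sim R^{-2}$, which is small only for large $R$. But the shift term $(y-x)/T$ costs $\sim (R^2 + |x|^2)/T^2$ when the target is fixed independently of $x$. Centering the target at $x$ removes the shift cost but makes the terminal support unbounded when $m_0$ is. I would first settle the compactly supported case by a two-phase scheme. The first phase uses a linear contracting drift $\alpha = -\lambda X$, with $\lambda^2 \sup_t \E X(t)^2$ at most half the budget. The second phase is a F\"ollmer phase with a wide target, where the $R^{-2}$ gain pays for the shift. I would then try to remove compact support using the uniform integrability from compactness in $\cP_2(\R)$, hoping the energy bound in Lemma~\ref{lem.follmer} controls only an averaged quantity. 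If that fails, I would fall back to rereading the exact admissible class in \eqref{def.up} and Lemma~\ref{lem.updet.upperbound}, which presumably contains the needed uniform estimate.
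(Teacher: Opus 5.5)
Your proposal is not yet a proof. It reduces correctly to an upper bound on compacts and locates the difficulty, but leaves that difficulty open.

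\textbf{Step 1 misidentifies $\cU_p$.} By \eqref{def.up}, $\cU_p$ is not a value over F\"ollmer drifts steering each starting point to a target. It is the sum $\cU_p^{\mathrm{det}}(t,m)+\cU_p^{\mathrm F}(t)$. Its local boundedness is indeed in Lemmas~\ref{lem.follmervalue} and~\ref{lem.updet.upperbound}, so the finiteness you flagged is not the hard part. The real content is the comparison $\cV^\infty\le\cU_2$ (Proposition~\ref{prop.ulequp}), and your Step~1 does not establish it.

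\textbf{Step 2 leaves the obstacle unresolved.} Compact subsets of $\cP_2(\R)$ contain laws with unbounded support, e.g.\ Gaussians. So the ``remove compact support'' step is the whole content, and you leave it as a hope. The two-phase scheme does not help:
\begin{itemize}
\item The phase $\alpha=-\lambda X$ acts on all of the mass and produces a law with full support. The F\"ollmer phase therefore starts from the same kind of law you began with.
\item For a F\"ollmer drift from $\delta_x$ to a fixed symmetric target $\mu$, the cost is $I(\mu\,|\,\cN(0,T))+x^2/T^2$. Averaged over the starting law, this exceeds $2/\pi$ once the second moment is large compared to $T^2$.
\item For compactly supported $m_0$ the scheme is unnecessary anyway: centering the target at $x$ already gives bounded terminal support.
\end{itemize}

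\textbf{The missing idea.} You need not move all of $m_0$, only its tail. Do this with a deterministic constant-velocity drift \emph{added to} an $x$-independent F\"ollmer drift from the origin, not composed with it in phases. Concretely, set $T=1-t_0$ and pick $f\in\cA_2^{\mathrm F}(t_0)$. Let $\alpha^f$ be its F\"ollmer drift (Lemma~\ref{lem.follmer}), driven by $B-B(t_0)$ and started at $0$. Put $Y_0=X_0\mathbf{1}_{\{|X_0|\le R\}}$ and
\[
\alpha(t)=\alpha^f(t)+\frac{Y_0-X_0}{T},\qquad t_0\le t\le 1 .
\]
Then:
\begin{itemize}
\item $X(1)=Y_0+X^f(1)$, so $\|X(1)\|_\infty\le R+[f]_\infty$.
\item By Minkowski, $\|\alpha(t)\|_2\le \cstar/2+T^{-1}\|X_0\mathbf{1}_{\{|X_0|>R\}}\|_2$. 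This is at most $\cstar$ once $\|X_0\mathbf{1}_{\{|X_0|>R\}}\|_2\le T\cstar/2$.
\end{itemize}
On a compact $K\subset[0,1)\times\cP_2(\R)$, $T$ is bounded below and second moments are uniformly integrable. Hence a single $R$ works on all of $K$. Lemma~\ref{lem.follmervalue} (with $p=2$) gives $f\in\cA_2^{\mathrm F}(t_0)$ with $[f]_\infty$ bounded uniformly in $t_0$.

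This is exactly the paper's route. Proposition~\ref{prop.ulequp} gives $\cV^\infty\le\cU_2^{\mathrm{det}}+\cU_2^{\mathrm F}$. Local boundedness then follows from the boundedness of $\cU_2^{\mathrm F}$ and the continuity of $\cU_2^{\mathrm{det}}$ on $[0,1)\times\cP_2(\R)$ (Lemma~\ref{lem.updet.upperbound}). In your Fisher-information language, this amounts to centering the target at the truncated point $x\mathbf{1}_{\{|x|\le R\}}$, rather than at $x$ or at $0$.
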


The following lemma shows, as was previously announced, that the PDE definition of $\cV^\infty$ admits an equivalent probabilistic formulation. This is an instance of a folklore result in stochastic control theory, that the optimal value is typically the same over \emph{open-loop} versus \emph{Markovian} controls. In other terms, the Eulerian and Lagrangian formulations are equivalent.

\begin{lemma} \label{lem.equivstrong}
   Fix $(t_0,m_0) \in [0,1) \times \cP_2(\R)$. Let $\big(\Omega, \cF, \bbF = (\cF(t))_{0 \leq t \leq 1}, \mathbb{P}\big)$ be any filtered probability space satisfying the usual conditions, which supports an $\cF(t_0)$-measurable random $X_0$ with law $m_0$, and an $\bbF$-Brownian motion $(B(t))_{0\leq t \leq 1}$. Then, we have
   \begin{align}
       \cV^{\infty}(t_0,m_0) = \inf_{\alpha} \Big\| X_0 + \int_{t_0}^1 \alpha(t) \, dt + B(1) - B(t_0) \Big\|_{\infty},
   \end{align}
   where the infimum is taken over all $\bbF$-progressive processes $\alpha$ satisfying the constraint
   \begin{align} \label{alpha.l2constraint}
       \E \, \alpha(t)^2 \leq 2/\pi, \quad \text{ for a.e.\ } t_0 \leq t \leq 1.
   \end{align}
   In particular, $V^{\infty} = \cV^{\infty}(0,\delta_0)$, where $V^{\infty}$ was defined  in \eqref{def.vinf.intro}.
\end{lemma}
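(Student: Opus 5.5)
We prove the two inequalities separately; write $\mathcal{S}$ for the right-hand side (the strong, open-loop value).

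\emph{Step 1: $\mathcal{S} \le \cV^\infty(t_0,m_0)$.} Fix $(m,\alpha)\in\cA(t_0,m_0)$ with $[m_1]_\infty<\infty$. Because $\alpha(t,\cdot)$ is only measurable, the SDE
\[
dX = \alpha(t,X)\,dt + dB, \qquad X(t_0)=X_0,
\]
need not have a strong solution. We therefore first regularize. Mollify in space via $m^\delta_t = m_t * \rho_\delta$ and set $\alpha^\delta = (\alpha m_t)*\rho_\delta / m^\delta_t$. Then $(m^\delta,\alpha^\delta)$ still solves the Fokker--Planck equation, started from $m_0*\rho_\delta$. By Jensen's inequality (convexity of $(a,m)\mapsto a^2/m$) we have $\int (\alpha^\delta)^2\,dm^\delta_t \le 2/\pi$, and $[m^\delta_1]_\infty \le [m_1]_\infty + \delta$. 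The field $\alpha^\delta$ is locally Lipschitz in $x$ with controlled growth, so by the superposition principle (Figalli/Trevisan) combined with uniqueness for Lipschitz drifts, the strong solution $X^\delta$ started from $X_0 + \delta\xi$ has marginals $m^\delta_t$. Here $\xi$ is an independent variable of law $\rho_\delta$ that is $\cF(t_0)$-measurable; it is available after possibly enlarging the space, or, better, by choosing $\rho$ supported in $[-1,1]$ and handling the initial mismatch by a time-shift argument.

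Setting $\alpha(t)=\alpha^\delta(t,X^\delta_t)$ gives $\E\,\alpha(t)^2\le 2/\pi$ and a terminal value with sup-norm at most $[m_1]_\infty+2\delta$. The cleanest way to avoid needing $\xi$ is to use the $m_0$-initial Fokker--Planck problem directly: the superposition principle gives a weak solution $(X,B')$ on some space, which realizes $\cL(X_1)=m_1$ on a possibly different space. The transfer back to the given $(\Omega,\bbF)$ is then done as in Step 2.

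\emph{Step 2: $\cV^\infty(t_0,m_0)\le \mathcal{S}$.} Given an open-loop $\alpha$ on $\Omega$ with $X(t)=X_0+\int_{t_0}^t\alpha\,ds+B(t)-B(t_0)$, apply mimicking (Gyöngy/Brunick--Shreve): set
\[
\hat\alpha(t,x)=\E[\alpha(t)\mid X(t)=x].
\]
Then $m_t=\cL(X(t))$ solves the Fokker--Planck equation with drift $\hat\alpha$. This only requires checking it against test functions via Itô's formula and conditioning, and needs no uniqueness. By conditional Jensen, $\int\hat\alpha^2\,dm_t\le \E\,\alpha(t)^2\le 2/\pi$, and $m_1=\cL(X(1))$. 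Hence $(m,\hat\alpha)\in\cA(t_0,m_0)$, which gives this inequality. Continuity of $t\mapsto m_t$ in $\cP_2$ follows from $\E\int\alpha^2<\infty$. This direction is routine.

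\emph{Main obstacle.} The hard direction is Step 1: realizing a Markovian (Eulerian) pair as a strong control on the prescribed filtered space, where only the given $B$ and $X_0$ are available. I would first produce an admissible weak solution via superposition. I would then use mollification so that the drift becomes locally Lipschitz, which yields a strong solution driven by the given $B$ from $X_0$. One must verify that $\alpha^\delta$ has at most linear growth after also truncating; truncation only decreases the $L^2$ cost, provided one replaces $\alpha^\delta$ by a bounded-Lipschitz approximation. Truncation, however, changes the marginals, so the final step is a stability argument: show that $[\cL(X^{\delta,R}_1)]_\infty$ tends to at most $[m_1]_\infty$ as $R\to\infty$ and $\delta\to0$. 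Since $[\cdot]_\infty$ is not weakly continuous, this stability step is delicate. I would handle it by proving pathwise uniqueness for the regularized equation, so that $\cL(X^\delta)$ equals the mollified Fokker--Planck solution exactly, whose support bound is explicit, with truncation needed only at a level $R$ beyond which no mass enters. If that fails, I would fall back on convexity of $\cA$ and an additional small inward correction to absorb the $\delta$-errors.
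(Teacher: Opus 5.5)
Your Step 2 is the paper's easy direction: Brunick--Shreve mimicking, It\^o's formula and conditional Jensen. It is fine. The gap is in Step 1.

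Your shortcut ``transfer back to $(\Omega,\bbF)$ as in Step 2'' does not work. Mimicking turns an open-loop control into a Markovian one with the same marginals. It does nothing to turn a weak solution on another space into a control that is progressive for the \emph{given} filtration.

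The regularization route also fails. Any bounded drift makes the law of $X(1)$ equivalent, by Girsanov, to the law $m_0*\gamma_{1-t_0}$ of $X_0+B(1)-B(t_0)$. Hence $[\cL(X^{\delta,R}(1))]_\infty=\infty$ for every finite truncation level $R$. Finite cost forces the Markovian drift to be singular at the edge of the support (compare the stationary drift $\alpha^{\text{st}}$ in the proof of Proposition~\ref{prop.firstupperbound}), so there is no level ``beyond which no mass enters.'' Without truncation there are further problems:
\begin{itemize}
\item The measures $m_t$, hence $m_t*\rho_\delta$, may have compact support, so a compactly supported mollifier does not make $\alpha^\delta$ locally Lipschitz on $\R$.
\item A Gaussian mollifier avoids this but destroys the support bound.
\item The given $\cF(t_0)$ need not carry an independent $\xi$, e.g.\ $t_0=0$ with the Brownian filtration.
\end{itemize}

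The missing idea is that you need neither a strong solution nor to reproduce $m_1$ on $\Omega$. It suffices that the terminal value have essential supremum at most $[m_1]_\infty$, and conditional expectation is an $L^\infty$ contraction.

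Given $(m,\hat\alpha)\in\cA(t_0,m_0)$, take the superposition weak solution $dX'=\hat\alpha(t,X'(t))\,dt+dB'$ on some filtered space with filtration $(\cF'(t))$, with $\cL(X'(t))=m_t$. Set $\alpha'(t)=\hat\alpha(t,X'(t))$ and $\cG(t)=\sigma(X'(t_0),B'(s):t_0\le s\le t)$. Define a progressive version of $\beta'(t)=\E[\alpha'(t)\mid\cG(t)]$.
\begin{itemize}
\item Jensen gives $\E\,\beta'(t)^2\le 2/\pi$.
\item The increments of $B'$ after time $t$ are independent of $\cF'(t)$, and both $\alpha'(t)$ and $\cG(t)$ are $\cF'(t)$-measurable. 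Hence $\E[\alpha'(t)\mid\cG(1)]=\beta'(t)$, and therefore
\[
X'(t_0)+\int_{t_0}^1\beta'(t)\,dt+B'(1)-B'(t_0)=\E\big[X'(1)\,\big|\,\cG(1)\big],
\]
whose $L^\infty$ norm is at most $\|X'(1)\|_\infty=[m_1]_\infty$.
\item $\beta'$ is a progressive functional of $(X'(t_0),B'-B'(t_0))$. This pair has the same law ($m_0$ times Wiener measure) as $(X_0,B-B(t_0))$ on the given space.
\end{itemize}
Evaluating that functional at $(X_0,B)$ therefore gives an admissible $\bbF$-progressive control with the same cost. This is the paper's argument, and it needs no regularity of $\hat\alpha$ at all.
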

\begin{proof}
    First, given a process $\alpha$ satisfying \eqref{alpha.l2constraint}, set $X(t) = X_0 + \int_{t_0}^t \alpha(s)\,ds + B(t) - B(t_0)$. Then we can argue as in \cite[Proposition 5.1]{brunickshreve} to show that there is a measurable function $\hat{\alpha} : [t_0,1] \times \R \to \R$ such that
    \begin{equation}\label{eq.equivstrong.defhatalpha}
        \hat{\alpha}(t,X(t)) = \E[ \alpha(t) \,|\, X(t)],\qquad\text{a.s.\ for a.e.\ } t.
    \end{equation}
    We claim that the pair $(m,\hat{\alpha})$ with $m_t\coloneqq\cL(X(t))$ lies in $\cA(t_0,m_0)$. First, the fact that $(m,\hat{\alpha})$ is a weak solution to the Fokker--Planck equation \eqref{eq.prelim.vinf.fpe} is a simple consequence of It\^{o}'s formula and \eqref{eq.equivstrong.defhatalpha}. Indeed, for any smooth test function $\phi:\R\to\R$ and all $t\in[0,1]$
    \begin{align*}
        \int_\R \phi\,dm_t = \E \, \phi(X(t))
        &= \int_0^t \E\Big[\alpha(s)\phi'(X(s)) + \frac{1}{2} \phi''(X(s))\Big]\,ds \\
        &= \int_0^t \E\Big[\hat{\alpha}(s,X(s))\phi'(X(s)) + \frac{1}{2} \phi''(X(s))\Big]\,ds \\
        &= \int_0^t \int_\R \Big(\hat{\alpha}(s,x) \phi'(x) + \frac{1}{2}\phi''(x)\Big)\,m_s(dx)\,ds.
    \end{align*}
    The constraint \eqref{eq.prelim.vinf.l2constraint} is also satisfied, since for any non-negative $\phi\in C[0,1]$, Jensen's inequality gives
    \begin{align*}
        \int_0^1 \phi(t)\int_\R \hat{\alpha}(t,x)^2\, m_t(dx)\,dt
        \leq \int_0^1 \phi(t) \E\big[\alpha(t)^2\big]\,dt \leq \frac{2}{\pi}\int_0^1\phi(t)\,dt.
    \end{align*}
    As a result, $(m,\hat{\alpha})$ lies in $\cA(t_0, m_0)$, and thus
    \begin{align*}
        \cV^{\infty}(t_0,m_0) \leq [m_1]_{\infty} = \| X(1) \|_{\infty}.
    \end{align*}
    Taking an infimum over $\alpha$ shows that
    \begin{align}\label{eq.equivstrong.toshow.leq}
       \cV^{\infty}(t_0,m_0) \leq \inf_{\alpha} \Big\| X_0 + \int_{t_0}^1 \alpha(t) \, dt + B(1) - B(t_0) \Big\|_{\infty},
   \end{align}
   with the infimum over $\bbF$-progressive processes $\alpha$ satisfying \eqref{alpha.l2constraint}.

   On the other hand, let $(m,\hat{\alpha}) \in \cA(t_0, m_0)$. Then, by the superposition principle  \cite[Theorem 2.5]{trevisan2016}, there exists a filtered probability space $(\Omega', \cF', \bbF', \bP')$ hosting a Brownian motion $B'$, together with a continuous, $\bbF'$-adapted process $(X'(t))_{t_0\leq t \leq 1}$ satisfying
   \begin{align*}
       dX'(t) = \hat{\alpha}\big(t,X'(t)\big) \, dt + dB'(t), \quad \cL\big(X'(t)\big) = m_{t}, \ \text{ for all } t \in [t_0,1].
   \end{align*}
   Now define processes $\alpha'$ and $\beta'$ via
   \begin{align*}
        \alpha'(t) = \hat{\alpha}\big( t,X'(t) \big), \ \beta'(t) = \E\Big[ \alpha'(t) \, \Big| \, \cF^{X'(t_0), B'}(t) \Big], \ \cF^{X'(t_0), B'}(t) = \sigma\big(X'(t_0), (B'(s))_{t_0 \leq s \leq t}\big).
   \end{align*}
   We note that \cite[Proposition 5.1]{brunickshreve} implies that it is possible to choose a progressively measurable version of $\beta'$, which we fix here. Moreover, the $L^2$ constraint on $\hat\alpha$ carries over to $\beta'$ in the sense of \eqref{alpha.l2constraint}.
   Noting that $\beta'(s) = \E[ \alpha'(s) \, | \, \cF^{X'(t_0), B'}(t) ]$ a.s.\ for $s < t$, we have
   \begin{equation*}
       X'(t_0) + \int_{t_0}^1 \beta'(t) \, dt + B'(1) - B'(t_0) = \E\Big[ X'(1) \,\Big|\, \cF^{X'(t_0), B'}(1) \Big],
   \end{equation*}
   and thus
    \begin{equation*}
      \Big\| X'(t_0) + \int_{t_0}^1 \beta'(t) \, dt + B'(1) - B'(t_0) \Big\|_{L^{\infty}(\bP')} \leq \| X'(1) \|_{L^{\infty}(\bP')} \leq [m_1]_\infty.
   \end{equation*}
   Now, since $\beta'$ is measurable with respect to the filtration generated by $X'(t_0)$ and $B'$, it follows that we can find a process $\alpha$ defined on $(\Omega, \cF, \bbF, \bP)$ such that the laws of $\big( X_0, \alpha, B\big)$ and $\big(X'(t_0), \beta', B'\big)$ under $\bP$ and $\bP'$, respectively, coincide. In particular,
   \begin{align*}
        \Big\| X_0 + \int_{t_0}^1 \alpha(t) \, dt + B(1) - B(t_0) \Big\|_{L^{\infty}(\bP)} &=  \Big\| X'(t_0) + \int_{t_0}^1 \beta'(t) \, dt + B'(1) - B'(t_0) \Big\|_{L^{\infty}(\bP')}
        \\
        &\leq [m_1]_{\infty}.
   \end{align*}
   Taking an infimum over $(m,\hat{\alpha}) \in \cA(t_0, m_0)$, we obtain
   \begin{equation*}
       \cV^{\infty}(t_0,m_0) \geq \inf_{\alpha} \Big\| X_0 + \int_{t_0}^1 \alpha(t) \, dt + B(1) - B(t_0) \Big\|_{L^{\infty}(\bP)},
   \end{equation*}
   which together with \eqref{eq.equivstrong.toshow.leq} completes the proof.
\end{proof}

A final fact about $\cV^\infty$ that we will need is a time-monotonicity property. It will not be used until a final step of the proof of the upper bound in Section~\ref{sec:upper-bound}.

\begin{lemma} \label{lem.nondecreasing}
The map $t \mapsto \cV^{\infty}(t,\delta_0)$ is nonincreasing on $[0,1]$.
\end{lemma}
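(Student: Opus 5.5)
The plan is to reduce the statement to a comparison of horizons. Fix $0\le t<t'\le 1$ and set $T\coloneqq 1-t$ and $S\coloneqq 1-t'<T$. By Lemma \ref{lem.equivstrong} and time-homogeneity of the dynamics, $\cV^\infty(t,\delta_0)$ is the value of the problem started from $0$ with horizon $T$, and $\cV^\infty(t',\delta_0)$ is the value with horizon $S$. It therefore suffices to show the following: for every admissible $\alpha$ on $[0,T]$ with state $X(T)=\int_0^T\alpha\,dr+B(T)$, there is an admissible control on a horizon of length $S$ whose terminal state has $L^\infty$ norm at most $\|X(T)\|_\infty$.

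\textbf{Step 1 (symmetrize).} Enlarge the space by an independent fair sign $\xi\in\{\pm1\}$, measurable at time $0$. Set
\[
\tilde\alpha(r)=\xi\,\alpha(r)[\xi B],\qquad \tilde X=\xi X[\xi B],
\]
where $\alpha(r)[\xi B]$ means $\alpha(r)$ evaluated along the path $\xi B$, and similarly for $X$. Since $\xi B$ is again a Brownian motion in the enlarged filtration, $\tilde X$ solves $d\tilde X=\tilde\alpha\,dr+dB$. The constraint $\E\,\tilde\alpha(r)^2\le 2/\pi$ is preserved, and $\|\tilde X(T)\|_\infty=\|X(T)\|_\infty$. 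Moreover, $\cL(\tilde X)$ is symmetric, so $\E\,\tilde X(T-S)=0$. This control uses extra randomness, but Step 2 removes it by conditioning, exactly as in the proof of Lemma \ref{lem.equivstrong}.

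\textbf{Step 2 (keep only the last $S$ units of noise).} Define the Brownian motion $W(u)\coloneqq B(T-S+u)-B(T-S)$ for $u\in[0,S]$, and let $\mathcal H_u\coloneqq\sigma(W(r):r\le u)$ be its filtration. Consider
\[
Y\coloneqq\E[\tilde X(T)\mid\mathcal H_S].
\]
The three pieces of $\tilde X(T)$ are handled as follows.
\begin{itemize}[label={}]
\item $\tilde X(T-S)$ is independent of $\mathcal H_S$, so its conditional expectation equals $\E\,\tilde X(T-S)=0$.
\item $\tilde\alpha(T-S+u)$ is $\cF(T-S+u)$-measurable, and the increments of $W$ after time $u$ are independent of $\cF(T-S+u)$. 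Hence $\E[\tilde\alpha(T-S+u)\mid\mathcal H_S]=\E[\tilde\alpha(T-S+u)\mid\mathcal H_u]\eqqcolon\beta(u)$, and one can take a progressively measurable version of $\beta$ via \cite[Proposition 5.1]{brunickshreve}.
\item $B(T)-B(T-S)=W(S)$ is $\mathcal H_S$-measurable.
\end{itemize}
Therefore $Y=\int_0^S\beta(u)\,du+W(S)$. By Jensen's inequality, $\E\,\beta(u)^2\le\E\,\tilde\alpha(T-S+u)^2\le 2/\pi$. Finally, conditional expectation is a contraction in $L^\infty$, so $\|Y\|_\infty\le\|\tilde X(T)\|_\infty=\|X(T)\|_\infty$.

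\textbf{Step 3 (conclude).} The pair $(\beta,W)$ lives on the Brownian filtration of $W$. Transferring its law to the canonical setting and applying Lemma \ref{lem.equivstrong} on the horizon $[t',1]$ gives $\cV^\infty(t',\delta_0)\le\|X(T)\|_\infty$. Taking the infimum over $\alpha$ yields $\cV^\infty(t',\delta_0)\le\cV^\infty(t,\delta_0)$; the endpoint $t'=1$ is trivial since $\cV^\infty(1,\delta_0)=0$.

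The main obstacle is the constant term $\E[X(T-S)]$, which would shift the terminal interval off-center. The symmetrization in Step 1 is exactly what kills it. The other point needing care is the identity $\E[\cdot\mid\mathcal H_S]=\E[\cdot\mid\mathcal H_u]$, which relies on $\tilde\alpha$ being adapted to a filtration for which $W$ has independent future increments; this holds because $B$ is a Brownian motion in the enlarged filtration.
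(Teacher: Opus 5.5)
Your proof is correct, but it is organized differently from the paper's.

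\textbf{The paper's route.} The paper argues at the level of the value function, in two steps.
\begin{enumerate}
\item Restricting an $\eps$-optimal pair in $\cA(s,\delta_0)$ to $[t,1]$ gives $\cV^\infty(s,\delta_0)\ge \cV^\infty(t,m_t)-\eps$.
\item Lemma~\ref{lem.vinf.convex} ($L$-convexity plus evenness) shows that $\delta_0$ is a global minimizer of $\cV^\infty(t,\cdot)$.
\end{enumerate}
The convexity step averages two separately chosen near-optimal controls, started from $X_0$ and from $-X_0$, on a common filtered space. The non-Brownian randomness in the averaged control is then absorbed by Lemma~\ref{lem.equivstrong}.

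\textbf{Your route.} You build the competitor in one shot from a single control.
\begin{itemize}
\item[(i)] The random-sign symmetrization does the job of evenness, by centering the law at time $T-S$.
\item[(ii)] Conditioning on the filtration of the last $S$ units of noise replaces the two-point convex combination by a conditional Jensen step. This single projection averages out the state at time $T-S$ and removes the auxiliary sign. It leaves an explicitly Brownian-adapted control on the shorter horizon.
\end{itemize}
The same conditioning argument, applied to any admissible control started from $X_0\sim m$ with $m\in\cP_2(\R)$, gives the Jensen-type bound $\cV^\infty(t',\delta_c)\le \cV^\infty(t',m)$, where $c$ is the mean of $m$. Centering is exactly what makes this land on $\delta_0$.

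\textbf{What each buys.} The paper's route is more modular. It yields the stronger intermediate fact that $\delta_0$ minimizes $\cV^\infty(t,\cdot)$ over all of $\cP_2(\R)$, and that fact is reused in Proposition~\ref{prop.firstupperbound}. Your route is self-contained, avoids choosing a second near-optimizer, and handles the extra randomness and the initial condition with one projection.

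\textbf{One point to state explicitly.} Writing $\alpha(r)[\xi B]$ presupposes that $\alpha$ is a progressive functional of the Brownian path. You may assume this by first applying Lemma~\ref{lem.equivstrong} on the Brownian filtration.
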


To prove this lemma, we will use two other facts  that are worth recording separately.
The first states that $\cV^{\infty}$ satisfies a dynamic programming principle. This is classical because the control problem is deterministic, and so the proof is omitted.

\begin{lemma} \label{lem.vinf.dpp}
    For each $t_0 \in [0,1)$, $h \in (0,1 - t_0]$, we have
    \begin{align*}
        \cV^{\infty}(t_0,m_0) = \inf_{(m,\alpha) \in \cA(t_0, m_0)} \cV^{\infty}(t_0+h,m_{t_0+h}).
    \end{align*}
\end{lemma}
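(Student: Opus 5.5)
The plan is to prove the two inequalities separately, directly from the definition \eqref{eq:Vinf-originaldef} of $\cV^\infty$ as an infimum of $[m_1]_\infty$ over $\cA(t_0,m_0)$. The key structural fact is that admissible pairs can be restricted to subintervals and concatenated. For the restriction, fix $(m,\alpha)\in\cA(t_0,m_0)$ and set $s=t_0+h$. The restriction of $(m,\alpha)$ to $[s,1]$ solves the Fokker--Planck equation \eqref{eq.prelim.vinf.fpe} on $(s,1)$ with initial datum $m_s$. To see this, test against functions of the form $\phi(t,x)$ and note that the weak formulation on $[t_0,1]$ gives, via the continuity of $t\mapsto m_t$, the identity $\int\phi\,dm_t-\int\phi\,dm_s=\int_s^t\int(\alpha\phi'+\tfrac12\phi'')\,dm_r\,dr$. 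The constraint \eqref{eq.prelim.vinf.l2constraint} obviously persists on the subinterval. Hence the restriction lies in $\cA(s,m_s)$, so $\cV^\infty(s,m_s)\le[m_1]_\infty$. Taking the infimum over $(m,\alpha)$ yields
\[
\inf_{(m,\alpha)\in\cA(t_0,m_0)}\cV^\infty(t_0+h,m_{t_0+h})\le\cV^\infty(t_0,m_0).
\]

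For the reverse inequality, fix $(m,\alpha)\in\cA(t_0,m_0)$ and $\eta>0$. By definition of $\cV^\infty(s,m_s)$ there is $(m',\alpha')\in\cA(s,m_s)$ with $[m'_1]_\infty\le\cV^\infty(s,m_s)+\eta$. (If $\cV^\infty(s,m_s)=+\infty$ there is nothing to prove.) Define the concatenation $\tilde m_t=m_t$ and $\tilde\alpha=\alpha$ on $[t_0,s]$, and $\tilde m_t=m'_t$ and $\tilde\alpha=\alpha'$ on $(s,1]$. The curve $\tilde m$ is continuous in $\cP_2(\R)$ because $m'_s=m_s$, and $\tilde\alpha$ is measurable and satisfies \eqref{eq.prelim.vinf.l2constraint} for a.e.\ $t$. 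It remains to check that $(\tilde m,\tilde\alpha)$ solves \eqref{eq.prelim.vinf.fpe} in the distributional sense on $(t_0,1)\times\R$. I would use the equivalent ``integrated in time'' weak formulation for each fixed $\phi\in C_c^\infty(\R)$: for all $t$,
\[
\int\phi\,d\tilde m_t=\int\phi\,dm_0+\int_{t_0}^t\int\bigl(\tilde\alpha\phi'+\tfrac12\phi''\bigr)\,d\tilde m_r\,dr.
\]
This identity is additive over the two pieces and holds on each of them. Its equivalence with the space-time distributional formulation, given continuity of the curve and local integrability of $\int|\alpha|\,dm_t\le\sqrt{2/\pi}$, is standard.

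Thus $(\tilde m,\tilde\alpha)\in\cA(t_0,m_0)$, and so $\cV^\infty(t_0,m_0)\le[\tilde m_1]_\infty=[m'_1]_\infty\le\cV^\infty(s,m_s)+\eta$. Sending $\eta\to0$ and taking the infimum over $(m,\alpha)$ gives the other inequality.

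Equivalently, one could argue in the Lagrangian formulation of Lemma~\ref{lem.equivstrong}, but then one needs a measurable selection of near-optimal controls depending on the state at time $s$. The Eulerian formulation avoids this entirely, which is why the control problem being ``deterministic'' (on the level of measures) makes the argument classical. The only step requiring any care is the gluing: verifying that the concatenated pair still solves the Fokker--Planck equation across the junction time $s$. This relies on the continuity of $t\mapsto m_t$ and the time-integrated weak form, and is routine.
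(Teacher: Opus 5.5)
Your proposal is correct. The paper omits this proof, calling it classical because the problem is deterministic at the level of measures; your restriction-and-concatenation argument in the Eulerian formulation, with the gluing across $t_0+h$ justified by the time-integrated weak form and the continuity of $t\mapsto m_t$, is exactly that classical argument. The restriction step is even simpler than you make it, since test functions in $C_c^\infty((t_0+h,1)\times\R)$ are already test functions on $(t_0,1)\times\R$.
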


Finally, we will need to know that $\cV^{\infty}$ has a convexity property.

\begin{lemma}
    \label{lem.vinf.convex}
    For each $t_0 \in [0,1]$, $\cV^{\infty}(t_0,\cdot)$ is $L$-convex, meaning that for any square-integrable random variables $X_0,X_0'$ defined on some common probability space $(\Omega, \cF, \bP)$, we have
    \begin{align*}
        \cV^{\infty}\Big(t_0, \cL\big(\lambda X_0 + (1-\lambda) X'_0 \big) \Big) \leq \lambda \cV^{\infty}\big(t_0,\cL(X_0)\big) + (1-\lambda) \cV^{\infty}\big(t_0,\cL(X'_0)\big),
    \end{align*}
    for $0 \leq \lambda \leq 1$. Moreover, $\cV^{\infty}(t_0,\cdot)$ is even, in the sense that
    \begin{align*}
        \cV^{\infty}(t_0, \cL(X_0)) = \cV^{\infty}\big(t_0,\cL(-X_0)\big).
    \end{align*}
    As a consequence, $\cV^{\infty}(t_0,\cdot)$ obtains a global minimum at $\delta_0$.
\end{lemma}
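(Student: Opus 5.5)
We prove Lemma~\ref{lem.vinf.convex} in the Lagrangian formulation of Lemma~\ref{lem.equivstrong}, which lets us work with random variables on one filtered probability space and compare their laws directly.

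\emph{Convexity.} Fix $X_0,X_0'$ on $(\Omega,\cF,\bP)$ and $\lambda\in[0,1]$. Enlarge the space, if needed, by taking a product with a Wiener space, so that we have a filtration $\bbF$ with $\cF(t_0)\supseteq\sigma(X_0,X_0')$ and an $\bbF$-Brownian motion $B$ whose increments after $t_0$ are independent of $\cF(t_0)$. Lemma~\ref{lem.equivstrong} applies on this space to each of the three initial conditions $X_0$, $X_0'$ and $\lambda X_0+(1-\lambda)X_0'$, because each is $\cF(t_0)$-measurable; we may assume both values on the right-hand side are finite.

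Given $\delta>0$, choose $\bbF$-progressive controls $\alpha,\alpha'$ satisfying \eqref{alpha.l2constraint} with
\begin{align*}
\Big\|X_0+\int_{t_0}^1\alpha\,dt+B(1)-B(t_0)\Big\|_\infty&\le \cV^\infty(t_0,\cL(X_0))+\delta,\\
\Big\|X_0'+\int_{t_0}^1\alpha'\,dt+B(1)-B(t_0)\Big\|_\infty&\le \cV^\infty(t_0,\cL(X_0'))+\delta.
\end{align*}
Both controls use the same $B$. Set $\beta=\lambda\alpha+(1-\lambda)\alpha'$. By Minkowski's inequality,
\[
\|\beta(t)\|_2\le\lambda\|\alpha(t)\|_2+(1-\lambda)\|\alpha'(t)\|_2\le\sqrt{2/\pi},
\]
so $\beta$ is admissible. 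The controlled terminal state started from $\lambda X_0+(1-\lambda)X_0'$ and driven by $\beta$ is exactly the convex combination of the two terminal states above, since the noise term $B(1)-B(t_0)$ enters both with total weight $\lambda+(1-\lambda)=1$. The triangle inequality in $L^\infty$ then bounds its norm by
\[
\lambda\,\cV^\infty(t_0,\cL(X_0))+(1-\lambda)\,\cV^\infty(t_0,\cL(X_0'))+\delta.
\]
Applying Lemma~\ref{lem.equivstrong} to the initial condition $\lambda X_0+(1-\lambda)X_0'$ and letting $\delta\to0$ gives the convexity inequality. The only delicate point is that both near-optimal controls live on the same space with the same $B$, and that is exactly what Lemma~\ref{lem.equivstrong} guarantees.

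\emph{Evenness.} If $\alpha$ is admissible for $X_0$ with Brownian motion $B$, then $-\alpha$ with $-B$ is admissible for $-X_0$: $-B$ is again an $\bbF$-Brownian motion, and $\|{-\alpha(t)}\|_2=\|\alpha(t)\|_2$. The resulting terminal state is the negative of the original one, so it has the same $L^\infty$ norm. Since Lemma~\ref{lem.equivstrong} holds for any Brownian motion on the space, we get $\cV^\infty(t_0,\cL(-X_0))\le\cV^\infty(t_0,\cL(X_0))$, and the reverse inequality follows by symmetry.

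\emph{Global minimum at $\delta_0$.} Take any $X_0$ with law $m_0$. Apply the convexity inequality to the pair $X_0$ and $-X_0$, placed on the same space, with $\lambda=1/2$. The combination $\tfrac12 X_0+\tfrac12(-X_0)$ is identically $0$, so evenness gives
\[
\cV^\infty(t_0,\delta_0)\le\tfrac12\cV^\infty(t_0,\cL(X_0))+\tfrac12\cV^\infty(t_0,\cL(-X_0))=\cV^\infty(t_0,m_0).
\]
The case $t_0=1$ is immediate, since then $\cV^\infty(1,m)=[m]_\infty$ and each step above reduces to elementary properties of the $L^\infty$ norm.
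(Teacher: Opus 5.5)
Your proposal is correct and follows essentially the same route as the paper: both use the Lagrangian formulation of Lemma~\ref{lem.equivstrong}. Convexity comes from taking the convex combination of two near-optimal controls driven by the same Brownian motion, evenness from the substitution $(\alpha,B)\mapsto(-\alpha,-B)$, and the minimum at $\delta_0$ from applying convexity with $\lambda=1/2$ to the pair $X_0,-X_0$.
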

\begin{proof}
   It is clear that $\cV^{\infty}(1,\cdot) = [\,\cdot\,]_{\infty}$ is $L$-convex. Fix $t_0 \in [0,1)$ and square-integrable random variables $X_0,X'_0$ on some probability space $(\Omega, \cF, \bP)$. By enlarging the probability space if necessary, we can assume that $(\Omega, \cF, \bP)$ also supports a filtration $\bbF = (\cF(t))_{0 \leq t \leq 1}$ and an $\bbF$-Brownian motion $B = (B(t))_{0 \leq t \leq 1}$ such that $X_0$ and $X_0'$ are $\cF(0)$-measurable. By Lemma \ref{lem.equivstrong}, for any $\eps > 0$ we can find $\bbF$-progressively measurable processes $\alpha$ and $\alpha'$ such that, setting
    \begin{align*}
        X(t) = X_0 + \int_{t_0}^t \alpha(s)\, ds + B(t) - B(t_0), \quad X'(t) = X_0' + \int_{t_0}^t \alpha'(s)\, ds + B(t) - B(t_0),
    \end{align*}
    for $t_0 \leq t \leq 1$, we have
    \begin{align*}
        \|X(1)\|_{\infty} \leq \cV^{\infty}(t_0,m_0) + \eps, \quad \|X'(1)\|_{\infty} \leq \cV^{\infty}(t_0,m_0') + \eps.
    \end{align*}
    Now fix $\lambda \in (0,1)$, and set $X^{\lambda}(t) = \lambda X(t) + (1-\lambda) X'(t)$ and $m_0^\lambda = \mathcal{L}(\lambda X_0 + (1-\lambda)X_0')$. By convexity, the process $\alpha^{\lambda} = \lambda \alpha + (1-\lambda) \alpha'$ satisfies the $L^2$-constraint \eqref{alpha.l2constraint}, and so using Lemma~\ref{lem.equivstrong}
    \begin{align*}
        \cV^{\infty}\big(t_0, m_0^{\lambda}) &\leq \|X^{\lambda}(1)\|_{\infty} = \| \lambda X(1) + (1-\lambda) X'(1) \|_{\infty}
        \\
        &\leq \lambda \|X(1)\|_{\infty} + (1-\lambda)  \|X'(1)\|_{\infty} \leq \lambda \cV^{\infty}(t_0,m_0) + (1-\lambda) \cV^{\infty}(t_0,m_0') + 2\eps.
    \end{align*}
    Sending $\eps \to 0$ completes the proof that $\cV^{\infty}(t_0,\cdot)$ is convex.

    The fact that $\cV^{\infty}\big(t_0, \cL(X_0)) = \cV^{\infty}(t_0,\cL(-X_0))$ can again be proved using the probabilistic formulation appearing in Lemma \ref{lem.equivstrong}.  In particular, given $(t_0,m_0)$, we first choose a filtered probability space $(\Omega, \cF, \bbF, \bP)$, a Brownian motion $B$ and a random variable $X_0$ as in the statement of that lemma. Noting that $B' = -B$ is also a Brownian motion, we can apply Lemma \ref{lem.equivstrong} and make the change of variables $\beta = - \alpha$
    \begin{align*}
      \cV^{\infty}(t_0,\cL(X_0)) &= \inf_{\|\alpha(t)\|_2^2 \leq 2/\pi}  \Big\| X_0 + \int_{t_0}^1 \alpha(t) \, dt + B(1) - B(t_0) \Big\|_{\infty}
      \\
      &= \inf_{\|\beta(t)\|_2^2 \leq 2/\pi}  \Big\| - X_0 + \int_{t_0}^1 \beta(t) \, dt + B'(1) - B'(t_0) \Big\|_{\infty} = \cV^{\infty}\big(t_0,\cL(-X_0)\big).
    \end{align*}
    Finally, to prove the last claim that $\cV^{\infty}(t_0,\cdot)$ is minimized at $\delta_0$, we fix $m_0 \in \cP_2(\R)$ and a random variable $X_0$ with $\cL(X_0) = m_0$, and then use symmetry and convexity to conclude that
    \begin{align*}
        \cV^{\infty}(t_0,m_0) &= \cV^{\infty}\big(t_0,\cL(X_0)\big) = \frac{1}{2} \cV^{\infty}\big(t_0, \cL(X_0) \big) + \frac{1}{2} \cV^{\infty}\big(t_0,\cL(-X_0)\big)
        \\
        &\geq \cV^{\infty}\Big(t_0, \cL\Big(\frac{1}{2} X_0 - \frac{1}{2} X_0\Big) \Big) = \cV^{\infty}(t_0,\delta_0). \qedhere
    \end{align*}
\end{proof}

\begin{proof}[Proof of Lemma \ref{lem.nondecreasing}]
    Fix $0\leq s < t \leq 1$ and $\eps>0$. Let $(m,\alpha) \in \cA(s,\delta_0)$ be an $\eps$-optimizer for the problem defining $\cV^{\infty}(s,\delta_0)$. Then,
    \begin{align*}
        \cV^{\infty}(s,\delta_0) \geq [m_1]_\infty - \eps \geq \cV^{\infty}(t, m_t) - \eps.
    \end{align*}
    By Lemma \ref{lem.vinf.convex}, $\cV^{\infty}(t,\cdot)$ obtains a global minimum at $\delta_0$, so $\cV^{\infty}(t,m_t) \geq \cV^{\infty}(t,\delta_0)$. Letting $\eps\to0$ proves $\cV^\infty(s,\delta_0) \geq \cV^\infty(t,\delta_0)$.
\end{proof}

\subsection{On the convexity and monotonicity of \protect\texorpdfstring{$V^\infty_T$}{V-T-infinity} in time}

By the same arguments as in Lemma~\ref{lem.nondecreasing}, the values $V_T^\infty$ defined in \eqref{def.vTinf} are nondecreasing in $T$; recall that the $T$ in $V_T^{\infty}$ is the time horizon, whereas the time horizon corresponding to $\cV^\infty(t,\delta_0)$ is $1-t$. We record one more interesting result about the time-dependence of $V_T^{\infty}$, though it is not used anywhere else in the paper. 

\begin{proposition} \label{pr:scaling-dec+convex}
The function $r \mapsto \frac{1}{r}V_{r^2}^{\infty}$ is nonincreasing and convex on $(0,\infty)$. In particular, $T \mapsto V^\infty_T$ is continuous.
\end{proposition}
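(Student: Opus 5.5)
The plan is to rescale time so that the horizon becomes $1$ and the dependence on $T$ moves entirely into a scalar factor in front of the drift. Once that is done, both claims reduce to elementary convex-combination arguments, in the spirit of the proof of Lemma~\ref{lem.vinf.convex}.

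\emph{Step 1: rescaling.} Fix $T=r^2>0$ and let $\alpha$ be admissible for \eqref{def.vTinf}. Define
\[
W(s)\coloneqq r^{-1}B(r^2 s), \qquad \beta(s)\coloneqq \alpha(r^2 s), \qquad s\in[0,1].
\]
Then $W$ is a standard Brownian motion, and its natural filtration is the time-changed filtration of $B$. Hence $\beta$ is progressively measurable with respect to the filtration of $W$, and $\|\beta(s)\|_2\le\sqrt{2/\pi}$ for every $s$. The substitution $t=r^2u$ gives
\[
\int_0^{T}\alpha(t)\,dt + B(T) = r^2\int_0^1\beta(u)\,du + rW(1) = r\Big(r\int_0^1\beta(u)\,du + W(1)\Big).
\]
The map $\alpha\leftrightarrow\beta$ is a bijection between admissible controls for the two problems. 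Therefore
\[
\frac1r V^\infty_{r^2} = f(r)\coloneqq \inf_{\beta}\Big\| r\int_0^1\beta(u)\,du + W(1)\Big\|_\infty,
\]
where the infimum runs over $W$-progressive $\beta$ with $\|\beta(u)\|_2\le\sqrt{2/\pi}$ for all $u$. All of these $\beta$ live on one fixed probability space with one fixed Brownian motion $W$, independently of $r$; this is what makes the next two steps work. By Lemma~\ref{lem.equivstrong}, the choice of filtered space does not matter.

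\emph{Step 2: monotonicity and convexity of $f$.} For monotonicity, let $0<r<r'$ and let $\beta$ be admissible. Then $\beta'\coloneqq (r/r')\beta$ is also admissible, and $r'\int\beta' = r\int\beta$. This gives $f(r')\le f(r)$.

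For convexity, take $r_1,r_2>0$, $\lambda\in[0,1]$, and set $r=\lambda r_1+(1-\lambda)r_2$. Let $\beta_1,\beta_2$ be $\epsilon$-optimal for $f(r_1)$ and $f(r_2)$ respectively, and define
\[
\gamma \coloneqq \frac{\lambda r_1\beta_1 + (1-\lambda) r_2\beta_2}{r}.
\]
By the triangle inequality in $L^2$,
\[
\|\gamma(u)\|_2 \le \frac{\lambda r_1 + (1-\lambda) r_2}{r}\sqrt{2/\pi} = \sqrt{2/\pi},
\]
so $\gamma$ is admissible. Its terminal value splits as
\[
r\int_0^1\gamma + W(1) = \lambda\Big(r_1\int_0^1\beta_1 + W(1)\Big) + (1-\lambda)\Big(r_2\int_0^1\beta_2 + W(1)\Big).
\]
Applying the triangle inequality in $L^\infty$ gives $f(r)\le\lambda f(r_1)+(1-\lambda)f(r_2)+2\epsilon$, and letting $\epsilon\to0$ proves convexity.

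\emph{Step 3: finiteness and continuity.} The remaining issue, and the only step needing input from elsewhere, is that $f$ is finite on $(0,\infty)$. Without this, monotonicity and convexity are vacuous, and a convex function could take the value $+\infty$.

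For $T\le 1$, finiteness follows from Proposition~\ref{prop.upperbound.uinf}: it shows $\cV^\infty(1-T,\delta_0)<\infty$, and this quantity equals $V_T^\infty$ after a time shift. For arbitrary $T$, I would invoke the bound $V_T^\infty\le(\pi/2)^{3/2}$ of Proposition~\ref{prop.firstupperbound}. Alternatively, monotonicity alone suffices: since $f$ is nonincreasing, $f(r)\le f(r_0)<\infty$ for all $r\ge r_0$, and every $r_0\in(0,1]$ has finite $f(r_0)$.

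Given finiteness, $f$ is a finite convex function on the open interval $(0,\infty)$, hence continuous there. Consequently $T\mapsto V^\infty_T=\sqrt T\,f(\sqrt T)$ is continuous on $(0,\infty)$.
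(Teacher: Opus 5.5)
Your proposal is correct and is essentially the paper's proof: Brownian rescaling to horizon $1$ (you put the factor $r$ on the drift rather than in the constraint, which is equivalent), shrinking the drift for monotonicity, the $r_i$-weighted convex combination of controls together with the triangle inequality in $L^\infty$ for convexity, and finiteness via Proposition~\ref{prop.firstupperbound} to obtain continuity. Your use of $\epsilon$-optimal controls instead of exact optimizers is a small improvement, since it avoids appealing to the existence of optimizers.
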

\begin{proof}
The continuity follows from convexity and the fact that $V^\infty_T < \infty$ for all $T>0$, as shown below in Proposition~\ref{prop.firstupperbound}.
Let $h(r) = \frac{1}{r}V_{r^2}^{\infty}$.
Start from the definition \eqref{def.vTinf} of $V_T^{\infty}$.
By scaling time via $t=sT$ and using Brownian scaling,
we have
\begin{align*}
h(\sqrt{T})=\frac{1}{\sqrt{T}}V_T^{\infty} &= \inf \bigg\{ \Big\| \sqrt{T}\int_0^1 \alpha(sT) \, ds + \frac{1}{\sqrt{T}}B(T) \Big\|_{\infty} : \sup_{0 \le t \le T}\|\alpha(t)\|_2 \leq \sqrt{\frac{2}{\pi}} \bigg\} \\
	&= \inf \bigg\{ \Big\|  \int_0^1 \alpha(s) \, ds + B(1) \Big\|_{\infty} : \sup_{0\leq s \leq 1} \|\alpha(s)\|_2 \leq \sqrt{\frac{2T}{\pi}} \bigg\}.
\end{align*}
This is clearly nonincreasing in $T$. To prove convexity, fix $r_1,r_2 > 0$ and $0 < u < 1$, and set $r\coloneqq ur_1+(1-u)r_2$. For $i=1,2$ let $\alpha_i$ be optimal for the above control problem with $T$ replaced by $r_i^2$; in particular, $\|\alpha_i(s)\|_2 \le r_i\sqrt{2/\pi}$ for all $s$. Then $\alpha \coloneqq u\alpha_1+(1-u)\alpha_2$ satisfies $\|\alpha(s)\|_2 \le r\sqrt{2/\pi}$  for all $s$, and by convexity of the $L^\infty$ norm we deduce
\begin{align*}
h(r) &\le \Big\|  \int_0^1 \alpha(s) \, ds + B(1) \Big\|_{\infty} \\
	&\le u\,\Big\|  \int_0^1 \alpha_1(s) \, ds + B(1) \Big\|_{\infty} + (1-u)\Big\|  \int_0^1 \alpha_2(s) \, ds + B(1) \Big\|_{\infty} \\
	&= u h(r_1) + (1-u)h(r_2). \qedhere
\end{align*}
\end{proof}

\section{Lower Bound}
\label{sec:lower-bound}

In this section, we prove the lower bound of Theorem \ref{th:intro} for a general coordinate distribution $\nu$ satisfying \eqref{nu.conditions}. That is, we show that
\begin{equation}
V^\infty \le \liminf_{n\to\infty} V^n. \label{pf:lowerbound-final}
\end{equation}
For the remainder of this section, let us define the control problems corresponding to the $V^n$'s on a common probability space. Let $(\Omega,\mathcal{F},\mathbb{P})$ be a fixed probability space that supports an array of i.i.d.\ random variables $(\zeta_i(k))_{i,k\in\N}$ with $\zeta_{i}(k)\sim \nu$. For $1\leq i,k\leq n$, we set
\begin{gather*}
    Z_i^n(k) \coloneqq n^{-1/2} \zeta_{i}(k),\quad \bm{Z}^n(k) \coloneqq  (Z_1^n(k),\dots,Z_n^n(k)),\\
    \G^n(k) \coloneqq  \sigma\big(\bm{Z}^n(\ell): 1\leq \ell \leq k\big),\quad \G^n(0) \coloneqq \{\emptyset, \Omega\}.
\end{gather*}
In particular, $\sqrt{n}\bm{Z}^n(1),\dots, \sqrt{n}\bm{Z}^n(n)$ are independent random vectors with distribution $\nu^{\otimes n}$. Denote $\mathbb{G}^n=(\cG^n(k))_{k=0,1,\dots,n}$. This setup satisfies the assumptions of Lemma~\ref{lem.dpp}, and so
\begin{equation*}
    V^n = \inf_{\epsilon^n(1),\dots,\epsilon^n(n)} \E\Big|\sum_{k=1}^n \epsilon^n(k) \bm{Z}^n(k)\Big|_{\infty}
\end{equation*}
where the infimum is taken over all $n$-state controls, i.e., $\mathbb{G}^n$-adapted $\{\pm 1\}$-valued processes $\epsilon^n=(\epsilon^n(k))_{k=1,\dots,n}$. We recall the ``weak'' formulation of the control problem $V^\infty$, discussed already in the introduction. We let the set of ``weak'' controls $\mathcal{A}^\mathrm{w}$ be given by all probability measures $m\in\P(\C^2)$ such that, under $m$, the canonical processes $(A,B)$ on $\cC^2$ satisfy \eqref{eq:thm:limit-points:ac0-paths}, \eqref{eq:thm:limit-points:l2-constraint}, and \eqref{eq:thm:limit-points:bm}.
Then, by Lemma~\ref{lem.equivstrong}, we have
\begin{equation*}
	V^\infty = \inf_{m \in \mathcal{A}^\mathrm{w}} \|A(1) + B(1)\|_{L^\infty(m)}.
\end{equation*}

\subsection{Doob decomposition} For the rest of Section~\ref{sec:lower-bound}, we fix a sequence $(\eps^n)_{n\in\N}$ such that $\eps^n$ is an $n$-state control, and we consider the corresponding state processes
\begin{equation*}
	\bm{Y}^n(k) \coloneqq \sum_{\ell=1}^k \eps^n(\ell)\bZ^n(\ell),\qquad k=0,1,\dots,n.
\end{equation*}
The goal is to show
\begin{equation*}
	V^\infty \leq \liminf_{n\to\infty} \E|\bY^n(n)|_\infty.
\end{equation*}

For each $n$, we start by embedding the state processes $\bm{Y}^n$ in the time interval $[0,1]$ by time rescaling and linear interpolation:
\begin{alignat*}{2}
	\hat{\bm{Y}}^n(\tfrac{k}{n})&\coloneqq \bm{Y}^n(k),&&k=0,1,\dots,n,\\
	\hat{\bm{Y}}^n(t)&\coloneqq \hat{\bm{Y}}^n(\tfrac{k}{n}) + (\bm{Y}^n(k+1)-\bm{Y}^n(k))(nt-k),\qquad &&k < nt \leq k+1.
\end{alignat*}
We decompose the continuous-time process $\hat{\bm{Y}}^n=(\hat{\bm{Y}}^n(t))_{t\in[0,1]}$ into two terms, $\bm{A}^n$ and $\bm{M}^n$, based on the Doob decomposition of $\bm{Y}^n$:
\begin{alignat*}{2}
	\bm{A}^n(\tfrac{k}{n})&\coloneqq \sum_{\ell=1}^k \bm{b}^n(\ell-1),\qquad &&k=0,1,\dots,n,\\
	\bm{A}^n(t)&\coloneqq \bm{A}^n(\tfrac{k}{n}) + \bm{b}^n(k)(nt-k),\qquad &&k < nt \leq k+1,\\
	\bm{M}^n(\tfrac{k}{n})&\coloneqq \sum_{\ell=1}^k \bm{\sigma}^n(\ell),&&k=0,1,\dots,n,\\
	\bm{M}^n(t)&\coloneqq \bm{M}^n(\tfrac{k}{n}) + \bm{\sigma}^n(k+1)(nt-k),\qquad &&k < nt \leq k+1,
\end{alignat*}
where for $\ell=1,\dots,n$
\begin{align}
	\bm{b}^n(\ell-1) &\coloneqq \E[\epsilon^n(\ell)\bm{Z}^n(\ell)\,|\,\mathcal{G}^n(\ell-1)],\label{eq:discrete-drift}\\
	\bm{\sigma}^n(\ell) &\coloneqq \epsilon^n(\ell)\bm{Z}^n(\ell) - \E[\epsilon^n(\ell)\bm{Z}^n(\ell)\,|\,\mathcal{G}^n(\ell-1)]\label{eq:discrete-mart-incr}.
\end{align}
We call $\bm{A}^n$ the drift term and $\bm{M}^n$ the martingale term. Note that the processes $\hat{\bm{Y}}^n$, $\bm{A}^n$, and $\bm{M}^n$ are continuous and take values in $\R^n$.

We begin with two elementary estimates on the drift and martingale increments.

\begin{lemma}\label{lemma:drift-vector:a-priori-bound}
    For any $k=0,\dots,n-1$, we have $|\bm{b}^n(k)|_2 \leq 1/\sqrt{n}$. If $\nu = \mathcal{N}(0,1)$, then $|\bm{b}^n(k)|_2 \leq \sqrt{2/(\pi n)}$.
\end{lemma}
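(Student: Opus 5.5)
The plan is to bound $|\bm{b}^n(k)|_2$ by duality, $|\bm{b}^n(k)|_2 = \sup_{|\bm{v}|_2 = 1} \bm{v}\cdot\bm{b}^n(k)$, and to estimate $\bm{v}\cdot\bm{b}^n(k)$ for a fixed direction. The only difficulty is that the maximizing direction is random. We therefore take $\bm{v}$ to be $\mathcal{G}^n(k)$-measurable; the natural choice is $\bm{v} = \bm{b}^n(k)/|\bm{b}^n(k)|_2$ on the event $\{\bm{b}^n(k)\neq 0\}$, and any fixed unit vector otherwise.

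With such a $\mathcal{G}^n(k)$-measurable unit vector $\bm{v}$, we write
\[
\bm{v}\cdot \bm{b}^n(k) = \E\big[\epsilon^n(k+1)\, \bm{v}\cdot \bm{Z}^n(k+1)\,\big|\,\mathcal{G}^n(k)\big] \le \E\big[|\bm{v}\cdot \bm{Z}^n(k+1)|\,\big|\,\mathcal{G}^n(k)\big].
\]
This uses that $\bm{v}$ is $\mathcal{G}^n(k)$-measurable, so it can be pulled inside the conditional expectation, and that $|\epsilon^n(k+1)|=1$.

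Since $\bm{Z}^n(k+1)$ is independent of $\mathcal{G}^n(k)$, the conditional expectation equals $g(\bm{v})$, where $g(\bm{u}) \coloneqq \E|\bm{u}\cdot \bm{Z}^n(k+1)|$ for deterministic unit vectors $\bm{u}$ (freezing lemma). We bound $g$ in the two cases.

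\begin{itemize}
\item[(i)] \emph{General $\nu$.} By Cauchy--Schwarz, $g(\bm{u}) \le \E[(\bm{u}\cdot\bm{Z})^2]^{1/2} = \big(\sum_i u_i^2/n\big)^{1/2} = 1/\sqrt{n}$. This uses that the coordinates are independent, mean zero, with variance $1/n$.
\item[(ii)] \emph{Gaussian $\nu$.} Here $\bm{u}\cdot\bm{Z}\sim\mathcal{N}(0,1/n)$ exactly, so $g(\bm{u}) = \sqrt{2/(\pi n)}$.
\end{itemize}

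Combining the displays gives $|\bm{b}^n(k)|_2 \le 1/\sqrt n$, respectively $\le\sqrt{2/(\pi n)}$, almost surely. No step is a real obstacle. The only point needing care is justifying the use of a random direction $\bm{v}$ inside the conditional expectation. An alternative that avoids this is to take a countable dense set of deterministic unit vectors, obtain the bound almost surely for each, and then take the supremum over this set.
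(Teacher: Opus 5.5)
Your proposal is correct and follows the paper's argument. The paper bounds $\bm{u}\cdot\bm{b}^n(k)$ by $\E|\bm{u}\cdot\bm{Z}^n(k+1)|$, using that $|\epsilon^n(k+1)|=1$ and that $\bm{Z}^n(k+1)$ is independent of $\mathcal{G}^n(k)$. It then applies Cauchy--Schwarz in the general case, or the exact first absolute moment $\sqrt{2/(\pi n)}$ of a $\mathcal{N}(0,1/n)$ variable in the Gaussian case, and takes the supremum over unit vectors.

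Your explicit handling of the random maximizing direction, via a $\mathcal{G}^n(k)$-measurable choice with the freezing lemma or via a countable dense set, makes rigorous the step the paper leaves implicit in ``taking the supremum over $\bm{u}$''.
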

\begin{proof}
    Take a unit vector $\bm{u}\in\R^n$. Since $\bm{Z}^n(k+1)$ is independent of $\G^n(k)$,
    \begin{equation*}
        \bm{u}\cdot \bm{b}^n(k) = \E[\epsilon^n(k+1) \bm{u}\cdot \bm{Z}^n(k+1)\,|\,\G^n(k)] \leq \E|\bm{u}\cdot\bm{Z}^n(k+1)|.
    \end{equation*}
    Using Cauchy--Schwarz and $\mathrm{Cov}(\bm{Z}^n(k+1)) = n^{-1} I_n$, where $I_n$ is the identity matrix, this quantity is bounded by $1/\sqrt{n}$. In the case $\nu = \mathcal{N}(0,1)$, the random variable $\bm{u}\cdot \bm{Z}^n(k+1)$ is a standard Gaussian, and so the right-hand side of the inequality above is equal to $\sqrt{2/(\pi n)}$. Taking the supremum over $\bm{u}$ concludes the proof.
\end{proof}

\begin{lemma}\label{lemma:sigma-moments}
	Let $k=1,\dots,n$.
	\begin{enumerate}
		\item We have
		\begin{equation*}
			\mathrm{Cov}(\bm{\sigma}^n(k)\,|\,\G^n(k-1)) = \frac{1}{n} I_n - \bm{b}^n(k-1)\bm{b}^n(k-1)^\top.
		\end{equation*}
		In particular, $\mathrm{Cov}(\bm{\sigma}^n(k)\,|\,\G^n(k-1))\leq n^{-1} I_n$ in semidefinite order.
		\item For every $i=1,\dots,n$,
		\begin{equation}\label{eq:lemma:sigma-fourth-moment}
			\E[\sigma_i^n(k)^4\,|\,\G^n(k-1)] \leq \frac{16}{n^2}\int x^4\,\nu(dx).
		\end{equation}
	\end{enumerate}
\end{lemma}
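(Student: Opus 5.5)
Both parts follow from the independence of $\bm{Z}^n(k)$ from $\G^n(k-1)$, together with the identity $\epsilon^n(k)^2=1$.

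\emph{Part (1).} Conditionally on $\G^n(k-1)$, the vector $\bm{\sigma}^n(k)$ has mean zero. Hence its conditional covariance equals $\E[\epsilon^n(k)^2 \bm{Z}^n(k)\bm{Z}^n(k)^\top \,|\, \G^n(k-1)] - \bm{b}^n(k-1)\bm{b}^n(k-1)^\top$. Since $\epsilon^n(k)^2=1$, the first term reduces to $\E[\bm{Z}^n(k)\bm{Z}^n(k)^\top \,|\, \G^n(k-1)]$. By independence this is the unconditional second-moment matrix of $\bm{Z}^n(k)$. That matrix is $n^{-1}I_n$, because the coordinates are i.i.d.\ with mean zero and variance $1/n$. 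This gives the stated formula. The semidefinite bound follows because $\bm{b}\bm{b}^\top \ge 0$. One should note that $\epsilon^n(k)$ is $\G^n(k)$-measurable rather than $\G^n(k-1)$-measurable. This is harmless here, since $\epsilon^n(k)$ only enters through its square.

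\emph{Part (2).} Write $\sigma_i^n(k) = \epsilon^n(k)Z_i^n(k) - b_i^n(k-1)$ and use $(a+b)^4 \le 8(a^4+b^4)$. This gives
\[
\E[\sigma_i^n(k)^4 \,|\, \G^n(k-1)] \le 8\,\E[Z_i^n(k)^4] + 8\, b_i^n(k-1)^4 .
\]
By independence and scaling, $\E[Z_i^n(k)^4] = n^{-2}\int x^4\,\nu(dx)$.

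For the drift term, use conditional Jensen: $b_i^n(k-1)^4 = \big(\E[\epsilon^n(k)Z_i^n(k)\,|\,\G^n(k-1)]\big)^4 \le \E[Z_i^n(k)^4\,|\,\G^n(k-1)]$. This is again at most $n^{-2}\int x^4\,d\nu$.

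Summing the two contributions yields the bound $16 n^{-2}\int x^4\,d\nu$, as claimed. The only step that requires any care is bounding $b_i^4$ by Jensen rather than by the cruder estimate of Lemma~\ref{lemma:drift-vector:a-priori-bound}. That lemma would give $b_i^4 \le n^{-2}$, which also suffices since $\int x^4\,d\nu \ge 1$, but Jensen gives the constant directly.
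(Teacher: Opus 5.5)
Your proof is correct and is essentially the paper's argument. In part (1) you expand the conditional second moment using $\epsilon^n(k)^2=1$ and the independence of $\bm{Z}^n(k)$ from $\G^n(k-1)$; in part (2) you use $(a+b)^4\le 8(a^4+b^4)$ and conditional Jensen to bound $b_i^n(k-1)^4$ by $\E[Z_i^n(k)^4]$, giving the constant $16$ (your alternative via Lemma~\ref{lemma:drift-vector:a-priori-bound} and $\int x^4\,d\nu\ge 1$ is also valid).
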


\begin{proof}
	For the first part, recall that $\bm{\sigma}^n(k) = \epsilon^n(k) \bm{Z}^n(k) - \bm{b}^n(k-1)$ and $\bm{b}^n(k-1) = \E[\epsilon^n(k)\bm{Z}^n(k)\,|\,\G^n(k)]$. Thus,
	\begin{align*}
		\mathrm{Cov}(\bm{\sigma}^n(k)\,|\,\G^n(k-1)) &= \E\big[\epsilon^n(k)^2 \bm{Z}^n(k)\bm{Z}^n(k)^\top\,\big|\,\G^n(k-1)\big] - \bm{b}^n(k-1)\bm{b}^n(k-1)^\top \\
	&= \frac{1}{n} I_n - \bm{b}^n(k-1)\bm{b}^n(k-1)^\top.
	\end{align*}
	For the second part, use Jensen's inequality and the independence of $\bm{Z}^n(k)$ and $\G^n(k-1)$ to obtain
	\begin{align*}
		\sigma_i^n(k)^4 &= \big(\epsilon^n(k) Z_i^n(k) - \E[\epsilon^n(k) Z_i^n(k)\,|\,\G^n(k-1)]\big)^4\\
		&\leq 8\big(Z_i^n(k)^4 + \E[\epsilon^n(k)Z_i^n(k)\,|\,\G^n(k-1)]^4\big) \leq 8\big(Z_i^n(k)^4 + \E[Z_i^n(k)^4]\big)
	\end{align*}
	Taking conditional expectations, we get $\E[\sigma_i^n(k)^4\,|\,\G^n(k-1)]\leq 16\E[Z_i^n(k)^4]$.
\end{proof}

\subsection{Tightness of empirical measures}
\label{sec:lower-bound:tightness}

We define the joint empirical distribution $\hat{\mu}_n$ of the coordinates of $\bm{A}^n$ and $\bm{M}^n$ by
\begin{equation*}
	\hat{\mu}_n(S)\coloneqq \frac{1}{n}\sum_{i=1}^{n} \delta_{(A_i^n,M_i^n)}(S),\qquad S\in\B\big(\C^2\big).
\end{equation*}
Then, $\hat{\mu}_n$ is a random element of $\mathcal{P}(\C^2)$. We define the corresponding marginals by
\begin{equation*}
	\hat{\mu}_n^A \coloneqq \frac{1}{n}\sum_{i=1}^n \delta_{A^n_i},\qquad \hat{\mu}_n^M \coloneqq \frac{1}{n}\sum_{i=1}^n \delta_{M^n_i}.
\end{equation*}
In this section, we show that the laws $\{\cL(\hat{\mu}_n):n\in\N\}$ of the empirical measures form a tight sequence in $\P(\P(\C^2))$. To do this, we will look at the marginals separately and prove that the sequences $\{\cL(\hat{\mu}_n^A):n\in\N\}$ and $\{\cL(\hat{\mu}_n^M):n\in\N\}$ are each tight in $\P(\P(\C))$.

Let us denote by $H_0^1\subset\C$ the space of absolutely continuous functions $\phi:[0,1]\to\R$ with $\phi(0)=0$ and $\int_0^1 \dot{\phi}(t)^2\,dt <\infty$.

\begin{lemma}\label{lemma:tightness-drift}
	For each $n$, the empirical measure $\hat{\mu}_n^A$ is $\PP$-almost surely contained in the compact set
	\begin{equation*}
		\mathcal{K}_1 \coloneqq \bigg\{\mu\in\P(\C): \mu(H_0^1)=1,\ \esssup_{t\in[0,1]} \int_{H_0^1} \dot{\phi}(t)^2\,\mu(d\phi) \leq 1\bigg\}.
	\end{equation*}
	In particular, the sequence $\big\{\cL(\hat{\mu}_n^A): n\in\N\big\}\subset \P(\P(\C))$ is tight.
	If $\nu=\mathcal{N}(0,1)$, the statement holds true if we replace the upper bound $1$ in the definition of $\mathcal{K}_1$ by $2/\pi$.
\end{lemma}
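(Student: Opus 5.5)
The plan has two parts. First I check that $\hat{\mu}_n^A$ always lies in $\mathcal{K}_1$. Then I prove that $\mathcal{K}_1$ is compact in $\P(\C)$. Since the laws of $\hat{\mu}_n^A$ are then all supported on one compact set, tightness follows at once.

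\emph{Membership.} By construction each coordinate $A_i^n$ is piecewise linear with $A_i^n(0)=0$. On each interval $(k/n,(k+1)/n)$ its derivative is $\dot{A}_i^n(t)=n\,b_i^n(k)$, so $A_i^n\in H_0^1$. For a.e.\ $t$ in such an interval, Lemma \ref{lemma:drift-vector:a-priori-bound} gives, pointwise in $\omega$,
\begin{equation*}
\int \dot\phi(t)^2\,\hat{\mu}_n^A(d\phi)=\frac1n\sum_{i=1}^n n^2 b_i^n(k)^2 = n\,|\bm{b}^n(k)|_2^2\le 1 .
\end{equation*}
In the Gaussian case the same computation gives the bound $2/\pi$. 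Hence $\hat{\mu}_n^A\in\mathcal{K}_1$ almost surely, and in fact surely.

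\emph{Tightness of $\mathcal{K}_1$.} Take $\mu\in\mathcal{K}_1$. Tonelli gives
\begin{equation*}
\int\!\int_0^1\dot\phi(t)^2\,dt\,\mu(d\phi)\le 1.
\end{equation*}
The sets $\{\phi\in H_0^1:\int_0^1\dot\phi^2\le R\}$ are compact in $\C$: they are uniformly bounded and uniformly $\tfrac12$-Hölder by Cauchy--Schwarz, so Arzelà--Ascoli applies, and they are closed by lower semicontinuity of the $H^1$ seminorm under uniform convergence. Markov's inequality then bounds the mass outside such a set by $1/R$, uniformly over $\mu\in\mathcal{K}_1$. By Prokhorov, $\mathcal{K}_1$ is relatively compact.

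\emph{Closedness of $\mathcal{K}_1$.} This is the main obstacle. The constraint is an essential supremum in $t$ of a second moment of derivatives, so it is not obviously preserved under weak limits. I would recast it as a family of lower semicontinuous constraints via duality. Fix $0\le s<u\le 1$. For $\phi\in H_0^1$, Cauchy--Schwarz gives
\begin{equation*}
(\phi(u)-\phi(s))^2\le (u-s)\int_s^u\dot\phi^2 .
\end{equation*}
Integrating against $\mu$ yields
\begin{equation*}
\int(\phi(u)-\phi(s))^2\,\mu(d\phi)\le (u-s)^2\quad\text{for all } s<u. \qquad (\ast)
\end{equation*}
Conversely, suppose $\mu$ satisfies $(\ast)$. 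Then for each partition the averaged difference quotients have second moment at most $1$. Taking partitions with mesh tending to zero, lower semicontinuity of the $L^2$ norm gives $\mu(H_0^1)=1$ and shows that $t\mapsto\dot\phi$ lies in $L^2(\mu\otimes dt)$ with
\begin{equation*}
\frac1{u-s}\int_s^u \int\dot\phi(t)^2\,\mu(d\phi)\,dt\le 1 \quad\text{for all } s<u .
\end{equation*}
By Lebesgue differentiation this gives the essential supremum bound, so $(\ast)$ characterizes $\mathcal{K}_1$. Each condition in $(\ast)$ is closed under weak convergence, because $\phi\mapsto(\phi(u)-\phi(s))^2$ is continuous and nonnegative, and the integral of such a function is weakly lower semicontinuous. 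Therefore $\mathcal{K}_1$ is closed, hence compact, which completes the proof.
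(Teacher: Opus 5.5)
Your strategy works and differs from the paper only in how closedness is proved. However, the characterization you claim there is wrong as stated. The conditions $(\ast)$ involve only increments $\phi(u)-\phi(s)$, so they cannot detect the initial value. For example, $\mu=\delta_{\phi_1}$ with $\phi_1\equiv 1$ satisfies $(\ast)$ trivially, yet $\mu(H_0^1)=0$. So the step ``$(\ast)$ implies $\mu(H_0^1)=1$'' fails.

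The repair takes one line. Characterize $\mathcal{K}_1$ by $(\ast)$ together with $\mu(\{\phi:\phi(0)=0\})=1$. The latter condition is preserved under weak limits by the Portmanteau theorem, because $\{\phi:\phi(0)=0\}$ is closed in $\C$.

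With that added, your converse goes through. To make ``taking partitions with mesh tending to zero'' precise:
\begin{itemize}
\item Along dyadic partitions, the discrete energies $\sum_j (\phi(t_{j+1})-\phi(t_j))^2/(t_{j+1}-t_j)$ are nondecreasing under refinement, since $(a+b)^2\le 2(a^2+b^2)$.
\item By monotone convergence, their limit has $\mu$-integral at most $1$.
\item For a continuous $\phi$, finiteness of this limit forces $\phi\in H^1$, with the limit equal to $\int_0^1\dot\phi^2$.
\item The same argument restricted to dyadic $[s,u]$ gives $\int_s^u\int\dot\phi(t)^2\,\mu(d\phi)\,dt\le u-s$.
\item Lebesgue differentiation then yields the essential supremum bound.
\end{itemize}

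Compared with the paper, the membership step is identical. For compactness, the paper writes the essential supremum as a supremum over a countable family of nonnegative weights $g^{(n)}$ with $\|g^{(n)}\|_{L^1}\le 1$. This gives $\mathcal{K}_1=\bigcap_n \mathcal{K}^{(1)}\cap\mathcal{K}^{(n)}$. Compactness of $\mathcal{K}^{(1)}$ comes from the compact sublevel sets of the energy $\int_0^1\dot\phi^2$, which is your Prokhorov--Markov step in packaged form. Closedness of each $\mathcal{K}^{(1)}\cap\mathcal{K}^{(n)}$ is asserted as straightforward; checking it requires identifying weak $L^2$ limits of derivatives.

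Your version replaces those weighted energy constraints, which involve the discontinuous map $\phi\mapsto\dot\phi$, with the increment constraints $(\ast)$. Their integrands are continuous and nonnegative, so closedness becomes immediate. The price is the converse characterization above, which recovers the $H^1$ regularity and the local energy bound from increments alone. Once the initial-value condition is added, both routes give the lemma, and yours makes the closedness step fully explicit.
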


\begin{proof}
	By definition, the processes $A_i^n$ are absolutely continuous with $A_i^n(0)=0$ and $\dot{A}_i^n(t)=n b_i^n(\floor{nt})$, and we have $\PP$-almost surely for a.e.\ $t\in[0,1]$
	\begin{equation*}
		\frac{1}{n} \sum_{i=1}^n |\dot{A}_i^n(t)|^2 = \frac{1}{n} \sum_{i=1}^n |n b_i^n(\floor{nt})|^2 = n |\bm{b}^n(\floor{nt})|_2^2.
	\end{equation*}
	By Lemma~\ref{lemma:drift-vector:a-priori-bound}, the right-hand side is bounded by 1, and it follows that $\PP$-almost surely, the empirical measure $\hat{\mu}_n^A$ is contained in $\mathcal{K}_1$. That $\mathcal{K}_1$ is compact is essentially a consequence of the Arzel\`a--Ascoli theorem; see Lemma~\ref{lemma:compactness-K} in the appendix for details. The results for the special case $\nu=\mathcal{N}(0,1)$ follow from those of Lemma~\ref{lemma:drift-vector:a-priori-bound}.
\end{proof}

\begin{lemma}\label{lemma:tightness-mart}
	The sequence $\big\{\cL(\hat{\mu}_n^M): n\in\N\big\}\subset \P(\P(\C))$ is tight.
\end{lemma}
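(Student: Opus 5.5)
The approach rests on the standard criterion for tightness of random empirical measures: the family $\{\cL(\hat\mu_n^M)\}$ is tight in $\P(\P(\C))$ if and only if the family of intensity (mean) measures $\{\E\,\hat\mu_n^M\}$ is tight in $\P(\C)$ (see e.g.\ Sznitman's lecture notes). Since $\E\,\hat\mu_n^M = \frac1n\sum_{i=1}^n \cL(M_i^n)$, it suffices to show that the laws of the coordinate processes $M_i^n$, for $n\in\N$ and $1\le i\le n$, form a tight family in $\P(\C)$. Equivalently, if $U$ is uniform on $\{1,\dots,n\}$ and independent of everything else, we need tightness of $\cL(M_U^n)$. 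Because $M_i^n(0)=0$, Kolmogorov's continuity criterion reduces this to a uniform moment bound on increments, of the form
\begin{equation*}
\E|M_i^n(t)-M_i^n(s)|^4 \le C|t-s|^2,\qquad s,t\in[0,1],
\end{equation*}
with $C$ independent of $n$ and $i$.

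The first step is to prove this bound on the grid $s=j/n$, $t=k/n$. For fixed $i$, the sequence $(M_i^n(k/n))_k$ is a martingale with respect to $\mathbb{G}^n$, with increments $\sigma_i^n(\ell)$. Burkholder's inequality with exponent $4$, followed by Cauchy--Schwarz and Lemma~\ref{lemma:sigma-moments}(2), gives
\begin{equation*}
\E\Big|\sum_{\ell=j+1}^k \sigma_i^n(\ell)\Big|^4 \le C\,\E\Big(\sum_{\ell=j+1}^k \sigma_i^n(\ell)^2\Big)^2 \le C(k-j)\sum_{\ell=j+1}^k \E\,\sigma_i^n(\ell)^4 \le C'\frac{(k-j)^2}{n^2}.
\end{equation*}
Here $C'$ depends only on $\int x^4\,\nu(dx)$, which is exactly where the fourth-moment assumption enters.

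The second step is to pass from grid points to arbitrary $s<t$ using the piecewise-linear interpolation. If $s$ and $t$ lie in the same cell $[k/n,(k+1)/n]$, then $|M_i^n(t)-M_i^n(s)| = n|t-s|\,|\sigma_i^n(k+1)|$. Its fourth moment is at most $n^4|t-s|^4\cdot C/n^2 = C\,(n|t-s|)^2|t-s|^2 \le C|t-s|^2$, using $n|t-s|\le1$. In the general case, we split the increment into two partial-cell pieces and one grid-to-grid piece, and combine the three estimates with $(a+b+c)^4\le 27(a^4+b^4+c^4)$. The bound $\lceil\cdot\rceil$-grid distance $\le |t-s|+2/n$ does not directly give $|t-s|^2$ when $|t-s|<1/n$, so we take the grid points inside $[s,t]$ and use the bounds from both steps for the three pieces separately; each piece then has length at most $|t-s|$.

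Kolmogorov's criterion (exponent $4$, power $2 = 1+1$) then yields a uniform Hölder-modulus bound, and together with $M_i^n(0)=0$ this gives tightness of $\{\cL(M_i^n)\}$ and hence of $\E\,\hat\mu_n^M$. The main obstacle is the only nonroutine point, the uniform fourth-moment increment estimate: Burkholder's inequality must be applied carefully so that the constant does not depend on $n$ or $i$, which the bound \eqref{eq:lemma:sigma-fourth-moment} ensures. Applying the intensity-measure criterion then concludes the proof.
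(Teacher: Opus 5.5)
Your proof is correct, but after the shared first step it takes a different route from the paper. That shared step is the reduction to tightness of the mean measures $\E\,\hat\mu_n^M=\cL(M_{U_n}^n)$.

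The paper does not prove tightness of $\cL(M_{U_n}^n)$ directly. It shows that $\cL(M_{U_n}^n)$ converges to Wiener measure, using the martingale functional central limit theorem (Billingsley, Theorem 18.2). To apply it, the paper checks two things: the summed conditional variances $\lfloor nt\rfloor/n-\sum_k b^n_{U_n}(k-1)^2$ tend to $t$ (using $|\bm{b}^n(k)|_2^2\le 1/n$), and a Lindeberg condition holds. It then transfers the result from the c\`adl\`ag partial-sum process to the piecewise-linear interpolation.

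You instead prove a uniform fourth-moment increment bound and apply the Kolmogorov--Chentsov criterion. The bound has three ingredients: Burkholder's inequality on grid points, the explicit linear form within a single cell, and a split at the grid points inside $[s,t]$. The splitting step is handled correctly, since each of the three pieces has length at most $t-s$.

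Your route gains an elementary, quantitative argument with no Skorokhod-space technicalities. It gives a uniform H\"older modulus, and it even holds coordinatewise in $i$ rather than only on average. It costs the fourth-moment hypothesis on $\nu$. For this lemma, the paper's Lindeberg verification needs only $\int x^2\,\nu(dx)<\infty$, and the paper additionally identifies the limit of the mean measures.

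Neither difference matters for the overall argument:
\begin{itemize}
\item The finite fourth moment is a standing assumption throughout Section~\ref{sec:lower-bound}. It is used again in the proof of Theorem~\ref{thm:limit-points} for the terms $G'_{n,2}$ and $G'_{n,3}$.
\item The Brownian property of limit points is established independently in Theorem~\ref{thm:limit-points} via L\'evy's characterization, so tightness alone is all that is needed here.
\end{itemize}
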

\begin{proof}
	For each $n$, we consider the \emph{mean measure} $m_n\in \P(\C)$ of $\cL(\hat{\mu}_n^M)$, defined by
	\begin{equation*}
		m_n(S) \coloneqq \E\big[\hat{\mu}_n^M(S)\big] = \frac{1}{n}\sum_{i=1}^n \PP[M_i^n\in S],\qquad S \in \mathcal{B}(\C).
	\end{equation*}
	Note in particular that if $U_n$ is uniformly distributed on $\{1,\dots,n\}$ and independent of $\{\bm{Z}^n(k): 1 \leq k \leq n\}$, then $m_n$ is the law of $M_{U_n}^n$ as a random element in $\C$. By extending our probability space if necessary, we can assume that such random variables $U_n$ exist.

	By a standard result, the tightness of $\{\cL(\hat{\mu}_n^M): n\in\N\}$ in $\P(\P(\C))$ is equivalent to the tightness of $\{m_n :n\in\N\}$ in $\P(\C)$; see, e.g., \cite[Theorem 2.11]{budhiraja-dupuis}. We will prove the latter statement by showing that $m_n$ converges to Wiener measure in $\P(\C)$, using the martingale central limit theorem in the form of \cite[Theorem 18.2]{billingsley-convergence}. Setting $\xi_{nk}=\sigma_{U_n}^n(k)$ and $s_{nk}^2 = \mathrm{Var}(\xi_{nk}\,|\,U_n, \G^n(k-1))$, the process $M_{U_n}^n$ converges in law to Wiener measure, provided that the following two conditions hold as $n\to\infty$:\footnote{To be exact, \cite[Theorem 18.2]{billingsley-convergence} states that under \eqref{eq:lemma:tightness-mart:cond-1} and \eqref{eq:lemma:tightness-mart:cond-2}, the càdlàg process $\sum_{k=1}^{\floor{nt}} \xi_{nk}$ converges in distribution to Brownian motion in the Skorohod space $D[0,1]$. Since Brownian motion is continuous, \cite[Theorem 13.4]{billingsley-convergence} then implies that $\sup_t |M_{U_n}^n(t) - \sum_{k=1}^{\floor{nt}} \xi_{nk} | \leq \max_k |\xi_{nk}|$ converges to zero in distribution. Together, this implies that $M_{U_n}^n$ converges in distribution to Brownian motion in $\C$.}
	\begin{alignat}{2}
		\sum_{k=1}^{\floor{nt}} s_{nk}^2 \longrightarrow t,\qquad&\text{in probability for all $t\in[0,1]$,}\label{eq:lemma:tightness-mart:cond-1}\\
		\sum_{k=1}^{\floor{nt}} \E[\xi_{nk}^2;\,|\xi_{nk}|\geq \delta]\longrightarrow 0,\qquad&\text{for all $t\in[0,1]$ and $\delta>0$.} \label{eq:lemma:tightness-mart:cond-2}
	\end{alignat}

	To see \eqref{eq:lemma:tightness-mart:cond-1}, note that $s_{nk}^2= n^{-1} - b_{U_n}^n(k-1)^2$ by the first part of Lemma~\ref{lemma:sigma-moments} and therefore
	\begin{equation*}
		\sum_{k=1}^{\floor{nt}} s_{nk}^2 = \frac{\floor{nt}}{n} - \sum_{k=1}^{\floor{nt}} b_{U_n}^n(k-1)^2.
	\end{equation*}
	The first term converges (surely) to $t$ as $n\to\infty$, and the second term converges to zero in probability since
	\begin{equation*}
		\sum_{k=1}^{\floor{nt}} \E\big[b_{U_n}^n(k-1)^2\big] = \frac{1}{n}\sum_{k=1}^{\floor{nt}}\E|\bm{b}^n(k-1)|_2^2 \leq \frac{1}{n}
	\end{equation*}
	by Lemma~\ref{lemma:drift-vector:a-priori-bound}.

	For \eqref{eq:lemma:tightness-mart:cond-2}, observe that
	\begin{equation*}
		|\sigma_i^n(k)| \leq |Z_i^n(k)| + \E\big[ |Z_i^n(k)| \,\big|\,\G^n(k-1)\big] \leq |Z_i^n(k)| + \frac{1}{\sqrt{n}},
	\end{equation*}
	and thus
	\begin{align*}
		\sum_{k=1}^{\floor{nt}}\E[\xi_{nk}^2;\,|\xi_{nk}|\geq \delta]
		&= \frac{1}{n} \sum_{k=1}^{\floor{nt}}\sum_{i=1}^n \E[|\sigma_i^n(k)|^2;\,|\sigma_i^n(k)|\geq \delta] \\
		&\leq \frac{2}{n} \sum_{k=1}^{\floor{nt}}\sum_{i=1}^n \E\bigg[|Z_i^n(k)|^2 + \frac{1}{n}; |Z_i^n(k)| \geq \delta - \frac{1}{\sqrt{n}}\bigg] \\
		&= \frac{2\floor{nt}}{n}\int_{\{|z|\geq \delta\sqrt{n} - 1\}} (z^2 + 1)\,\nu(dz)\longrightarrow 0.
	\end{align*}
\end{proof}

\color{black}

The previous lemmas show that  $\{\cL(\hat{\mu}_n^A):n\in\N\}$ and $\{\cL(\hat{\mu}_n^M):n\in\N\}$ are each tight sequences in $\P(\P(\C))$. As noted in the proof of Lemma \ref{lemma:tightness-mart}, this is equivalent to the tightness of the corresponding sequences of mean measures in $\P(\C)$, which are the marginals of the mean measures associated with $\big\{\cL(\hat{\mu}_n): n\in\N\big\}$. The latter sequence is thus tight in $\P(\P(\C \times \C))$.

\subsection{Characterization of limit points}
We have shown that $\{\cL(\hat{\mu}_n):n\in\N\}$ forms a tight sequence in $\P(\P(\C^2))$, and we now analyze its limit points. Let us start with the drift term.
\begin{proposition}\label{prop:limit-points-drift}
    Any limit point $Q\in\P(\P(\C))$ of the sequence $\big\{\cL(\hat{\mu}_n^A): n\in\N\big\}$ assigns probability 1 to the set
	\begin{equation*}
		\mathcal{K}_{2/\pi} \coloneqq \bigg\{\mu\in\P(\C): \mu(H_0^1)=1,\ \esssup_{t\in[0,1]} \int_{H_0^1} \dot{\phi}(t)^2\,\mu(d\phi) \leq \frac{2}{\pi} \bigg\}.
	\end{equation*}
\end{proposition}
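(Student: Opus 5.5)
The idea is to replace the crude per-step bound $n|\bm b^n(k)|_2^2\le 1$ of Lemma~\ref{lemma:drift-vector:a-priori-bound} by the sharper asymptotic bound $2/\pi$, using a quantitative CLT for the \emph{delocalized} part of the drift vector. The \emph{localized} part is then handled by a truncation that is invisible in the limit. The constraint defining $\mathcal{K}_{2/\pi}$ is nonlinear in $\mu$, so I would encode it through lower semicontinuous functionals. For $M>0$ let $\psi_M$ be the Huber function, $\psi_M(x)=x^2$ for $|x|\le M$ and $\psi_M(x)=2M|x|-M^2$ otherwise. It is convex, satisfies $\psi_M\le x^2$ and $\psi_M(x)\le 2M|x|$, and $\psi_M\uparrow x^2$ as $M\to\infty$. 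For continuous $g\ge 0$ on $[0,1]$ set
\[
F_{g,M}(\mu)\coloneqq \int \Big(\int_0^1 g(t)\,\psi_M(\dot\phi(t))\,dt\Big)\mu(d\phi),
\]
with the inner integral set to $+\infty$ if $\phi\notin H_0^1$. Since $\psi_M$ is convex, $\phi\mapsto\int g\,\psi_M(\dot\phi)$ is lower semicontinuous on $\C$ (a supremum of uniformly continuous functionals obtained by integration by parts against smooth test functions). Hence $F_{g,M}$ is lower semicontinuous on $\P(\C)$. If $\mu(H_0^1)=1$ and $F_{g,M}(\mu)\le \frac{2}{\pi}\int g$ for all $g$ in a countable dense set and all $M\in\N$, then monotone convergence in $M$ and arbitrariness of $g$ give $\mu\in\mathcal{K}_{2/\pi}$. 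The property $\mu(H_0^1)=1$ is inherited from $\mathcal{K}_1$, which is closed and carries $\hat\mu_n^A$ by Lemma~\ref{lemma:tightness-drift}.

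The key estimate is at the finite-$n$ level. Fix $k$, write $\bm b=\bm b^n(k)$, fix a threshold $K>M$, and let $S=\{i: n|b_i|\le K\}$. Let $\bm b_S$ be the restriction to $S$ and $\bm u=\bm b_S/|\bm b_S|_2$. Exactly as in Lemma~\ref{lemma:drift-vector:a-priori-bound},
\[
|\bm b_S|_2^2=\bm u\cdot\bm b\le \E|\bm u\cdot\bm Z^n(k+1)| = n^{-1/2}\,\E\Big|\sum_i u_i\zeta_i\Big|.
\]
A Wasserstein-$1$ Berry--Esseen bound (valid since the third moment is finite, by the fourth moment assumption) gives $\E|\sum_i u_i\zeta_i|\le\sqrt{2/\pi}+C\sum_i|u_i|^3\le \sqrt{2/\pi}+C|\bm u|_\infty$. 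Moreover $|\bm u|_\infty\le K/(n|\bm b_S|_2)$. With $x=\sqrt n|\bm b_S|_2$ this reads $x\le\sqrt{2/\pi}+CK/(\sqrt n\,x)$, so
\[
\frac1n\sum_{i\in S}(nb_i)^2 = x^2 \le \frac{2}{\pi}+\frac{C'K}{\sqrt n}.
\]
For $i\notin S$ we use $\psi_M(nb_i)\le 2M|nb_i|\le 2M(nb_i)^2/K$. Summing and using $n|\bm b|_2^2\le1$ gives $\frac1n\sum_{i\notin S}\psi_M(nb_i)\le 2M/K$. Integrating in $t$ against $g$ (recall $\dot A_i^n(t)=nb_i^n(\floor{nt})$) yields, almost surely,
\[
F_{g,M}(\hat\mu_n^A)\le \Big(\frac{2}{\pi}+\frac{C'K}{\sqrt n}+\frac{2M}{K}\Big)\int_0^1 g .
\]

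To conclude, let $Q$ be a limit point along a subsequence. The set $\{\mu: F_{g,M}(\mu)\le c\}$ is closed, so by the Portmanteau theorem $Q\big(F_{g,M}\le(\frac2\pi+\frac{2M}{K}+\eta)\int g\big)=1$ for every $\eta>0$. Letting $\eta\to0$ and then $K\to\infty$ gives $Q(F_{g,M}\le\frac2\pi\int g)=1$. Intersecting over countably many $g$ and $M$ yields $Q(\mathcal{K}_{2/\pi})=1$.

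The main obstacle is the localization issue: the CLT bound $\sqrt{2/\pi}$ fails when the drift concentrates on a few coordinates. The Huber truncation with $K\gg M$ is designed precisely to make that mass negligible. A second point to check carefully is the lower semicontinuity of $F_{g,M}$ on $\P(\C)$, in particular that the inner functional is bounded below (it is $\ge0$), so the Portmanteau argument applies.
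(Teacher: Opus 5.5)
Your argument is correct in substance, but it takes a genuinely different route from the paper's.

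\textbf{How the paper proceeds.} The paper proves a deterministic fixed-path statement (Lemma~\ref{lemma:limit-points-drift:fixed-path}) and transfers it to $Q$ by Skorohod representation. Inside that lemma it
\begin{itemize}
\item truncates the derivatives at the growing level $\log n$, so the truncated empirical measures have the same limit and the full quadratic functional can be used;
\item tests the drift against its own truncated part via $(\dot\phi_i^n)^2=\dot\phi_i^n\dot a_i^n$;
\item normalizes with an $\eta$-regularized Cauchy--Schwarz in time;
\item concludes with the qualitative Lindeberg CLT along subsequences, pointwise in $t$, plus reverse Fatou.
\end{itemize}

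\textbf{How you proceed.} You fix the threshold $K$ and let the linearly growing Huber cost $\psi_M$ absorb the localized coordinates, which contribute at most $2M/K$. You then use a quantitative $W_1$ Berry--Esseen bound. This gives an almost sure, pointwise-in-time, non-asymptotic estimate $\frac1n\sum_i\psi_M(nb_i^n(k))\le\frac2\pi+\frac{C'K}{\sqrt n}+\frac{2M}{K}$. The passage to $Q$ is then a closed-set/Portmanteau argument, with no Skorohod representation, no subsequences, and no Cauchy--Schwarz in time. The price is a third moment at this step, which is harmless under the fourth-moment hypothesis; the paper's Lindeberg step needs only finite variance. You also carry the extra limits $K\to\infty$ and $M\to\infty$.

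\textbf{Two corrections.}

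First, the identity should read $|\bm b_S|_2=\bm u\cdot\bm b$, not $|\bm b_S|_2^2$. Your inequality $x\le\sqrt{2/\pi}+CK/(\sqrt n\,x)$ is what the correct identity yields.

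Second, and more substantively, with the convention that the inner integral is $+\infty$ off $H_0^1$, your $F_{g,M}$ is \emph{not} lower semicontinuous. Since $\psi_M$ grows only linearly, a bound on $\int g\,\psi_M(\dot\phi)$ controls only $\int|\dot\phi|$. For example, take $g\equiv 1$: the paths $\phi_j(t)=\int_0^t\min(j,s^{-3/4})\,ds\in H_0^1$ converge uniformly to $4t^{1/4}\notin H_0^1$, while $\int_0^1\psi_M(\dot\phi_j)\,dt\le 8M$. If $g$ vanishes on an interval, the failure is even more immediate.

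The supremum representation you cite does produce a lower semicontinuous functional. However, that functional is finite on every absolutely continuous path, so it is not the functional you defined.

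The fix is to \emph{define} the inner functional as that supremum, taking $g$ smooth so that integration by parts against smooth $\theta$ gives sup-norm continuous functionals. It is nonnegative, lower semicontinuous, and coincides with $\int g\,\psi_M(\dot\phi)$ on $H_0^1$. That is all you need: $\hat\mu_n^A$ is supported on $H_0^1$, and $Q(\mathcal{K}_1)=1$ gives $\mu(H_0^1)=1$ for $Q$-a.e.\ $\mu$. After that, your monotone convergence in $M$ and density in $g$ finish the proof.
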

In the Gaussian setting, $\nu=\mathcal{N}(0,1)$, the proposition follows immediately from Lemma~\ref{lemma:tightness-drift} as in this case the laws $\cL(\hat{\mu}_n^A)$ are concentrated on $\mathcal{K}_{2/\pi}$. In the general case, however, we can only conclude from Lemma~\ref{lemma:tightness-drift} that $Q[\mathcal{K}_1] = 1$. Using a central-limit-type argument, we now show that the improved bound $2/\pi$ is recovered in the limit.

We first prove a lemma for a fixed realization of the process $\bm{A}^n=(\bm{A}^n(t))_{t\in[0,1]}$. Let us denote such a path by $\bm{a}^n=(\bm{a}^n(t))_{t\in[0,1]}$. We will obtain Proposition~\ref{prop:limit-points-drift} as a corollary using Skorohod representation.

\begin{lemma}\label{lemma:limit-points-drift:fixed-path}
	For each $n$, let $\bm{a}^n:[0,1]\to\R^n$ be such that $a_i^n\in H_0^1$ and for almost every $t\in[0,1]$
	\begin{equation}\label{eq:lemma:limit-points-drift:fixed-path:assumption}
		n^{-1}\dot{\bm{a}}^n(t) \in \big\{\E[g(\bm{Z}^n) \bm{Z}^n] : \sqrt{n}\bm{Z}^n\sim \nu^{\otimes n},\ g:\R^n\to\{\pm 1\}\text{ measurable}\big\}.
	\end{equation}
	Then, any limit point $\mu\in\P(\C)$ of the sequence
    \begin{equation*}
        \mu_n = \frac{1}{n} \sum_{i=1}^n \delta_{a_i^n}\in\P(\C)
    \end{equation*}
    is contained in $\mathcal{K}_{2/\pi}$.
\end{lemma}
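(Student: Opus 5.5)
Since $\mu_n\in\mathcal{K}_1$ and $\mathcal{K}_1$ is compact (Lemma~\ref{lemma:compactness-K}), any limit point $\mu$ lies in $\mathcal{K}_1$. In particular $\mu(H_0^1)=1$, and $t\mapsto \int\dot\phi(t)^2\,\mu(d\phi)$ is an a.e.\ defined element of $L^\infty[0,1]$. The only thing left is the sharper constant $2/\pi$. I would prove it in dual form: for every $0\le s<t\le 1$ and every bounded measurable $h:\R\to\R$ (equivalently, every continuous bounded $h$),
\begin{equation*}
\int \big(\phi(t)-\phi(s)\big)h\big(\phi(s)\big)\,\mu(d\phi)\le \sqrt{\tfrac{2}{\pi}}\,(t-s)\Big(\int h(\phi(s))^2\,\mu(d\phi)\Big)^{1/2}.
\end{equation*}
The left side is continuous in $\mu$ when $h$ is continuous and bounded, and it is uniformly integrable on $\mathcal{K}_1$ because $|\phi(t)-\phi(s)|^2\le (t-s)\int_s^t\dot\phi^2$. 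So it passes to the limit from $\mu_n$ to $\mu$.

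Here the test function may depend on the path only through $\phi(s)$, whereas the $\mu_n$ bound available is for test functions depending on the coordinate index. To get a pathwise bound on $\dot\phi$, I would first strengthen the inequality: replace $h(\phi(s))$ by $H(\phi|_{[0,s]})$ for continuous bounded functionals $H$ of the path up to time $s$. The resulting family of inequalities characterizes the bound $\int \dot\phi(r)^2\,\mu(d\phi)\le 2/\pi$ for a.e.\ $r$. Indeed, take the supremum over $H$ with $\int H^2\,d\mu\le1$ and use density of such $H$ in $L^2(\mu)$ restricted to $\sigma(\phi(r'),r'\le s)$. This gives $\|\E_\mu[\phi(t)-\phi(s)\mid\mathcal{F}_s]\|_{L^2(\mu)}\le\sqrt{2/\pi}\,(t-s)$. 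Divide by $t-s$ and let $t\downarrow s$; since $\phi$ is absolutely continuous and $\dot\phi(s)$ is $\mathcal{F}_{s+}$-measurable, Lebesgue differentiation yields $\|\dot\phi(s)\|_{L^2(\mu)}\le\sqrt{2/\pi}$ for a.e.\ $s$. I expect this step to need some care about the left-continuous filtration, but it can be handled with backward differences instead: use $H$ measurable at time $s$ and increments on $[s-\delta,s]$, or simply $\mathcal{F}_t$-measurable $H$. Actually the cleanest route is to take $H$ depending on the whole path, and test against $\int_s^t\dot\phi(r)H(\phi)\,dr$ written as $\int (\phi(t)-\phi(s))H(\phi)\,d\mu$. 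Then the supremum over $H$ gives $\|\phi(t)-\phi(s)\|_{L^2(\mu)}\le\sqrt{2/\pi}(t-s)$, and differentiation gives the claim directly, with no conditioning at all.

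So the core finite-$n$ estimate I would prove is this: for any vector $\bm{c}\in\R^n$ (playing the role of $(H(a_i^n))_i$),
\begin{equation*}
\frac1n\sum_{i=1}^n c_i\big(a_i^n(t)-a_i^n(s)\big)\le \sqrt{\tfrac{2}{\pi}}\,(t-s)\Big(\frac1n\sum_i c_i^2\Big)^{1/2}+o(1),
\end{equation*}
uniformly for $\bm{c}$ with entries bounded by $\|H\|_\infty$. By \eqref{eq:lemma:limit-points-drift:fixed-path:assumption}, the integrand at time $r$ equals $\frac1n\sum_i c_i\,n\,\E[g Z^n_i]\le \E\big|\sum_i c_i\zeta_i/\sqrt n\big|$, where $\zeta\sim\nu^{\otimes n}$. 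This is the key step, and the main obstacle. I need
\begin{equation*}
\E\Big|\frac{1}{\sqrt n}\sum_i c_i\zeta_i\Big|\le \sqrt{\tfrac2\pi}\Big(\frac1n\sum c_i^2\Big)^{1/2}+o(1)
\end{equation*}
uniformly over $|c_i|\le K$. The plan is a quantitative CLT: apply Lindeberg (or Berry--Esseen with fourth moments) to the normalized sum. Its variance is $\frac1n\sum c_i^2$, and its Lindeberg term is at most $\frac{K^2}{n}\sum_i\E[\zeta_i^2;|\zeta_i|\ge\delta\sqrt n/K]\to0$. So the normalized sum is close in law to a Gaussian with matching variance, and the uniform integrability from bounded second moments turns this into closeness of first absolute moments, uniformly in $\bm c$ by a compactness/contradiction argument over sequences $\bm c^n$. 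Integrating in $r$ over $[s,t]$ then yields the displayed inequality.

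Finally, I would pass to the limit along the subsequence $\mu_{n}\to\mu$ for each fixed continuous bounded $H$ to obtain $\int(\phi(t)-\phi(s))H\,d\mu\le\sqrt{2/\pi}(t-s)\|H\|_{L^2(\mu)}$. Density of $C_b(\C)$ in $L^2(\mu)$ then gives $\|\phi(t)-\phi(s)\|_{L^2(\mu)}\le\sqrt{2/\pi}(t-s)$ for all $s<t$. By Jensen, for $\int_s^t\dot\phi$ this is equivalent, after a Lebesgue point argument in $L^2(\mu)$-valued Bochner form, to $\int\dot\phi(r)^2\,d\mu\le2/\pi$ for a.e.\ $r$. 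Hence $\mu\in\mathcal{K}_{2/\pi}$.
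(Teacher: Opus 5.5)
Your proposal is correct, and it takes a genuinely different route from the paper.

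\emph{The paper's route.} It fixes a time weight $h\ge 0$ and uses lower semicontinuity of $\mu\mapsto\int_0^1 h(t)\int\dot\phi(t)^2\,\mu(d\phi)\,dt$. It then bounds the finite-$n$ energy $r_n$ by testing $\dot{\bm a}^n(t)$ against a truncated copy of itself. That weight vector is time-dependent and unbounded. This forces the truncation at $|\dot a_i^n|\le\log n$ (to get Lindeberg), the $\eta r_n$-regularized normalization, Cauchy--Schwarz in time, a pointwise-in-$t$ CLT along subsequences, and reverse Fatou.

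\emph{Your route.} You test against time-independent bounded weights $c_i=H(a_i^n)$ with $H\in C_b(\C)$. Since $\bm a^n$ is deterministic, $\bm c\cdot n^{-1}\dot{\bm a}^n(r)\le\E|\bm c\cdot\bZ^n|$ holds for a.e.\ $r$, with a right-hand side independent of $r$. So the time integration is trivial. The CLT is needed only once, for a triangular array with weights bounded by $\|H\|_\infty$, where Lindeberg is immediate. Passing to the limit uses only weak convergence plus $\int|\phi(t)-\phi(s)|^2\,d\mu_n\le(t-s)^2$. The duality step is then done in $L^2(\mu)$, via density of $C_b(\C)$, giving $\|\phi(t)-\phi(s)\|_{L^2(\mu)}\le\sqrt{2/\pi}\,(t-s)$. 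The a.e.\ bound on $\int\dot\phi(r)^2\,d\mu$ follows by Lebesgue differentiation of $r\mapsto\int\Psi\,\dot\phi(r)\,d\mu$ over a countable dense set of $\Psi\in L^2(\mu)$.

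\emph{Comparison.} Your argument trades the paper's truncation and regularization for a short ``Lipschitz in $L^2(\mu)$ implies derivative bound'' step at the end. The paper's argument works directly with the quadratic energy at finite $n$ and needs no differentiation step. When writing it up, go straight to whole-path functionals $H$. The detour through $\mathcal{F}_s$-measurable test functions is unnecessary, because at finite $n$ the paths are deterministic and no adaptedness is required.
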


\begin{proof}
	Note first that by the same arguments as in Lemma~\ref{lemma:drift-vector:a-priori-bound}, $\mu_n\in\mathcal{K}_1$ for every $n$. Let us assume that $\mu$ is the limit of some subsequence $(\mu_n)_{n\in J}$ for $J\subset \N$. Since $\mathcal{K}_1$ is compact (and thereby closed), we know that $\mu\in \mathcal{K}_1$. To prove that $\mu\in\mathcal{K}_{2/\pi}$, it suffices to show that for any fixed $h:(0,1)\to\R$ with $h\geq 0$ and $\int_0^1 h(t)\,dt=1$
    \begin{equation}\label{eq:drift-limit-points:proof:ts}
        \int_0^1 h(t) \int_{H_0^1} \dot{\phi}(t)^2\,\mu(d\phi)\,dt \leq \frac{2}{\pi}.
    \end{equation}

    To start, let us define the modified paths
    \begin{equation*}
        \phi_i^n(t) \coloneqq \int_0^t \dot{a}_i^n(s)\mathbf{1}_{\{|\dot{a}_i^n(s)|\leq \log n\}}\,ds,\qquad t\in[0,1].
    \end{equation*}
    Then, $|\dot{\phi}_i^n(t)|\leq \log n$ and $(\dot{\phi}_i^n)^2=\dot{\phi}_i^n\dot{a}_i^n$, and along $n\in J$, the empirical law $\bar{\mu}_n=n^{-1}\sum_{i}\delta_{\phi_i^n}$ converges to the same limit $\mu$ since
    \begin{align*}
        \frac{1}{n}\sum_{i=1}^n \sup_{t\in[0,1]}|\phi_i^n(t) - a_i^n(t)| &\leq \frac{1}{n}\sum_{i=1}^n \int_0^1 |\dot{a}_i^n(t)|\mathbf{1}_{\{|\dot{a}_i^n(t)| > \log n\}}\,dt \\
        &\leq \frac{1}{\log n} \frac{1}{n}\sum_{i=1}^n \int_0^1 |\dot{a}_i^n(t)|^2\,dt  \leq \frac{1}{\log n}\longrightarrow 0,
    \end{align*}
    where the last inequality uses $\mu_n\in\mathcal{K}_1$.

    The expression on the left-hand side in \eqref{eq:drift-limit-points:proof:ts} is lower semicontinuous in $\mu$, and it is therefore enough to prove that
    \begin{equation}\label{eq:drift-limit-points:proof:ts-reduction}
        \liminf_{n\in J} r_n \leq \sqrt{2/\pi},\qquad \text{ where } r_n \coloneqq \bigg(\frac{1}{n}\sum_{i=1}^n \int_0^1 h(t) (\dot{\phi}_i^n(t))^2\,dt\bigg)^{1/2}.
    \end{equation}
    The claim is trivial if $\liminf_{n\in J} r_n = 0$, and so we may assume that $r_n \geq \delta > 0$ for all $n\in J$ and some $\delta >0$. Using $(\dot{\phi}_i^n)^2=\dot{\phi}_i^n\dot{a}_i^n$ and \eqref{eq:lemma:limit-points-drift:fixed-path:assumption}, we have
    \begin{equation}\label{eq:drift-limit-points:proof:rn-bound-1}
        r_n = \frac{1}{r_n}\int_0^1 h(t) \Big(\frac{1}{n}\sum_{i=1}^n \dot{\phi}_i^n(t) \dot{a}_i^n(t)\Big)\,dt \leq \frac{1}{r_n} \int_0^1 h(t)\, \E\Big|\sum_{i=1}^n \dot{\phi}_i^n(t) Z_i^n\Big|\,dt
    \end{equation}
    where $\bm{Z}^n$ is a random vector with $\sqrt{n}\bm{Z}^n\sim\nu^{\otimes n}$. For a fixed $\eta>0$, we define
    \begin{equation*}
        w_i^n(t) \coloneqq \frac{\dot{\phi}_i^n(t)}{\big(\eta^2 r_n^2 + n^{-1}\sum_{j=1}^n (\dot{\phi}_j^n(t))^2\big)^{1/2}},\qquad t\in[0,1].
    \end{equation*}
    Rewriting the right-hand side in \eqref{eq:drift-limit-points:proof:rn-bound-1} in terms of $w_i^n(t)$ and using Cauchy--Schwarz gives
    \begin{align}
        r_n
		&\leq \frac{1}{r_n} \bigg(\int_0^1 h(t) \Big(\E\Big|\sum_{i=1}^n w_i^n(t) Z_i^n\Big|\Big)^2\,dt\bigg)^{1/2}\bigg(\int_0^1 h(t) \Big(\eta^2 r_n^2 + \frac{1}{n}\sum_{j=1}^n (\dot{\phi}_j^n(t))^2\Big)\,dt\bigg)^{1/2} \notag\\
		&=\sqrt{1 + \eta^2} \bigg(\int_0^1 h(t)\Big(\E\Big|\sum_{i=1}^n w_i^n(t) Z_i^n\Big|\Big)^2\,dt\bigg)^{1/2}. \label{eq:drift-limit-points:proof:rn-bound-2}
    \end{align}

    We now reduce the proof to the following pointwise estimate:
	\begin{equation}\label{eq:drift-limit-points:proof:pointwise-estimate}
		\limsup_{n\in J}\E\Big|\sum_{i=1}^n w_i^n(t)Z_i^n\Big|
		\leq \sqrt{\frac{2}{\pi}}
		\qquad\text{for a.e.\ } t\in[0,1].
	\end{equation}
	Indeed, assume this estimate and take the $\limsup$ in \eqref{eq:drift-limit-points:proof:rn-bound-2}. The bound $\E|\sum_i w_i^n(t)Z_i^n| \leq 1$ gives the domination needed for the reverse Fatou lemma, yielding
	\begin{equation*}
		\limsup_{n\in J} r_n \leq \sqrt{1+\eta^2} \bigg(\int_0^1 h(t) \frac{2}{\pi} \,dt\bigg)^{1/2}
		= \sqrt{1+\eta^2} \sqrt{\frac{2}{\pi}}.
	\end{equation*}
	Letting $\eta\to0$ then gives \eqref{eq:drift-limit-points:proof:ts-reduction} and completes the proof.

    Let us prove the estimate \eqref{eq:drift-limit-points:proof:pointwise-estimate}. By construction of $\phi_i^n$ and $w_i^n$, together with the assumption that $r_n\geq\delta>0$, the set $\{t\in[0,1]: |w_i^n(t)|\leq(\eta\delta)^{-1}\log n\ \forall i,n\}$ has Lebesgue measure 1. Fix such a $t$, and set
	\begin{equation*}
		\Phi_n \coloneqq \sum_{i=1}^n w_i^n(t)Z_i^n.
	\end{equation*}
	Then $\Phi_n$ is a sum of independent mean-zero random variables, and
	\begin{equation*}
		\E\,\Phi_n^2
		= \frac{1}{n}\sum_{i=1}^n (w_i^n(t))^2
		= \frac{n^{-1}\sum_{i=1}^n(\dot{\phi}_i^n(t))^2}
		{\eta^2r_n^2+n^{-1}\sum_{i=1}^n(\dot{\phi}_i^n(t))^2}
		\leq 1.
	\end{equation*}
	Choose a subsequence $J'\subset J$ such that
	\begin{equation*}
		\lim_{n\in J'}\E|\Phi_n|
		= \limsup_{n\in J}\E|\Phi_n|,
		\qquad
		\lim_{n\in J'}\E\,\Phi_n^2 = v
	\end{equation*}
	for some $v\in[0,1]$. If $v=0$, then Cauchy--Schwarz gives $\lim_{n\in J'}\E|\Phi_n|=0$ and the bound in \eqref{eq:drift-limit-points:proof:pointwise-estimate} holds trivially. So let us assume $v>0$. We claim that $\Phi_n$ converges in law to $\mathcal{N}(0,v)$ along $n\in J'$. Since
	the variables $\Phi_n$ are bounded in $L^2$, this would imply
	\begin{equation*}
		\lim_{n\in J'} \E|\Phi_n|
		= \sqrt{\frac{2v}{\pi}}
		\leq \sqrt{\frac{2}{\pi}}
	\end{equation*}
	which proves \eqref{eq:drift-limit-points:proof:pointwise-estimate}. To apply the Lindeberg central limit theorem, it only remains to verify the Lindeberg condition. For $\epsilon>0$
    \begin{align*}
        \sum_{i=1}^n \E\big[(w_i^n(t) Z_i^n)^2;\, |w_i^n(t) Z_i^n|\geq \epsilon\big]
        &= \sum_{i=1}^n (w_i^n(t))^2\, \E[(Z_i^n)^2; |w_i^n(t) Z_i^n|\geq \epsilon ]\\
        &= \frac{1}{n}\sum_{i=1}^n (w_i^n(t))^2\, \psi\Big(\frac{|w_i^n(t)|}{\epsilon \sqrt{n}}\Big)
    \end{align*}
    where, for $\zeta\sim\nu$, we set $\psi(y)\coloneqq \E[\zeta^2;\,|\zeta|\geq y^{-1}]$ when $y>0$, and $\psi(0)\coloneqq 0$. Since $|w_i^n(t)|\leq (\eta\delta)^{-1} \log n$ and $n^{-1}\sum_i(w_i^n(t))^2\leq1$, the quantity above is bounded by $\psi(\log n/(\eta \delta \epsilon \sqrt{n}))$. This converges to zero as $n\to\infty$, and the Lindeberg condition follows.
\end{proof}

\begin{proof}[Proof of Proposition~\ref{prop:limit-points-drift}]
	Write $Q_n=\cL(\hat{\mu}_n^A)$ and let $Q$ be the limit of some subsequence $(Q_n)_{n\in J}$, $J\subset \N$. By the Skorohod representation theorem \cite[Theorem II.86.1]{rogers-williams-i}, we can extend our probability space $(\Omega,\mathcal{F},\mathbb{P})$ if necessary to find $\P(\C)$-valued random elements $(m_n)_{n\in J}$ and $m$ such that
	\begin{equation*}
		\cL(m_n) = Q_n,\quad \cL(m) = Q,\quad m_n\to m\text{ a.s.}
	\end{equation*}
	For each $n$, since $\cL(m_n)=\cL(\hat{\mu}_n^A)$, we may find a continuous $\R^n$-valued process $\tilde{\bm{A}}^{n}=(\tilde{\bm{A}}^n(t))_{t\in[0,1]}$ such that $m_n = n^{-1} \sum_{i=1}^n \delta_{\tilde{A}_i^n}$ and such that the unordered sets $\{A^n_1,\dots,A^n_n\}$ and $\{\tilde{A}^n_1,\dots,\tilde{A}^n_n\}$ have the same law. The assumption \eqref{eq:lemma:limit-points-drift:fixed-path:assumption} of Lemma~\ref{lemma:limit-points-drift:fixed-path} is invariant under coordinate permutations and it therefore carries over from the paths of $\bm{A}^n$ to those of $\tilde{\bm{A}}^n$. In particular, for almost every $\omega$, \eqref{eq:lemma:limit-points-drift:fixed-path:assumption} holds for $\bm{a}^n(t)=\tilde{\bm{A}}^n(t,\omega)$, and $m_n(\omega)\to m(\omega)$ weakly. For all such $\omega$, we obtain $m(\omega)\in \mathcal{K}_{2/\pi}$, and thus $Q[\mathcal{K}_{2/\pi}] = \PP[m\in \mathcal{K}_{2/\pi}] = 1$.
\end{proof}

\begin{theorem}\label{thm:limit-points}
	Any limit point $Q$ of the sequence $\big\{\cL(\hat{\mu}_n): n\in\N\big\}\subset\P(\P(\C^2))$ is a relaxed control for the problem $V^\infty$, in the sense that $Q[\mathcal{A}^\mathrm{w}] = 1$.
\end{theorem}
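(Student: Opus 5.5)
Fix a limit point $Q$ along a subsequence $J$. We must show that $Q$-a.e.\ $m$ satisfies \eqref{eq:thm:limit-points:ac0-paths}, \eqref{eq:thm:limit-points:l2-constraint} and \eqref{eq:thm:limit-points:bm}, where in the limit $B$ is the canonical second coordinate (the limit of $M^n$).

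\textbf{Step 1: the drift conditions.} The first two conditions depend only on the $A$-marginal. The map $\mu\mapsto\mu^A$ is continuous, so the $A$-marginal of $Q$ is a limit point of $\cL(\hat\mu_n^A)$. Proposition~\ref{prop:limit-points-drift} then gives $m^A\in\mathcal{K}_{2/\pi}$ for $Q$-a.e.\ $m$, which is exactly \eqref{eq:thm:limit-points:ac0-paths}--\eqref{eq:thm:limit-points:l2-constraint}.

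\textbf{Step 2: a martingale-problem characterization of \eqref{eq:thm:limit-points:bm}.} By L\'evy's characterization, $B$ is an $\mathbb{F}^{A,B}$-Brownian motion under $m$ iff, for all $0\le s<t\le1$, all bounded continuous $\Psi$ of $(A(r),B(r))_{r\le s}$, and $f(x)\in\{x,\,x^2\}$,
\[
F_{s,t,\Psi,f}(m) \coloneqq \int \Psi\,\big(f(B(t))-f(B(s))-\mathbf 1_{f=x^2}(t-s)\big)\,dm = 0 .
\]
It suffices to check this for $s,t$ rational and $\Psi$ ranging over a countable dense family. We will show $\E_Q|F(m)|=0$ for each such test. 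To pass to the limit we use uniform integrability: $\sup_n \frac1n\sum_i\E\sup_t|M_i^n(t)|^4<\infty$ by Burkholder's inequality and Lemma~\ref{lemma:sigma-moments}. With this bound, $m\mapsto F(m)$ can be approximated by continuous bounded functionals by truncating $f$, so that $\E_Q|F|\le\liminf_n\E|F(\hat\mu_n)|$.

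\textbf{Step 3: the prelimit computation (main obstacle).} The key quantity is
\[
F(\hat\mu_n)=\frac1n\sum_{i=1}^n \Psi_i\,\Delta_i ,
\]
where $\Psi_i$ is $\cG^n(\lfloor ns\rfloor)$-measurable and $\Delta_i$ is the (interpolated) martingale increment of $M_i^n$ over $[s,t]$, or of $(M_i^n)^2$ minus $\frac{\lfloor nt\rfloor-\lfloor ns\rfloor}{n}$; interpolation errors are $O(n^{-1/2})$ in $L^2$. I will bound $\E|F(\hat\mu_n)|^2$.

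For $f(x)=x$, Lemma~\ref{lemma:sigma-moments}(1) gives
\[
\E\Big|\frac1n\sum_i\Psi_i\Delta_i\Big|^2\le\frac{\|\Psi\|_\infty^2}{n^2}\sum_k \E\Big[\boldsymbol{\Psi}^\top \Cov(\bm\sigma^n(k)\mid\cG^n(k-1))\,\boldsymbol\Psi\Big]\le \frac{\|\Psi\|_\infty^2}{n^2}\cdot n\cdot\frac{n}{n}\to0 .
\]
The point is that the covariance is dominated by $n^{-1}I_n$, so cross-coordinate correlations cannot accumulate. For $f(x)=x^2$, write
\[
(M_i^n)^2(t)-(M_i^n)^2(s)-(t-s)=2\sum_k M_i\sigma_i(k)+\sum_k\big(\sigma_i(k)^2-\E[\sigma_i(k)^2\mid\cG^n(k-1)]\big)-\sum_k b_i(k-1)^2 .
\]
The first sum is handled as above, using the fourth-moment bound on $M$ to control the weights. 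The last sum averages over $i$ to $\sum_k|\bm b^n(k-1)|_2^2\le1/n$. The middle term is the delicate one, because the $\sigma_i(k)^2$ are correlated across $i$ through the common sign $\eps^n(k)$. However, $\sigma_i^2=(\eps Z_i-b_i)^2=Z_i^2-2\eps Z_ib_i+b_i^2$, and $Z_i^2$ does not depend on the sign. The independent part $\frac1n\sum_i\Psi_i(Z_i^2-1/n)$ has variance $O(n^{-3})$ per step, hence $O(n^{-2})$ in total. The remaining part is bounded by $\frac1n\sum_i|Z_ib_i|\le\frac1n|\bm Z|_2|\bm b|_2=O(n^{-3/2})$ per step, so it sums to $o(1)$.

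Hence $\E|F(\hat\mu_n)|\to0$. Combined with Step 2, this gives $Q[\mathcal{A}^{\rm w}]=1$.
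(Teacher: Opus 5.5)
Your proof is correct. Its skeleton matches the paper's: Proposition~\ref{prop:limit-points-drift} handles the drift, and a martingale-problem characterization of $B$ is then verified in the prelimit. In both, the bound $\Cov(\bm\sigma^n(k)\mid\cG^n(k-1))\le n^{-1}I_n$ is what prevents cross-coordinate correlations from accumulating in the linear martingale term.

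The genuine difference is the choice of test functions.
\begin{itemize}
\item The paper uses $\phi\in C_c^\infty(\R)$. Then $m\mapsto G_{s,t,\phi,h}(m)$ is bounded and continuous, so it passes to the limit for free. The price is a Taylor expansion whose third-order remainder and Riemann-sum error (the terms $G'_{n,2}$ and $G'_{n,4}$) must be estimated.
\item You use $f(x)=x$ and $f(x)=x^2$. The discrete identity $M_i(k)^2-M_i(k-1)^2=2M_i(k-1)\sigma_i(k)+\sigma_i(k)^2$ is exact, so there is no remainder.
\item Your price is the uniform fourth-moment bound (Burkholder plus the fourth moment of $\nu$) and a truncation argument to pass through unbounded functionals. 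This works as you sketch it. You should add that $\E_Q\int B(t)^4\,dm\le\liminf_n \frac1n\sum_i\E|M_i^n(t)|^4$, by lower semicontinuity. This inequality both controls $\E_Q|F-F_R|$ and guarantees $B(t)\in L^2(m)$ for $Q$-a.e.\ $m$, so that $F(m)$ is even defined.
\item Centering with $\E[\sigma_i(k)^2\mid\cG^n(k-1)]$ rather than $1/n$ gives you genuine martingale differences. This avoids the non-vanishing $k<\ell$ cross terms the paper has to handle in $G'_{n,3}$.
\end{itemize}

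The term you call delicate is not actually delicate. The crude bound $|\Cov(\sigma_i(k)^2,\sigma_j(k)^2\mid\cG^n(k-1))|\le C/n^2$, applied to all $n^2$ pairs $(i,j)$ as the paper does, already gives $O(n^{-2})$ per step after the $n^{-2}$ normalization. So the correlation through the common sign needs no special treatment. Your decomposition $\sigma_i^2=Z_i^2-2\eps Z_ib_i+b_i^2$ is correct, just unnecessary.

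Two small slips:
\begin{itemize}
\item $\Psi_i$ is in general only $\cG^n(\floor{ns}+1)$-measurable, not $\cG^n(\floor{ns})$-measurable, because $M^n$ on $[\floor{ns}/n,s]$ involves $\sigma^n(\floor{ns}+1)$. This is why the paper starts its sums at $\floor{ns}+2$. Your one-step interpolation error absorbs it.
\item The drift-correction bound should read $\frac1n\sum_k|\bm b^n(k-1)|_2^2\le 1/n$, not $\sum_k|\bm b^n(k-1)|_2^2\le 1/n$.
\end{itemize}
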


\begin{proof}
	Let $Q$ be a limit point of $\big\{\cL(\hat{\mu}_n): n\in\N\big\}$. Consider the canonical space $\C^2$, with $A$ and $B$ denoting the canonical processes and $\mathbb{F}^{A,B}$ denoting the filtration generated by $(A,B)$. By the definition of $\mathcal{A}^\mathrm{w}$, we need to verify that for $Q$-almost every $m\in\P(\C^2)$, under $m$
	\begin{gather*}
		A=(A(t))_{t\in[0,1]} \text{ has absolutely continuous paths and } A(0)=0,\\
		\E\,\dot{A}(t)^2 \leq \frac{2}{\pi},\quad \text{for a.e.\ } t\in[0,1],\\
		B=(B(t))_{t\in[0,1]} \text{ is an $\mathbb{F}^{A,B}$-Brownian motion}.
	\end{gather*}
	The first two properties follow immediately from Proposition~\ref{prop:limit-points-drift}. It remains to prove that the third property holds as well.\par

	We use the following version of Lévy's characterization of Brownian motion. For $0\leq s<t\leq 1$, $\phi\in C_c^\infty(\R)$, and $h\in C_b(C[0,s])$, we define a map $G_{s,t,\phi,h}:\P(\C^2)\to \R$ by
	\begin{equation*}
		G_{s,t,\phi,h}(m) \coloneqq \int_{\C^2} h(a|_{[0,s]},x|_{[0,s]})\left(\phi(x(t)) - \phi(x(s)) - \frac{1}{2}\int_s^t \phi''(x(u))\,du\right)\, m(da\,dx)
	\end{equation*}
	Then, $B$ is an $\mathbb{F}^{A,B}$-Brownian motion under a fixed $m$ if and only if $G_{s,t,\phi,h}(m)=0$ for all choices of $(s,t,\phi,h)$. In particular, this holds true for $Q$-almost every $m$ if and only if $\int |G_{s,t,\phi,h}(m)|\,Q(dm)=0$ for all $(s,t,\phi,h)$.\footnote{Here, we use that we can restrict our attention to a suitable countable and dense set of $(s,t,\phi,h)$.} Since $|G_{s,t,\phi,h}|$ is a bounded continuous function on $\P(\C^2)$ and $\cL(\hat{\mu}_n)$ converges to $Q$ along some subsequence, it suffices to show that as $n\to\infty$
	\begin{equation*}
		\E\big|G_{s,t,\phi,h}(\hat{\mu}_n)\big| = \int |G_{s,t,\phi,h}|\,d\cL(\hat{\mu}_n) \to 0,\quad\text{ for all } (s,t,\phi,h).
	\end{equation*}

	So let us fix $(s,t,\phi,h)$ and write $G_n=G_{s,t,\phi,h}(\hat{\mu}_n)$. Then, by the definition of $\hat{\mu}_n$,
	\begin{equation*}
		G_n = \frac{1}{n}\sum_{i=1}^n h_i^n \bigg(\phi\big(M_i^n(t)\big) - \phi\big(M_i^n(s)\big) - \frac{1}{2}\int_s^t \phi''\big(M_i^n(u)\big)\,du\bigg),
	\end{equation*}
	where $h_i^n\coloneqq h(A_i^n|_{[0,s]}, M_i^n|_{[0,s]})$. Write $s_n = (\floor{ns} + 1)/n$ and $t_n = \floor{nt}/n$, and let $n$ be large enough so that $s\leq s_n < t_n \leq t$. We also define
	\begin{align*}
		G_n' &= \frac{1}{n}\sum_{i=1}^n h_i^n \bigg(\phi\big(M_i^n(t_n)\big) - \phi\big(M_i^n(s_n)\big) - \frac{1}{2}\int_{s_n}^{t_n} \phi''\big(M_i^n(u)\big)\,du\bigg).
	\end{align*}
	We prove that $\E|G_n-G_n'| \to 0$ and $\E|G_n'| \to 0$.

	To see that $\E|G_n-G_n'| \to 0$, note that for any $i$
	\begin{align*}
		&\bigg|\bigg(\phi(M_i^n(t)) - \phi(M_i^n(s)) - \frac{1}{2}\int_s^t \phi''(M_i^n(u))\,du \bigg) \\
		&\quad -\bigg(\phi(M_i^n(t_n)) - \phi(M_i^n(s_n)) - \frac{1}{2}\int_{s_n}^{t_n} \phi''(M_i^n(u))\,du\bigg)\bigg|\\
		&\leq \|\phi'\|_{\infty}\big(|M_i^n(t) - M_i^n(t_n)| + |M_i^n(s_n) - M_i^n(s)|\big) + \frac{1}{2}\|\phi''\|_{\infty}\big(|t-t_n| + |s_n - s|\big) \\
		&\leq \|\phi'\|_{\infty}\big(|\sigma_i^n(\floor{nt}+1)| + |\sigma_i^n(\floor{ns}+1)|\big) + \frac{\|\phi''\|_{\infty}}{n}.
	\end{align*}
	It follows that
	\begin{align*}
		\E|G_n - G_n'|
		&\leq \frac{\|h\|_{\infty}\|\phi'\|_{\infty}}{n} \sum_{i=1}^n \E\big[|\sigma_i^n(\floor{nt}+1)| + |\sigma_i^n(\floor{ns}+1)|\big] + \frac{\|h\|_{\infty}\|\phi''\|_{\infty}}{n} \longrightarrow 0
	\end{align*}
	since $\E|\sigma_i^n(k)| \leq n^{-1/2}$ for any $k$ by Lemma~\ref{lemma:sigma-moments}.

	We now show that $\E|G_n'| \to 0$. By Taylor's theorem, we have for $1\leq i,k\leq n$
	\begin{align*}
		\phi\big(M_i^n(\tfrac{k}{n})\big) - \phi\big(M_i^n(\tfrac{k-1}{n})\big)
		&= \phi'\big(M_i^n(\tfrac{k-1}{n})\big)\big(M_i^n(\tfrac{k}{n}) - M_i^n(\tfrac{k-1}{n})\big) \\
		&\qquad + \frac{1}{2} \phi''(\xi_{ik}^n) \big(M_i^n(\tfrac{k}{n}) - M_i^n(\tfrac{k-1}{n})\big)^2 \\
		&= \phi'\big(M_i^n(\tfrac{k-1}{n})\big)\sigma_i^n(k) + \frac{1}{2} \phi''(\xi_{ik}^n) \sigma_i^n(k)^2,
	\end{align*}
	where $\xi_{ik}^n$ is some intermediate value between $M_i^n(\tfrac{k}{n})$ and $M_i^n(\tfrac{k-1}{n})$. Summing over $k$ from $\floor{ns}+2$ to $\floor{nt}$ gives
	\begin{align*}
		\phi\big(M_i^n(t_n)\big) - \phi\big(M_i^n(s_n)\big)
		&= \sum_{k=\floor{ns}+2}^{\floor{nt}} \phi'\big(M_i^n(\tfrac{k-1}{n})\big)\sigma_i^n(k) + \frac{1}{2} \sum_{k=\floor{ns}+2}^{\floor{nt}}\phi''(\xi_{ik}^n) \sigma_i^n(k)^2.
	\end{align*}
	We can thus decompose $G_n'$ as follows:
	\begin{equation*}\label{eq:thm:limit-points:g-decomposition}
		G_n' = G_{n,1}' + G_{n,2}' + G_{n,3}' + G_{n,4}'
	\end{equation*}
	where
	\begin{align*}
		G_{n,1}' &\coloneqq \frac{1}{n}\sum_{i=1}^n h_i^n \Bigg(\sum_{k=\floor{ns}+2}^{\floor{nt}} \phi'\big(M_i^n(\tfrac{k-1}{n})\big)\sigma_i^n(k)\Bigg), \\
		G_{n,2}' &\coloneqq \frac{1}{2n} \sum_{i=1}^n h_i^n \Bigg(\sum_{k=\floor{ns}+2}^{\floor{nt}}\big(\phi''(\xi_{ik}^n)- \phi''\big(M_i^n(\tfrac{k-1}{n})\big)\big) \sigma_i^n(k)^2\Bigg), \\
		G_{n,3}' &\coloneqq \frac{1}{2n} \sum_{i=1}^n h_i^n \Bigg(\sum_{k=\floor{ns}+2}^{\floor{nt}}\phi''\big(M_i^n(\tfrac{k-1}{n})\big) \big(\sigma_i^n(k)^2 - 1/n\big)\Bigg), \\
		G_{n,4}' &\coloneqq \frac{1}{2n} \sum_{i=1}^n h_i^n \Bigg(\frac{1}{n}\sum_{k=\floor{ns}+2}^{\floor{nt}}\phi''\big(M_i^n(\tfrac{k-1}{n})\big) - \int_{s_n}^{t_n} \phi''\big(M_i^n(u)\big)\,du\Bigg).
	\end{align*}
	We will show that each of the terms converges to zero in $L^1(\mathbb{P})$.\smallskip

    \noindent
	\textit{The term $G_{n,1}'$:} Taking squares, we have
	\begin{equation*}
		(G_{n,1}')^2 = \frac{1}{n^2} \sum_{i,j=1}^n \sum_{k,\ell=\floor{ns}+2}^{\floor{nt}} g_{ik}^n g_{j\ell}^n \sigma_i^n(k)\sigma_j^n(\ell),
	\end{equation*}
	where $g_{ik}^n = h_i^n \phi'(M_i^n(\tfrac{k-1}{n}))$. Since $h_i^n$ is $\G^n(\floor{ns}+1)$-measurable, $g_{ik}^n$ is $\G^n(k-1)$-measurable, and for $k<\ell$, we get
	\begin{equation*}
		\E\big[g_{ik}^n g_{j\ell}^n \sigma_i^n(k)\sigma_j^n(\ell)\big] = \E\big[g_{ik}^n g_{j\ell}^n \sigma_i^n(k) \E\big[\sigma_j^n(\ell)\,|\,\G^n(\ell-1)\big]\big] = 0,
	\end{equation*}
	where we recall that $(\sigma_i^n(k))_{k=1,\dots,n}$ forms a martingale difference sequence in the filtration $(\G^n(k))_{k=0,\dots,n}$. Similarly, the terms with $k>\ell$ vanish in expectation, and we obtain
	\begin{equation*}
		\E[(G_{n,1}')^2] = \frac{1}{n^2} \sum_{i,j=1}^n \sum_{k=\floor{ns}+2}^{\floor{nt}} \E\big[g_{ik}^n g_{jk}^n \sigma_i^n(k)\sigma_j^n(k)\big].
	\end{equation*}
	Now, for each $k=\floor{ns}+2,\dots,\floor{nt}$,
	\begin{align*}
		\sum_{i,j=1}^n \E\big[g_{ik}^n g_{jk}^n \sigma_i^n(k)\sigma_j^n(k)\,\big|\,\G^n(k-1)\big]
		&= \sum_{i,j=1}^n g_{ik}^n g_{jk}^n \E\big[\sigma_i^n(k)\sigma_j^n(k)\,\big|\,\G^n(k-1)\big] \leq \frac{1}{n} \sum_{i=1}^n \big(g_{ik}^n\big)^2
	\end{align*}
	because $\Cov(\bm{\sigma}^n(k)\,|\,\G^n(k-1))\leq n^{-1} I_n$ in semidefinite order by Lemma~\ref{lemma:sigma-moments}. The right-hand side is bounded by $C=\|h\|_{\infty}^2 \|\phi'\|_{\infty}^2$, and we obtain
	\begin{equation*}
		\E\big[(G_{n,1}')^2\big] \leq \frac{C}{n^2}(\floor{nt} - \floor{ns} - 1) \longrightarrow 0.
	\end{equation*}\smallskip

    \noindent
	\textit{The term $G_{n,2}'$:} We have
	\begin{align*}
		|G_{n,2}'| &\leq \frac{1}{2n} \sum_{i=1}^n |h_i^n|\sum_{k=\floor{ns}+2}^{\floor{nt}}\big|\phi''(\xi_{ik}^n)- \phi''\big(M_i^n(\tfrac{k-1}{n})\big)\big| \sigma_i^n(k)^2 \\
		&\leq \frac{\|h\|_\infty \|\phi'''\|_\infty}{2n} \sum_{i=1}^n \sum_{k=\floor{ns}+2}^{\floor{nt}} \sigma_i^n(k)^4
	\end{align*}
	where we used that
	\begin{equation*}
		\big|\xi_{ik}^n - M_i^n(\tfrac{k-1}{n})\big| \leq \big|M_i^n(\tfrac{k}{n}) - M_i^n(\tfrac{k-1}{n})\big| = \sigma_i^n(k)^2.
	\end{equation*}
	By assumption, $\nu$ has a fourth moment, and so $\E[\sigma_i^n(k)^4] \leq C/n^2$ by Lemma~\ref{lemma:sigma-moments}. It follows that $\E|G_{n,2}'| \to 0$.\smallskip

	\noindent
	\textit{The term $G_{n,3}'$:}
	Taking squares, we have
	\begin{align*}
		(G_{n,3}')^2 = \frac{1}{n^2} \sum_{i,j=1}^n \sum_{k,\ell=\floor{ns} + 2}^{\floor{nt}} g_{ik}^n g_{j\ell}^n \big(\sigma_i^n(k)^2 - 1/n\big)\big(\sigma_j^n(\ell)^2 - 1/n\big)
	\end{align*}
	where we denote $g_{ik}^n = \tfrac{1}{2} h_i^n \phi''(M_i^n(\tfrac{k-1}{n}))$.
	Let us consider the summands for the cases $k=\ell$ and $k\neq \ell$. When $k=\ell$, note that
	\begin{align*}
		&\E\Big[\big(\sigma_i^n(k)^2 - 1/n\big)\big(\sigma_j^n(k)^2 - 1/n\big)\,\Big|\, \G^n(k-1)\Big] \\
		&\qquad= \mathrm{Cov}\big(\sigma_i^n(k)^2, \sigma_j^n(k)^2\,\big|\,\G^n(k-1)\big) + b_i^n(k-1)^2 b_j^n(k-1)^2
	\end{align*}
	since $\E\big[\sigma_i^n(k)^2 - 1/n\,\big|\,\G^n(k-1)\big] = -b_i^n(k-1)^2$ by the first part of Lemma~\ref{lemma:sigma-moments}. Thus,
	\begin{align*}
		&\hphantom{=}\frac{1}{n^2} \sum_{i,j=1}^n g_{ik} ^n g_{jk}^n \E\Big[\big(\sigma_i^n(k)^2 - 1/n\big)\big(\sigma_j^n(k)^2 - 1/n\big)\,\Big|\, \G^n(k-1)\Big] \\
		&= \frac{1}{n^2} \sum_{i,j=1}^n g_{ik} ^n g_{jk}^n \mathrm{Cov}\big(\sigma_i^n(k)^2, \sigma_j^n(k)^2\,\big|\,\G^n(k-1)\big) + \frac{1}{n^2}\bigg(\sum_{i=1}^n g_{ik}^n b_i^n(k-1)^2\bigg)^2.
	\end{align*}
    The final term is bounded by $C^2/n^4$ where  $C \ge \|h\|_{\infty}\|\phi''\|_{\infty} \ge |g^n_{ik}|$, because  $|\bm{b}^n(k-1)|_2^2 \leq 1/n$ by Lemma~\ref{lemma:drift-vector:a-priori-bound}. We also have
	\begin{equation*}
		\big|\Cov\big(\sigma_i^n(k)^2, \sigma_j^n(k)^2\,\big|\,\G^n(k-1)\big)\big| \leq \sqrt{\E[\sigma_i^n(k)^4]}\sqrt{ \E[\sigma_j^n(k)^4]} \leq \frac{C}{n^2}
	\end{equation*}
	by Lemma~\ref{lemma:sigma-moments} for some $C>0$. It follows that
	\begin{equation*}
		\frac{1}{n^2} \sum_{i,j=1}^n g_{ik} ^n g_{jk}^n \E\Big[\big(\sigma_i^n(k)^2 - 1/n\big)\big(\sigma_j^n(k)^2 - 1/n\big)\,\big|\, \G^n(k-1)\Big] \leq \frac{C}{n^2} + \frac{C}{n^4}
	\end{equation*}
	and in turn
	\begin{equation*}
		\E\bigg[\frac{1}{n^2} \sum_{i,j=1}^n \sum_{k=\floor{ns} + 2}^{\floor{nt}} g_{ik}^n g_{jk}^n \big(\sigma_i^n(k)^2 - 1/n\big)\big(\sigma_j^n(k)^2 - 1/n\big)\bigg] \longrightarrow 0.
	\end{equation*}
	It remains to show that the off-diagonal terms of $(G_{n,3}')^2$ vanish in expectation, i.e.,
	\begin{equation}
		\E\bigg[\frac{1}{n^2} \sum_{i,j=1}^n \sum_{\substack{k,\ell=\floor{ns} + 2 \\ k<\ell}}^{\floor{nt}} g_{ik}^n g_{j\ell}^n \big(\sigma_i^n(k)^2 - 1/n\big)\big(\sigma_j^n(\ell)^2 - 1/n\big)\bigg] \longrightarrow 0. \label{pf:Gn3'-1}
	\end{equation}
	For $k<\ell$, we have
	\begin{equation*}
		\E\Big[\big(\sigma_i^n(k)^2 - 1/n\big)\big(\sigma_j^n(\ell)^2 - 1/n\big)\,\big|\, \G^n(\ell-1)\Big] = - \big(\sigma_i^n(k)^2 - 1/n\big) b_j^n(\ell-1)^2,
	\end{equation*}
	and
	\begin{align*}
		-\frac{1}{n^2} \sum_{i,j=1}^n g_{ik}^n g_{j\ell}^n (\sigma_i^n(k)^2 - 1/n) b_j^n(\ell-1)
		&= -\frac{1}{n^2} \bigg(\sum_{i=1}^n g_{ik}^n(\sigma_i^n(k)^2 - 1/n)\bigg)\bigg(\sum_{j=1}^n g_{j\ell}^n b_j^n(\ell-1)^2\bigg) \\
		&\leq \frac{C}{n^3} \sum_{i=1}^n \big|\sigma_i^n(k)^2 - 1/n\big|,
	\end{align*}
	using again that $g$ is bounded and $|\bm{b}^n(\ell-1)|_2^2\leq 1/n$. This is bounded in expectation by $C/n^3$ because $\E\big|\sigma_i^n(k)^2 - 1/n\big|\leq 2/n$, and we deduce that the left-hand side of \eqref{pf:Gn3'-1} is bounded by $C/n$. We have thus shown that $\E[(G_{n,3}')^2]\to 0$.\smallskip

	\noindent
	\textit{The term $G_{n,4}'$:}
	We have
	\begin{align*}
		G_{n,4}'
		= \frac{1}{2n} \sum_{i=1}^n h_i^n \Bigg(\sum_{k=\floor{ns}+2}^{\floor{nt}}\int_{\frac{k-1}{n}}^{\frac{k}{n}} \phi''\big(M_i^n(\tfrac{k-1}{n})\big) - \phi''\big(M_i^n(u)\big)\,du\Bigg).
	\end{align*}
	For $(k-1)/n\leq u\leq k/n$, we have
	\begin{equation*}
		\E\big|\phi''\big(M_i^n(\tfrac{k-1}{n})\big) - \phi''\big(M_i^n(u)\big)\big| \leq \|\phi'''\|_{\infty} (nu+1-k)\E|\sigma_i^n(k)| \leq \|\phi'''\|_{\infty}C/\sqrt{n}
	\end{equation*}
	by the definition of $M_i^n(u)$ and Lemma~\ref{lemma:sigma-moments}. It easily follows that $\E|G_{n,4}'|\to 0$, completing the proof.
\end{proof}

\subsection{Completion of the proof}

We are now ready to prove the lower bound \eqref{pf:lowerbound-final} of Theorem~\ref{th:intro}.
	Consider the canonical space $\C^2$ of continuous paths $[0,1]\to\R^2$, with $A$ and $B$ denoting the canonical processes. We define a map $\mathcal{E}:\P(\C^2)\to[0,\infty]$ by sending $m\in\P(\C^2)$ to $\|A(1)+B(1)\|_{L^\infty(m)}=[\cL(A(1)+B(1))]_{\infty}$. The map $[\,\cdot\,]_{\infty} : \cP(\R) \to [0,\infty]$ is lower semicontinuous, as it can be written as the supremum of the lower semicontinuous functions $\mu\mapsto r\mathbf{1}_{A_r}(\mu)$, $r\geq 0$, where $A_r\coloneqq \{\mu\in\cP(\R): \mu([-r,r]^c) > 0\}$. Hence, the map $\mathcal{E}$ is lower semicontinuous, and in turn, $Q\mapsto \int \mathcal{E}\,dQ$ defines a lower semicontinuous map from $\P(\P(\C^2))$ to $[0,\infty]$.\par
	We have
	\begin{equation*}
		\E|\bm{Y}^n(n)|_\infty = \E\big[\mathcal{E}(\hat{\mu}_{n})\big] = \int \mathcal{E}\,d\cL(\hat{\mu}_{n}).
	\end{equation*}
	Now, to see that
	\begin{equation*}
		V^\infty \leq \liminf_{n\to\infty} \E|\bm{Y}^n(n)|_\infty,
	\end{equation*}
	we take a subsequence ${n_k}$ such that $\E|\bm{Y}^{n_k}(n_k)|_\infty$ converges to the right-hand side. As shown in Section~\ref{sec:lower-bound:tightness}, the sequence $\{\cL(\hat{\mu}_n):n\in\N\}$ is tight, and so we may assume without loss of generality that $\cL(\hat{\mu}_{n_k})$ converges to some $Q\in\P(\P(\C^2))$. By Theorem~\ref{thm:limit-points}, we have $Q[\mathcal{A}^\mathrm{w}]=1$, and so
	\begin{equation*}
		\lim_{k\to\infty} \E|\bm{Y}^{n_k}(n_k)|_\infty = \lim_{k\to\infty} \int_{\P(\C^2)} \mathcal{E}\,d\cL(\hat{\mu}_{n_k}) \geq \int_{\P(\C^2)} \mathcal{E} \,dQ \geq \inf_{m\in\mathcal{A}^\mathrm{w}} \mathcal{E}(m) = V^\infty.
	\end{equation*}
This completes the proof of \eqref{pf:lowerbound-final}.

\section{Upper Bound}
\label{sec:upper-bound}

In this section, we prove the upper bound of Theorem \ref{th:intro}, namely that
\begin{equation}
V^\infty \ge \limsup_{n\to\infty} V^n. \label{pf:upperbound-final}
\end{equation}
Throughout this section, the increment distribution appearing in the definition of $V^n$ is Gaussian; $\nu = \gamma$, and we will write $\cstar\coloneqq \sqrt{2/\pi}$ to ease notation.

\subsection{Growth bounds for \texorpdfstring{$\cV^n$}{Vn}} \label{sec.upperbounds}

The main goal of this section is to obtain an upper bound on the functions $\cV^n$ that is not necessarily sharp, but which is tractable and uniform in $n$. Ultimately, we will prove that for each $p \in (4,\infty)$, we have the bound
\begin{align}\label{vn.upperbound.intro}
    \cV^n(k,\bx) \leq \cU_p(k/n,m_{\bx}^n) + o_n(1),
\end{align}
where $m_{\bx}^n \coloneqq n^{-1}\sum_{i=1}^n \delta_{x_i}$, $\cU_p : [0,1] \times \cP_2(\R) \to \R$, and $o_n(1)$ denotes a constant which is independent of $(k,\bx)$ (but may depend on the parameter $p$), and vanishes as $n \to \infty$.

The function $\cU_p(t,m)$ is constructed from a specific admissible strategy for the continuous control problem $\cV^\infty(t,m)$. More precisely, instead of working with the original uniform $L^2$ bound $\cstar=\sqrt{2/\pi}$, we restrict attention to controls satisfying a stronger uniform $L^p$ bound for $p>2$, and we split the resulting $L^p$ budget equally between two parts.

One part of the budget is used in a deterministic way to shrink the initial law $m$ towards the origin. This motivates the quantity $\cU_p^{\mathrm{det}}(t,m)$, which measures the smallest essential supremum that can be achieved in this way. The other part of the budget is used in a construction based on the F\"ollmer drift; see Lemma \ref{lem.follmer}. We introduce a class of terminal laws that can be realized using F\"ollmer's construction, starting from the origin and with only half of the $L^p$ budget. We define \(\cU_p^{\mathrm{F}}(t)\) to be the smallest essential supremum among laws in this class. Proposition~\ref{prop.ulequp} shows that this budget-splitting strategy is admissible for the problem \(\cV^\infty(t,m)\), and therefore yields
\[
   \cV^\infty(t,m) \leq \cU_p(t,m) \coloneqq \cU_p^{\mathrm{det}}(t,m) + \cU_p^{\mathrm{F}}(t).
\]

The same function $\cU_p$ will then serve as an upper bound for $\cV^n$, up to an error term $o_n(1)$, in the sense of \eqref{vn.upperbound.intro}. This is established in Proposition~\ref{prop.vnupperbound}, and constitutes one of the main difficulties in the proof of \eqref{pf:upperbound-final}. In the next two subsections, we introduce $\cU_p^{\mathrm{det}}$ and $\cU_p^{\mathrm{F}}$ precisely and establish their basic properties before turning to the proof of Proposition~\ref{prop.vnupperbound}.

Throughout Section \ref{sec.upperbounds}, it will be convenient to fix a filtered probability space $\big(\Omega, \cF, \bbF = (\cF(t))_{0 \leq t \leq 1}, \bP\big)$ satisfying the usual conditions such that $\cF(0)$ is rich enough (i.e., for any $\mu \in \cP(\R)$, we can find a $\cF(0)$-measurable random variable $X$ with $\cL(X) = \mu$), and hosting independent $\bbF$-Brownian motions $B$ and $(B_i)_{i \in \N}$.

\subsubsection{The F\"ollmer drift construction and $\cU_p^{\mathrm{F}}$}

Let $t_0 \in [0,1]$. Define $\cA_p^{\mathrm{F}}(t_0)$ to be the set of absolutely continuous, even, compactly supported functions $f:\R\to[0,\infty)$ such that
\begin{equation*}
    \int_\R f\, d \gamma_{1-t_0} = 1, \qquad \bigg(\int_{\{f>0\}} \frac{|f'(x)|^p}{f(x)^{p-1}}\,\gamma_{1-t_0}(dx)\bigg)^{1/p} \leq \frac{\cstar}{2}.
\end{equation*}
We remind the reader that $\gamma_v$ is the Gaussian measure on $\R$ with mean $0$ and variance $v$. For each $p \geq 2$, we then define $\cU^{\mathrm{F}}_p : [0,1] \to \R$ by
\begin{align*}
    \cU^{\mathrm{F}}_p (t_0) \coloneqq \inf \Big\{[f]_{\infty} : {f \in \cA_p^{\mathrm{F}}(t_0)}\Big\}, \qquad [f]_{\infty} \coloneqq \inf \big\{ r > 0 : f = 0 \text{ on } [-r,r]^c \big\}.
\end{align*}
We note that we interpret $\gamma_0$ as $\delta_0$, and it is easy to check that $\cU_p^{\mathrm{F}}(1) = 0$.
Each $f \in \cA_p^{\mathrm{F}}(t_0)$ determines a drift process $\alpha^f = (\alpha^f(t))_{t_0 \leq t \leq 1}$ through the following procedure. First, we define
\begin{align} \label{def.v}
 v : [t_0,1] \times \R \to \R, \quad v(t,x) \coloneqq \log P_{1-t} f(x),
\end{align}
where $P_s$ is the heat semigroup, defined by $P_s f(x) \coloneqq \E[f(x + B(s))]$.

\begin{lemma} \label{lem.follmer}
    Fix $t_0 \in [0,1)$, $p \geq 2$, and $f \in \cA_p^{\mathrm{F}}(t_0)$.
Define $v$ by \eqref{def.v}. Then, the SDE
\begin{align*}
    dX^f(t) = \partial_x v(t,X^f(t))\,dt + dB(t), \quad t_0 \leq t \leq 1, \quad X^f(t_0) = 0,
\end{align*}
has a unique strong solution, which satisfies $X^f(1) \sim f \, d\gamma_{1- t_0}$. In particular,
\begin{align} \label{schrodinger.term}
     \|X^f(1)\|_{\infty} = [f]_{\infty}.
\end{align}
Moreover, the drift
\begin{align} \label{def.alpha}
    \alpha^f(t) \coloneqq \partial_x v(t,X^f(t)), \qquad t_0 \leq t \leq 1,
\end{align}
is a mean-zero martingale such that
\begin{align} \label{schrodinger.lp}
    \|\alpha^f(t)\|_p \leq \frac{\cstar}{2}, \qquad t_0 \leq t \leq 1.
\end{align}
\end{lemma}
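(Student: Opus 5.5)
The plan is the standard Föllmer-drift/Doob $h$-transform argument. Throughout, $T \coloneqq 1-t_0 > 0$ and $h(t,x) \coloneqq P_{1-t}f(x)$.

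\textbf{Regularity of $h$.} Since $f \geq 0$ is compactly supported, bounded, and $\int f\,d\gamma_T = 1$, the function $h$ is smooth on $[t_0,1)\times\R$ and strictly positive. It also solves the backward heat equation $\partial_t h + \tfrac12 \partial_{xx} h = 0$ and satisfies $h(t_0,0)=1$. Hence $v = \log h$ is smooth there and solves
\[
\partial_t v + \tfrac12\partial_{xx}v + \tfrac12(\partial_x v)^2 = 0 .
\]
On $[t_0,1-\delta]$, the drift $\partial_x v = \partial_x h/h$ is locally Lipschitz in $x$ with at most linear growth. The growth bound holds because $\partial_x h(t,x) = \E[f(x+B(1-t))B(1-t)]/(1-t)$, and on the compact support of $f$ the ratio $|\partial_x h|/h$ is controlled by $(|x|+R)/(1-t)$. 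This gives a unique strong solution up to time $1-\delta$ for every $\delta > 0$.

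\textbf{Law of the solution via Girsanov.} Under the reference measure on which $X$ is a Brownian motion started at $0$ at time $t_0$, the process $D(t) \coloneqq h(t,X(t))$ is a positive martingale with $D(t_0)=1$. By Itô's formula, $dD = D\,\partial_x v\,dB$. Define $\mathbb{Q}$ by $d\mathbb{Q}/d\bP = f(X(1))$; this is a valid density since $\E f(X(1)) = \int f\,d\gamma_T = 1$. By Girsanov, under $\mathbb{Q}$ the process $X$ solves the SDE on $[t_0,1)$, and $\mathcal{L}_{\mathbb{Q}}(X(1)) = f\,d\gamma_T$.

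\textbf{Passing to $t=1$.} Pathwise uniqueness on each $[t_0,1-\delta]$, together with Yamada--Watanabe, identifies the law of the strong solution with $\mathbb{Q}\circ X^{-1}$. The limit $X^f(1) = \lim_{t\uparrow1}X^f(t)$ exists because the drift is $L^2(dt\,d\mathbb{Q})$-integrable; this is shown below via the entropy identity $\E_{\mathbb{Q}}\int_{t_0}^1(\partial_x v)^2dt = 2H(f\gamma_T\,|\,\gamma_T) < \infty$. This yields $X^f(1) \sim f\,d\gamma_T$, and \eqref{schrodinger.term} follows because $f$ is continuous and even, so that $\operatorname{ess\,sup}|X^f(1)| = [f]_\infty$.

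\textbf{Martingale property of the drift.} Differentiating the HJB equation above in $x$ and applying Itô's formula to $\partial_x v(t,X^f(t))$, the $dt$ terms cancel: $\partial_t\partial_x v + \tfrac12\partial_{xxx}v + \partial_x v\,\partial_{xx}v = 0$. So $\alpha^f$ is a local martingale. A cleaner identification, which also gives the true martingale property, is
\[
\alpha^f(t) = \E\big[(\log f)'(X^f(1)) \,\big|\, \cF(t)\big].
\]
To see this, note that $\partial_x h(t,x) = P_{1-t}f'(x)$ (using absolute continuity of $f$). Therefore
\[
\partial_x v(t,x) = \frac{\E[f'(x+B_{1-t})]}{\E[f(x+B_{1-t})]} = \E_{\mathbb{Q}}\big[(f'/f)(X(1)) \,\big|\, X(t)=x\big],
\]
by the Bayes formula for the $h$-transform, since the $\mathbb{Q}$-conditional law of $X(1)$ given $X(t)=x$ has density proportional to $f$ against the Gaussian kernel. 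Since $f$ is even, $f'/f$ is odd, and $X^f(1)$ is symmetric, so the martingale has mean zero.

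\textbf{The $L^p$ bound.} Conditional Jensen gives
\[
\|\alpha^f(t)\|_p^p \leq \E\big|(f'/f)(X^f(1))\big|^p = \int_{\{f>0\}}\frac{|f'|^p}{f^{p-1}}\,d\gamma_T \leq (\cstar/2)^p,
\]
which is \eqref{schrodinger.lp}. Since the case $p=2$ is included, this also gives the $L^2$ integrability needed above for existence of the limit at $t=1$. The same closed-martingale representation also resolves integrability at $t=1$.

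\textbf{Main obstacle.} The hardest step is the behavior near $t=1$, where the drift blows up at the boundary of $\operatorname{supp} f$. It requires justifying the strong solution on all of $[t_0,1]$ and the conditional-expectation representation when $f$ is only absolutely continuous and may vanish. I would handle this by working on $[t_0,1-\delta]$ and passing $\delta\to0$ using the martingale convergence theorem for the closed martingale $\E[(f'/f)(X(1))\mid\cF(t)]$.
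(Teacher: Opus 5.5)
Your proposal is correct and follows essentially the same route as the paper: weak existence via the change of measure with density $f(X(1))$ (the $h$-transform), pathwise uniqueness from local Lipschitz continuity of $\partial_x v$ on $[t_0,1)\times\R$, the closed-martingale representation $\alpha^f(t)=\E[(\log f)'(X^f(1))\,|\,\cF(t)]$ obtained from $\partial_x P_{1-t}f = P_{1-t}f'$, and conditional Jensen for the $L^p$ bound. The differences are only cosmetic. You spell out the linear-growth bound, Yamada--Watanabe, and the existence of the limit at $t=1$, where the paper simply cites Baudoin. You also obtain mean zero from the oddness of $f'/f$, whereas the paper uses that the martingale has constant mean, so $\E\,\alpha^f(s)=\E\,X^f(1)=0$.
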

\begin{proof}
These are all fairly standard properties of the F\"ollmer drift, but we sketch the proof for $t_0=0$ to ease notation; the general case is no different. We will also write $X=X^f$ and $\alpha=\alpha^f$.
Differentiating the heat kernel yields
\[
\partial_x v(t,x) =  \frac{\int_\R \frac{y-x}{1-t }f(y)\exp\!\Big(\frac{y^2}{2 } - \frac{(y-x)^2}{2(1-t )}\Big)dy}{\int_\R  f(y) \exp\!\Big(\frac{y^2}{2} - \frac{(y-x)^2}{2(1-t )}\Big)dy},\qquad (t,x)\in[0,1)\times\R.
\]
The existence and uniqueness of the SDE is shown in Theorem 25 of \cite{baudoin2002conditioned}, and the idea is roughly as follows: The pathwise uniqueness follows from the fact that $v$ is smooth and thus locally Lipschitz on $[0,1) \times \R$. The weak existence comes from a direct construction, defining a probability measure on $C[0,1]$ by specifying its Radon--Nikodym derivative with respect to Wiener measure to be given by $C[0,1] \ni x \mapsto f(x(1))$. That is, $\E\, \varphi(X)=\E[f(B(1))\varphi(B)]$ for any bounded measurable $\varphi : C[0,1] \to \R$. The fact that $X(1) \sim f \, d\gamma_1$ is a consequence of this identity (see also Proposition 26 of \cite{baudoin2002conditioned}): for all bounded measurable $\varphi : \R \to \R$, it implies
\[
\E\, \varphi(X(1))= \E \big[f(B(1)) \varphi(B(1))\big] = \int_\R f(x)\varphi(x)\,\gamma_{1}(dx).
\]
To justify that $\alpha(t)=\partial_x v(t,X(t))$ is a martingale, we note first that
\begin{equation}\label{eq.lem.follmer.identity}
\E[\varphi(X(1))\,|\,\F(t)] = \frac{P_{1-t} (\varphi f)(X(t))}{P_{1-t}f(X(t))}, \quad \text{a.s.}, \ 0 \le t \le 1,
\end{equation}
for any $\varphi\in L^1(\mu)$, where $(\F(t))_{t \in [0,1]}$ is the Brownian filtration. To see this, take $h : C[0,t] \to \R$ bounded and measurable, and use the Markov semigroup formula $P_{1-t}f(B(t))=\E[f(B(1))\,|\,\F(t)]$ to compute
\begin{align*}
\E\big[h(X|_{[0,t]}) \varphi(X(1))\big] &= \E\big[h(B|_{[0,t]}) f(B(1))\varphi(B(1))\big] \\
    &= \E\bigg[h(B|_{[0,t]}) f(B(1)) \frac{P_{1-t} (\varphi f)(B(t))}{P_{1-t}f(B(t))} \bigg] \\
    &= \E\bigg[h(X|_{[0,t]})  \frac{P_{1-t} (\varphi f)(X(t))}{P_{1-t}f(X(t))} \bigg].
\end{align*}
Differentiating under the integral sign in \eqref{def.v} and noting that differentiation commutes with  $P_t$, we deduce
\[
\alpha(t) = \partial_x v(t,X(t)) = \frac{P_{1-t}  f'(X(t))}{P_{1-t}f(X(t))} = \E\big[ (\log f)'(X(1))\,|\,\F(t)\big],
\]
where we used \eqref{eq.lem.follmer.identity} and that $(\log f)' \in L^p(\mu) \subset L^1(\mu)$ by the definition of $\mathcal{A}_p^\mathrm{F}(0)$. This shows that $\alpha$ is a martingale. As $\E\alpha(t)$ is constant in time, we have for any $s \in [0,1]$
\[
\E \alpha(s) = \E\bigg[ \int_0^1\alpha(t)\,dt + B(1)\bigg] = \E[X(1)]= \int_\R xf(x)\,\gamma_1(dx)=0
\]
where we use that $f$ is even by assumption. Finally, to deduce the $L^p$-norm bound, use Jensen's inequality and the assumption that $f \in \cA_p^\mathrm{F}(0)$:
\[
\E|\alpha(t)|^p \le \E|\alpha(1)|^p = \E|(\log f)'(X(1))|^p = \int_\R |(\log f)'(x)|^p f(x)\,\gamma_{1}(dx) \le \bigg(\frac{\cstar}{2}\bigg)^p. \qedhere
\]
\end{proof}

We call the process $\alpha^f$ appearing in Lemma \ref{lem.follmer} the Föllmer drift corresponding to $f$, and $X^f$ the Föllmer process corresponding to $f$. We note that \eqref{schrodinger.term} shows that $\cU_p^{\mathrm{F}}(t)$ may be viewed as the value of the optimization problem defining $\V^\infty(t,\delta_0)$, restricted to controls arising from the F\"ollmer drift with $f\in \cA_p^\mathrm{F}(t)$.

\begin{lemma} \label{lem.follmervalue}
Fix $p\geq 2$. The function $\cU_p^\mathrm{F}$ is upper semicontinuous on $[0,1]$. In particular, it is bounded and
    \begin{equation*}
        \lim_{t \to 1} \cU_p^\mathrm{F}(t) = 0.
    \end{equation*}
\end{lemma}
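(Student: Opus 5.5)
The idea is to exhibit, for each $t_0$ and each $f\in\cA_p^{\mathrm{F}}(t_0)$, a way to transport $f$ to nearby times $t$ while keeping admissibility and controlling the support. The natural operation is Gaussian rescaling. Set $\sigma_0=\sqrt{1-t_0}$ and $\sigma=\sqrt{1-t}$, and define $g(x)=f(\lambda x)$ with $\lambda=\sigma/\sigma_0$ times a normalizing constant. If $X\sim\gamma_{1-t_0}$, then $X/\lambda\sim\gamma_{1-t}$, so
\begin{equation*}
\int g\,d\gamma_{1-t}=\int f\,d\gamma_{1-t_0}=1 .
\end{equation*}
Hence $g(x)=f(\lambda x)$ needs no normalization. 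It is even, absolutely continuous, compactly supported, and $[g]_\infty=[f]_\infty/\lambda$.

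Next I check the Fisher-type constraint. Since $g'(x)=\lambda f'(\lambda x)$, the same change of variables gives
\begin{equation*}
\int \frac{|g'|^p}{g^{p-1}}\,d\gamma_{1-t}=\lambda^p\int\frac{|f'|^p}{f^{p-1}}\,d\gamma_{1-t_0}.
\end{equation*}
So the $L^p$ quantity scales by $\lambda$. When $t\ge t_0$ we have $\lambda\le1$ and $g$ is admissible, but then $[g]_\infty$ grows. When $t<t_0$ the constraint could be violated, so I would rather use a convex combination with the constant function $1$ to relax the constraint and keep $\lambda$ close to $1$.

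Alternatively, fix $f$ strictly inside the constraint: replace $f$ by $f_\theta=(1-\theta)f+\theta\cdot c\,\mathbf 1$, suitably cut off. This is awkward, so instead I would use a dilation $f_\rho(x)=\rho f(\rho x)\cdot(\text{norm})$. Simpler still is to use convexity of $(a,b)\mapsto |a|^p/b^{p-1}$: the set of admissible densities (as measures $f\,d\gamma$) is convex. Mixing $f$ with a fixed admissible $h$ of small Fisher value, namely a smooth even bump, gives strict slack, at the cost of making the support the max of the two supports. That cost is bad, so this route fails too.

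The cleanest fix is to use scaling in both directions. Given $\eps>0$ and $f\in\cA_p^{\mathrm F}(t_0)$ with $[f]_\infty\le\cU_p^{\mathrm F}(t_0)+\eps$, first shrink: $f^{(\beta)}(x)=f(\beta x)\,\frac{\int f\,d\gamma_{1-t_0}}{\int f(\beta\cdot)\,d\gamma_{1-t_0}}$ with $\beta$ slightly larger than $1$. This lowers the Fisher functional by a factor $\approx\beta^{-1}$, with a continuity error in the normalization, and changes the support by a factor $\beta$. Because the Fisher functional for dilations is continuous in $\beta$ (dominated convergence, using compact support and the finiteness of the integral), for $\beta$ close to $1$ there is slack. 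Then the time-rescaling above with $\lambda$ close to $1$ yields admissible $g$ for all $t$ near $t_0$, with $[g]_\infty\le[f]_\infty+\eps$.

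Upper semicontinuity at $t_0<1$ follows from this construction. At $t_0=1$ we need $\cU_p^{\mathrm F}(t)\to0$ as $t\to1$. For this, fix any admissible $f$ at some time $s<1$; the sub-case $t\ge s$ above gives $\cU_p^{\mathrm F}(t)\le [f]_\infty\sigma_0/\sigma$. That bound grows rather than shrinks, so it is not what we need here. Instead I take $\lambda=\sigma/\sigma_0\to0$ in the other direction, $g(x)=f(x/\mu)$ with $\mu=\sigma/\sigma_0$. This gives support $[f]_\infty\,\sigma/\sigma_0\to0$ and Fisher value scaled by $\mu^{-1}\cdot$, which blows up. Hence I need an $f$ at scale $\sigma$ directly: take $f=c\,\phi(x/\sigma)$ for a fixed even bump $\phi$. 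Its Fisher value is $\sim\sigma^{-1}$, which also blows up. So near $t=1$ the bound must come from choosing the support $r$ with $r\gg\sigma$ but $r\to0$. Take $f$ to be a bump of width $r$, essentially $\propto \mathbf 1_{[-r,r]}$ smoothed at scale $r$; then $|(\log f)'|\sim1/r$ only on a boundary layer of $\gamma_{\sigma^2}$-mass $\sim e^{-r^2/2\sigma^2}$. Choosing $r=\sigma\sqrt{C\log(1/\sigma)}$ makes the Fisher value small, which gives $\cU_p^{\mathrm F}(t)\to0$. This estimate is the main computation, and I expect it to be the main obstacle. Boundedness then follows from upper semicontinuity on the compact interval $[0,1]$, given finiteness everywhere, which the bump construction supplies with a suitable $r$.
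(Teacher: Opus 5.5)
Your proof has a genuine gap: the step that is supposed to create slack is wrong as written, and slack is exactly what upper semicontinuity from the right requires.

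First, your rescaling is inverted. The identity $\int g\,d\gamma_{1-t}=\int f\,d\gamma_{1-t_0}$ needs $\lambda=\sigma_0/\sigma$; with $\lambda=\sigma/\sigma_0$ the variable $X/\lambda$ has variance $\sigma_0^4/\sigma^2$. With the correct $\lambda$, $g(x)=f(\lambda x)$ multiplies the $L^p$ Fisher quantity by $\sigma_0/\sigma$ and the support by $\sigma/\sigma_0$. So the free direction is $t<t_0$, and you get $\cU_p^{\mathrm F}(t)\le\cU_p^{\mathrm F}(t_0)\sqrt{(1-t)/(1-t_0)}$. That is upper semicontinuity from the left with no slack at all, and it is a clean exact substitute for the paper's step using uniform convergence of $\gamma_{1-s}$ to $\gamma_{1-t}$ on $[-R,R]$.

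For $t\downarrow t_0$, however, you do need an $f$ with $[f]_\infty\le\cU_p^{\mathrm F}(t_0)+\eps$ and Fisher quantity strictly below $\cstar/2$, and your dilation does not produce one. With $\beta>1$ it goes the wrong way: $\frac{d}{dx}f(\beta x)=\beta f'(\beta x)$, so shrinking makes the profile steeper and multiplies the Fisher quantity by roughly $\beta$, not $\beta^{-1}$. With $\beta<1$ and $s=1-t_0$, the $p$-th power of the Fisher quantity of the normalized dilate is $\Phi(\beta)=\beta^p\int|f'|^pf^{1-p}\,d\gamma_{\beta^2s}\big/\int f\,d\gamma_{\beta^2s}$. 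The change of reference measure $\gamma_s\to\gamma_{\beta^2s}$ is not a negligible ``continuity error''; it is first order in $1-\beta$, like the gain $\beta^p$. Precisely, $\frac{d}{d\beta}\log\Phi(1)=p+\big(\langle x^2\rangle_{\mathrm{Fish}}-\int x^2f\,d\gamma_s\big)/s$, where $\langle x^2\rangle_{\mathrm{Fish}}$ is the second moment of the normalized measure $|f'|^pf^{1-p}\gamma_s$. Dominated convergence only gives $\Phi(\beta)\to\Phi(1)$, which may equal $(\cstar/2)^p$; it says nothing about which side of that value $\Phi(\beta)$ lies on.

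The dilation route can be rescued, but only with a first-order estimate you did not supply. Integrating by parts gives $\int x^2f\,d\gamma_s=s+s\int xf'\,d\gamma_s$. H\"older's inequality together with the constraint then gives $\int x^2f\,d\gamma_s\le s+\tfrac{\cstar}{2}s\big(\int x^2f\,d\gamma_s\big)^{1/2}$. Hence $\int x^2f\,d\gamma_s\le 1.49\,s<ps$ for $s\le 1$, so the log-derivative above is positive and $\Phi(\beta)<(\cstar/2)^p$ for $\beta<1$ close to $1$.

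The paper instead uses the mixing you discarded, but in the variable $h=f^{1/p}$, where the constraint reads $\|h'\|_{L^p(\gamma_{1-t})}\le\frac{\cstar}{2p}\|h\|_{L^p(\gamma_{1-t})}$. Take $h\le 1$ supported in $[-r,r]\subset(-R,R)$ with $R=\cU_p^{\mathrm F}(t_0)+\eps$. Take an even cutoff $\varphi\equiv 1$ on $[-r,r]$ that vanishes off $[-R,R]$. Then $h_\eta=(1-\eta)h+\eta\varphi$ satisfies $h_\eta\ge h$. Because $h'$ and $\varphi'$ have disjoint supports, $\|h_\eta'\|_p^p=(1-\eta)^p\|h'\|_p^p+\eta^p\|\varphi'\|_p^p$. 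The partner need not be admissible and costs only $\eps$ in support. Its contribution is of order $\eta^p$ against a gain of order $\eta$. Mixing linearly in $f$, as you considered, makes that contribution of order $\eta$ as well, which is why it looked hopeless.

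Finally, your $t\to 1$ construction is essentially the paper's and is routine rather than an obstacle, provided the profile vanishes fast enough at $\pm r$. A trapezoidal $f$ has infinite Fisher quantity for $p\ge 2$. Also, $|(\log f)'|$ is not of size $1/r$ near $\pm r$; it necessarily blows up there. Take $f=h^p$ with $h$ the trapezoid equal to $1$ on $[-r/2,r/2]$ and $0$ off $[-r,r]$. Then $|f'|^pf^{1-p}=p^p|h'|^p\le p^p(2/r)^p$ on the layer. A Gaussian tail bound with $r\asymp\sqrt{(1-t)\log(1/(1-t))}$ then gives both $\cU_p^{\mathrm F}(t)\to 0$ and boundedness directly.
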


\begin{proof}
    Write $K \coloneqq \cstar/(2p)$. After the change of variables \(h = f^{1/p}\), and using that \(h \mapsto [h]_\infty\) is invariant under positive scaling, we can equivalently write
	\begin{equation}\label{eq:v_functional_h}
		\cU_p^{\mathrm{F}}(t) = \inf \left\{ [h]_{\infty} : h \geq 0,\ \|h'\|_{L^p(\gamma_{1-t})} \leq K\|h\|_{L^p(\gamma_{1-t})},\ \|h\|_{L^p(\gamma_{1-t})} > 0\right\}
	\end{equation}
    where the functions $h:\R\to[0,\infty)$ are assumed to be absolutely continuous, even, and compactly supported.
	We first establish that $\cU_p^\mathrm{F}(t)$ is bounded and $\lim_{t \to 1} \cU_p^\mathrm{F}(t) = 0$.

	Fix $r_t > 0$ to be specified, and define
	\begin{equation*}
		h_t(x) \coloneqq
		\begin{cases}
			1 & |x|\le r_t/2,\\
			(r_t/2)^{-1} (r_t - |x|) & |x|\in[r_t/2, r_t],\\
			0 & |x|>r_t.
		\end{cases}
	\end{equation*}
	To ensure that $h_t$ is feasible in~\eqref{eq:v_functional_h}, it suffices to choose $r_t$ such that
	\begin{equation}\label{eq:jon_need}
		\gamma_{1-t}([r_t/2, r_t]) \leq (Kr_t/2)^p \gamma_{1-t}([0, r_t/2])\,,
	\end{equation}
	since~\eqref{eq:jon_need} implies
	\begin{align*}
		\|h'_t\|_{L^p(\gamma_{1-t})} = \left(2 (r_t/2)^{-p}\int_{[r_t/2, r_t]}d\gamma_{1-t}\right)^{1/p} & \leq K \left(2 \int_{[0, r_t/2]}d\gamma_{1-t}\right)^{1/p} \\
		& \leq K \|h_t\|_{L^p(\gamma_{1-t})}\,.
	\end{align*}

	Let $r_t \coloneqq 2\sqrt{p (1-t) \log(4K^{-2}/(1-t))}$.
	By a standard Gaussian tail bound,
	\begin{equation*}
		\gamma_{1-t}([r_t/2, \infty)) \leq e^{-r_t^2/8(1-t)} = (K/2)^p(1-t)^{p/2}\,.
	\end{equation*}
	Therefore for this choice of $r_t$,
	\begin{align*}
		\gamma_{1-t}([r_t/2, r_t]) & \leq (K/2)^p(1-t)^{p/2}\,.
	\end{align*}
	By the same token, since $\gamma_{1-t}([0, r_t/2]) \geq \tfrac 12 - (K/2)^p \geq \tfrac 14$ and $\log(4K^{-2}/(1-t)) \geq 1$ for all $t \in [0, 1)$,
	\begin{align*}
		(Kr_t/2)^p \gamma_{1-t}([0, r_t/2]) \geq \frac 14 K^p (1-t)^{p/2} \geq \gamma_{1-t}([r_t/2, r_t])\,.
	\end{align*}
	Hence \eqref{eq:jon_need} holds for all $t \in [0, 1)$.
	We obtain that $\sup_{t \in [0, 1)}\cU_p^{\mathrm{F}}(t) \leq \sup_{t \in [0, 1)} r_t  = r_0$ and $\lim_{t \to 1} \cU_p^\mathrm{F}(t) \leq \lim_{t \to 1} r_t = 0$, as claimed.

	We next show that $\cU_p^{\mathrm{F}}(t)$ is upper semicontinuous on $[0, 1)$.
	For any $t \in [0, 1)$ and $\eps > 0$, pick an admissible $h$ such that $\operatorname{supp}(h) \subseteq [-r, r] \subseteq (-R, R)$, where $R = \cU_p^{\mathrm{F}}(t) + \eps$.
	We may assume by rescaling that $h \leq 1$.
	Let $\varphi$ be a smooth even cutoff function with $\varphi \equiv 1$ on $[-r,r]$ and $\varphi = 0$ on $[-R, R]^c$, and set $h_\eta = (1-\eta)h + \eta \varphi$ for small $\eta > 0$.
	Then $\operatorname{supp}(h_\eta) \subseteq [-R, R]$, and $\|h_\eta\|_{L^p(\gamma_{1-t})} \geq \|h\|_{L^p(\gamma_{1-t})}$.
	The derivatives $h'$ and $\varphi'$ are supported on disjoint sets, and thus
	\begin{equation*}
		\|h'_\eta\|_{L^p(\gamma_{1-t})}^p = (1-\eta)^p \|h'\|_{L^p(\gamma_{1-t})}^p + O(\eta^p)\,.
	\end{equation*}
	By the admissibility of $h$, we obtain for sufficiently small $\eta$
	\begin{equation}
		\|h'_\eta\|_{L^p(\gamma_{1-t})}^p \leq (1-\eta)K^p \|h_\eta\|_{L^p(\gamma_{1-t})}^p\,.
	\end{equation}

	As $s \to t$, the density $\gamma_{1-s}$ converges uniformly to $\gamma_{1-t}$ on $[-R, R]$; hence
	\begin{equation}
		\|h'_\eta\|_{L^p(\gamma_{1-s})}^p \leq (1-\eta)K^p \|h_\eta\|_{L^p(\gamma_{1-s})}^p + o(1) \quad \text{as $s \to t$}\,,
	\end{equation}
	and in particular for $s$ sufficiently close to $t$, the function $h_\eta$ will satisfy the constraints in the definition of $ \cU_p^{\mathrm{F}}(s)$.
	Therefore, $\limsup_{s \to t}  \cU_p^{\mathrm{F}}(s) \leq R = \cU_p^{\mathrm{F}}(t) + \eps$.
	Taking $\eps \to 0$ yields the claim.
\end{proof}

\subsubsection{The deterministic control problem and $\cU_p^\mathrm{det}$}

For $p\ge2$, we define $\cU_p^\mathrm{det} : [0,1] \times \cP_p(\R) \to \R$ via
\begin{align*}
    \cU_p^\mathrm{det}(t,m) \coloneqq \inf \Big\{ [m']_{\infty} : m'\in\cP_p(\R),\ \bd_p(m,m') \leq \frac{(1-t)\cstar}{2} \Big\},
\end{align*}
where we recall that $\bd_p$ denotes the $p$-Wasserstein distance.
While it will not be needed for any of our arguments, we note that one can show that
\begin{align*}
    \cU_p^{\mathrm{det}}(t_0,m_0) = \inf_{\alpha} \Big\| X_0 + \int_{t_0}^1 \alpha(t) \, dt \Big\|_{\infty},
\end{align*}
where $X_0 \sim m_0$ is $\cF(t_0)$-measurable, and the infimum is taken over all progressively measurable processes satisfying
\begin{align*}
    \| \alpha(t) \|_p \leq \frac{\cstar}{2},\qquad t_0 \leq t \leq 1,
\end{align*}
so $\cU_p^{\mathrm{det}}$ can be interpreted as the value of a deterministic analogue of the limiting optimization problem, but with the $L^2$ constraint replaced by an $L^p$ constraint. We next record some important facts about $\cU_p^{\mathrm{det}}$.

\begin{lemma} \label{lem.updet.upperbound}
    The function $\cU_p^{\mathrm{det}}$ is continuous on $[0,1) \times \cP_p(\R)$. For each $t \in [0,1]$ and $m \in \cP(\R)$ with bounded support, we have
    \begin{align} \label{vpd.est1}
        \cU_p^{\mathrm{det}}(t,m) \leq [m]_{\infty}.
    \end{align}
    Moreover, for each $q > p$, $t \in [0,1)$ and $m \in \cP_q(\R)$, we have
    \begin{align} \label{vpd.est2}
    \cU_p^{\mathrm{det}}(t,m) \leq C (1-t)^{-p/(q-p)} \Big(\int_\R |x|^q\,m(dx)\Big)^{1/(q-p)},  
    \end{align}
    where $C=C(p,q)$ is a constant depending only on $p$ and $q$.
\end{lemma}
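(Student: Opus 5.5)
The bound \eqref{vpd.est1} is immediate: take $m'=m$, which is admissible since $\bd_p(m,m)=0$. So the work lies in \eqref{vpd.est2} and in continuity.

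\emph{Proof of \eqref{vpd.est2}.} Write $\rho \coloneqq (1-t)\cstar/2$ and let $X\sim m$. For a truncation level $R>0$ to be chosen, use the deterministic contraction
\begin{align*}
    T_R(x) = \sgn(x)\min\{|x|,R\}.
\end{align*}
Set $m' = (T_R)_\# m$. Then $[m']_\infty\le R$. Since $|X-T_R(X)| = (|X|-R)_+ \le |X|\mathbf 1_{\{|X|>R\}}$, Markov's inequality gives
\begin{align*}
    \bd_p(m,m')^p \le \E\big[|X|^p\mathbf 1_{\{|X|>R\}}\big] \le R^{p-q}\int_\R |x|^q\,m(dx) \eqqcolon R^{p-q}M_q.
\end{align*}
We therefore need $R^{p-q}M_q\le \rho^p$, and it suffices to take $R = (M_q/\rho^p)^{1/(q-p)}$. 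This yields
\begin{align*}
    \cU_p^{\mathrm{det}}(t,m)\le C(p,q)\,(1-t)^{-p/(q-p)}M_q^{1/(q-p)},
\end{align*}
which is exactly the claimed form.

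\emph{Continuity on $[0,1)\times\cP_p(\R)$.} This is the main step. First observe that
\begin{align*}
    \cU_p^{\mathrm{det}}(t,m) = g\big(m,\rho(t)\big), \qquad g(m,\rho)\coloneqq \inf\big\{[m']_\infty : \bd_p(m,m')\le\rho\big\},
\end{align*}
where $g$ is nonincreasing in $\rho$, and $\rho(t)>0$ on $[0,1)$ is continuous. The triangle inequality gives
\begin{align*}
    g(m_2,\rho+\bd_p(m_1,m_2))\le g(m_1,\rho)\le g(m_2,\rho-\bd_p(m_1,m_2)).
\end{align*}
Continuity therefore reduces to continuity of $\rho\mapsto g(m,\rho)$ for $\rho>0$, locally uniformly in $m$.

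\emph{Key estimate.} I will show that for $0<\rho'<\rho$,
\begin{align*}
    g(m,\rho')\le \frac{\rho}{\rho'}\,g(m,\rho).
\end{align*}
Take $m'$ that is near-optimal for $g(m,\rho)$, with an optimal coupling $(X,X')$. For $\lambda\in[0,1]$ put $X_\lambda = (1-\lambda)X + \lambda X'$. Then $\bd_p(m,\cL(X_\lambda))\le\lambda\rho$, but $[\cL(X_\lambda)]_\infty$ could be large because $X$ may be unbounded, so interpolation alone does not work.

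The fix is to use the scaling map $x\mapsto sx$ instead. By convexity of the distance to a target set, and because the Wasserstein distance scales linearly, we have $\bd_p(s_\# m', s_\# m)$ under control. I will combine this with the $L$-convexity-type bound
\begin{align*}
    \bd_p\big(m, s_\# m'\big)\le s\,\bd_p(m,m') + (1-s)\|X\|_p.
\end{align*}
The right-hand side is at most $\rho'$ provided $s<1$ is close enough to $1$ (depending on $\|X\|_p$ and $\rho-\rho'$). This gives
\begin{align*}
    g(m,\rho')\le s^{-1}\ldots
\end{align*}
More precisely, it gives the two-sided modulus
\begin{align*}
    g(m,\rho-\delta)\le \frac{g(m,\rho)}{1-\delta/(\rho+\|X\|_p)}\quad\text{up to a choice of } s = 1-\frac{\delta}{\rho+\|X\|_p},
\end{align*}
using that $[s_\# m']_\infty = s[m']_\infty \le [m']_\infty$.

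Since $\|X\|_p$ is locally bounded in $\bd_p$ and the bound $g\le$ \eqref{vpd.est1}/\eqref{vpd.est2} is locally finite, the combined inequalities give joint continuity. The main obstacle I anticipate is making this scaling step fully rigorous: choosing the correct coupling of $m$ with $(x\mapsto sx)_\#m'$ so that the $p$-Wasserstein bound above holds, and checking that $g$ stays finite (via the truncation bound in $\cP_p$) so that these ratio estimates are meaningful.
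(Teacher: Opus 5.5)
Your proofs of \eqref{vpd.est1} and \eqref{vpd.est2} are fine; using the clamp $T_R$ instead of the paper's $X\mathbf{1}_{\{|X|\le R\}}$ makes no difference. Your sandwich $g(m_2,\rho+\bd_p(m_1,m_2))\le g(m_1,\rho)\le g(m_2,\rho-\bd_p(m_1,m_2))$ also correctly reduces joint continuity to continuity of $\rho\mapsto g(m,\rho)$ on $(0,\infty)$ for each fixed $m$; no uniformity in $m$ is needed.

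The gap is the ``key estimate''. The coupling $(X,sX')$ does give $\bd_p(m,s_\#m')\le s\rho+(1-s)\|X\|_p$. But this bound equals $\rho+(1-s)(\|X\|_p-\rho)$, which is $\ge\rho$ whenever $\|X\|_p\ge\rho$. Shrinking the competitor toward the origin moves it toward $\delta_0$, which need not lie in the ball, so no $s<1$ puts you in the smaller ball of radius $\rho'$. With your choice $s=1-\delta/(\rho+\|X\|_p)$ you would need $\|X\|_p\le 0$. The multiplicative modulus itself is also false. For $m=\delta_a$ with $a\ge0$ one has $g(\delta_a,\rho)=(a-\rho)_+$, so taking $a=\rho$ gives $g(m,\rho)=0<\rho-\rho'=g(m,\rho')$. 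In general no bound $g(m,\rho')\le c\,g(m,\rho)$ can hold, since $g(m,\rho)$ can vanish while $g(m,\rho')>0$. You also never handle the $\rho+\delta$ side of your sandwich.

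The missing idea is convexity, which gives an additive modulus instead.
\begin{itemize}
\item Take an optimal coupling $(X,X')$ with $\|X-X'\|_p\le\rho$ and $\|X'\|_\infty\le g(m,\rho)+\eta$.
\item Set $X''=T_R(X)$, with $R$ large enough that $\|X-X''\|_p\le\rho/2$. This is possible for every $m\in\cP_p(\R)$ by dominated convergence. Note that \eqref{vpd.est2} needs a $q$-th moment, so it does not give finiteness of $g$ on $\cP_p$.
\item With $\lambda=2\delta/\rho$, the variable $Z=(1-\lambda)X'+\lambda X''$ satisfies $\|X-Z\|_p\le\rho-\delta$ and $\|Z\|_\infty\le g(m,\rho)+\eta+2\delta R/\rho$.
\end{itemize}
Hence $g(m,\rho-\delta)\le g(m,\rho)+\eta+2\delta R/\rho$. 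Applied at radius $\rho+\delta$ with the same $R$, the same estimate gives the other side of the sandwich.

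A cleaner route: gluing two optimal couplings along $m$ shows that $\rho\mapsto g(m,\rho)$ is convex and finite on $(0,\infty)$, hence continuous.

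This is essentially the paper's mechanism. The paper lifts to $\hat{\cU}_p^{\mathrm{det}}(X)=\inf\{\|Y\|_\infty:\|X-Y\|_p\le\cstar/2\}$ and uses that a convex, locally bounded function on $L^p$ is locally Lipschitz. It then handles $t$ through the exact identity $\hat{\cU}_p^{\mathrm{det}}(t,X)=(1-t)\,\hat{\cU}_p^{\mathrm{det}}(X/(1-t))$. That identity rescales $X$ and the radius together, not the competitor alone.
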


\begin{proof}
    First, consider the lift of $\cU_p^{\mathrm{det}}$, i.e., the function
    \begin{align*}
       \hat{\cU}_p^{\mathrm{det}} : [0,1) \times L^p(\Omega) \to \R, \quad \hat{\cU}_p^{\mathrm{det}}(t,X) \coloneqq \cU_p^{\mathrm{det}}\big(t,\cL(X)\big).
    \end{align*}
   Using the definition of the Wasserstein distance in terms of couplings, one obtains
    \begin{align*}
        \hat{\cU}_p^{\mathrm{det}}(t,X) = \inf \Big\{ \|Y\|_{\infty} : \|X - Y\|_p \leq \frac{(1-t)\cstar}{2} \Big\}.
    \end{align*}
    Moreover, making the change of variables $\wt{X} = \frac{X}{1-t}$ gives
    \begin{align} \label{vpd.scaling}
        \hat{\cU}_p^{\mathrm{det}}(t,X) = (1-t)\, \hat{\cU}_p^{\mathrm{det}}\Big(\frac{X}{1-t}\Big),
    \end{align}
    where we write $\hat{\cU}_p^{\mathrm{det}}(X) = \hat{\cU}_p^{\mathrm{det}}(0,X)$ for simplicity. Now, we notice that $\hat{\cU}_p^{\mathrm{det}} : L^p(\Omega) \to \R$ is convex and locally bounded, in the sense that for each $X \in L^p$ there exists an $\eps > 0$ such that $\hat{\cU}_p^{\mathrm{det}}$ is bounded on
    \begin{align*}
        \{ X' \in L^p : \|X' - X\|_p < \eps \}.
    \end{align*}
    It follows that $\hat{\cU}_p^{\mathrm{det}}$ is locally Lipschitz continuous with respect to $L^p$, and then the continuity of $\hat{\cU}_p^{\mathrm{det}} : [0,1) \times L^p \to \R$ follows from \eqref{vpd.scaling}. We deduce that ${\cU}_p^{\mathrm{det}}$ is also continuous with respect to $\cP_p$.

    Next, the upper bound \eqref{vpd.est1} follows directly from the definition of $\hat{\cU}_p^{\mathrm{det}}$ (by taking $m' = m$). For the upper bound \eqref{vpd.est2}, we fix $X \in L^q$, and for $R > 0$ we set $X_R = X \mathbf{1}_{\{|X|\leq R\}}$. Then we estimate
    \begin{align*}
        \|X - X_R\|^p_p &= \E\big[ |X|^p \mathbf{1}_{\{|X| > R\}} \big]
        \\
        &\leq \|X\|_q^{p} \cdot  \bP[|X| > R]^{(q-p)/q}
        \\
        &\leq \|X\|_q^q R^{-(q-p)}.
    \end{align*}
    So, to ensure that $\|X - X_R\|_p \leq \cstar/2$, we need
    \begin{align*}
        \|X\|_q^q R^{-(q-p)} \leq (2\pi)^{-p/2} \iff R \geq C \|X\|_q^{q/(q-p)}
    \end{align*}
    where $C=(2\pi)^{p/(2(q-p))}$. We deduce that
    \begin{align*}
        \hat{\cU}_p^{\mathrm{det}}(X) \leq C\|X\|_q^{q/(q-p)},
    \end{align*}
    and so by \eqref{vpd.scaling},
    \begin{align*}
        \hat{\cU}_p^{\mathrm{det}}(t,X) \leq C (1-t)^{-p/(q-p)} \|X\|_q^{q/(q-p)}.
    \end{align*}
    Recalling the relation $\hat{\cU}_p^{\mathrm{det}}(t,X) = {\cU}_p^{\mathrm{det}}(t,\cL(X))$ yields \eqref{vpd.est2}.
\end{proof}

It will also be useful to understand the restriction of the function $\cU_p^{\mathrm{det}}$ to empirical measures.
\begin{lemma} \label{lem.updet.emp}
    For $t \in [0,1]$ and $\bx \in \R^n$, we have
    \begin{align} \label{updet.emp}
        \cU_p^{\mathrm{det}}(t,m_{\bx}^n) =
    \inf \bigg\{ | \by |_{\infty} : \Big(\frac{1}{n} \sum_{i = 1}^n |y_i - x_i|^p \Big)^{1/p} \leq \frac{(1-t)\cstar}{2} \bigg \}.
    \end{align}
\end{lemma}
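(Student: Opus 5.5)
The plan is to prove the two inequalities in \eqref{updet.emp} separately. Write $\rho \coloneqq (1-t)\cstar/2$, and let $R$ denote the right-hand side of \eqref{updet.emp}.

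The inequality $\cU_p^{\mathrm{det}}(t,m_{\bx}^n) \le R$ is immediate. Given $\by$ with $\big(\frac1n\sum_i|y_i-x_i|^p\big)^{1/p}\le\rho$, set $m' \coloneqq m_{\by}^n$. The coupling $\frac1n\sum_i\delta_{(x_i,y_i)}$ shows that $\bd_p(m_{\bx}^n,m')\le\rho$. Moreover $[m']_\infty=|\by|_\infty$. So $m'$ is admissible in the definition of $\cU_p^{\mathrm{det}}$, and taking the infimum over $\by$ gives the bound.

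For the reverse inequality, I would work with the lifted formulation from the proof of Lemma \ref{lem.updet.upperbound}. Take a uniform random index $U$ on $\{1,\dots,n\}$ and set $X \coloneqq x_U$. Then
\[
\cU_p^{\mathrm{det}}(t,m_{\bx}^n)=\inf\{\|Y\|_\infty : \|X-Y\|_p\le\rho\},
\]
where $Y$ ranges over random variables on a probability space rich enough to support $(U,Y)$ with arbitrary joint laws. This identity follows from writing the Wasserstein distance in terms of couplings, using that the infimum defining $\bd_p$ is attained for measures in $\cP_p$.

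Fix an admissible $Y$ and define $y_i \coloneqq \E[Y\mid U=i]$. By Jensen's inequality applied conditionally on $U$, $|y_i|\le\|Y\|_\infty$, so $|\by|_\infty\le\|Y\|_\infty$. Also, by conditional Jensen for the convex function $|\cdot|^p$,
\[
\frac1n\sum_i|y_i-x_i|^p=\E\,\big|\E[Y-X\mid U]\big|^p\le\E|Y-X|^p\le\rho^p .
\]
Hence $\by$ is admissible on the right-hand side and $R\le\|Y\|_\infty$. Taking the infimum over $Y$ gives $R\le\cU_p^{\mathrm{det}}(t,m_{\bx}^n)$.

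The only point needing care is repeated values among the $x_i$: then $X$ alone does not determine $U$. This is why I condition on $U$ rather than on $X$, which requires the coupling to include $U$. This is harmless, because any coupling $\pi$ of $m_{\bx}^n$ and $m'$ can be disintegrated by splitting the mass at a repeated point $x$ equally among the indices $i$ with $x_i=x$.

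An alternative is to argue directly that the infimum over $m'$ is attained by averaging the transport map in barycentric projection form. This is the same argument in different language. I expect no real obstacle beyond this bookkeeping.
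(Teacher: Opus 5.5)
Your proposal is correct and is essentially the paper's argument: the easy direction takes $m'=m_{\by}^n$, and the reverse direction couples $X\sim m_{\bx}^n$ with $Y$, projects $Y$ onto the index by conditional expectation, and applies Jensen to get both $|\by|_\infty\le\|Y\|_\infty$ and the $L^p$ constraint. The differences are minor: you start from an arbitrary admissible $Y$ rather than an optimal $m'$, which avoids the paper's unproved (though true) assumption that the infimum defining $\cU_p^{\mathrm{det}}$ is attained; and your extra care about repeated $x_i$ is unnecessary, since $\E[Y\mid X]$ is constant on each level set $\{X=x_i\}$ and so already has the form $\sum_i y_i\mathbf{1}_{\Omega_i}$, which is how the paper handles it.
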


\begin{proof}
    First, for any $\by \in \R^n$ satisfying the constraint in \eqref{updet.emp}, the empirical measure $m_{\by}^n$ satisfies
 $\bd_p(m_{\bx}^n, m_{\by}^n) \leq (1-t)\cstar/2$, and so $\cU_p^{\mathrm{det}}(t,m_{\bx}^n) \leq [m_{\by}^n]_{\infty} = |\by|_{\infty}$. Taking an infimum over $\by$ gives
 \begin{align} \label{updet.emp.upper}
        \cU_p^{\mathrm{det}}(t,m_{\bx}^n) \leq
    \inf \bigg\{ | \by |_{\infty} : \Big(\frac{1}{n} \sum_{i = 1}^n |y_i - x_i|^p \Big)^{1/p} \leq \frac{(1-t)\cstar}{2} \bigg \}.
    \end{align}
    Now let $m'$ be optimal for the problem defining $\cU_p^{\mathrm{det}}$, and let $(X,Y)$ be an optimal coupling of $m_{\bx}^n$ and $m'$ with respect to the $p$-Wasserstein distance. Thus we have
    \begin{align} \label{updetcomps}
        \cU_p^{\mathrm{det}}(t,m_{\bx}^n) = \| Y \|_{\infty}, \quad \| X - Y\|_p \leq \frac{(1- t)\cstar}{2}.
    \end{align}
    Note that because $X \sim m_{\bx}^n$, we can find disjoint events $\Omega_1,\dots,\Omega_n \subset \Omega$ such that $\bP[\Omega_i] = 1/n$, and $X = \sum_{i = 1}^n x_i \mathbf{1}_{\Omega_i}$. Now consider the random variable $Y' = \E[Y \,|\, X]$ which, since $X$ is discrete, must take the form $Y' = \sum_{ i = 1}^n y_i \mathbf{1}_{\Omega_i}$ for some $\by = (y_1,\dots,y_n)$. Notice that by \eqref{updetcomps}, we have $| \by |_{\infty} \leq \| Y\|_{\infty} = \cU_p^{\mathrm{det}}(t,m_{\bx}^n)$, as well as
     \begin{align*}
         \Big(\frac{1}{n} \sum_{i = 1}^n |y_i - x_i|^p \Big)^{1/p} = \|X - Y'\|_p \leq \|X - Y\|_p = \frac{(1-t)\cstar}{2}.
     \end{align*}
     This implies that
     \begin{align*}
        \cU_p^{\mathrm{det}}(t,m_{\bx}^n) \geq
        \inf \bigg\{ | \by |_{\infty} : \bigg(\frac{1}{n} \sum_{i = 1}^n |y_i - x_i|^p \bigg)^{1/p} \leq \frac{(1-t)\cstar}{2}  \bigg \},
    \end{align*}
    completing the proof.
\end{proof}

\subsubsection{Coupling procedure and the upper bound on $\cV^n$}
\label{subsec.coupling}
For $p \geq 2$, we now define $\cU_p : [0,1] \times \cP_p(\R) \to \R$ by
\begin{align} \label{def.up}
    \cU_p(t,m) = \cU_p^{\mathrm{det}}(t,m) + \cU_p^{\mathrm{F}}(t),
\end{align}
where $\cU_p^{\mathrm{det}}$ and $\cU_p^{\mathrm{F}}$ are as defined in the previous two subsections. Note that $\cU_p$ is upper semicontinuous on $[0,1)\times\P_p(\R)$ by Lemmas~\ref{lem.follmervalue} and \ref{lem.updet.upperbound}.

Let us first establish $\cU_p$ as an upper bound for the value function $\cV^\infty$, by showing that the budget-splitting control from the previous sections is admissible for the limiting problem. This in particular proves Proposition \ref{prop.upperbound.uinf}.

\begin{proposition} \label{prop.ulequp}
    For each $(t,m) \in [0,1] \times \cP_2(\R)$, we have
    \begin{align*}
        \cV^\infty(t,m) \leq \cU_2(t,m).
    \end{align*}
    In particular, $\cV^\infty$ is locally bounded on $[0,1) \times \cP_2(\R)$.
\end{proposition}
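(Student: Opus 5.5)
We construct an explicit admissible control for $\cV^\infty(t_0,m)$ by splitting the budget into a deterministic part and a Föllmer part, and then invoke Lemma~\ref{lem.equivstrong}. Fix $(t_0,m)\in[0,1)\times\cP_2(\R)$; the case $t_0=1$ is trivial, since both sides equal $[m]_\infty$. Fix $\eps>0$.

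\textbf{Deterministic part.} By the definition of $\cU_2^{\mathrm{det}}$, choose $m'$ with
\[
\bd_2(m,m')\le (1-t_0)\cstar/2, \qquad [m']_\infty\le \cU_2^{\mathrm{det}}(t_0,m)+\eps.
\]
Using the richness of $\cF(0)$, realize an optimal coupling $(X_0,Y_0)$ of $(m,m')$ as $\cF(t_0)$-measurable random variables. Then $\|X_0-Y_0\|_2\le (1-t_0)\cstar/2$ and $\|Y_0\|_\infty=[m']_\infty$. Set
\[
\beta(t)\coloneqq \frac{Y_0-X_0}{1-t_0}, \qquad t\in[t_0,1].
\]
This process is $\cF(t_0)$-measurable and constant in time, hence progressive, with $\|\beta(t)\|_2\le \cstar/2$. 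It satisfies $X_0+\int_{t_0}^1\beta\,dt=Y_0$.

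\textbf{Föllmer part.} By the definition of $\cU_2^{\mathrm{F}}$, choose $f\in\cA_2^{\mathrm{F}}(t_0)$ with $[f]_\infty\le \cU_2^{\mathrm{F}}(t_0)+\eps$. Let $\alpha^f, X^f$ be as in Lemma~\ref{lem.follmer}, driven by $B$ on $[t_0,1]$ with $X^f(t_0)=0$. Then $\|\alpha^f(t)\|_2\le\cstar/2$ and $\|X^f(1)\|_\infty=[f]_\infty$, where $X^f(1)=\int_{t_0}^1\alpha^f\,dt+B(1)-B(t_0)$. The lemma is stated on the Brownian filtration, but $\alpha^f$ is a functional of the increments of $B$ after $t_0$, so it is $\bbF$-progressive. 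The two budgets may be combined even though $X_0$ and the Brownian increments are independent: only the triangle inequality is needed below, not any independence structure.

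\textbf{Combination.} Let $\alpha\coloneqq\beta+\alpha^f$. By Minkowski's inequality, $\|\alpha(t)\|_2\le\cstar$, so $\alpha$ satisfies \eqref{alpha.l2constraint}. The terminal state is
\[
X_0+\int_{t_0}^1\alpha\,dt+B(1)-B(t_0)=Y_0+X^f(1).
\]
Hence Lemma~\ref{lem.equivstrong} gives
\[
\cV^\infty(t_0,m)\le \|Y_0\|_\infty+\|X^f(1)\|_\infty\le \cU_2(t_0,m)+2\eps.
\]
Letting $\eps\to0$ proves the inequality.

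\textbf{Local boundedness.} This follows from upper bounds on $\cU_2$:
\begin{itemize}
\item $\cU_2^{\mathrm{F}}$ is bounded on $[0,1]$ by Lemma~\ref{lem.follmervalue}.
\item $\cU_2^{\mathrm{det}}$ is continuous on $[0,1)\times\cP_2(\R)$ by Lemma~\ref{lem.updet.upperbound}, hence bounded on compact subsets.
\item $\cV^\infty\ge 0$, which supplies the lower bound.
\end{itemize}

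The only real subtlety is the measurability of the combined control. This is handled by the constant-in-time construction of $\beta$ and by the fact that $\alpha^f$ depends only on the Brownian increments after $t_0$. No step requires heavy estimates.
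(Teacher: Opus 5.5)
Your proof is correct and follows the paper's argument essentially verbatim. You take an optimal $\bd_2$-coupling $(X_0,Y_0)$, add the constant deterministic drift $(Y_0-X_0)/(1-t_0)$ to the F\"ollmer drift $\alpha^f$, use Minkowski's inequality for the $L^2$ constraint, and conclude with Lemma~\ref{lem.equivstrong} and the triangle inequality for $\|\cdot\|_\infty$. Your $\eps$-optimal choices are equivalent to the paper's infimum at the end, and your local-boundedness justification via Lemma~\ref{lem.follmervalue} and the continuity part of Lemma~\ref{lem.updet.upperbound} (with $p=2$) supplies what the paper leaves implicit in ``in particular.''
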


\begin{proof}
    Fix $(t_0,m_0)\in[0,1]\times \cP_2(\R)$. The case $t_0=1$ is trivial since both sides of the inequality are then $[m]_\infty$, which can be infinite. So assume $t_0<1$. Let $f \in \cA_2^{\mathrm{F}}(t_0)$ and define $\alpha^f$ as the corresponding control in the sense of Lemma \ref{lem.follmer}. In particular, we know that
    \begin{align*}
        \|\alpha^f(t)\|_2 \leq \frac{\cstar}{2},\qquad t_0\leq t\leq 1,
    \end{align*}
    and that the process $X^f(t) = \int_{t_0}^t \alpha^f(s)\, ds + B(t) - B(t_0)$ satisfies $\|X^f(1)\|_{\infty} = [f]_{\infty}$. Next, let $m_0' \in \cP_2(\R)$ be such that $\bd_2(m_0,m_0') \leq (1-t_0)\cstar/2$. We then let $X_0$ and $Y_0$ be square-integrable random variables, measurable with respect to $\cF(t_0)$, such that $(X_0,Y_0)$ is an optimal coupling of $m_0$ and $m_0'$. For $t_0\leq t\leq 1$, we define
    \begin{align*}
        \alpha(t) \coloneqq \alpha^f(t) + \alpha^\mathrm{det},\qquad \alpha^\mathrm{det} \coloneqq \frac{Y_0 - X_0}{1 - t_0},
    \end{align*}
    and
    \begin{align*}
        X(t) \coloneqq X_0 + \int_{t_0}^t \alpha(s)\,ds + B(t) - B(t_0) = X_0 + X^f(t) + (t - t_0) \alpha^\mathrm{det}.
    \end{align*}
    Note that $X(1) = X^f(1) + Y_0$ and
    \begin{align*}
        \| \alpha(t)\|_2 &\leq \|\alpha^f(t)\|_2 +  \|\alpha^\mathrm{det}\|_2
        \\
        &= \|\alpha^f(t)\|_2 + \frac{1}{1-t_0} \bd_2(m_0,m_0') \leq \frac{\cstar}{2} + \frac{\cstar}{2} = \cstar,
    \end{align*}
    so by Lemma \ref{lem.equivstrong},
    \begin{align*}
        \cV^\infty(t_0,m_0) \leq \|X(1)\|_{\infty} \leq \|X^f(1)\|_{\infty} + \|Y_0\|_{\infty} = [f]_{\infty} + [m_0']_{\infty}.
    \end{align*}
    Taking an infimum over $f \in \cA_2^{\mathrm{F}}(t_0)$ and $m_0'$ satisfying $\bd_2(m_0,m_0') \leq (1-t_0)\cstar/2$, we obtain the result.
\end{proof}

The main objective of this subsection is to prove the following non-asymptotic upper bound on $\cV^n$.

\begin{proposition} \label{prop.vnupperbound}
    For each $p > 4$, there is a constant $C_p > 0$ such that for all $n \in \N$, $k_0 \in \{0,1,\dots,n\}$, and $\bx \in \R^n$, we have
    \begin{align} \label{vn.upperbound.main}
        \cV^n(k_0,\bx) \leq \cU_p(k_0/n, m_{\bx}^n) + C_p n^{\frac{1}{2p} - \frac{1}{4}}.
    \end{align}
\end{proposition}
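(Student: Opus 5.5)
The strategy is to construct, for given $(k_0,\bx)$, an explicit admissible sign sequence $\eps(k_0+1),\dots,\eps(n)$ and bound its cost using Lemma~\ref{lem.dpp}. Write $t_0=k_0/n$. Pick a near-optimal $\by$ in Lemma~\ref{lem.updet.emp}, so that $|\by|_\infty\le \cU_p^{\mathrm{det}}(t_0,m^n_{\bx})+\delta$ and $(\frac1n\sum_i|y_i-x_i|^p)^{1/p}\le (1-t_0)\cstar/2$. Pick also a near-optimal $f\in\cA^{\mathrm F}_p(t_0)$ with $[f]_\infty\le \cU_p^{\mathrm F}(t_0)+\delta$. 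Take $n$ independent Föllmer processes $X_i=X^{f}$ driven by independent $B_i$ on $[t_0,1]$, with drifts $\alpha_i=\alpha^f_i$ (Lemma~\ref{lem.follmer}). These are i.i.d. mean-zero martingales with $\|\alpha_i(t)\|_p\le\cstar/2$. The target drift for coordinate $i$ is $\beta_i(t)=\alpha_i(t)+(y_i-x_i)/(1-t_0)$. The target terminal state is $x_i+\int\beta_i+B_i(1)-B_i(t_0)=y_i+X_i(1)$, whose $\ell^\infty$ norm is at most $|\by|_\infty+[f]_\infty$. It therefore suffices to build Gaussian increments $\bZ(k)$ and signs $\eps(k)$ with $\E|\bY(n)-(\by+\bX(1))|_\infty\le C_pn^{\frac1{2p}-\frac14}$.

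\textbf{Coupling step.} At step $k$, set $\bm{u}(k-1)\in\R^n$ with coordinates $u_i=\frac1n\beta_i(\frac{k-1}{n})$, or a time average over the step. Its squared $\ell^2$ norm is $\frac1{n^2}\sum_i\beta_i^2$. By Minkowski, $(\frac1n\sum_i\beta_i^2)^{1/2}\le(\frac1n\sum\alpha_i^2)^{1/2}+\cstar/2$, using $p\ge2$ so the $\ell^p$ average bounds the $\ell^2$ average. The first term concentrates around $\|\alpha\|_2\le\cstar/2$ by the law of large numbers; the fluctuation is controlled by $p$-th moments, which is where $p>4$ should enter, through the fourth moment of $\alpha^2$. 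So $n|\bm u|_2^2\le 2/\pi+\text{small}$, and after shrinking $\bm u$ by a factor $(1+\text{fluct})^{-1}$ it becomes feasible. Then use the discrete coupling (Lemma~\ref{lem.coupling.discrete}, the Gaussian "sign-flipping") to realize $\bZ(k)$ as the increment of an auxiliary Brownian motion $\bm W$ and $\eps(k)$ with $\E[\eps(k)\bZ(k)|\cG(k-1)]=\bm u$. This must be done so that the martingale part $\eps(k)\bZ(k)-\bm u$ stays close to the Brownian increment $\bm B(\frac kn)-\bm B(\frac{k-1}n)$ coordinatewise. The natural choice is to take $\eps(k)\bZ(k)$ equal to $\bm B$-increment plus a correction along the single direction $\bm u/|\bm u|$, i.e.\ flip the sign of the component of the Gaussian increment along $\bm u$. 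This keeps $\bZ(k)$ exactly $\cN(0,I/n)$ and makes $\eps(k)\bZ(k)$ differ from the $B$-increment only in a rank-one direction.

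\textbf{Error estimate.} The discrepancy $\bY(n)-\by-\bX(1)$ then splits into three pieces:
\begin{itemize}
\item a drift-discretization term, controlled by the martingale property of $\alpha_i$ (increments of $\alpha$ over a step of size $1/n$ are small in $L^2$);
\item a normalization-shrinkage term, of size roughly the LLN fluctuation times $\|\beta_i\|$;
\item a rank-one martingale correction $\sum_k(\text{scalar}_k)\,\bm u(k-1)/|\bm u(k-1)|$.
\end{itemize}
The $\ell^\infty$ norm of each piece is bounded via $\ell^\infty\le\ell^p$-type union bounds, costing a factor $n^{1/p}$, combined with $L^2$ martingale bounds of order $n^{-1/2}$. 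Balancing these should produce the rate $n^{\frac1{2p}-\frac14}$. Averaging the $\delta$'s away, and the uniformity of the constants (which depend only on $p$ via the $L^p$ bounds), give the claim.

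\textbf{Main obstacle.} The hardest part is the third piece. Its coordinate $i$ accumulates $\sum_k \xi_k\, u_i(k-1)/|\bm u(k-1)|$ with $\xi_k$ of order $n^{-1/2}$ and weights of order $\beta_i/\sqrt n$. Controlling the maximum over $i$ requires a Burkholder/BDG bound in $L^p$ for each $i$, then a union bound. This is also where the martingale structure of the Föllmer drift is needed, to keep the weights from correlating badly with $\xi_k$. I would first attempt BDG with exponent $p/2$ on each coordinate and Hölder in $i$.
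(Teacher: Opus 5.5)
Your construction is the paper's: i.i.d.\ F\"ollmer copies driven by independent $B_i$, the deterministic shift $\bm{\alpha}^{\mathrm{det}}=(\by-\bx)/(1-t_0)$, a rescaling to make the drift vector feasible, the sign-flip coupling of Lemma~\ref{lem.coupling.discrete}, and the same three error terms. The paper projects only $\frac1n\bm{\alpha}^f(t_k)$ onto the ball of radius $\cstar/(2\sqrt n)$ and keeps $\frac1n\bm{\alpha}^{\mathrm{det}}$ exact, which is feasible by Jensen. Your global shrinkage also works: it only adds an error of order $|\bm{\alpha}^{\mathrm{det}}|_\infty\,\E(1-\lambda_k)\lesssim n^{1/p-1/2}$. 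Two smaller corrections. The drift for step $k$ must be $\cG(k-1)$-measurable, so use the left-endpoint value, not an average over the current step. And $p>4$ supplies the fourth moment of $\alpha$, not of $\alpha^2$; it enters the shrinkage term through Lemma~\ref{lemma:norm-concentration} and $\E|\bm{\alpha}^f(t_k)|_\infty^2\le\sqrt n\,\|\alpha_1^f(t_k)\|_4^2$.

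The gap is the step you yourself flag as the main obstacle: it is not carried out, and the sketch would not deliver the bound. Your heuristic (scalar of order $n^{-1/2}$, weights $u_i/|\bm{u}|_2$ of order $\beta_i/\sqrt n$) presumes the budget is saturated, $R\coloneqq\sqrt n|\bm{u}|_2\approx\cstar$. But $\alpha^f(t_0)=0$ and $\E\,\alpha^f(t)^2$ grows from $0$, so early on $R$ can be small. Moreover $\bm{u}$ may then be dominated by $\frac1n\bm{\alpha}^{\mathrm{det}}$, whose entries can be of order $n^{1/p-1}$ on a few coordinates. The normalized weights $v_i=\sqrt n\,u_i/R$ then do not inherit the smallness of $\bm{u}$ and can be of order one.

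The missing idea is that the compensating smallness sits in the scalar. The flip in Lemma~\ref{lem.coupling.discrete} happens only on $\{-\Psi(R)<\eta\le 0\}$ with $\Psi(R)\lesssim\sqrt R$. Hence the error $\xi$ in $\bm{\sigma}(k)-\big(\bm{B}(\tfrac kn)-\bm{B}(\tfrac{k-1}n)\big)=n^{-1/2}\xi\,\bm{v}(k-1)$ satisfies $\E[|\xi|^{2p}\mid\cG(k-1)]^{1/p}\lesssim R$. Trading half the power of $v_i$ via $|v_i|\le 1$ and $R|v_i|=\sqrt n|u_i|$ gives $\|\xi v_i\|_{2p}^2\lesssim\sqrt n\,\|u_i\|_p$. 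This holds uniformly in $R$ and uses only $p$-th moments.

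This also dictates the exponent. Per coordinate the mechanism gives order $n^{-1/4}$ in any $L^q$ with $q\le 2p$. The union bound must therefore be taken in $L^{2p}$, at cost $n^{1/(2p)}$. Your proposed exponent $p/2$ costs $n^{2/p}$ and yields only $n^{2/p-1/4}$, which does not even vanish for $p\le 8$. So the rate $n^{\frac1{2p}-\frac14}$ comes from this $\sqrt R$ trade-off, not from balancing $n^{1/p}$ against $n^{-1/2}$, which would suggest $n^{1/p-1/2}$.

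Since you already cite Lemma~\ref{lem.coupling.discrete}, the fix is to use its part (3) directly. With $\|u_i(k)\|_p\le\frac1n(1+|\alpha_i^{\mathrm{det}}|)$ and $|\bm{\alpha}^{\mathrm{det}}|_p\le n^{1/p}$, it gives $I_{\mathrm{mart}}\le C_p n^{-1/4}\big(n^{1/p}+|\bm{\alpha}^{\mathrm{det}}|_p\big)^{1/2}\le C_p n^{\frac1{2p}-\frac14}$.
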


We now explain the strategy of the proof of Proposition~\ref{prop.vnupperbound}. Fix throughout $n\in\N$, $k_0\in\{0,1,\dots,n\}$, and $\bx\in\R^n$, and set $t_0\coloneqq k_0/n$. The proof is trivial when $k_0=n$, since $\V^n(n,\bm{x})=|\bx|_\infty=\cU_p(1,m_{\bx}^n)$, so let us assume $k_0<n$. Starting from an arbitrary $f\in\cA_p^{\mathrm F}(t_0)$, we construct a coupling of the following two $n$-dimensional systems:
\begin{enumerate}
    \item a continuous system $\bm X$ consisting of $n$ independent realizations of the state processes for the limiting problem, driven by the budget-splitting control associated with $f$ and the initial condition $m_{\bx}^n$;
    \item the discrete state process $\bY$ for the $n$-particle problem, started from $\bx$ and constructed from a specifically chosen admissible control $\eps$, independent Gaussian increments $\bZ$, and filtration $\mathbb{G}$ satisfying the assumptions of Lemma~\ref{lem.dpp}.
\end{enumerate}
The coupling is constructed so that, at terminal time, the two systems are close in the sense that
\begin{equation*}
\E|\bY(n)-\bm X(1)|_\infty \le C_p n^{\frac{1}{2p}-\frac14},
\end{equation*}
and by the specific choice of $\bm X$, we will have
\begin{equation*}
\E|\bm X(1)|_\infty \le [f]_\infty + \cU_p^{\mathrm{det}}(t_0,m_{\bx}^n).
\end{equation*}
Therefore,
\begin{equation*}
    \cV^n(k_0, \bx) \leq \E|\bY(n)|_\infty \leq [f]_\infty + \cU_p^\mathrm{det}(t_0, m_{\bm{x}}^n) + C_p n^{\frac{1}{2p} - \frac{1}{4}}.
\end{equation*}
Since $f\in\cA_p^{\mathrm F}(t_0)$ was arbitrary, taking the infimum over $f$ yields \eqref{vn.upperbound.main}.

The following lemma gives the construction of $\bm{Y}$ underlying the proof. Given a Brownian motion $\bm{B}$ and an adapted sequence of vectors $(\bm{b}(k))$, it constructs an admissible triple $(\bbG,\bm{Z},\epsilon)$ such that the associated discrete state process has drift $\bm{b}(k)$, and such that its martingale increments are close to the increments of $\bm{B}$. Recall here from the introduction that the vectors in the $\ell^2$-ball of radius $\cstar/\sqrt{n}$ with $\cstar=\sqrt{2/\pi}$ are precisely the admissible drifts.

\begin{lemma}\label{lem.coupling.discrete}
    Fix $n\in\N$, $\bx\in\R^n$, and $k_0 \in \{0,1,\dots,n - 1\}$, and write $t_0=k_0/n$. Let $\bm{B}=(\bm{B}(t))_{t_0\leq t \leq 1}$ be an $n$-dimensional Brownian motion and denote its canonical filtration by $\mathbb{F}=(\cF(t))_{t_0\leq t\leq 1}$. Also, define the filtration $\bbG = (\cG(k))_{k=k_0,\dots,n}$ by $\cG(k)=\cF(\tfrac{k}{n})$. Suppose we are given random vectors $\bm{b}(k_0),\dots,\bm{b}(n-1)\in\R^n$ such that
    \begin{equation*}
        |\bm{b}(k)|_2\leq \frac{\cstar}{\sqrt{n}},\qquad \bm{b}(k)\text{ is $\G(k)$-measurable,}\qquad\text{for all $k=k_0,\dots,n-1$.}
    \end{equation*}
    Then, there exist processes $\bm{Z}=(\bm{Z}(k))_{k=k_0+1,\dots,n}$ and $\eps=(\eps(k))_{k={k_0+1,\dots,n}}$ such that the following properties hold:
    \begin{enumerate}
        \item The triple $(\bbG, \bZ, \eps)$ satisfies the assumptions of Lemma~\ref{lem.dpp} with $\nu=\gamma$ and $k_1=n$.

        \item We have
        \begin{align} \label{b.increment}
            \bm{b}(k) = \E\big[\epsilon(k+1) \bm{Z}(k+1)\,\big|\,\G(k)\big], \qquad k = k_0,\dots,n-1.
        \end{align}
        In particular, we have the martingale decomposition
        \begin{align}\label{y.martdecomp}
            \bm{Y}(k) \coloneqq \bx + \sum_{\ell=k_0+1}^k \epsilon(\ell)\bm{Z}(\ell) = \bx + \sum_{\ell=k_0 + 1}^k \bm{b}(\ell - 1) + \sum_{\ell= k_0 + 1}^k \bm{\sigma}(\ell),
        \end{align}
        where $\bm{\sigma}(\ell) \coloneqq \eps(\ell) \bZ(\ell) - \E\big[\eps(\ell) \bZ(\ell)\,\big|\,\mathcal{G}(\ell-1)\big]$ and $k=k_0,\dots,n$.

        \item For every $p > 2$ and $k_1=k_0+1,\dots,n$,
        \begin{align}
            \E\bigg|\sum_{k=k_0+1}^{k_1}\bm{\sigma}(k) - \big(\bm{B}(\tfrac{k_1}{n}) - \bm{B}(\tfrac{k_0}{n})\big)\bigg|_\infty
            &\leq C_p n^{\frac14} \max_{k_0 \leq k < k_1} \bigg(\sum_{i=1}^n \|b_i(k)\|_p^p\bigg)^{\frac{1}{2p}}, \label{eq.coupling.discrete.marterror}
        \end{align}
        where $C_p>0$ is a constant that depends only on $p$.

        \end{enumerate}
\end{lemma}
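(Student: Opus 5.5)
The plan is to build $\bZ(k+1)$ and $\eps(k+1)$ one step at a time from the Brownian increment $\Delta\bm B(k)\coloneqq \bm B(\tfrac{k+1}{n})-\bm B(\tfrac kn)$, using a ``sign-flipping'' construction in the single direction of $\bm b(k)$. Then bound the resulting discrepancy between $\bm\sigma$ and $\Delta\bm B$ coordinatewise with Burkholder's inequality.

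\emph{Step 1 (one-step construction).} Fix $k$ and condition on $\cG(k)$. If $\bm b(k)=0$, set $\eps(k+1)=1$ and $\bZ(k+1)=\Delta\bm B(k)$. Otherwise define:
\begin{itemize}
\item the unit vector $\bm u\coloneqq \bm b(k)/|\bm b(k)|_2$ and the ratio $r\coloneqq \sqrt n|\bm b(k)|_2/\cstar\in(0,1]$, both $\cG(k)$-measurable;
\item the decomposition $\Delta\bm B(k)=g\,\bm u+\Delta\bm B^\perp$, with $g\coloneqq \bm u\cdot\Delta\bm B(k)$.
\end{itemize}
Conditionally on $\cG(k)$, the variable $g\sim\mathcal N(0,1/n)$ is independent of $\Delta\bm B^\perp$. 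We also need auxiliary randomness: a uniform variable $\xi\in[0,1]$ that is $\cF(\tfrac{k+1}{n})$-measurable and independent of $\cG(k)\vee\sigma(\Delta\bm B(k))$. It can be extracted from the Brownian bridge of $\bm B$ on $[\tfrac kn,\tfrac{k+1}n]$, which is independent of the increment. Now set
\begin{align*}
\eps(k+1)\coloneqq \begin{cases}\sgn(g), & \xi\le r,\\ 1,&\xi>r,\end{cases}\qquad
\bZ(k+1)\coloneqq \eps(k+1)\,\Delta\bm B^\perp+ g\,\bm u .
\end{align*}
Then $\eps(k+1)\bZ(k+1)=\Delta\bm B^\perp+\eps(k+1)g\,\bm u$.

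\emph{Step 2 (admissibility and drift).} The sign $\eps(k+1)$ is a function of $(g,\xi)$, which is independent of $\Delta\bm B^\perp$. The law of $\Delta\bm B^\perp$ given $\cG(k)$ is symmetric, so $\eps(k+1)\Delta\bm B^\perp$ has the same conditional law and remains independent of $g$. Hence, conditionally on $\cG(k)$, $\bZ(k+1)\sim\mathcal N(0,n^{-1}I_n)$ whatever $\bm u$ is. Therefore $\bZ(k+1)$ is independent of $\cG(k)$ and $\cG(k+1)$-measurable, which gives (1). For (2), the perpendicular part has conditional mean zero, and
\[
\E[\eps(k+1)g\mid\cG(k)]=r\,\E|g|=r\cstar/\sqrt n=|\bm b(k)|_2,
\]
which gives \eqref{b.increment}.

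\emph{Step 3 (error estimate).} We have
\[
\bm\sigma(k+1)-\Delta\bm B(k)=\big((\eps(k+1)-1)g-|\bm b(k)|_2\big)\bm u \eqqcolon \bm e(k+1),
\]
which is a martingale difference sequence in $\bbG$. The factor $(\eps-1)g$ is nonzero only on $\{\xi\le r\}$, where it is bounded by $2|g|$. Therefore, for each coordinate $i$ and $q\ge 2$,
\[
\E\big[|e_i(k+1)|^q\,\big|\,\cG(k)\big]\le C_q\,|u_i|^q\,r\,n^{-q/2}.
\]
Since $|u_i|\le1$ and $|u_i|r=\sqrt n|b_i(k)|/\cstar$, this is at most $C_q n^{(1-q)/2}|b_i(k)|$. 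In particular the conditional variance is at most $C|b_i(k)|/\sqrt n$.

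Apply Burkholder--Rosenthal to the martingale $\sum_k e_i(k)$ with exponent $2p$. The quadratic-variation term gives $\big(n\cdot\max_k\|b_i(k)\|_p/\sqrt n\big)^{p}$, i.e.\ $\E|\sum_k e_i(k)|^{2p}\lesssim n^{p/2}\max_k\|b_i(k)\|_p^p$. Here we use Jensen and the bound of the conditional variance by $|b_i|$, so that the $p$-th moment of the sum of conditional variances is controlled by $\max_k\|b_i(k)\|_p^p$. The jump term is of lower order. Finally, bound
\[
\E\max_i|\cdot|\le\Big(\sum_i\E|\cdot|^{2p}\Big)^{1/(2p)},
\]
which yields
\[
C_p n^{1/4}\Big(\sum_i\max_k\|b_i(k)\|_p^p\Big)^{1/(2p)}.
\]

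\emph{Main obstacle.} The bound just obtained has the $\max_k$ inside the sum over $i$. The stated estimate \eqref{eq.coupling.discrete.marterror} has the maximum outside, which is stronger. To get it, the plan is to bound the conditional-variance sum via $\sum_k|b_i(k)|/\sqrt n$ and apply Minkowski over $k$ in $L^p$ before summing over $i$. The bookkeeping needed to end up with exactly $\max_k(\sum_i\|b_i(k)\|_p^p)$ is the delicate part, and may require a H\"older interchange of $\sum_i$ and $\sum_k$.
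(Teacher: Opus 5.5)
Parts (1) and (2) of your construction are correct. The auxiliary uniform taken from the Brownian bridge is legitimate, $\bZ(k+1)$ is conditionally $\mathcal N(0,n^{-1}I_n)$, and $\E[\eps(k+1)g\mid\cG(k)]=r\cstar/\sqrt n=|\bm b(k)|_2$.

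The gap is in (3), and it is not the one you flagged. Your claim that the Rosenthal jump term is ``of lower order'' is false for your construction. The error $(\eps(k+1)-1)g-|\bm b(k)|_2$ has full size $\asymp n^{-1/2}$ on an event of conditional probability $r/2$. Hence $\E[|e_i(k+1)|^{2p}\mid\cG(k)]$ is of order $|u_i|^{2p}\,r\,n^{-p}$, which is only \emph{linear} in $r$. The target $n^{p/2}\max_k\|b_i(k)\|_p^p$, by contrast, has degree $p$ in $\bm b$.

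A concrete counterexample: take $\bm b(k)\equiv\beta e_1$ with $nr=n^{3/2}\beta/\cstar\ll 1$. With probability $\asymp nr$ exactly one flip occurs, and then $|N_1|\asymp n^{-1/2}$, since the compensator $n\beta$ is negligible. Therefore $\E|N_1|^{2p}\gtrsim n^{3/2-p}\beta$. This exceeds $n^{p/2}\beta^p$ by the factor $(n^{3/2}\beta)^{1-p}$, which tends to $\infty$ as $\beta\downarrow 0$.

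So the per-coordinate $L^{2p}$ bound you need is false for your construction. No bookkeeping of sums lets $\big(\sum_i\E|N_i|^{2p}\big)^{1/(2p)}$ yield the estimate in (3). Your construction might still satisfy (3) through some finer argument for the $\ell^\infty$ norm, but the route you propose cannot establish it.

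The issue you did flag is harmless. Minkowski over $k$ gives $n^{p/2}\big(\tfrac1n\sum_k\|b_i(k)\|_p\big)^p$, and Jensen bounds this by $n^{p/2}\tfrac1n\sum_k\|b_i(k)\|_p^p$. Summing over $i$ before maximizing then gives $\max_k\sum_i\|b_i(k)\|_p^p$.

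The missing idea is to make the flip deterministic and localized near $g=0$, so that the error has size $\lesssim\sqrt{R/n}$, where $R=\sqrt n|\bm b(k)|_2$. The paper sets $\eps(k+1)=-1$ exactly when $-\Psi(R)<\sqrt n\,g\le 0$. Here $\Psi(r)=\sqrt{-2\log(1-r/\cstar)}$ is chosen so that $\E[\eta(1-2\mathbf 1_{\{-\Psi(r)<\eta\le 0\}})]=r$ for $\eta\sim\mathcal N(0,1)$. This gives the same drift and needs no auxiliary randomness.

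Since $\mathbf 1_{\{-\Psi(R)<\eta\le 0\}}|\eta|\le C\sqrt R\,(1+|\eta|)$, the scaled error $\xi=-2\mathbf 1_{\{-\Psi(R)<\eta\le 0\}}\eta-R$ satisfies $\E[|\xi|^{2p}\mid\cG(k)]^{1/p}\le C_pR$. Combined with $|v_i|^{2p}R^p\le(\sqrt n\,|b_i|)^p$, this gives $\|\xi(k+1)v_i(k)\|_{2p}^2\le C_p\sqrt n\,\|b_i(k)\|_p$, which is homogeneous of the right degree. The Marcinkiewicz--Zygmund (Burkholder) inequality, together with the Minkowski/Jensen step above, then yields (3).
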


The main subtlety is the choice of the Gaussian increments $\bm Z(k)$. While the
Brownian increments $\bm{B}(\frac{k}{n})-\bm{B}(\frac{k-1}{n})$ are the natural
first candidates, this choice does not yield an estimate of the form
\eqref{eq.coupling.discrete.marterror}. We instead construct $\bm Z(k)$ as the
increments of an auxiliary Brownian motion, obtained from $\bm B$ through a
sign-flipping procedure.

\begin{proof}
    For $k = k_0,\dots,n-1$, let $\bm{v}(k)\in\R^n$ denote an $\ell^2$-normalized version of $\bm{b}(k)$, i.e., $\bm{v}(k) = \bm{b}(k)/|\bm{b}(k)|_{2}$ on $\{\bm{b}(k)\neq 0\}$ and $\bm{v}(k)=(1,0,\dots,0)$ on $\{\bm{b}(k) = 0\}$. We define
    \begin{equation*}
        \epsilon(k+1) \coloneqq
        \begin{cases}
            -1 & \text{if } -\Psi\big(\sqrt{n}|\bm{b}(k)|_2\big) < \sqrt{n} \big\langle \bm{B}(\tfrac{k+1}{n}) - \bm{B}(\tfrac{k}{n}),\bm{v}(k)\big\rangle \leq 0, \\
            1  & \text{otherwise,}
        \end{cases}
    \end{equation*}
    where we write $\langle x,y\rangle$ for the inner product on $\R^n$ and
    \begin{equation} \label{def.psi}
        \Psi(r) \coloneqq \sqrt{-2\log\Big(1 - \frac{r}{\cstar}\Big)},\qquad r\in[0,\cstar].
    \end{equation}
    Here, we use the convention $\Psi(\cstar)\coloneqq \infty$. This choice of $\Psi$ satisfies the following identity, which we will use below:
    \begin{equation}\label{psi.property}
        \E\Big[\eta\big(1-2 \mathbf{1}_{\{-\Psi(r)<\eta\leq 0\}}\big)\Big] = r,
        \qquad r\in[0,\cstar],\quad \eta\sim\mathcal{N}(0,1).
    \end{equation}

    We next construct an auxiliary $n$-dimensional Brownian motion $\bm{W}$ whose increments will be used to define $\bm{Z}$. Let $\bm{P}(k) \coloneqq I_n - \bm{v}(k)\bm{v}(k)^\top \in \R^{n\times n}$ be the orthogonal projection onto the subspace $\bm{v}(k)^\perp \coloneqq \{\bm{y}\in\R^n : \langle \bm{v}(k), \bm{y} \rangle = 0\}$. We then inductively define $(\bm{W}(t))_{t_0\leq t \leq 1}$ by setting $\bm{W}(t_0) \coloneqq \bm{B}(t_0)$ and
    \begin{align}
        \bm{W}(t)
        & \coloneqq \bm{W}(\tfrac{k}{n}) + \big\langle \bm{v}(k), \bm{B}(t) - \bm{B}(\tfrac{k}{n})\big\rangle \bm{v}(k) \nonumber \\
        & \qquad\qquad + \epsilon(k+1) \bm{P}(k)\big(\bm{B}(t) - \bm{B}(\tfrac{k}{n})\big),
        \label{eq:upper-bound:bm-construction}
    \end{align}
    for $\tfrac{k}{n} < t \leq \frac{k+1}{n}$ and $k=k_0,\dots,n-1$. Thus, this construction modifies the Brownian increment of $\bm{B}$ on each interval $(\frac{k}{n},\frac{k+1}{n}]$ only through a possible sign flip in directions orthogonal to $\bm v(k)$, while leaving the component along $\bm v(k)$ unchanged. This defines a new Brownian motion $\bm{W}$ with the property that at the discrete time points $t=\frac{k}{n}$, the filtrations generated by $\bm{B}$ and $\bm{W}$ coincide. A detailed proof of this is left to Appendix~\ref{sec:appendix:bm-construction}.

    We now define
    \begin{equation*}
        \cG(k) \coloneqq \sigma\big(\bm{W}(t): t \leq \tfrac{k}{n}\big) = \sigma\big(\bm{B}(t): t \leq \tfrac{k}{n}\big),\qquad k=k_0,\dots,n,
    \end{equation*}
    as well as
    \begin{equation*}
        \bZ(k) \coloneqq \bm{W}(\tfrac{k}{n}) - \bm{W}(\tfrac{k-1}{n}),\qquad k=k_0+1,\dots,n.
    \end{equation*}
    The process $\epsilon$ can now be written as
    \begin{equation}\label{eq:upper-bound:epsilon:identity}
        \epsilon(k) =
        \begin{cases}
            -1 & \text{if } -\Psi\big(\sqrt{n}|\bm{b}(k-1)|_2\big) < \sqrt{n} \big\langle \bm{Z}(k),\bm{v}(k-1)\big\rangle \leq 0, \\
            1  & \text{otherwise,}
        \end{cases}
    \end{equation}
    since $\langle \bm{B}(\frac{k}{n}) - \bm{B}(\frac{k-1}{n}),\bm{v}(k-1)\rangle = \langle \bm{Z}(k), \bm{v}(k-1)\rangle$. It is immediate that $(\bbG, \bm{Z}, \epsilon)$ satisfies the assumptions of Lemma~\ref{lem.dpp}.

    To prove \eqref{b.increment}, let us fix $k=k_0+1,\dots,n$, and introduce the random variables
    \begin{align}\label{def.eta.r}
        \eta \coloneqq  \sqrt{n} \langle \bm{v}(k-1), \bZ(k) \rangle, \quad R \coloneqq \sqrt{n} |\bm{b}(k-1)|_2.
    \end{align}
    Since $\bm{v}(k-1)$ has unit length and $\bZ(k) \sim \gamma_{1/n}^{\otimes n}$ is independent of $\cG(k-1)$, $\eta$ is a standard Gaussian, independent of $\cG(k-1)$. Using \eqref{eq:upper-bound:epsilon:identity} and \eqref{psi.property}, we obtain
    \begin{align} \nonumber
        \big\langle \bm{v}(k-1),\, \E\big[\epsilon(k) \bm{Z}(k)\,\big|\,\G(k-1)\big] \big\rangle
        & = \frac{1}{\sqrt{n}} \E\big[\eta\,\eps(k) \,\big|\, \cG(k-1) \big] \\ \nonumber
        & = \frac{1}{\sqrt{n}} \E\Big[ \eta \big(1 - 2 \mathbf{1}_{\{-\Psi(R) < \eta \leq 0\}} \big) \,\Big|\, \cG(k-1) \Big] \\
        & = |\bm{b}(k-1)|_2. \label{etar.comp}
    \end{align}
    On the other hand, if $\bm{u}$ is a $\cG(k-1)$-measurable random vector which is orthogonal to $\bm{v}(k-1)$, then $\langle \bm{u}, \bm{Z}(k) \rangle$ and $\eps(k)$ are conditionally independent with mean zero given $\cG(k-1)$, and so
    \begin{align} \label{cond.indep}
        \big\langle \bm{u},\, \E\big[ \eps(k) \bZ(k) \,\big|\, \cG(k-1)\big] \big\rangle = \E\big[ \eps(k) \langle \bm{u},\bZ(k)\rangle
        \big|\, \cG(k-1)\big] = 0.
    \end{align}
    Together, \eqref{etar.comp} and \eqref{cond.indep} imply that \eqref{b.increment} holds.

    It remains to prove the estimate \eqref{eq.coupling.discrete.marterror} for some $p>2$. For $k=k_0+1,\dots,n$, notice that
    \begin{align*}
        \langle \bm{v}(k-1),\bZ(k)\rangle & = \big\langle \bm{v}(k-1),\, \bm{B}(\tfrac{k}{n}) - \bm{B}(\tfrac{k-1}{n})\big\rangle, \\
        \bm{P}(k-1) \bZ(k)                 & = \epsilon(k) \bm{P}(k-1) \big(\bm{B}(\tfrac{k}{n}) - \bm{B}(\tfrac{k-1}{n})\big).
    \end{align*}
    It follows that
    \begin{equation*}
        \epsilon(k) \bm{Z}(k) - \big(\bm{B}(\tfrac{k}{n}) - \bm{B}(\tfrac{k-1}{n})\big) = (\epsilon(k) - 1) \langle \bm{v}(k-1),\bZ(k)\rangle \bm{v}(k-1)
    \end{equation*}
    and
    \begin{align}
        \sigma(k) - \big(\bm{B}(\tfrac{k}{n}) - \bm{B}(\tfrac{k-1}{n})\big)
        & = \epsilon(k) \bZ(k) - \bm{b}(k-1) - \big(\bm{B}(\tfrac{k}{n}) - \bm{B}(\tfrac{k-1}{n})\big) \notag\\
        & = \frac{1}{\sqrt{n}}\xi(k) \bm{v}(k-1) \label{eq:upper-bound:mart-incr}
    \end{align}
    where
    \begin{align}
        \xi(k)
        & \coloneqq - 2 \mathbf{1}_{\{\epsilon(k)=-1\}} \sqrt{n} \langle \bZ(k), \bm{v}(k-1)\rangle - \sqrt{n}|\bm{b}(k-1)|_2 \label{eq:upper-bound:mart:def-xi} \\
        & = -2\mathbf{1}_{\{-\Psi(R) < \eta \leq 0\}}\eta - R \label{eq:upper-bound:mart:def-xi:simplified},
    \end{align}
    with $\eta$ and $R$ as defined in \eqref{def.eta.r}.

    By the martingale decomposition \eqref{y.martdecomp}, both $\bm{\sigma}(k)$ and $\bm{B}(\frac{k}{n}) - \bm{B}(\frac{k-1}{n})$ have mean zero conditionally on $\G(k-1)$, and thus
    \begin{equation}\label{eq:upper-bound:mart:xi-mean-zero}
        \E[\xi(k)\,|\,\cG(k-1)] = 0.
    \end{equation}
    Next, since $\Psi(r) \leq 2\sqrt{r}$ for $r<r_0$, with $r_0$ sufficiently small, we have
    \begin{equation*}
        \mathbf{1}_{\{-\Psi(R) < \eta \leq 0\}} |\eta| \leq 2\sqrt{R}\,\mathbf{1}_{\{R < r_0\}} + |\eta|\mathbf{1}_{\{R\geq r_0\}} \leq C\sqrt{R}(1 + |\eta|)
    \end{equation*}
    for some constant $C>0$. In view of \eqref{eq:upper-bound:mart:def-xi:simplified} and \eqref{eq:upper-bound:mart:xi-mean-zero}, it follows that
    \begin{align*}
        \E\big[|\xi(k)|^{2p}\,\big|\,\G(k-1)\big]^\frac{1}{2p}
        \leq 4 \E\big[\big|\mathbf{1}_{\{-\Psi(R) < \eta \leq 0\}} \eta\big|^{2p}\,\big|\,\G(k-1)\big]^\frac{1}{2p} \leq 4C\sqrt{R}\|1+\eta\|_{2p}
    \end{align*}
    where we used that $R$ is $\cG(k-1)$-measurable and $\eta\sim\mathcal{N}(0,1)$ is independent of $\cG(k-1)$. Letting $C_p$ denote a generic constant depending only on $p$, we have thus shown that
    \begin{equation}\label{eq:upper-bound:mart:xi-lp}
        \E\big[|\xi(k)|^{2p}\,\big|\,\G(k-1)\big]^{\frac1p} \leq C_p \sqrt{n} |\bm{b}(k-1)|_2
    \end{equation}
    for each $k=k_0+1,\dots,n$.

    To prove \eqref{eq.coupling.discrete.marterror}, let us now define an $\R^n$-valued $\bbG$-martingale $\bm{N}=(\bm{N}(k))_{k=k_0,\dots,n}$ by
    \begin{equation*}
        \bm{N}(k) \coloneqq \frac{1}{\sqrt{n}}\sum_{\ell=k_0}^{k-1} \xi(\ell+1) \bm{v}(\ell) = \sum_{\ell=k_0+1}^{k}\bm{\sigma}(\ell) - \big(\bm{B}(\tfrac{k}{n}) - \bm{B}(\tfrac{k_0}{n})\big)
    \end{equation*}
    where the equality holds by \eqref{eq:upper-bound:mart-incr}. Let $k_0 < k_1 \leq n$. By the Marcinkiewicz--Zygmund inequality for martingales \cite[Theorem 2.1]{rio-2009}, we can estimate for each coordinate $i$
    \begin{align*}
        \|N_i(k_1)\|_{2p}^2 = \bigg\|\frac{1}{\sqrt{n}} \sum_{k=k_0}^{k_1-1} \xi(k+1) v_i(k)\bigg\|_{2p}^2
        \leq \frac{2p-1}{n}\sum_{k=k_0}^{k_1-1} \big\|\xi(k+1) v_i(k)\big\|_{2p}^2.
    \end{align*}
    By \eqref{eq:upper-bound:mart:xi-lp},
    \begin{align*}
        \|\xi(k+1) v_i(k)\big\|_{2p}^{2} &= \big(\E|\xi(k+1) v_i(k)|^{2p}\big)^{\frac1p} \\
        &\leq C_p \sqrt{n}\, \big(\E|\bm{b}(k)|_{2}^p\, |v_i(k)|^{2p}\big)^{\frac1p}
        \leq C_p \sqrt{n}\, \|b_i(k)\|_p,
    \end{align*}
    where in the last step we used $|v_i(k)|^p \leq 1$ and $|\bm{b}(k)|_2\, v_i(k)=b_i(k)$. By Jensen's inequality,
    \begin{align*}
        \E|\bm{N}(k_1)|_{2p}
        &\leq \bigg(\sum_{i=1}^n \|N_i(k_1)\|_{2p}^{2p}\bigg)^{\frac{1}{2p}} \\
        &\leq C_p n^{\frac14}\bigg(\sum_{i=1}^n
            \bigg(\frac{1}{n}\sum_{k=k_0}^{k_1-1} \, \|b_i(k)\|_p\bigg)^p
        \bigg)^{\frac{1}{2p}}
        \leq C_p n^{\frac14} \max_{k_0\leq k < k_1} \bigg(\sum_{i=1}^n \|b_i(k)\|_p^p\bigg)^{\frac{1}{2p}}.
    \end{align*}
    Since $\E|\bm{N}(k_1)|_{\infty} \leq \E|\bm{N}(k_1)|_{2p}$, the proof is complete.
\end{proof}

We are now ready for the proof of Proposition \ref{prop.vnupperbound}.

\begin{proof}[Proof of Proposition \ref{prop.vnupperbound}]
    We fix $n \in \N$, $k_0 \in \{0,1,\dots,n-1\}$, $\bx \in \R^n$, and $f \in \cA_p^\mathrm{F}(t_0)$ where $t_0=k_0/n$. Let $\bm{B}=(\bm{B}(t))_{t_0\leq t\leq 1}$ be a Brownian motion in $\R^n$, and let $\mathbb{F}=(\cF(t))_{t_0\leq t\leq 1}$ denote its canonical filtration. For each $i=1,\dots,n$, we now let $\alpha_i^f=(\alpha_i^f(t))_{t_0\leq t\leq 1}$ denote the Föllmer drift associated with $f$ and $B_i$, as defined in Lemma~\ref{lem.follmer}. In particular, each $\alpha_i^f$ is a mean-zero martingale satisfying
    \begin{equation*}
        \|\alpha_i^f(t)\|_p \leq \frac{\cstar}{2},\qquad t_0 \leq t \leq 1,
    \end{equation*}
    and the Föllmer process $X_i^f$ defined by
    \begin{equation*}
        X_i^f(t) \coloneqq \int_{t_0}^t \alpha_i^f(s)\,ds + (B_i(t) - B_i(t_0)),\qquad t_0 \leq t \leq 1,
    \end{equation*}
    satisfies $X_i^f(1) \sim f\,d\gamma_{1-t_0}$. Moreover, these processes are independent across $i$. Write $\bm{\alpha}^f$ and $\bm{X}^f$ for the corresponding $\R^n$-valued processes.

    Next, by Lemma~\ref{lem.updet.emp}, the infimum in the definition of $\cU_p^{\mathrm{det}}(t_0,m_{\bx}^n)$ is attained by an empirical measure. That is, there exists $\by\in\R^n$ such that
    \begin{align}\label{eq.upperbound.ydef}
    |\by|_\infty = \cU_p^{\mathrm{det}}(t_0,m_{\bx}^n),\qquad \Big(\frac1n\sum_{i=1}^n |y_i-x_i|^p\Big)^{\frac 1p}
    \le
    \frac{(1-t_0)\cstar}{2}.
    \end{align}
    For $t_0\leq t\leq 1$, we now set
    \begin{equation}\label{def.upperbound.drift.cont}
            \bm{\alpha}(t) \coloneqq \bm{\alpha}^f(t) + \bm{\alpha}^\mathrm{det},\qquad \bm{\alpha}^\mathrm{det} \coloneqq \frac{\by - \bx}{1 - t_0},
    \end{equation}
    and define the process $\bX$ by
    \begin{align*}
        \bX(t)\coloneqq \bm{x} + \int_{t_0}^t \bm{\alpha}(s)\,ds + \bm{B}(t) - \bm{B}(t_0) = \bm{x} + \bm{X}^f(t) + \frac{t-t_0}{1-t_0}(\by - \bx).
    \end{align*}
    At terminal time, $\bX(1) = \bm{y} + \bm{X}^f(1)$, so that
    \begin{equation}\label{upperbound.fixedf}
        \E|\bX(1)|_\infty \leq |\bm{y}|_\infty + \E|\bm{X}^f(1)|_\infty \leq \cU_p^{\mathrm{det}}(t_0,m_{\bx}^n)+[f]_\infty.
    \end{equation}

    Let us now define for $k=k_0,\dots,n-1$
    \begin{equation}\label{def.upperbound.drift.discrete}
    \bm b(k)\coloneqq \bm{b}^f(k)+\bm{b}^\mathrm{det},
    \qquad
    \bm b^\mathrm{det} \coloneqq \frac1n \bm{\alpha}^\mathrm{det},
    \end{equation}
    where $\bm b^f(k)$ is the $\ell^2$-projection of $\frac1n\bm\alpha^f(k/n)$ onto the (closed) $\ell^2$-ball of radius $\frac{\cstar}{2\sqrt{n}}$.
    Observe that by \eqref{eq.upperbound.ydef} and Jensen's inequality,
    \begin{align*}
        |\bm{b}(k)|_2
        &\leq |\bm{b}^f(k)|_2 + |\bm{b}^\mathrm{det}|_2 \\
        &= |\bm{b}^f(k)|_2 + \frac{1}{\sqrt{n}(1-t_0)} \Big(\frac1n\sum_{i=1}^n |y_i-x_i|^2\Big)^{\frac 12}  \leq \frac{\cstar}{2\sqrt{n}} + \frac{\cstar}{2\sqrt{n}} = \frac{\cstar}{\sqrt{n}}.
    \end{align*}
    Moreover, $\bm{b}(k)$ is measurable with respect to $\cG(k)\coloneqq \cF(\frac{k}{n})$, and thus the vectors $\bm{b}(k)$ are valid drift vectors in the sense of Lemma~\ref{lem.coupling.discrete}. Let $\bm{Z}=(\bm{Z}(k))$, $\eps=(\eps(k))$, and $\bm{Y}=(\bm{Y}(k))$ be as stated in the lemma.

    Since the triple $(\bbG,\bm{Z}, \epsilon)$ satisfies the assumptions of Lemma~\ref{lem.dpp}, we obtain from \eqref{upperbound.fixedf}
    \begin{equation*}
        \cV^n(k_0,\bm{x}) \leq \E|\bm{Y}(n)|_\infty \leq \cU^{\mathrm{det}}_{p}(t_0, m_{\bx}^n) + [f]_{\infty} + \E| \bm{Y}(n) - \bm{X}(1) |_{\infty}
    \end{equation*}
    Thus, once we show the estimate
    \begin{equation}\label{eq.coupling.bound}
        \E|\bY(n)-\bm X(1)|_\infty \le C_p n^{\frac{1}{2p}-\frac14},
    \end{equation}
    taking the infimum over $f\in \cA_p^\mathrm{F}(t_0)$ and using the definition of $\cU_p$ completes the proof.

    The remainder of this proof is devoted to the estimate \eqref{eq.coupling.bound}. Let us assume $k_0=0$ for notational simplicity; the general case is identical. Using the martingale decomposition from Lemma \ref{lem.coupling.discrete}, and the definition of $\bm{b}(k)$, we have
    \begin{equation*}
        \bm{Y}(n) = \bm{x} + \sum_{k=1}^n \bm{b}(k-1) + \sum_{k=1}^n \bm{\sigma}(k) = \by + \sum_{k=1}^n \bm{b}^f(k-1) + \sum_{k=1}^n \bm{\sigma}(k).
    \end{equation*}
    Since $\bX(1) = \by + \bX^f(1)$, a straightforward computation gives
    \begin{align*}
        \E| \bm{Y}(n) - \bm{X}(1) |_{\infty} \leq I_{\mathrm{mart}} + I_{\mathrm{drift},1} + I_{\mathrm{drift},2},
    \end{align*}
    where
    \begin{align} \label{errordefs}
        \nonumber
        I_{\text{mart}} &\coloneqq \E\bigg|\sum_{k = 1}^n \bm{\sigma}(k) - \bm{B}(1)\bigg|_\infty,
        \\
        \nonumber
        I_{\mathrm{drift},1} &\coloneqq \E \bigg|\sum_{k=0}^{n-1} \Big(\bm{b}^f(k) - \frac{1}{n}\bm{\alpha}^f(t_k)\Big)\bigg|_\infty,
        \\
        I_{\mathrm{drift},2} &\coloneqq \E \bigg|\frac{1}{n}\sum_{k=0}^{n-1} \bm{\alpha}^f(t_k) - \int_0^1 \bm{\alpha}^f(t)\,dt\bigg|_\infty.
    \end{align}
    Here, we write $t_k\coloneqq k/n$ for $k=0,\dots,n$. We now estimate each of these errors in turn. In the remainder of this proof, $C$ denotes a generic constant which can depend on $p$, but is independent of $n$, $\bx$, and $f$. In particular, $C$ may change from line to line.\smallskip

    \noindent\textit{The martingale term, \texorpdfstring{$I_\mathrm{mart}$}{}.} By the last part of Lemma~\ref{eq.coupling.discrete.marterror}, we have
    \begin{equation*}
        I_\mathrm{mart} \leq C n^{\frac14} \max_{k} \bigg(\sum_{i=1}^n \|b_i(k)\|_p^p\bigg)^{\frac{1}{2p}}.
    \end{equation*}
    By the definition of $\bm{b}(k)$ and using that $\|\alpha_i^f(t_k)\|_p\leq \cstar/2 \leq 1$, we have for each $i=1,\dots,n$
    \begin{equation*}
        \|b_i(k)\|_p \leq \frac{1}{n}\Big(\big\|\alpha_i^f(t_k)\big\|_p + \big|\alpha_i^\mathrm{det}\big|\Big) \leq \frac{1}{n}\big(1 + \big|\alpha_i^\mathrm{det}\big|\big).
    \end{equation*}
    Since $|\bm{\alpha}^\mathrm{det}|_p \leq n^{1/p}$ by definition, it follows that
    \begin{equation}\label{mart.est}
        I_\mathrm{mart} \leq C_p n^{-\frac14} \Big(n^{\frac1p} + |\bm{\alpha}^\mathrm{det}|_p\Big)^{\frac{1}{2}} \leq C n^{\frac{1}{2p} - \frac{1}{4}}.
    \end{equation}

    \noindent\textit{The projection error, \texorpdfstring{$I_{\mathrm{drift},1}$}{}.}
    Recall that $\bm{b}^f(k)$ is defined as the $\ell^2$-projection of the vector $\frac{1}{n}\bm{\alpha}^f(t_k)$ onto the $\ell^2$-ball of radius $\frac{\cstar}{2\sqrt{n}}$. Thus, writing $x_+ = \max(x,0)$, we have
    \begin{align*}
        \Big|\bm{b}^f (k) - \frac{1}{n} \bm{\alpha}^f(t_k)\Big|_\infty
        &= \frac{1}{n} \bigg(1 - \frac{\cstar/(2\sqrt{n})}{ \frac{1}{n}|\bm{\alpha}^f(t_k)|_2}\bigg)_+ \big|\bm{\alpha}^f(t_k)\big|_\infty \\
        &\leq \frac{1}{n\sqrt{n}}\bigg(\frac{2}{\cstar}\big|\bm{\alpha}^f(t_k)\big|_2 - \sqrt{n}\bigg)_+ \big|\bm{\alpha}^f(t_k)\big|_\infty.
    \end{align*}
    It follows that
    \begin{align}
        I_{\mathrm{drift},1}
        &= \E \bigg|\sum_{k = 0}^{n-1} \Big(\bm{b}^f (k) - \frac{1}{n} \bm{\alpha}^f (t_k)\Big)\bigg|_\infty \nonumber \\
        &\leq \frac{1}{\sqrt{n}} \frac{1}{n} \sum_{k = 0}^{n-1} \E\bigg[\bigg(\frac{2}{\cstar}\big|\bm{\alpha}^f(t_k)\big|_2 - \sqrt{n}\bigg)_+ \big|\bm{\alpha}^f(t_k)\big|_\infty\bigg]. \label{eq.proj.error.bound}
    \end{align}
    We estimate the expectation using Cauchy--Schwarz. First, using that the random variables $\alpha_i^f(t_k)$ are i.i.d.\ across $i$ and $L^p$-bounded by $\cstar/2 < 1$ for $p > 4$, we have
    \begin{equation*}
        \E\big|\bm{\alpha}^f(t_k)\big|_\infty^2 \leq \bigg(\sum_{i=1}^n \E|\alpha_i^f(t_k)|^4\bigg)^{\frac12} = \sqrt{n}\, \big\|\alpha_1^f(t_k)\big\|_4^2 \leq \sqrt{n}.
    \end{equation*}
    Second, the random variables $U_i=\frac{2}{\cstar}\alpha_i^f(t_k)$ are independent with mean zero and $\|U_i\|_4\leq 1$ and hence $\E\, U_i^2 \leq 1$. We can therefore use Lemma~\ref{lemma:norm-concentration} below to obtain
    \begin{equation*}
        \E\bigg[\bigg(\frac{2}{\cstar}\big|\bm{\alpha}^f(t_k)\big|_2 - \sqrt{n}\bigg)_+^2\bigg] \leq 2.
    \end{equation*}
    Combining this with \eqref{eq.proj.error.bound} gives
    \begin{equation} \label{drift1.bound}
        I_{\mathrm{drift},1} \leq \frac{\sqrt{2}}{\sqrt{n}}\, n^{\frac{1}{4}} = \sqrt{2}\, n^{-\frac{1}{4}}.
    \end{equation}

    \noindent \textit{Discretization error, \texorpdfstring{$I_{\mathrm{drift},2}$}{}.}
    We now turn our attention to $I_{\mathrm{drift},2}$. Using the crude bound $\max_i c_i \le \sqrt{\sum_i c_i^2}$, we have
    \begin{align*}
        I_{\mathrm{drift},2}
        \leq \sqrt{ \E \sum_{i=1}^n  \bigg|\frac{1}{n} \sum_{k=0}^{n-1}\alpha_i^f(t_k) - \int_{0}^1 \alpha_i^f(t)\,dt \bigg|^2}
         & = \sqrt{ n \E \bigg|\frac{1}{n}\sum_{k=0}^{n-1} \alpha_1^f(t_k) - \int_0^1 \alpha_1^f(t)\,dt \bigg|^2}                                       \\
         & = \sqrt{ n \E \bigg|\sum_{k = 0}^{n-1} \bigg(\frac{1}{n}\alpha_1^f(t_k) - \int_{t_k}^{t_{k+1}} \alpha_1^f(t)\,dt\bigg) \bigg|^2}
    \end{align*}
    where the second step used that the processes $\alpha_1^f,\dots,\alpha_n^f$ are i.i.d. Since $\alpha_1^f$ is a martingale by Lemma~\ref{lem.follmer}, the terms in the parentheses form a martingale difference sequence in $k$. Thus, if we expand the square, the cross-terms vanish, and we get
    \begin{align*}
        (I_{\mathrm{drift},2})^2
         & \le n \sum_{k = 0}^{n-1} \E \bigg( \int_{t_k}^{t_{k+1}} \big(\alpha_1^f(t) -  \alpha_1^f(t_k)\big)\,dt\bigg)^2
         \le \sum_{k = 0}^{n-1} \int_{t_k}^{t_{k+1}} \E \big(\alpha_1^f(t)-\alpha_1^f(t_k)\big)^2 \,dt,
    \end{align*}
    where the last step used Cauchy--Schwarz and that $t_{k+1} - t_k = 1/n$. Since $\alpha_1^f$ is a martingale,
    \begin{equation*}
        \E \big(\alpha_1^f(t)-\alpha_1^f(t_k)\big)^2 = \E \big(\alpha_1^f(t)\big)^2 - \E \big(\alpha_1^f(t_k)\big)^2 \le \E \big(\alpha_1^f(t_{k+1})\big)^2 - \E \big(\alpha_1^f(t_k)\big)^2,
    \end{equation*}
    for $t_k \le t \le t_{k+1}$. Hence,
    \begin{align*}
        (I_{\mathrm{drift},2})^2
         & \le \frac{1}{n} \sum_{k=0}^{n-1} \Big( \E \big(\alpha_1^f(t_{k+1})\big)^2 - \E \big(\alpha_1^f(t_k)\big)^2 \Big) = \frac{1}{n} \E \big(\alpha_1^f(1)\big)^2.
    \end{align*}
    Finally, use $\|\alpha_1^f(1)\|_2\leq \cstar/2 \leq 1$ to obtain
    \begin{align} \label{discretization.bound}
            I_{\text{drift},2} \leq n^{-\frac{1}{2}}.
    \end{align}

    Finally, combining \eqref{mart.est}, \eqref{drift1.bound} and \eqref{discretization.bound}, we obtain
    \begin{equation*}
        \E|\bm{Y}(n) - \bm{X}(1)|_\infty \leq C n^{\frac{1}{2p} - \frac{1}{4}} + \sqrt{2} n^{-\frac{1}{4}} + n^{-\frac{1}{2}} \leq C n^{\frac{1}{2p} - \frac{1}{4}}.
    \end{equation*}
    This proves the desired bound \eqref{eq.coupling.bound} and completes the proof.
\end{proof}

\begin{lemma}\label{lemma:norm-concentration}
	Let $U_1,\dots,U_n$ be i.i.d.\ random variables with $\E\,U_i=0$ and $\E\,U_i^2\leq 1$. Define $\bm{U}=(U_1,\dots,U_n)$. Then,
	\begin{align}
		\E\big[\big(|\bm{U}|_2-\sqrt{n}\big)_+^2\big]
		&\leq 2\Var(U_1^2) \leq 2\,\E\,U_1^4\label{eq:lemma:norm-concentration}.
	\end{align}
\end{lemma}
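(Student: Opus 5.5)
The plan is to compare $|\bm U|_2$ with $\sqrt{n}$ through the squared norm $S \coloneqq |\bm U|_2^2 = \sum_{i=1}^n U_i^2$, which is a sum of i.i.d.\ nonnegative variables with mean $n\,\E U_1^2 \le n$. The key elementary observation is that for $a \ge 0$ and $b>0$,
\begin{equation*}
    (\sqrt{a} - b)_+ \le \frac{(a - b^2)_+}{\sqrt{a} + b} \le \frac{(a-b^2)_+}{b}.
\end{equation*}
This holds because $\sqrt{a}-b = (a-b^2)/(\sqrt a + b)$ and the denominator is at least $b$. Applying it with $a = S$ and $b = \sqrt{n}$ gives the pointwise bound
\begin{equation*}
    \big(|\bm U|_2 - \sqrt n\big)_+^2 \le \frac{(S - n)_+^2}{n}.
\end{equation*}

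Next we use $\E U_1^2 \le 1$. It gives $S - n \le S - \E S$, and since $x\mapsto x_+$ is nondecreasing, $(S-n)_+^2 \le (S - \E S)^2$. Taking expectations and using independence of the $U_i^2$,
\begin{equation*}
    \E\big[(|\bm U|_2-\sqrt n)_+^2\big] \le \frac{1}{n}\Var(S) = \frac{1}{n}\cdot n \Var(U_1^2) = \Var(U_1^2).
\end{equation*}
This is already stronger than the claimed $2\Var(U_1^2)$. The final inequality $\Var(U_1^2) \le \E U_1^4$ is trivial, so $2\Var(U_1^2)\le 2\E U_1^4$ follows at once.

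The argument has no real obstacle. The only point needing care is the monotonicity step: replacing $n$ by the smaller quantity $\E S$ is justified precisely because we only control the positive part. Note that the mean-zero assumption is not even needed; only $\E U_1^2\le 1$ and independence are used. If $\E U_1^4=\infty$, the right-hand side is infinite and there is nothing to prove.
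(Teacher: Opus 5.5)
Your proof is correct, and it is cleaner than the paper's. The paper also works with $S=|\bm U|_2^2$ and variance bounds, with $\sigma^2\coloneqq\E U_1^2$.

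\emph{The paper's route.} It first lowers the threshold from $\sqrt n$ to $\sigma\sqrt n$, and only then rationalizes, dividing by $(|\bm U|_2+\sigma\sqrt n)^2\ge \sigma^2 n$. This gives the bound $\sigma^{-2}\Var(U_1^2)$, which degenerates as $\sigma\to 0$. To cover small $\sigma$, it runs a second argument: it bounds $(|\bm U|_2-\sqrt n)_+^2\le (S-n)_+$ and integrates the Chebyshev tail of $S-\sigma^2 n$ from $(1-\sigma^2)n$ to $\infty$, which gives $(1-\sigma^2)^{-1}\Var(U_1^2)$. Taking the better of the two constants $\sigma^{-2}$ and $(1-\sigma^2)^{-1}$ produces the factor $2$.

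\emph{Your route.} You make the same two moves in the opposite order. You rationalize against $\sqrt n$ first, so the denominator is at least $n$ and does not depend on $\sigma$. Only afterwards do you lower the threshold from $n$ to $\E S=\sigma^2 n$ inside the positive part. Because the $\sigma$-dependence never enters the denominator, no case split is needed. You also get the sharper bound $\E[(|\bm U|_2-\sqrt n)_+^2]\le \Var(U_1^2)$, which implies the stated inequality.

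Your observation that the mean-zero hypothesis is unnecessary is right, and it holds equally for the paper's proof, which only uses $\E U_1^2$ and independence.
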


The proof is given in Appendix~\ref{sec:appendix-misc}.

\subsection{The upper half-relaxed limit}

For each $p > 4$, we now define a function $\cV^+_p : [0,1) \times \cP_p(\R) \to \R$ via
\begin{align} \label{def.halfrelaxed}
    \cV^+_p(t,m) \coloneqq \sup_{(k^n, \bx^n)} \limsup_{n \to \infty} \cV^n\big(k^n,\bx^n\big),
\end{align}
where the supremum is taken over all sequences $(k^n,\bx^n) \in \{0,1,\dots,n-1\} \times \R^n$ such that
\begin{align*}
    \frac{k^n}{n} \to t, \qquad m_{\bx^n}^n \xrightarrow{\bd_p} m, \qquad \text{  as } n \to \infty.
\end{align*}
This ``upper half-relaxed limit'' is a natural object in the theory of viscosity solutions, though we will not explicitly use any PDE arguments here. Instead, our goal will be to show that $\cV_p^+$ satisfies a ``dynamic programming inequality'' and useful upper bounds. We start by summarizing some basic properties.

\begin{lemma} \label{lem.halfrelaxed}
    For each $p > 4$, $\cV_p^+$ is upper semicontinuous on $[0,1)\times \P_p(\R)$ and satisfies
    \begin{align*}
        \cV_p^+(t,m) \leq \cU_p(t,m),\qquad (t,m)\in[0,1)\times\P_p(\R).
    \end{align*}
\end{lemma}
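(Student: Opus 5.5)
The proof has two parts: upper semicontinuity, which follows from the structure of the definition \eqref{def.halfrelaxed} by a diagonal argument, and the bound $\cV_p^+\le\cU_p$, which follows from Proposition~\ref{prop.vnupperbound} together with the semicontinuity of $\cU_p$.

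\emph{The bound $\cV_p^+ \le \cU_p$.} Fix $(t,m)\in[0,1)\times\cP_p(\R)$ and any admissible sequence $(k^n,\bx^n)$ with $k^n/n\to t$ and $m^n_{\bx^n}\to m$ in $\bd_p$. By Proposition~\ref{prop.vnupperbound},
\[
\cV^n(k^n,\bx^n)\le \cU_p(k^n/n,m^n_{\bx^n})+C_pn^{\frac1{2p}-\frac14}.
\]
Since $p>4$, the error term vanishes as $n\to\infty$. Moreover $(k^n/n,m^n_{\bx^n})\to(t,m)$ in $[0,1)\times\cP_p(\R)$. Here $t<1$, so eventually $k^n/n$ stays in a compact subset of $[0,1)$; note that $k^n\le n-1$ anyway. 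By Lemma~\ref{lem.updet.upperbound}, $\cU_p^{\mathrm{det}}$ is continuous on $[0,1)\times\cP_p(\R)$, and by Lemma~\ref{lem.follmervalue}, $\cU_p^{\mathrm F}$ is upper semicontinuous. Hence $\cU_p$ is upper semicontinuous there, and
\[
\limsup_n\cU_p(k^n/n,m^n_{\bx^n})\le\cU_p(t,m).
\]
Taking the supremum over admissible sequences gives $\cV_p^+(t,m)\le\cU_p(t,m)$. In particular $\cV_p^+$ is finite and locally bounded above.

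\emph{Upper semicontinuity.} Suppose $(t_j,m_j)\to(t,m)$ in $[0,1)\times\cP_p(\R)$. The goal is
\[
\limsup_j\cV_p^+(t_j,m_j)\le\cV_p^+(t,m),
\]
and we may pass to a subsequence so that $\cV_p^+(t_j,m_j)$ converges to the left-hand side.

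For each $j$, pick an admissible sequence $(k^n_j,\bx^n_j)_n$ for $(t_j,m_j)$ with $\limsup_n\cV^n(k^n_j,\bx^n_j)\ge\cV_p^+(t_j,m_j)-1/j$. If the left-hand side is $-\infty$, replace it by a large negative number; values are nonnegative in any case, so this is harmless. Next choose integers $N_1<N_2<\dots$ such that $N_j$ is large enough that for all $n\ge N_j$ we have $|k^n_j/n-t_j|\le1/j$ and $\bd_p(m^n_{\bx^n_j},m_j)\le1/j$.

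Then define the diagonal sequence $(k^n,\bx^n)\coloneqq(k^n_j,\bx^n_j)$ for $N_j\le n<N_{j+1}$. The gaps must be arranged so that each block contains some $n$ with $\cV^n(k^n_j,\bx^n_j)\ge\cV_p^+(t_j,m_j)-2/j$; this is possible because the $\limsup$ over $n$ is attained along arbitrarily large $n$, so we choose $N_{j+1}$ beyond such an $n$. For $n<N_1$, use any admissible choice. By construction, $k^n/n\to t$ and $m^n_{\bx^n}\to m$ in $\bd_p$, using the triangle inequality for $\bd_p$. Consequently
\[
\cV_p^+(t,m)\ge\limsup_n\cV^n(k^n,\bx^n)\ge\limsup_j\bigl(\cV_p^+(t_j,m_j)-2/j\bigr),
\]
which is the claim.

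The only delicate point is the bookkeeping in the diagonal extraction: within each block $[N_j,N_{j+1})$ one must both keep the approximation errors below $1/j$ and capture a near-maximizing index $n$. There is no genuine analytic obstacle here, since the hard estimate is already contained in Proposition~\ref{prop.vnupperbound}.
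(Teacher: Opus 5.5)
Your proof is correct and follows the paper's argument. The bound comes from Proposition~\ref{prop.vnupperbound} together with upper semicontinuity of $\cU_p$ on $[0,1)\times\cP_p(\R)$, and upper semicontinuity of $\cV_p^+$ comes from a diagonal extraction. The paper picks a single index $n_j$ per $j$, a subsequence that it tacitly extends to a full admissible sequence; your block construction builds the sequence over all $n$ explicitly, which matches the definition \eqref{def.halfrelaxed} slightly more carefully.
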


\begin{proof}
    The bound $\cV_p^+ \leq \cU_p$ comes directly from the definition of $\cV_p^+$, Proposition \ref{prop.vnupperbound}, and the upper semicontinuity of $\cU_p$ away from time $1$.

    To verify the upper semicontinuity of $\cV_p^+$, suppose that $(t_j, m_j)\to (t,m)$ in $[0,1)\times\cP_p(\R)$.
    Using the definition of $\cV_p^+(t_j,m_j)$, we can inductively choose integers $n_j>n_{j-1}$ along with $(k^{n_j}, \bx^{n_j}) \in \{0,1,\dots,n_j-1\} \times \R^{n_j}$ such that
    \begin{align*}
        \bigg| \frac{k^{n_j}}{n_j} - t_j \bigg
        | + \bd_p\big( m_{\bx^{n_j}}^{n_j}, m_j \big) \leq \frac{1}{j}, \qquad \Big| \cV^{n_j}\big(k^{n_j}, \bx^{n_j}\big) - \cV_p^+(t_j, m_j) \Big| \leq \frac{1}{j}.
    \end{align*}
    Since $t_j \to t$ and $m_j \to m$, we have $\frac{k^{n_j}}{n_j}\to t$ and $m_{\bx^{n_j}}^{n_j}\to m$ by the triangle inequality. By the definition of $\cV_p^+(t,m)$, we deduce that
    \begin{align*}
        \cV_p^+(t,m) \geq \limsup_{j \to \infty} \cV^{n_j}\big(k^{n_j}, \bx^{n_j}\big)
        = \limsup_{j \to \infty} \cV_p^+(t_j,m_j).
    \end{align*}
    This completes the proof.
\end{proof}

\subsubsection{Dynamic programming inequality}

The goal of this subsection is to prove that the upper half-relaxed limit $\cV^+_p$ defined above satisfies the following ``dynamic programming inequality".
\begin{proposition} \label{prop.dpi}
    Fix $p > 4$ and $(t_0,m_0) \in [0,1) \times \cP_p(\R)$. Suppose that $(\Omega, \cF, \bbF, \bP)$ is a filtered probability space hosting a Brownian motion $B$, and an $\cF(t_0)$-measurable random variable $X_0$ with $X_0 \sim m_0$. Let $\alpha$ be an $\bbF$-progressively measurable process with
    \begin{align*}
        \|\alpha(t)\|_2 \leq \cstar,\qquad \|\alpha(t)\|_{\infty} \leq C \qquad \text{ for a.e.\ $t\in[t_0,t_0+h]$},
    \end{align*}
    for $\cstar=\sqrt{2/\pi}$ and some arbitrary constant $C<\infty$. Then, for any $h\in(0,1-t_0)$, we have
    \begin{align*}
        \cV_p^+(t_0,m_0) \leq \cV_p^+\Big( t_0 + h, \cL\big( X(t_0 + h) \big)\Big),
    \end{align*}
    where $X$ is defined by
    \begin{align*}
        X(t) \coloneqq X_0 + \int_{t_0}^t \alpha(s)\, ds + B(t) - B(t_0), \quad t_0 \leq t \leq 1.
    \end{align*}
\end{proposition}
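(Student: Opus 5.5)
The plan is to compare $\cV^n$ along a near-maximizing sequence for $\cV_p^+(t_0,m_0)$ with a discrete system that tracks $n$ independent copies of $X$ on $[t_0,t_0+h]$, and then to pass to the limit using Lemma~\ref{lem.dpp} and Lemma~\ref{lem.vn.1lip}.

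\textbf{Step 1: regularize $\alpha$.} Enlarging the space, write $\alpha(t)=F(t,X_0,U,B|_{[t_0,t]})$ with $U$ uniform and independent of $(X_0,B)$; this is possible because $\alpha$ is progressive. For $\delta>0$, I would replace $F$ by $(1-\delta)F_\delta$, where $F_\delta$ is continuous in $x_0$, still bounded by $C$, and satisfies $\E|F_\delta-F|^2\le\delta^2$ (Lusin/density). Let $X^\delta$ be the resulting state process. Then $\cL(X^\delta(t_0+h))\to\cL(X(t_0+h))$ in $\bd_p$ as $\delta\to0$, since $\alpha$ is bounded and $m_0\in\cP_p$. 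By the upper semicontinuity in Lemma~\ref{lem.halfrelaxed}, it therefore suffices to prove the inequality with $X^\delta$ in place of $X$. The point of the factor $(1-\delta)$ is to create slack in the $L^2$ budget, which will absorb law-of-large-numbers errors below.

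\textbf{Step 2: coupling at level $n$.} Take $(k^n,\bx^n)$ with $k^n/n\to t_0$, $m^n_{\bx^n}\to m_0$ in $\bd_p$, and $\limsup_n\cV^n(k^n,\bx^n)=\cV_p^+(t_0,m_0)$ up to $\eps$. Set $k_1^n=k^n+\lfloor nh\rfloor$. On a space carrying an $n$-dimensional Brownian motion $\bm B$ and i.i.d.\ uniforms $U_i$, define particles
\[
X_i(t)=x_i^n+\int_{t_n}^t\alpha_i(s)\,ds+B_i(t)-B_i(t_n),\qquad \alpha_i=(1-\delta)F_\delta(\cdot,x_i^n,U_i,B_i),
\]
with $t_n=k^n/n$. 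These are independent but not identically distributed. I would take $\bm b(k)$ to be $\frac1n\bm\alpha(k/n)$, projected onto the $\ell^2$-ball of radius $\cstar/\sqrt n$, and feed it into Lemma~\ref{lem.coupling.discrete} (the $U_i$ are put into $\cF(t_n)$).
\begin{itemize}
\item Since $|\alpha_i|\le C$, the projection is inactive with probability tending to one. Indeed, $\frac1n\sum_i\alpha_i^2$ concentrates around its mean, and its mean is at most $(1-\delta)^2\cstar^2$ plus an error that vanishes because $m^n_{\bx^n}\to m_0$ and $F_\delta$ is continuous in $x_0$.
\item The martingale error in \eqref{eq.coupling.discrete.marterror} is $O(n^{1/4}\cdot n^{1/(2p)-1/2})\to0$, using $\|b_i\|_p\le C/n$.
\item The drift discretization error is handled as in the proof of Proposition~\ref{prop.vnupperbound}, using boundedness and continuity in time (after a further time-mollification of $F_\delta$ if needed).
\end{itemize}
Combining these, $\E|\bY(k_1^n)-\bm X(k_1^n/n)|_\infty\to0$. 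Lemma~\ref{lem.dpp} and Lemma~\ref{lem.vn.1lip} then give
\[
\cV^n(k^n,\bx^n)\le\E\,\cV^n\big(k_1^n,\bm X(k_1^n/n)\big)+o(1).
\]

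\textbf{Step 3: pass to the limit; this is the main obstacle.} Write $K(x,\cdot)$ for the law of $X^\delta(t_0+h)$ given $X_0=x$. It is continuous in $x$ (in $\bd_p$), so a law of large numbers for independent non-identical particles, combined with $m^n_{\bx^n}\to m_0$, gives
\[
m^n_{\bm X(k_1^n/n)}\to\int K(x,\cdot)\,m_0(dx)=\cL\big(X^\delta(t_0+h)\big)
\]
in $\bd_p$ almost surely, after a Skorokhod-type realization on a common space. The definition of $\cV_p^+$ as a supremum over sequences then yields, pointwise, $\limsup_n\cV^n(k_1^n,\bm X)\le\cV_p^+(t_0+h,\cL(X^\delta(t_0+h)))$.

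To exchange $\limsup$ and $\E$ I would use reverse Fatou. This requires uniform integrability of $\cV^n(k_1^n,\bm X(k_1^n/n))$, which I would get from Proposition~\ref{prop.vnupperbound}: it bounds this quantity by $\cU_p(k_1^n/n,m^n_{\bm X})+o(1)$. Here $\cU_p^{\mathrm F}$ is bounded (Lemma~\ref{lem.follmervalue}), and $\cU_p^{\mathrm{det}}$ is locally Lipschitz in $\bd_p$ via its convex $L^p$ lift, so it is dominated by $a+b\,\bd_p(m^n_{\bm X},\cL(X^\delta(t_0+h)))$. The moments of the last quantity are uniformly bounded because $x^n$ has bounded $p$-th empirical moments and $\alpha$ is bounded. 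The delicate part is making this domination uniform; if needed, I would first truncate $\bx^n$ using Lemma~\ref{lem.vn.1lip}, at the cost of the $\ell^\infty$ change.

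Finally, let $n\to\infty$, then $\eps\to0$, and then $\delta\to0$ using Lemma~\ref{lem.halfrelaxed}.
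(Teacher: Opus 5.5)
Your Step~3 has a genuine gap at the uniform integrability. The domination $\cU_p^{\mathrm{det}}(t,m)\le a+b\,\bd_p\big(m,\cL(X^\delta(t_0+h))\big)$ does not follow from local Lipschitz continuity of the convex lift. It is in fact false, because $\cU_p^{\mathrm{det}}(t,\cdot)$ is unbounded on $\bd_p$-balls. Take $m=(1-\eta)\delta_0+\eta\delta_M$ with $\eta M^p=K^p$ and $K>r\coloneqq(1-t)\cstar/2$. Then $\bd_p(m,\delta_0)=K$ for every $M$, while every $m'$ with $\bd_p(m,m')\le r$ satisfies $[m']_\infty\ge M(1-r/K)\to\infty$.

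Your fallback does not work either. $\bd_p$-convergence of $m^n_{\bx^n}$ allows $|\bx^n|_\infty\to\infty$, so truncating $\bx^n$ moves $\cV^n$ by an $\ell^\infty$ amount that need not vanish.

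The missing idea is to give up a little integrability. Since $p>4$ strictly, apply Proposition~\ref{prop.vnupperbound} with some $p'\in(4,p)$, and then use \eqref{vpd.est2} with $(p',p)$ in place of $(p,q)$. This gives $\cU_{p'}^{\mathrm{det}}(t,m^n_{\bm X})\le C(1-t)^{-p'/(p-p')}\big(\frac1n\sum_i|X_i|^p\big)^{1/(p-p')}$. The right side is bounded in every $L^r$, because $\frac1n\sum_i|x_i^n|^p$ is bounded, the drift is bounded, and the Brownian part has all moments. This is exactly how the paper obtains uniform integrability in Lemma~\ref{lem.dpiconstant}.

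There is a second problem in Step~1. For a general filtration $\bbF$ in which $B$ is a Brownian motion, progressive measurability does not let you write $\alpha(t)=F(t,X_0,U,B|_{[t_0,t]})$ with one initial randomization $U$ independent of $(X_0,B)$. Tsirelson's drift is a counterexample: it is bounded by $1$ with $\|\alpha(t)\|_2=1/\sqrt3<\cstar$, and such a representation would produce a strong solution of Tsirelson's SDE for almost every value of $U$.

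You can repair this by discretizing time first. Then only finitely many $\cF(t_i)$-measurable variables must be represented, and sequential randomization suffices. Once you do that, though, the paper's route is much lighter than yours:
\begin{itemize}
\item It proves the inequality for a single constant-in-time control $\widebar\alpha$ (Lemma~\ref{lem.dpiconstant}). There only the joint law of $(X_0,\widebar\alpha)$ matters, and it is approximated by empirical measures of deterministic pairs $(\bx^n,\ba^n)$. So no continuity in $x_0$ and no time-mollification are needed. Nor is a law of large numbers for particles with random path-dependent drifts; only deterministic shifts of i.i.d.\ Gaussians appear (Lemma~\ref{lem.glivenkocantelli}).
\item It then chains the inequality over a partition through the intermediate laws $\cL(X(t_i))$.
\item Finally it passes to general bounded $\alpha$ via $\|X^k(t_0+h)-X(t_0+h)\|_\infty\le Ch/k$ and the upper semicontinuity of $\cV_p^+$.
\end{itemize}
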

We note that, using a superposition principle as in the proof of Lemma \ref{lem.equivstrong}, we can deduce from Proposition \ref{prop.dpi} the following corollary.
\begin{corollary} \label{cor.dpi}
    Fix $p > 4$ and $(t_0,m_0) \in [0,1) \times \cP_p(\R)$. Let $(m,\alpha) \in \cA(t_0,m_0)$ be such that
    \begin{equation*}
        |\alpha(t,x)| \leq C,\qquad\text{for a.e.\ } (t,x)\in[t_0,t_0+1]\times\R,
    \end{equation*}
    where $C>0$ is an arbitrary constant. Then, for any $h\in(0,1-t_0)$, we have
    \begin{align*}
        \cV_p^+(t_0,m_0) \leq \cV_p^+\big( t_0 + h, m_{t_0 + h} \big).
    \end{align*}
\end{corollary}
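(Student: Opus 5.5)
The plan is to reduce Corollary~\ref{cor.dpi} directly to Proposition~\ref{prop.dpi} by realizing the Eulerian pair $(m,\alpha)$ as a Lagrangian control on a filtered probability space, exactly as in the second half of the proof of Lemma~\ref{lem.equivstrong}. Given $(m,\alpha)\in\cA(t_0,m_0)$ with $|\alpha|\le C$, I invoke the superposition principle \cite[Theorem 2.5]{trevisan2016}. This yields a filtered probability space $(\Omega',\cF',\bbF',\bP')$, which I may take to satisfy the usual conditions by completing and right-continuizing. It supports an $\bbF'$-Brownian motion $B'$ and a continuous $\bbF'$-adapted process $X'$ on $[t_0,1]$ with
\begin{align*}
    dX'(t) = \alpha\big(t,X'(t)\big)\,dt + dB'(t), \qquad \cL\big(X'(t)\big) = m_t \ \text{ for all } t\in[t_0,1].
\end{align*}
In particular, $X_0 \coloneqq X'(t_0)$ is $\cF'(t_0)$-measurable with law $m_0$. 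The hypotheses of the superposition principle hold because $\alpha$ is bounded, so $\int_{t_0}^1\int|\alpha|\,dm_t\,dt<\infty$.

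Next I set $\alpha'(t)\coloneqq \alpha(t,X'(t))$. This is $\bbF'$-progressively measurable as a measurable function of $t$ and an adapted continuous process. I then check the two constraints in Proposition~\ref{prop.dpi}. The first is $\|\alpha'(t)\|_\infty\le C$ for a.e.\ $t$. The point is that $|\alpha(t,x)|\le C$ holds outside a Lebesgue-null set $N$ of $(t,x)$, and I need this to transfer along $X'$. For this I would use that $\int_{t_0}^1 m_t(N_t)\,dt$ may be nonzero if $m_t$ is singular, so the cleanest route is to redefine $\alpha$ on $N$ by truncation, replacing it with $(\alpha\wedge C)\vee(-C)$. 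This is harmless only if the modification does not change the Fokker--Planck equation, which requires that $m_t$ charge $N_t$ for at most null $t$. Here $m_t$ has a density for a.e.\ $t>t_0$, since $m$ solves a nondegenerate Fokker--Planck equation with bounded drift. The second constraint is $\E\,\alpha'(t)^2 = \int\alpha(t,x)^2\,m_t(dx)\le \cstar^2 = 2/\pi$ for a.e.\ $t$, which is immediate from \eqref{eq.prelim.vinf.l2constraint} since $X'(t)\sim m_t$.

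Proposition~\ref{prop.dpi}, applied on $(\Omega',\cF',\bbF',\bP')$ with $X_0$, $B'$ and $\alpha'$, then gives
\begin{align*}
    \cV_p^+(t_0,m_0) \le \cV_p^+\big(t_0+h,\cL(X'(t_0+h))\big) = \cV_p^+(t_0+h,m_{t_0+h}).
\end{align*}
Here the process in the proposition, $X_0+\int_{t_0}^t\alpha'(s)\,ds + B'(t)-B'(t_0)$, coincides with $X'(t)$. One last check is that $m_{t_0+h}\in\cP_p(\R)$, so the right-hand side is defined. This follows because $m_0\in\cP_p$, the drift is bounded, and Brownian increments have all moments.

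I expect the main, and essentially only, obstacle to be the bookkeeping around "a.e.": justifying that the bound $|\alpha|\le C$ and the $L^2$ constraint transfer to $\alpha'$ for a.e.\ $t$, almost surely. If the density argument above proves awkward, the fallback is to argue directly that $\E\int_{t_0}^{1}\mathbf 1_N(t,X'(t))\,dt = \int_{t_0}^1 m_t(N_t)\,dt = 0$. This holds by absolute continuity of $m_t$ for a.e.\ $t$, which follows from Girsanov's theorem since the drift is bounded and the law of $X'(t)$ is then equivalent to that of $X_0$ plus an independent Gaussian.
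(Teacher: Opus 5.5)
Your proposal is correct and follows the paper's argument. The paper deduces Corollary~\ref{cor.dpi} in one line, by realizing $(m,\alpha)$ through the superposition principle as in the proof of Lemma~\ref{lem.equivstrong} and applying Proposition~\ref{prop.dpi} to $\alpha(t,X'(t))$. In the a.e.\ bookkeeping you flagged, derive the absolute continuity of $m_t$ for $t>t_0$ (which gives $\int_{t_0}^1 m_t(N_t)\,dt=0$) from the $L^2$ constraint via the entropy bound \cite[Proposition 1]{lehec2013representation}, not from boundedness of the drift along $X'$, since that boundedness is what you are trying to establish.
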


Before proving Proposition \ref{prop.dpi}, we need several preliminary steps.

\begin{lemma} \label{lem.glivenkocantelli}
    Suppose that $\bx^n = (x^n_1,\dots,x^n_n) \in \R^n$, and $h^n \in \R_+$ are such that
    \begin{align*}
        m_{\bx^n}^n \xlongrightarrow{\bd_p} m, \qquad h^n \longrightarrow h,
    \end{align*}
    for some $m \in \cP_p(\R)$ and $h \in (0,\infty)$. Suppose moreover that for each $n$,
    \begin{align*}
        \bG^n = \big(G^n_1,\dots,G^n_n \big) \sim \gamma_{h^n}^{\otimes n}.
    \end{align*}
   Then,
    \begin{align*}
       \bd_p\big( m_{\bx^n + \bG^n}^n, m * \gamma_h \big) \longrightarrow 0\quad \text{  in probability.}
    \end{align*}
\end{lemma}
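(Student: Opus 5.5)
We reduce everything to a Wasserstein law of large numbers for the noise, via a coupling and the triangle inequality. Write $\bG^n = \sqrt{h^n}\,\bG'^n$ with $\bG'^n \sim \gamma^{\otimes n}$. Let $\bm{W}^n \coloneqq \bx^n + \sqrt{h}\,\bG'^n$. Since the empirical measures of $\bx^n+\bG^n$ and $\bW^n$ are coupled coordinatewise, we have
\begin{align*}
\bd_p\big(m^n_{\bx^n+\bG^n}, m^n_{\bW^n}\big)^p \le \frac1n\sum_{i=1}^n \big|\sqrt{h^n}-\sqrt{h}\big|^p |G'^n_i|^p .
\end{align*}
The expectation of the right-hand side is $|\sqrt{h^n}-\sqrt h|^p \E|G|^p \to 0$. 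This reduces the lemma to showing $\bd_p(m^n_{\bW^n}, m*\gamma_h)\to 0$ in probability.

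Next we separate the two limits. Triangle inequality gives
\[
\bd_p(m^n_{\bW^n}, m*\gamma_h) \le \bd_p(m^n_{\bW^n}, m^n_{\bx^n}*\gamma_h) + \bd_p(m^n_{\bx^n}*\gamma_h, m*\gamma_h).
\]
The second term is deterministic. It is at most $\bd_p(m^n_{\bx^n},m)\to0$, because convolution with a fixed law is a contraction in $\bd_p$: couple $(X,X')$ optimally and add a common independent $\sqrt h\,G$.

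The first term is the main step, a law of large numbers for independent but non-identically distributed samples. The law $m^n_{\bx^n}*\gamma_h = \frac1n\sum_i \cL(x^n_i+\sqrt h G'_i)$ is exactly the mean of the random measure $m^n_{\bW^n}$, whose atoms $W_i$ are independent. We use the characterization of $\bd_p$-convergence as weak convergence plus convergence of $p$th moments, applied along subsequences; alternatively we argue directly as follows.

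\emph{Weak convergence.} For bounded Lipschitz $\varphi$, the quantity $\frac1n\sum_i(\varphi(W_i)-\E\varphi(W_i))$ has variance at most $\|\varphi\|_\infty^2/n$. Using a countable convergence-determining family, and since $m^n_{\bx^n}*\gamma_h \to m*\gamma_h$ weakly, we obtain $m^n_{\bW^n}\to m*\gamma_h$ weakly in probability.

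\emph{Moments (the main obstacle).} We need $\frac1n\sum_i |W_i|^p - \frac1n\sum_i \E|W_i|^p \to 0$ in probability. Plain Chebyshev fails here, since the $x^n_i$ need not have uniformly bounded $2p$th empirical moments. Instead we truncate. Fix $R$ and split the indices into $\{|x_i^n|\le R\}$ and its complement.
\begin{itemize}
\item On $\{|x_i^n|\le R\}$ the summands $|W_i|^p$ have uniformly bounded variance, so Chebyshev applies and the fluctuation vanishes as $n\to\infty$.
\item On the complement we use the crude bound $|W_i|^p \le 2^{p-1}(|x_i^n|^p + h^{p/2}|G'_i|^p)$. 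This contribution has expectation at most $C\frac1n\sum_i |x_i^n|^p\mathbf 1_{\{|x_i^n|>R\}} + C\,\frac{\#\{i:|x_i^n|>R\}}{n}$, and both pieces are small uniformly in $n$ for large $R$.
\end{itemize}
The uniform smallness of the complement uses uniform integrability of $|x|^p$ under $m^n_{\bx^n}$, which follows from $\bd_p$-convergence. Sending $n\to\infty$ and then $R\to\infty$ yields convergence of $p$th moments in probability.

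Finally, weak convergence together with convergence of $p$th moments gives $\bd_p(m^n_{\bW^n},m*\gamma_h)\to0$ in probability. We pass to subsequences converging almost surely on the countable test family to make this deterministic. Combined with the two reductions above, this proves the lemma.
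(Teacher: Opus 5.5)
Your proof is correct and follows essentially the same route as the paper. You use the same coordinatewise coupling to reduce to $h^n=h$, and you get weak convergence in probability from the mean $\langle\varphi, m^n_{\bx^n}*\gamma_h\rangle$ together with the $\|\varphi\|_\infty^2/n$ variance bound. You then control the $p$th moments and conclude with the weak-convergence-plus-moments criterion, which the paper packages as Lemma~\ref{lem.randommeasures}.

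The only real difference is in the moment step. You prove convergence in probability of the empirical $p$th moments by truncating on $\{|x_i^n|\le R\}$ and applying Chebyshev. The paper instead verifies uniform integrability of the tails $\int |x|^p\mathbf{1}_{\{|x|>R\}}\,dm^n_{\bx^n+\bG^n}$ by splitting into $\{|x_i^n|>R/2\}$ and $\{|G_i^n|>R/2\}$. Your intermediate triangle inequality through $m^n_{\bx^n}*\gamma_h$ is harmless, but in the end it only serves to identify the limits of the means.
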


The proof uses the following elementary criterion for convergence in probability for random measures in $\cP_p(\R)$, whose proof we leave to the reader. For measures $m$ and integrable functions $\phi$, we write $\langle \phi, m \rangle = \int \phi\,dm$.

\begin{lemma} \label{lem.randommeasures}
    Let $(m^n)_{n\in\N}$ be a sequence of random elements of $\cP_p(\R)$, and let $m \in \cP_p(\R)$. Suppose that the following properties hold:
    \begin{enumerate}
        \item for every test function $\phi \in C_b(\R)$, $\langle \phi, m^n \rangle \to \langle \phi, m \rangle$ in probability,
        \item for all $\eps > 0$, there exist $R > 0$ and $N \in \N$ such that
        \begin{align*}\bP \bigg[ \int_{\R} |x|^p \mathbf{1}_{\{|x| > R\}} dm^n > \eps \bigg] < \eps,\qquad \text{ for all } n \geq N.
        \end{align*}
    \end{enumerate}
    Then,
    \begin{align*}
        \bd_p(m^n,m) \longrightarrow 0\quad \text{in probability.}
    \end{align*}
\end{lemma}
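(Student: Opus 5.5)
The statement to prove is Lemma \ref{lem.randommeasures}, the criterion for convergence in probability of random measures in $\cP_p(\R)$. I would use the standard characterization of $\bd_p$-convergence: $\bd_p(\mu^j,\mu)\to0$ if and only if $\mu^j\to\mu$ weakly and $\lim_{R\to\infty}\sup_j\int|x|^p\mathbf 1_{\{|x|>R\}}\,d\mu^j=0$. The plan is to transfer this to convergence in probability via the subsequence criterion. It suffices to show that every subsequence of $(m^n)$ has a further subsequence along which $\bd_p(m^n,m)\to0$ almost surely.

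\textbf{Weak convergence.} Fix a countable family $\{\phi_\ell\}\subset C_b(\R)$ that is convergence determining for weak convergence on $\cP(\R)$, for instance bounded Lipschitz functions with rational parameters. By hypothesis (1) and a diagonal argument, from any subsequence I extract a further subsequence $J$ along which $\langle\phi_\ell,m^n\rangle\to\langle\phi_\ell,m\rangle$ almost surely for every $\ell$. Hence, almost surely, $m^n\to m$ weakly along $J$.

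\textbf{Uniform integrability of $p$-th moments.} For $j\in\N$ take $\eps=2^{-j}$ in hypothesis (2), which yields $R_j$ and $N_j$. I would choose indices $n_j\in J$ increasing with $n_j\ge N_j$. Borel--Cantelli then gives, almost surely, $\int|x|^p\mathbf 1_{\{|x|>R_j\}}\,dm^{n_j}\le 2^{-j}$ for all large $j$. This controls only the diagonal pair $(R_j,n_j)$, whereas we need $\limsup_{j}\int|x|^p\mathbf 1_{\{|x|>R\}}\,dm^{n_j}\to0$ as $R\to\infty$. I expect this to be the main obstacle. To handle it, I would first make $R_j$ nondecreasing in $j$; this is allowed because enlarging $R$ keeps (2) valid. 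Then, for fixed $i$ and all $j\ge i$ large, monotonicity in $R$ gives
\[
\int|x|^p\mathbf 1_{\{|x|>R_j\}}\,dm^{n_j}\le 2^{-j}\le 2^{-i}.
\]
If instead $R_j$ stays bounded, this already gives the required tail control directly. If $R_j\to\infty$, I would refine the construction: for each level $i$, apply (2) with $\eps=2^{-i}$ at the fixed radius $R_i$ along all $n\ge N_i$, giving the event $\int|x|^p\mathbf 1_{\{|x|>R_i\}}\,dm^{n}\le 2^{-i}$ with probability at least $1-2^{-i}$. The difficulty is that this holds only for each individual $n$, not uniformly in $n$.

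To close the gap, I would argue via the metric instead. Let $T_R(\mu)=\int|x|^p\mathbf 1_{\{|x|>R\}}\,d\mu$, and bound
\[
\bd_p(m^n,m)\le \bd_p(m^n_R,m_R)+c\big(T_R(m^n)^{1/p}+T_R(m)^{1/p}\big),
\]
where $\mu_R$ denotes the pushforward of $\mu$ under radial truncation at level $R$. The error bound comes from coupling each point with its truncation, which moves mass only on $\{|x|>R\}$ and contributes at most $T_R(\cdot)^{1/p}$. Now fix $\delta>0$ and choose $R$ with $T_R(m)<\delta$, taking $R$ at least the value supplied by (2) with $\eps=\delta$. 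For the truncated measures, which are supported on $[-R,R]$, weak convergence in probability implies $\bd_p(m^n_R,m_R)\to0$ in probability. Indeed, the truncation map is continuous, so hypothesis (1) transfers to the truncated measures, and $\bd_p$ metrizes weak convergence on measures supported in a compact set; the subsequence argument of the first step then applies.

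\textbf{Conclusion.} Combining these, for $n\ge N$,
\[
\bP\big[\bd_p(m^n,m)>C\delta^{1/p}\big]\le \delta+o(1).
\]
Since $\delta>0$ is arbitrary, $\bd_p(m^n,m)\to0$ in probability. This route avoids the uniformity issue entirely, and I would present it as the actual proof, with the subsequence characterization used only for the compactly supported truncated measures.
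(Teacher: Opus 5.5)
The paper gives no proof of this lemma; it is stated as an elementary criterion and ``left to the reader.'' So there is nothing to compare against. Your final truncation argument is a correct and complete proof.

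Each step checks out:
\begin{itemize}
\item With the clamping map $\tau_R(x)=\max(-R,\min(x,R))$, the coupling $(X,\tau_R(X))$ gives $\bd_p(\mu,\mu_R)\le T_R(\mu)^{1/p}$ with constant $1$. This is because $|x-\tau_R(x)|=(|x|-R)_+\le |x|\mathbf 1_{\{|x|>R\}}$.
\item Hypothesis (1) transfers to $m^n_R$ since $\phi\circ\tau_R\in C_b(\R)$.
\item On $[-R,R]$ you may take the countable family $\tau_R(x)^k$, $k\in\N$. Its convergence is equivalent to weak convergence by Weierstrass, and $\bd_p$ metrizes weak convergence there. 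The subsequence/diagonal argument therefore yields $\bd_p(m^n_R,m_R)\to0$ in probability.
\end{itemize}
In the write-up, state two small points explicitly. First, $R\mapsto T_R(\mu)$ is nonincreasing, so you can take $R$ at least the radius from (2) with $\eps=\delta$ while also having $T_R(m)<\delta$. Second, the conclusion is $\limsup_n\bP[\bd_p(m^n,m)>3\delta^{1/p}]\le\delta$ for every $\delta>0$.

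Drop the middle ``uniform integrability'' paragraph entirely. The diagonal Borel--Cantelli construction there does not give tail control at a fixed radius, as you noticed, and the truncation argument makes it unnecessary.

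An equally short alternative, probably what the authors had in mind, goes through moments:
\begin{itemize}
\item Show $\int|x|^p\,dm^n\to\int|x|^p\,dm$ in probability, using the splitting $|x|^p=(|x|^p\wedge R^p)+(|x|^p-R^p)_+$ together with (1) and (2).
\item Extract almost surely convergent subsequences for this moment and for a countable convergence-determining family.
\item Invoke the characterization of $\bd_p$-convergence as weak convergence plus convergence of $p$-th moments.
\end{itemize}
Your route avoids that characterization: it works directly with the metric and uses only that $\bd_p$ metrizes weak convergence on a compact interval.
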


\begin{proof}[Proof of Lemma~\ref{lem.glivenkocantelli}]
    First, notice that if we replace $\bm{G}^n$ with $\wt{\bm{G}}^n = \sqrt{\frac{h}{h^n}} \bm{G}^n$, then $\wt{\bG}^n \sim \gamma_h^{\otimes n}$, and
    \begin{align*}
        \bd_p\Big( m_{\bx^n + \bm{G}^n}^n, m_{\bx^n + \wt{\bm{G}}^n}^n \Big) \leq \bigg(\frac{1}{n} \sum_{i = 1}^n |G_i^n - \wt{G}_i^n|^p\bigg)^{\frac{1}{p}} = \Big|\sqrt{\frac{h^n}{h}} - 1\Big| \bigg(\frac{1}{n} \sum_{i = 1}^n |\wt{G}_i^n|^p \bigg)^{\frac{1}{p}}.
    \end{align*}
    The right-hand side converges to $0$ in probability since $h^n\to h$ and
    \begin{equation*}
        \E\bigg[\frac{1}{n}\sum_{i=1}^n |\wt G_i^n|^p\bigg]
        = \E|N(0,h)|^p < \infty.
    \end{equation*}
    Here and below, $N(0,h)$ denotes a generic random variable with law $\gamma_h$. Thus, it is enough to prove the claim under the additional assumption that $h^n=h$ for all $n$.

    We now aim to apply Lemma~\ref{lem.randommeasures} to show convergence in probability of $m^n_{\bx^n + \bm{G}^n}$ to $m * \gamma_h$. Fix a test function $\phi \in C_b(\R)$, and notice that
    \begin{align*}
        \E \big[ \langle \phi, m^n_{\bx^n +\bG^n  } \rangle \big] = \frac{1}{n} \sum_{i = 1}^n \E\Big[ \phi\big(x^n_i + G^n_i\big) \Big] = \frac{1}{n} \sum_{i = 1}^n \int_\R \phi(x^n_i+ y)\,\gamma_h(dy) = \langle \phi, m_{\bx^n}^n * \gamma_h \rangle.
    \end{align*}
    Since $m_{\bx^n}^n \to m$ in $\bd_p$, and hence weakly, we deduce that $m_{\bx^n}^n * \gamma_h \to m * \gamma_h$ weakly, and hence
    \begin{align*}
         \E \big[ \langle \phi, m^n_{\bx^n +\bG^n  } \rangle \big] \to \langle \phi, m * \gamma_h \rangle,
    \end{align*}
    for each $\phi \in C_b(\R)$. In addition,
    \begin{align*}
        \text{Var}\big( \langle \phi, m^n_{\bx^n +\bG^n  } \rangle\big) = \text{Var}\Big(\frac{1}{n} \sum_{i = 1}^n \phi\big(x^n_i + G^n_i \big) \Big) \leq \frac{\|\phi\|_{\infty}^2}{n}.
    \end{align*}
    It now follows from Markov's inequality that
    \begin{align}
        \langle \phi, m^n_{\bx^n +\bG^n  } \rangle \longrightarrow \langle \phi, m * \gamma_h \rangle\qquad \text{ in probability,}
    \end{align}
    for each test function $\phi \in C_b(\R)$.

    It remains to see that $m_{\bx^n + \bm{G}^n}^n$ satisfies the tightness property (2) of Lemma~\ref{lem.randommeasures}. Fix an arbitrary $\eps>0$ and note that for every $R>0$
    \begin{align*}
        &\int_{\R} |x|^p \mathbf{1}_{\{|x| > R\}} \, m_{\bx^n + \bm{G}^n}^n(dx) = \frac{1}{n} \sum_{i = 1}^n |x^n_i + G^n_i|^p \mathbf{1}_{\{|x^n_i + G^n_i| > R\}}
        \\
        &\qquad \leq  \frac{2^p}{n}\sum_{i = 1}^n |x_i^n|^p \mathbf{1}_{\{|x_i^n| > R/2\}} + \frac{2^p}{n} \sum_{i = 1}^n |G_i^n|^p \mathbf{1}_{\{|G_i^n| > R/2\}}.
    \end{align*}
    Since $m_{\bx^n}^n$ converges in $\cP_p(\R)$, we may choose $R>0$ and $N\in\N$ such that the first term is at most $\eps/2$ for all $n\geq N$. On the other hand, by Cauchy--Schwarz
    \begin{align*}
        \E\Big[ \frac{1}{n} \sum_{i = 1}^n |G_i^n|^p \mathbf{1}_{\{|G_i^n| > R/2\}} \Big]
        &\leq \frac{1}{n} \sum_{i = 1}^n \E\big[ |G_i^n|^{2p} \big]^{1/2} \bP\big[ |G_i^n| > R/2 \big]^{1/2} \\
        &= \E\big[ |N(0,h)|^{2p} \big]^{1/2} \gamma_h\big( [-R/2,R/2]^c \big)^{1/2},
    \end{align*}
    which converges to zero as $R\to\infty$. Thus, enlarging $R$ if necessary, we may also assume that
    \begin{align*}
        \bP\bigg[\frac{2^p}{n} \sum_{i = 1}^n |G_i^n|^p \mathbf{1}_{\{|G_i^n| > R/2\}} > \frac{\eps}{2}\bigg] < \eps
    \end{align*}
    for all $n$. Combining the preceding estimates, we obtain
    \begin{align*}
        \bP\bigg[\int_{\R} |x|^p \mathbf{1}_{\{|x| > R\}} \, m_{\bx^n + \bm{G}^n}^n(dx) > \eps\bigg] < \eps
    \end{align*}
    for all $n \geq N$, as required.
\end{proof}

The next step is to prove a version of Proposition \ref{prop.dpi} when the control $\alpha$ is constant in time.

\begin{lemma} \label{lem.dpiconstant}
    Suppose that $p > 4$, $(\Omega, \cF, \bP)$ is any probability space, $t_0 \in [0,1)$, $X_0 \in L^p(\Omega)$, $\widebar\alpha \in L^{\infty}(\Omega)$ with $\|\widebar\alpha\|_2 \leq \cstar$, and $h \in (0,1-t_0)$. Then
    \begin{align*}
        \cV_p^+\big( t_0, \cL(X_0) \big) \leq \cV_p^+\big( t_0 + h, \cL(X_0 + h \widebar\alpha) * \gamma_h \big).
    \end{align*}
\end{lemma}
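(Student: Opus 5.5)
The plan is to unwind the definition \eqref{def.halfrelaxed} of $\cV_p^+$ and use Lemma~\ref{lem.dpp} over $j^n \coloneqq \floor{nh}$ steps with an (essentially) constant-in-time drift. Fix sequences $(k^n,\bx^n)$ with $k^n/n\to t_0$ and $m^n_{\bx^n}\to \cL(X_0)$ in $\bd_p$; passing to a subsequence, we may assume $\cV^n(k^n,\bx^n)$ converges to its $\limsup$. It then suffices to show
\begin{align*}
\limsup_n \cV^n(k^n,\bx^n)\le \cV_p^+\big(t_0+h,\ \cL(X_0+h\widebar\alpha)*\gamma_h\big).
\end{align*}

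\textbf{Step 1: building the discrete drift.} We need vectors $\ba^n\in\R^n$ such that $|a^n_i|\le \|\widebar\alpha\|_\infty$, $\frac1n\sum_i (a_i^n)^2\le \cstar^2$, and the joint empirical measure of $(x_i^n,a_i^n)_i$ converges to $\cL(X_0,\widebar\alpha)$ in $\bd_p$. Since $\cL(X_0)$ may have atoms, a deterministic choice may be impossible, so we randomize. Disintegrate $\widebar\alpha \overset{d}{=} g(X_0,U)$, with $U$ uniform and independent of $X_0$. We spend the first step $k^n\to k^n+1$ with $\eps\equiv1$ (at cost $O(n^{-1/2})$ in $|\cdot|_\infty$-norm after applying Lemma~\ref{lem.vn.1lip}, using $\E|\bZ|_\infty=O(\sqrt{\log n/n})$). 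From the first increment we extract i.i.d.\ uniforms $U_i$, which are $\cG(k^n+1)$-measurable, and set $\tilde a_i=g(x_i^n,U_i)$. A law-of-large-numbers argument, as in the proof of Lemma~\ref{lem.glivenkocantelli} via Lemma~\ref{lem.randommeasures}, gives convergence in probability of the joint empirical measure, and also $\frac1n\sum_i\tilde a_i^2\to\E\widebar\alpha^2\le\cstar^2$. We then set $\ba^n=\lambda_n\tilde{\ba}$ with $\lambda_n=\min(1,\cstar\sqrt n/|\tilde{\ba}|_2)\to1$ in probability.

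\textbf{Step 2: coupling over the window.} On the remaining $j^n-1$ steps we apply Lemma~\ref{lem.coupling.discrete} with $\bm b(k)=\ba^n/n$, which satisfies $|\bm b(k)|_2\le\cstar/\sqrt n$. The error bound \eqref{eq.coupling.discrete.marterror} gives
\begin{align*}
C_p n^{1/4}\big(n\cdot\|\widebar\alpha\|_\infty^p n^{-p}\big)^{1/(2p)} = C n^{\frac1{2p}-\frac14}\to0 .
\end{align*}
Combining Lemma~\ref{lem.dpp}, Lemma~\ref{lem.vn.1lip}, and this coupling, we obtain
\begin{align*}
\cV^n(k^n,\bx^n)\le \E\,\cV^n\big(k^n+j^n,\ \bx^n+\tfrac{j^n}{n}\ba^n+\bG^n\big)+o(1),
\end{align*}
where $\bG^n$ consists of Brownian increments independent of $\ba^n$, so that conditionally $\bG^n\sim\gamma_{(j^n-1)/n}^{\otimes n}$. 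Conditioning on $\ba^n$ and applying Lemma~\ref{lem.glivenkocantelli}, the empirical measure $\hat m^n$ of $\bx^n+\tfrac{j^n}{n}\ba^n+\bG^n$ converges in probability in $\bd_p$ to $\cL(X_0+h\widebar\alpha)*\gamma_h$. Moreover, $(k^n+j^n)/n\to t_0+h<1$.

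\textbf{Step 3: passing to the limit (main obstacle).} The remaining task is to conclude
\begin{align*}
\limsup_n\E\,\cV^n(k^n+j^n,\cdot)\le\cV_p^+(t_0+h,\text{target}).
\end{align*}
Pointwise this follows from the definition of $\cV_p^+$: along almost surely convergent subsequences (Skorohod representation), the $\limsup$ of $\cV^n$ is at most $\cV_p^+$. To pass the $\limsup$ inside the expectation by reverse Fatou, we need uniform integrability. We split on the event $A_n=\{\bd_p(\hat m^n,\text{target})>\delta\}$. On $A_n^c$, Proposition~\ref{prop.vnupperbound} together with the continuity and local boundedness of $\cU_p$ (Lemmas~\ref{lem.follmervalue} and~\ref{lem.updet.upperbound}) gives a uniform bound. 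On $A_n$, we bound $\cV^n(k,\by)\le \cV^n(k,\bx^n+\tfrac{j^n}{n}\ba^n)+|\bG^n|_\infty$. The first term is controlled by $\cU_p$ at a $\bd_p$-convergent sequence, hence is bounded. The second satisfies
\begin{align*}
\E[|\bG^n|_\infty;A_n]\le C\sqrt{\log n}\,\bP(A_n)^{1/2}.
\end{align*}
This is the step I expect to be delicate, since it requires a rate for $\bP(A_n)$. I would first try to obtain such a rate from the explicit variance bounds $\|\phi\|_\infty^2/n$ and the Gaussian tail estimates in the proof of Lemma~\ref{lem.glivenkocantelli}. Failing that, I would instead bound $\cU_p^{\mathrm{det}}(t,\hat m^n)$ directly: its lift $\hat\cU_p^{\mathrm{det}}$ is convex and locally Lipschitz, which controls it by $\bd_p(\hat m^n, m^n_{\bx^n+\frac{j^n}n\ba^n})\le(\frac1n\sum_i|G_i^n|^p)^{1/p}$, a quantity that is uniformly integrable.
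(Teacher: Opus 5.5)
Your Step~2 and the pointwise part of Step~3 match the paper's argument: Lemma~\ref{lem.coupling.discrete} with constant drift $\ba^n/n$ and error $O(n^{\frac1{2p}-\frac14})$, then Lemmas~\ref{lem.dpp}, \ref{lem.vn.1lip} and \ref{lem.glivenkocantelli} along an a.s.\ convergent subsequence. Step~1, however, fails as stated.

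\emph{Step~1.} The mean of $\frac1n\sum_i\delta_{(x_i^n,\,g(x_i^n,U_i))}$ is $\frac1n\sum_i\delta_{x_i^n}\otimes\cL(g(x_i^n,U))$. Since $m^n_{\bx^n}\to\cL(X_0)$ only in $\bd_p$, this mean converges to $\cL(X_0,\widebar\alpha)$ only if the kernel $x\mapsto\cL(g(x,U))$ is continuous. In Lemma~\ref{lem.glivenkocantelli} the kernel $x\mapsto\delta_x*\gamma_h$ is continuous, which is why the argument works there. Here $g$ is determined only $\cL(X_0)$-a.e., and the $x_i^n$ may all lie in a null set.

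For a concrete failure, let $X_0\sim\mathrm{Unif}[0,1]$ and $\widebar\alpha=\cstar\mathbf 1_A(X_0)$, where $A$ is measurable with $0<|A\cap I|<|I|$ for every interval $I$. Any version of $g$ equals $\cstar\mathbf 1_A$ off a null set $N$. Perturbing a near-optimal $\bx^n$ by at most $1/n$ per coordinate costs at most $1/n$ by Lemma~\ref{lem.vn.1lip}, so you may take all $x_i^n\in A\setminus N$. Then $\ba^n\equiv\cstar$, and the empirical measure of $\bx^n+\frac{j^n}{n}\ba^n$ tends to $\mathrm{Unif}[h\cstar,1+h\cstar]$. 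But $\cL(X_0+h\widebar\alpha)$ charges $[0,h\cstar)$, so you land on the wrong terminal law.

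Atoms are not an obstruction to a deterministic choice either, so no randomization is needed. The missing idea is the paper's matching step. Take any $(\tilde\bx^n,\tilde\ba^n)$ whose joint empirical measure converges to $\cL(X_0,\widebar\alpha)$ in $\bd_p$. Relabel its coordinates jointly by a permutation realizing an optimal coupling of $m^n_{\tilde\bx^n}$ and $m^n_{\bx^n}$. Then $\frac1n\sum_i|\tilde x_i^n-x_i^n|^p\to0$, hence $m^n_{(\bx^n,\tilde\ba^n)}\to\cL(X_0,\widebar\alpha)$. Finally truncate at $\|\widebar\alpha\|_\infty+1$ and rescale into the $\ell^2$-ball of radius $\cstar\sqrt n$.

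\emph{Step~3.} This step is left open, and your fallback does not close it. The bound $C+|\bG^n|_\infty$ is not uniformly integrable, since $\E|\bG^n|_\infty\sim\sqrt{2h\log n}$. Local Lipschitz continuity of the lift of $\cU_p^{\mathrm{det}}$ does not give a bound of the form $C+L\,\bd_p(\hat m^n,m^n_{\bx^n+\frac{j^n}{n}\ba^n})$, because $\cU_p^{\mathrm{det}}(t,\cdot)$ is unbounded on $\bd_p$-balls. For example, $\mu_M\coloneqq(1-M^{-p})\delta_0+M^{-p}\delta_M$ has $\bd_p(\mu_M,\delta_0)=1$, yet $\cU_p^{\mathrm{det}}(t,\mu_M)\ge(1-\cstar/2)M$.

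Your first option can be completed, but only with concentration of $\bd_p$ itself, not variance bounds for fixed test functions. For instance, Gaussian concentration applies because $\bG\mapsto\bd_p(m^n_{\by+\bG},\mu)$ is $n^{-1/p}$-Lipschitz with respect to $|\cdot|_2$.

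The paper avoids rates altogether. It applies Proposition~\ref{prop.vnupperbound} with some $4<p'<p$, then \eqref{vpd.est2} with the pair $(p',p)$ in place of $(p,q)$. This bounds the integrand by a constant plus $C\big(\frac1n\sum_i|X_i^n(t^n+h^n)|^p\big)^{1/(p-p')}$. That quantity is uniformly integrable, because the $p$-th moments of $m^n_{\bx^n}$ and $m^n_{\ba^n}$ converge and the Gaussian part has moments of all orders.
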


\begin{proof}
    We proceed as follows. First, we approximate the joint law of $(X_0,\widebar{\alpha})$ by empirical measures of a suitable sequence $(\bx^n,\ba^n)\in\R^n\times\R^n$. We then define $\R^n$-valued processes $\bX^n$ and $\bY^n$ using a coupling construction similar in spirit to that in the proof of Proposition~\ref{prop.vnupperbound}. Roughly speaking, $\bX^n$ represents $n$ realizations of the limiting state process, started from $X_0$ at time $t_0$ and under the constant control $\widebar{\alpha}$, while $\bY^n$ is the discrete state process started from $\bx^n$, driven by a control $\eps^n$ chosen so that it has constant drift equal to $\frac{1}{n}\ba^n$. We show that the empirical measures of $\bX^n$ and $\bY^n$ at time $t_0+h$ are asymptotically close, and the claim then follows by combining this with the discrete dynamic programming inequality and passing to the limit.

    \medskip
    \noindent\emph{Step 1: Approximation of $\cL(X_0, \widebar \alpha)$.} Fix $\delta>0$ and choose a sequence $(k^n, \bx^n)$ such that
    \begin{align}\label{eq.dpiconstant.pf.kx}
        \frac{k^n}{n} \to t_0, \quad m_{\bx^n}^n \xrightarrow{\bd_p} \cL(X_0),
    \end{align}
    and
    \begin{align}\label{eq.dpiconstant.pf.xn}
        \limsup_{n \to \infty} \cV^n(k^n, \bx^n) \geq \cV_p^+\big( t_0, \cL(X_0) \big) - \delta.
    \end{align}

    We now construct a sequence of vectors $\ba^n \in \R^n$ and some constant $R<\infty$ such that
    \begin{align} \label{anprops.a}
        |\ba^n|_2 \leq \cstar \sqrt{n}, \qquad | \ba^n |_{\infty} \leq R,\qquad\text{for all $n\in\N$,}
    \end{align}
    as well as
    \begin{equation}\label{anprops.convergence}
        m_{(\bx^n, \ba^n)}^n \xrightarrow{\bd_p} \cL(X_0, \widebar{\alpha}).
    \end{equation}
    To do so, let $(\wt{\bx}^n,\wt{\ba}^n)\in\R^n\times\R^n$ be any sequence such that its joint empirical measure converges to $\cL(X_0,\widebar{\alpha})$ in $\bd_p$. Since $m_{\bx^n}^n \to \cL(X_0)$ in $\bd_p$ by assumption, we have $\bd_p\big(m_{\wt{\bx}^n}^n,m_{\bx^n}^n\big)\to 0$. Therefore, after relabeling the coordinates of $(\wt{\bx}^n,\wt{\ba}^n)$ simultaneously if necessary, but without relabeling the coordinates of $\bx^n$, we may assume that
    \begin{align*}
        \frac1n\sum_{i=1}^n |\wt x_i^n-x_i^n|^p \to 0.
    \end{align*}
    This quantity bounds $\bd_p^p\big(m_{(\bx^n,\wt{\ba}^n)}^n,m_{(\wt{\bx}^n,\wt{\ba}^n)}^n\big)$ from above, while the relabeling does not change the empirical measure $m_{(\wt{\bx}^n,\wt{\ba}^n)}^n$. Hence we still have $m_{(\wt{\bx}^n,\wt{\ba}^n)}^n \to \cL(X_0,\widebar{\alpha})$ in $\bd_p$, and so the triangle inequality yields $m_{(\bx^n,\wt{\ba}^n)}^n \to \cL(X_0,\widebar{\alpha})$ in $\bd_p$.

    Now, set $R\coloneqq \|\widebar{\alpha}\|_\infty + 1$, and define vectors $\ba^{n,R}\in\R^n$ by
    \begin{equation*}
        a_i^{n,R} \coloneqq \wt{a}_i^n \mathbf{1}_{\{|\wt{a}_i^n| \leq R\}},\qquad i=1,\dots,n.
    \end{equation*}
    We define $\ba^n$ to be the $\ell^2$-projection of $\ba^{n,R}$ onto the $\ell^2$-ball of radius $\cstar\sqrt{n}$, i.e.,
    \begin{equation*}
        \ba^n \coloneqq \lambda_n\ba^{n,R},\qquad \lambda_n \coloneqq \min\Big\{1, \frac{\cstar\sqrt{n}}{|\ba^{n,R}|_2}\Big\},
    \end{equation*}
    where we use the convention $\lambda_n=1$ when $\ba^{n,R}=0$. Observe that \eqref{anprops.a} holds since $|\ba^n|_\infty \leq |\ba^{n,R}|_\infty \leq R < \infty$ and $|\ba^n|_2 \leq \sqrt{n} \cstar$. We prove the convergence \eqref{anprops.convergence} as follows. Let $\varphi$ be a continuous function with $\mathbf{1}_{(R,\infty)} \leq \varphi \leq \mathbf{1}_{(R-1,\infty)}$. Since $m_{\wt{\ba}^n}^n\to\cL(\widebar{\alpha})$ in $\bd_p$ and $\varphi(\widebar\alpha)=0$ a.s.\ by the definition of $R$, we have
    \begin{equation*}
        \frac1n \sum_{i=1}^{n} \big|a_i^{n,R} - \wt{a}_i^n\big|^p = \frac1n \sum_{i=1}^{n} |\wt{a}_i^n|^p \mathbf{1}_{\{|\wt{a}_i^n| > R\}} \leq \frac1n \sum_{i=1}^{n} |\wt{a}_i^n|^p \varphi(|\wt{a}_i^n|) \longrightarrow \E|\widebar{\alpha}|^p \varphi(|\widebar{\alpha}|) = 0.
    \end{equation*}
    In particular, $m_{\ba^{n,R}}^n \to \cL(\widebar{\alpha})$ in $\bd_p$, and in turn $n^{-1/2} |\ba^{n,R}|_2 \to \|\widebar{\alpha}\|_2 \leq \cstar$. Hence, $\lambda_n\to 1$ and so
    \begin{equation*}
        \bigg(\frac1n\sum_{i=1}^{n} \big|a_i^n - a_i^{n,R}\big|^p\bigg)^{\frac1p}
        = \bigg(\frac1n \sum_{i=1}^{n} \big|a_i^{n,R}\big|^p\bigg)^{\frac1p} (1-\lambda_n)
        \leq R(1-\lambda_n) \longrightarrow 0.
    \end{equation*}
    It follows that
    \begin{equation*}
        \bd_p(m_{(\bx^n, \ba^n)}^n, m_{(\bx^n, \wt{\ba}^n)}^n) \leq \bigg(\frac1n\sum_{i=1}^{n} \big|a_i^n - a_i^{n,R}\big|^p\bigg)^{\frac1p} + \bigg(\frac1n \sum_{i=1}^{n} \big|a_i^{n,R} - \wt{a}_i^n\big|^p\bigg)^{\frac1p} \longrightarrow 0.
    \end{equation*}
    Since $m_{(\bx^n,\wt{\ba}^n)}^n \to \cL(X_0,\widebar{\alpha})$ in $\bd_p$, we get \eqref{anprops.convergence} by the triangle inequality.

    \medskip
    \noindent\emph{Step 2: Coupling construction.} For $i\in\N$, let $(B_i(t))_{0\le t\le 1}$ be independent Brownian motions defined on a common probability space $(\Omega,\cF,\PP)$. For each $n$, write $t^n = k^n / n$, and define an $\R^n$-valued process $\bX^n$ on the interval $[t^n,1]$ by
    \begin{equation}\label{eq.dpiconstant.pf.xdef}
        \bX^n(t) \coloneqq \bx^n + (t-t^n)\ba^n + \bm{B}^n(t) - \bm{B}^n(t^n),\qquad t^n\le t\le 1,
    \end{equation}
    where $\bm{B}^n(t)\coloneqq (B_1(t),\dots,B_n(t))$. We write $\bbF^n = (\cF^n(t))_{t\in[0,1]}$ for the filtration generated by $\bm{B}^n$.

    Next, we construct a discrete state process $\bY^n$ starting from $\bx^n$ at time $k^n$. To this end, we apply Lemma~\ref{lem.coupling.discrete} to the constant sequence
    \begin{equation*}
        \bm{b}^n(k) \coloneqq \frac{1}{n}\ba^n,\qquad k=k^n,\dots,n-1,
    \end{equation*}
    and the Brownian motion $\bm{B}^n$. By the first part of \eqref{anprops.a}, the assumptions of the lemma are satisfied. We thus obtain Gaussian increments $\bZ^n = (\bZ^n(k))_{k=k^n+1,\dots,n}$ and a discrete control $\eps^n = (\eps^n(k))_{k=k^n+1,\dots,n}$ for the discrete filtration $\bbG^n = (\cG^n(k))$, defined by $\cG^n(k) \coloneqq \cF^n(k/n)$, such that $(\bbG^n,\bZ^n,\eps^n)$ satisfies the assumptions of Lemma~\ref{lem.dpp} and
    \begin{equation*}
        \frac{1}{n}\ba^n
        =
        \E\big[\eps^n(k+1)\bZ^n(k+1)\,\big|\,\cG^n(k)\big],
        \qquad k=k^n,\dots,n-1.
    \end{equation*}
    In particular, the state process $\bY^n(k) \coloneqq \bx^n + \sum_{\ell=k^n+1}^k \eps^n(\ell)\bZ^n(\ell)$ admits the decomposition
    \begin{equation}\label{eq.dpiconstant.pf.ydef}
        \bY^n(k)
        =
        \bx^n + \frac{k-k^n}{n}\ba^n + \sum_{\ell=k^n+1}^k \bm{\sigma}^n(\ell),
        \qquad k=k^n,\dots,n,
    \end{equation}
    where $\bm{\sigma}^n(\ell) \coloneqq \eps^n(\ell)\bZ^n(\ell) - \E\big[\eps^n(\ell)\bZ^n(\ell)\,\big|\,\cG^n(\ell-1)\big]$. Moreover, by the last part of Lemma~\ref{lem.coupling.discrete}, we have for every $\ell^n=0,\dots,n-k^n$ and $h^n=\ell^n / n$
    \begin{align*}
        \E\bigg|
            \sum_{k=k^n+1}^{k^n+\ell^n}\bm{\sigma}^n(k)
            -
            \big(
                \bm{B}^n(t^n+h^n)
                -
                \bm{B}^n(t^n)
            \big)
        \bigg|_\infty
        &\le
        n^{\frac14}
        \max_k \bigg(
            \sum_{i=1}^n \|b_i(k)\|_p^p
        \bigg)^{\frac{1}{2p}} \\
        &=
        n^{-\frac14}\sqrt{|\ba^n|_p} \\
        &\le
        \sqrt{R}\,n^{\frac{1}{2p}-\frac14}.
    \end{align*}
    Since the initial condition and drift terms in \eqref{eq.dpiconstant.pf.xdef} and \eqref{eq.dpiconstant.pf.ydef} agree, we can rewrite this estimate as
    \begin{equation}\label{eq.dpiconstant.pf.compstate}
        \E\big|\bX^n(t^n+h^n)-\bY^n(k^n+\ell^n)\big|_\infty
        \le
        \sqrt{R}\,n^{\frac{1}{2p}-\frac14}.
    \end{equation}

    \medskip
    \noindent\emph{Step 3: Comparison via dynamic programming.} Fix a sequence $(\ell^n)$ of integers such that
    \begin{equation*}
        0\le \ell^n \le n-k^n,
        \qquad
        h^n \coloneqq \frac{\ell^n}{n} \longrightarrow h.
    \end{equation*}
    Since $(\bbG^n,\bZ^n,\eps^n)$ satisfies the assumptions of Lemma~\ref{lem.dpp}, we have
    \begin{equation*}
        \cV^n(k^n,\bx^n)
        \le
        \E\Big[\cV^n\big(k^n+\ell^n,\bY^n(k^n+\ell^n)\big)\Big].
    \end{equation*}
    Moreover, since $p > 4$ and $\cV^n(k^n+\ell^n,\,\cdot\,)$ is $1$-Lipschitz with respect to $\ell^\infty$ by Lemma~\ref{lem.vn.1lip}, the estimate \eqref{eq.dpiconstant.pf.compstate} yields
    \begin{align*}
        \E\Big|
            \cV^n\big(k^n+\ell^n,\bX^n(t^n+h^n)\big)
            -
            \cV^n\big(k^n+\ell^n,\bY^n(k^n+\ell^n)\big)
        \Big|
        = o_n(1).
    \end{align*}
    We obtain
    \begin{equation}\label{eq.dpiconstant.pf.comparex}
        \cV^n(k^n,\bx^n)
        \le
        \E\Big[\cV^n\big(k^n+\ell^n,\bX^n(t^n+h^n)\big)\Big] + o_n(1).
    \end{equation}

    \medskip
    \noindent\emph{Step 4: Identification of the limiting terminal law.} Recalling \eqref{anprops.convergence}, we have
    \begin{equation*}
        m_{\bx^n + h^n \ba^n}^n \xlongrightarrow{\bd_p} \cL(X_0 + h\widebar{\alpha}).
    \end{equation*}
    On the other hand, by definition of $\bX^n$,
    \begin{equation}\label{eq.dpiconstant.pf.xn.hn}
        \bX^n(t^n+h^n)
        =
        \bx^n + h^n \ba^n + \bm{B}^n(t^n+h^n)-\bm{B}^n(t^n).
    \end{equation}
    Lemma~\ref{lem.glivenkocantelli} therefore implies that
    \begin{equation}\label{eq.dpiconstant.pf.inprob}
        m_{\bX^n(t^n+h^n)}^n
        \xrightarrow{\bd_p}
        \cL(X_0 + h\widebar{\alpha}) * \gamma_h
        \qquad
        \text{in probability.}
    \end{equation}

    \medskip
    \noindent\emph{Step 5: Passage to the limit and conclusion.} Let $(n_j)$ be a subsequence such that
    \begin{equation*}
        \cV^{n_j}(k^{n_j},\bx^{n_j})
        \longrightarrow
        \limsup_{n\to\infty}\cV^n(k^n,\bx^n),
    \end{equation*}
    and such that the convergence in \eqref{eq.dpiconstant.pf.inprob} holds almost surely along $(n_j)$. By the definition of $\cV_p^+$, it follows that
    \begin{equation*}
        \limsup_{j\to\infty}
        \cV^{n_j}\big(k^{n_j}+\ell^{n_j},\bX^{n_j}(t^{n_j}+h^{n_j})\big)
        \le
        \cV_p^+\big(t_0+h,\cL(X_0+h\widebar{\alpha}) * \gamma_h\big)
        \qquad
        \text{a.s.}
    \end{equation*}
    We claim that the sequence
    \begin{equation}\label{eq.dpiconstant.pf.ui}
        \cV^n\big(k^n+\ell^n,\bX^n(t^n+h^n)\big),
        \qquad n\in\N,
    \end{equation}
    is uniformly integrable. Assuming this for the moment, we may combine \eqref{eq.dpiconstant.pf.xn} and \eqref{eq.dpiconstant.pf.comparex} to obtain
    \begin{align*}
        \cV_p^+\big(t_0,\cL(X_0)\big)
        &\le
        \delta + \limsup_{n\to\infty}\cV^n(k^n,\bx^n) \\
        &\le
        \delta + \limsup_{j\to\infty}
        \E\Big[\cV^{n_j}\big(k^{n_j}+\ell^{n_j},\bX^{n_j}(t^{n_j}+h^{n_j})\big)\Big] \\
        &\le
        \delta + \E\Big[
            \limsup_{j\to\infty}
            \cV^{n_j}\big(k^{n_j}+\ell^{n_j},\bX^{n_j}(t^{n_j}+h^{n_j})\big)
        \Big] \\
        &\le
        \delta + \cV_p^+\big(t_0+h,\cL(X_0+h\widebar{\alpha}) * \gamma_h\big).
    \end{align*}
    Since $\delta>0$ was arbitrary, this proves the claim.

    It remains to justify the uniform integrability. To this end, fix $4<p'<p$. By Proposition~\ref{prop.vnupperbound}, there exists a constant $C>0$, independent of $n$, such that
    \begin{align*}
        \cV^n\big(k^n+\ell^n,\bX^n(t^n+h^n)\big)
        &\le
        \cU_{p'}^\mathrm{det}\big(t^n+h^n,m_{\bX^n(t^n+h^n)}^n\big)
        + \cU_{p'}^\mathrm{F}(t^n+h^n)
        + C n^{\frac{1}{2p'}-\frac{1}{4}}.
    \end{align*}
    By Lemma~\ref{lem.follmervalue}, the term $\cU_{p'}^\mathrm{F}$ is bounded. Moreover, by Lemma~\ref{lem.updet.upperbound}, the first term is bounded up to a multiplicative constant by
    \begin{equation*}
        \big(1-t^n-h^n\big)^{-\frac{p'}{p-p'}}
        \bigg(\frac{1}{n}\sum_{i=1}^n \big|X_i^n(t^n + h^n)\big|^p\bigg)^{\frac{1}{p-p'}}.
    \end{equation*}
    Since $t^n+h^n\to t_0+h<1$, the prefactor is bounded for all sufficiently large $n$. Using \eqref{eq.dpiconstant.pf.xn} and the triangle inequality, we can therefore bound this quantity up to a constant by
    \begin{equation*}
        \bigg(\frac{1}{n}\sum_{i=1}^n |x_i^n|^p\bigg)^{\frac{1}{p-p'}}
        +
        \bigg(\frac{1}{n}\sum_{i=1}^n |a_i^n|^p\bigg)^{\frac{1}{p-p'}}
        +
        \bigg(
            \frac1n\sum_{i=1}^n
            \Big|
                B_i(t^n+h^n)-B_i(t^n)
            \Big|^p
        \bigg)^{\frac{1}{p-p'}}.
    \end{equation*}
    The first two terms are uniformly bounded in $n$ because $m_{\bx^n}^n$ and $m_{\ba^n}^n$ converge in $\bd_p$. The third term is uniformly bounded in $L^r$ for every $r>1$ by Jensen's inequality. We conclude that the sequence \eqref{eq.dpiconstant.pf.ui} is uniformly integrable.
\end{proof}

We now use Lemma \ref{lem.dpiconstant} together with an approximation argument to prove Proposition \ref{prop.dpi}.

\begin{proof}[Proof of Proposition \ref{prop.dpi}]
    First, suppose that $\alpha$ is piecewise constant, i.e., for some partition
    \begin{align*}
        t_0 < t_1 < \cdots < t_k = t_0 + h,
    \end{align*}
    we have
    \begin{equation*}
        \alpha(t) = \sum_{i=0}^{k-1} \alpha(t_i) \mathbf{1}_{[t_i,t_{i+1})}(t),
        \qquad t_0 \leq t \leq t_0+h.
    \end{equation*}
    In particular, for each $i$,
    \begin{equation*}
        X(t_{i+1})
        = X(t_i) + (t_{i+1}-t_i)\alpha(t_i) + B(t_{i+1}) - B(t_i),
    \end{equation*}
    and thus
    \begin{equation*}
        \cL\big(X(t_{i+1})\big)
        =
        \cL\big(X(t_i) + (t_{i+1}-t_i)\alpha(t_i)\big) * \gamma_{t_{i+1}-t_i}.
    \end{equation*}
    Since $\alpha(t_i)$ is $\cF(t_i)$-measurable, belongs to $L^\infty(\Omega)$, and satisfies $\|\alpha(t_i)\|_2 \leq \cstar$, we may apply Lemma~\ref{lem.dpiconstant} on each interval $[t_i,t_{i+1}]$ to obtain
    \begin{align*}
        \cV_p^+\big(t_i,\cL(X(t_i))\big)
        \leq
        \cV_p^+\big(t_{i+1},\cL(X(t_{i+1}))\big).
    \end{align*}
    The desired inequality then follows by induction.

    Now consider the general case. For each $k \in \N$, write
    \begin{equation*}
        t_i^k \coloneqq t_0 + \frac{ih}{k}, \qquad i=0,\dots,k,
    \end{equation*}
    and define a piecewise constant approximation $\alpha^k$ by
    \begin{align*}
        \alpha^k(t)
        \coloneqq
        \sum_{i=0}^{k-1} \alpha^{k,i} \mathbf{1}_{[t_i^k,t_{i+1}^k)}(t),
        \qquad
        \alpha^{k,i}
        \coloneqq
        \begin{cases}
            0 & i = 0,
            \\
            \displaystyle \frac{k}{h} \int_{t_{i-1}^k}^{t_i^k} \alpha(s)\,ds & i \in \{1,\dots,k-1\}.
        \end{cases}
    \end{align*}
    Then $\alpha^{k,i}$ is $\cF(t_i^k)$-measurable for each $i$, and
    \begin{equation*}
        \|\alpha^{k,i}\|_\infty \leq C,
        \qquad
        \|\alpha^{k,i}\|_2 \leq \cstar
    \end{equation*}
    by Jensen's inequality.

    Let $X^k$ denote the corresponding state process, i.e.,
    \begin{align*}
        X^k(t)
        \coloneqq
        X_0 + \int_{t_0}^t \alpha^k(s)\,ds + B(t)-B(t_0),
        \qquad t_0 \leq t \leq t_0+h.
    \end{align*}
    By the piecewise constant case, we have
    \begin{align*}
        \cV_p^+\big(t_0,\cL(X_0)\big)
        \leq
        \cV_p^+\big(t_0+h,\cL\big(X^k(t_0+h)\big)\big)
    \end{align*}
    for every $k$.

    Moreover,
    \begin{equation*}
        X^k(t_0+h)-X(t_0+h)
        =
        -\int_{t_{k-1}^k}^{t_0+h} \alpha(s)\,ds,
    \end{equation*}
    and therefore, since $\|\alpha(s)\|_\infty \leq C$ for almost every $s$,
    \begin{equation*}
        \|X^k(t_0+h)-X(t_0+h)\|_\infty
        \leq
        \frac{Ch}{k}
        \to 0.
    \end{equation*}
    In particular,
    \begin{equation*}
        \cL\big(X^k(t_0+h)\big) \to \cL\big(X(t_0+h)\big)
        \qquad\text{in } \cP_p(\R).
    \end{equation*}
    By the upper semicontinuity of $\cV_p^+$, we conclude
    \begin{align*}
        \cV_p^+\big(t_0,\cL(X_0)\big)
        \leq
        \limsup_{k\to\infty}
        \cV_p^+\big(t_0+h,\cL\big(X^k(t_0+h)\big)\big)
        \leq
        \cV_p^+\big(t_0+h,\cL\big(X(t_0+h)\big)\big).
    \end{align*}
\end{proof}

\subsection{Completion of the proof}

We remind the reader that at this stage, we have produced for each $p > 4$ a function $\cV_p^+ : [0,1) \times \cP_p(\R) \to \R$ which satisfies the following properties. First, directly from the definition in \eqref{def.halfrelaxed}, we have that
\begin{align} \label{fact1.vpplus}
    \limsup_{n \to \infty} V^n \leq \cV_p^+(0, \delta_0).
\end{align}
Second, by Lemma \ref{lem.halfrelaxed}, we have $\cV_p^+(t,m) \leq \cU_p(t,m)$ and
\begin{align} \label{fact2.vpplus}
    \cV_p^+ \text{  is upper semicontinuous (with respect to $\bd_p$)},
\end{align}
on $[0,1) \times \cP_p(\R)$.
Combining this with Lemma \ref{lem.follmervalue} and Lemma \ref{lem.updet.upperbound}, we see that
\begin{align} \label{fact3.vpplus}
    \cV_p^+(t,m) \leq [m]_{\infty} + \kappa(t),
\end{align}
where $\kappa(t) \coloneqq \sup_{t \leq s \leq 1} \cU_p^{\mathrm{F}}(s) \to 0 \text{  as  } t \to 1.$

Finally, we know from Corollary \ref{cor.dpi} that $\cV_p^+$ satisfies a dynamic programming inequality: for each $t_0 \in [0,1)$, $h \in (0,1-t_0)$, $m_0 \in \cP_p(\R)$, and $(m,\alpha) \in \cA(t_0,m_0)$ such that $\|\alpha\|_{L^{\infty}([t_0,1] \times \R)} < \infty$, we have
\begin{align} \label{fact4.vpplus}
        \cV_p^+(t_0,m_0) \leq \cV_p^+\big( t_0 + h, m_{t_0 + h} \big)
\end{align}
Our goal is now to combine \eqref{fact1.vpplus}, \eqref{fact2.vpplus}, \eqref{fact3.vpplus} and \eqref{fact4.vpplus} to complete the proof of \eqref{pf:upperbound-final}.

As a final preparation, we will need an approximation result because the dynamic programming inequality \eqref{fact4.vpplus} has been established only for bounded controls $\alpha$. To state this precisely, we introduce a truncated version of the optimization problem which defines $V^{\infty}$. In particular, for $0 < T \leq 1$, we consider the problem
\begin{equation} \label{optimization.truncated}
\inf_{(m,\alpha) \in \cA(0,\delta_0)} [m_T]_{\infty}.
\end{equation}
We note that the optimization problem in \eqref{optimization.truncated} is just a reparameterization of the problem which defines $\cV^{\infty}(1-T,\delta_0)$, i.e.,
\begin{equation}
    \inf_{(m,\alpha) \in \cA(0,\delta_0)} [m_T]_{\infty} = \cV^{\infty}(1-T, \delta_0).
\end{equation}
The following lemma, whose proof is postponed to the next section, explains that we can find good approximations of optimizers of \eqref{optimization.truncated} with bounded $\alpha$.

\begin{lemma} \label{lem.Wpapproximation}
    Fix $T \in (0,1]$ and $p \geq 1$. Then, there exists an optimizer $(m,\alpha)$ for \eqref{optimization.truncated} and a sequence $(m^{(k)},\alpha^{(k)}) \in \cA(0,\delta_0)$ such that $\|\alpha^{(k)}\|_{L^{\infty}([0,T]\times \R)} < \infty$, and
    \begin{align*}
      \bd_p\big( m_T^{(k)}, m_T \big)  \longrightarrow 0.
    \end{align*}
\end{lemma}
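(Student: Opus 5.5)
Our plan is to produce the optimizer as a limit of the softened problems mentioned in the introduction, where the terminal cost $[m]_\infty$ is replaced by the $q$-th moment $(\int |x|^q\,m(dx))^{1/q}$, and to use the structure those optimizers inherit. For each $q$ we take the unique optimizer of the $L^q$-problem on $[0,T]$. By the duality results announced for Section~\ref{sec:duality-and-optimizers} (Theorem~\ref{thm:duality-regularized}, Proposition~\ref{prop:primal-optimizers-finite-p}), it has the feedback form $\alpha_q(t)=\hat\alpha_q(t,X_q(t))$, with $\hat\alpha_q(t,\cdot)$ odd and inward pointing ($\hat\alpha_q(t,x)x\le 0$) and $\int\hat\alpha_q(t,x)^2 m_{q,t}(dx)\le 2/\pi$.

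We then send $q\to\infty$. The weak formulation of Section~\ref{sec:lower-bound}, with the joint laws of $(\int_0^\cdot\alpha_q\,dt,B)$ tight on $\cC^2$ by Lemma~\ref{lemma:compactness-K}, gives a limit point $m\in\cA^{\mathrm w}$. Lower semicontinuity of $[\cdot]_\infty$, together with $\|X\|_q\le\|X\|_\infty$, shows that this limit is optimal for \eqref{optimization.truncated}. Applying the projection $\hat\alpha(t,x)=\E[\alpha(t)\,|\,X(t)=x]$ as in Lemma~\ref{lem.equivstrong} yields a Markovian optimizer $(m,\alpha)\in\cA(0,\delta_0)$. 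Oddness and the inward-pointing property are closed under weak limits of the pairs $(X(t),\alpha(t))$, since they amount to the conditions $\E[\alpha(t)\phi(X(t))X(t)]\le0$ for $\phi\ge0$ and to invariance of the joint law under $(x,a)\mapsto(-x,-a)$. They therefore pass to the optimizer, as in Proposition~\ref{lem.optproperties}.

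Next we approximate by bounded controls. For $R>0$ we set $\alpha^{(R)}(t,x)=\mathrm{sgn}(\hat\alpha(t,x))\min\{|\hat\alpha(t,x)|,R\}$. The new control is still odd and inward pointing. We let $X^{(R)}$ solve $dX^{(R)}=\alpha^{(R)}(t,X^{(R)})\,dt+dB$, which is well posed weakly by Girsanov since the drift is bounded. The key comparison is that truncating an inward drift only weakens the confinement, and we expect this to give stochastic domination of $|X^{(R)}(t)|$ over $|X(t)|$ in a controllable way. The obstacle is that the $L^2$ constraint must hold under the new marginals $m^{(R)}_t$, not under $m_t$. We plan to handle this by an additional scaling $\lambda_R\alpha^{(R)}$ with $\lambda_R\le1$ and $\lambda_R\to1$. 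Failing that, we use a mixture in the spirit of Proposition~\ref{prop.ulequp}: run the bounded control on $[0,T-\eta]$ and spend the remaining budget on a short Föllmer-plus-deterministic correction, which has bounded drift after smoothing $f$.

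The main obstacle is proving $\bd_p(m^{(R)}_T,m_T)\to0$. Our first attempt is pathwise: on a common Brownian motion, with strong solutions obtained after mollifying $\hat\alpha$ in $x$, we use the one-dimensional comparison principle. This should show that $|X^{(R)}|$ is controlled by $|X|$ plus the effect of the drift on the set where $|\hat\alpha|>R$. That effect vanishes in $L^1(dt\,d\bP)$ because $\int_0^T\E\,\alpha(t)^2\,dt<\infty$. Convergence in probability then follows. Uniform $L^{p'}$ bounds for $p'>p$ hold because the drifts are inward, so that $|X^{(R)}(t)|\le$ a reflected Brownian motion in the sense of moments; this upgrades the convergence to $\bd_p$. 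If the pathwise comparison fails because $\hat\alpha$ is non-smooth, we instead argue through stability of the superposition principle and the Fokker--Planck equation \eqref{eq.prelim.vinf.fpe} with $L^2(m\,dt)$-convergent drifts, which gives weak convergence, combined with the same uniform moment bound.
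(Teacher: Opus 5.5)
Your first step (an odd, inward-pointing optimizer obtained as a limit of regularized problems) is essentially Proposition~\ref{lem.optproperties}, with one caveat. The feedback form, oddness and inward-pointing property of the regularized optimizers do not follow from Theorem~\ref{thm:duality-regularized} and Proposition~\ref{prop:primal-optimizers-finite-p} alone. They come from Lemma~\ref{lemma:optimal-control-structure:finite-p}, whose HJB argument assumes $\delta>0$ precisely so that $\lambda(t)\le 1/\delta$. With the pure $L^q$ softening no such bound is available, so you need the extra quadratic penalty $\delta_k\downarrow 0$, or you can simply cite Proposition~\ref{lem.optproperties}.

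\textbf{The genuine gap is in the approximation step.} Truncating the \emph{feedback} drift and re-solving the SDE changes the marginal flow. The constraint $\int \alpha^{(R)}(t,x)^2\,m^{(R)}_t(dx)\le 2/\pi$ must therefore hold against the new flow. You flag this, but neither patch resolves it.
\begin{itemize}
\item Rescaling by $\lambda_R$ changes the marginals yet again. You would need uniform-in-$t$ control of $\lambda_R^2\int(\alpha^{(R)})^2\,dm^{(R,\lambda_R)}_t$, a fixed-point problem for integrals of unbounded functions that nothing in your argument controls.
\item The optimizer may already spend the full budget $c_0^2$ at every time, so there is no spare budget for a terminal F\"ollmer correction.
\item $\hat\alpha(t,\cdot)$ is only determined $m_t$-a.e. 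Once the weakened confinement lets $X^{(R)}$ charge regions that $m_t$ does not, the cost there is bounded only by $R^2$.
\item The pathwise comparison step needs strong solutions, hence mollifying $\hat\alpha$ in $x$. Naive mollification does not preserve the constraint either.
\end{itemize}

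\textbf{The missing idea} is to truncate in \emph{open-loop} form along an already admissible process, so the constraint is inherited pointwise, after a mollification that makes the drift locally bounded. The paper proceeds as follows.
\begin{itemize}
\item It sets $m^\delta_t=m_t*\gamma_\delta$ and $\alpha^\delta=\big((m_t\alpha(t,\cdot))*\gamma_\delta\big)/(m_t*\gamma_\delta)$. This pair still solves the Fokker--Planck equation (from $\gamma_\delta$) and satisfies the $L^2$ constraint by Jensen. It remains odd and inward-pointing because $m_t$ is even. The drift is smooth and locally bounded uniformly in $t$, since $m_t*\gamma_\delta$ is locally bounded below.
\item It takes the strong solution $X^\delta$, which is non-explosive thanks to the inward drift, and stops the drift at the exit time $\tau_M$ from $[-M,M]$: $\alpha^{\delta,M}(t)=\mathbf{1}_{\{t<\tau_M\}}\alpha^\delta(t,X^\delta(t))$.
\item This control is bounded and satisfies $|\alpha^{\delta,M}(t)|\le|\alpha^\delta(t,X^\delta(t))|$, so $\E\,\alpha^{\delta,M}(t)^2\le c_0^2$ automatically.
\item Its state process coincides with $X^\delta$ up to $\tau_M$, so it stays inward-pointing. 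That gives the Gaussian moment bounds uniform in $M$ (your reflected-Brownian-motion idea), hence $L^p$ convergence as $M\to\infty$.
\item Finally it restarts at $0$, at an $L^p$ cost of $\sqrt\delta\,\|N(0,1)\|_p$, and mimics the control by a Markovian one as in Lemma~\ref{lem.equivstrong}. This preserves both boundedness and the constraint.
\end{itemize}
Note that simply clipping the original $\alpha$ along $X$ would also preserve the constraint. However, it only directly yields $\bd_2$-convergence, since $\int_0^T|\alpha|\,dt$ is only known to lie in $L^2$. Keeping the state equal to $X^\delta$ until $\tau_M$ is what retains the inward property and the higher moments.
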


We are now ready to complete the proof of the upper bound \eqref{pf:upperbound-final} of Theorem \ref{th:intro}.
    Fix any $p > 4$, and then fix $\eps > 0$. Let $(m,\alpha)$ and $(m^{(k)}, \alpha^{(k)})$ be as in the statement of Lemma \ref{lem.Wpapproximation}, with $T = 1 - \eps$. Because each $\alpha^{(k)}$ is bounded, we obtain from \eqref{fact4.vpplus} that
    \begin{align*}
        \cV^+_p(0,\delta_0) \leq \cV^+_p(1- \eps, m_{1-\eps}^{(k)}).
    \end{align*}
    By \eqref{fact2.vpplus},
    \begin{align*}
        \cV^+_p(0,\delta_0) \leq \limsup_{k \to \infty} \cV^+_p(1- \eps, m_{1-\eps}^{(k)}) \leq \cV^+_p(1- \eps, m_{1-\eps}).
    \end{align*}
    Then applying \eqref{fact3.vpplus}, we get
    \begin{align*}
        \cV^+_p(0,\delta_0) &\leq [m_{1-\eps}]_{\infty} + \kappa(\eps) = \cV^{\infty}(\eps, \delta_0) + \kappa(\eps)  \leq \cV^{\infty}(0,\delta_0) + \kappa(\eps).
    \end{align*}
    with the equality following from the optimality of $(m,\alpha)$ for \eqref{optimization.truncated}, and the inequality $\cV^{\infty}(\eps, \delta_0) \le \cV^{\infty}(0, \delta_0)$ coming from Lemma \ref{lem.nondecreasing}. Sending $\eps \to 0$ completes the proof. This completes the proof of \eqref{pf:upperbound-final}, pending the proof of Lemma \ref{lem.Wpapproximation}, given at the end of the next section.

\section{Duality and Properties of Optimizers}
\label{sec:duality-and-optimizers}

\label{sec.duality}

In this section, we analyze the structure of the limiting problem $V^\infty$ and ultimately prove Lemma \ref{lem.Wpapproximation}, which asserts that there exists an optimizer for $ \cV^{\infty}(0,\delta_0)$, as defined in \ref{eq:Vinf-originaldef}, for which the corresponding measure $m_t$ can be approximated in $p$-Wasserstein distance for arbitrary $p \ge 1$. This is not as easy as one might expect; because the controls are constrained in $L^2$ norm, a naive truncation argument does not work for $p > 2$.
To achieve this we will develop some duality results which let us deduce symmetry properties of optimizers, most notably Proposition \ref{lem.optproperties} which shows that there exists an optimizer for $V^\infty$ satisfying certain symmetry properties which imply higher-order integrability.

We remark that the results in this section hold more generally if we replace the time interval $[0,1]$ by $[0,T]$ for an arbitrary finite horizon $T>0$. This will be needed in the proof of the upper bound in Section~\ref{sec.upperbounds}. To simplify notation, we will work only with $T=1$.

Fix a complete probability space $(\Omega, \cF, \mathbb{P})$ supporting a Brownian motion $B$ starting at zero and let $\mathbb{F}$ be the (completed) filtration generated by $B$. The set of controls $\mathbb{A}$ is defined as the set of processes $(\alpha(t))_{t\in[0,1]}$ that are $\mathbb{F}$-progressively measurable and satisfy
\begin{equation*}
    \|\alpha(t)\|_2 \leq c_0 \qquad\text{for a.e. } t\in[0,1].
\end{equation*}
Here, $c_0 \coloneqq \sqrt{2/\pi}$, though the results in this section hold for any other positive constant. By Lemma~\ref{lem.equivstrong}, we have
\begin{equation*}
    V^\infty = \inf_{\alpha\in \mathbb{A}} \|X^\alpha(1)\|_{\infty},\qquad X^\alpha(t) \coloneqq \int_0^t \alpha(s)\,ds + B(t),\quad t\in[0,1].
\end{equation*}
The constraint set $\mathbb{A}$ is convex, and the objective is a convex functional of $\alpha$.
This can be viewed as a convex optimization problem in a suitable Banach space.
As a consequence of Theorem~\ref{thm:duality-regularized} below, derived from the Fenchel--Rockafellar duality theorem, we will see that an optimal $\alpha$ exists, and the following strong duality formula holds:
\begin{align*}
    V^\infty =
    \sup \bigg\{
        \E[YB(1)] - c_0 \int_0^1 \big\|\E[Y\,|\,\F(t)]\big\|_2\,dt
        :
        \begin{array}{c}
            Y:\Omega\to \R \mathrm{\ s.t.\ } \|Y\|_1 \leq 1,\\
            \,
            \E|Y|\sqrt{\log(1+|Y|)}<\infty
        \end{array}
        \bigg\}.
\end{align*}
It will become apparent that $V^\infty$ is the Fenchel dual of the optimization problem on the right-hand side, which we call the ``primal'' problem.
A natural strategy would be to try to use this strong duality formula to prove structural properties of the optimal $\alpha$.
However, it turns out that there does not exist $Y$ attaining the supremum on the right-hand side, essentially because it would have to attain the supremum of $\E[Y X^\alpha(1)]$ over the unit ball in $L^1$, and $X^\alpha(1)$ is nonatomic. This makes it difficult to extract any useful structural properties on the optimal $\alpha$ for $V^\infty$. (In a sense, an optimal $Y$ should exist in a sufficiently ``large'' space like the dual of $L^\infty$, but we did not find this viewpoint to be sufficiently tractable.) As a workaround, we introduce a family of regularized control problems that approximate $V^\infty$ and for which the corresponding primal problems admit a unique optimizer. From strong duality we will then deduce structural properties of primal ($Y$) and dual ($\alpha$) optimizers, and the relevant properties of the optimal $\alpha$ can be passed to the limit and deduced for $V^\infty$.

\subsection{Regularized problem} \label{sec.duality.regularized}
Let $p\in[2,\infty]$ and $\delta\in[0,\infty)$, we consider the regularized problem
\begin{equation}\label{eq:regularized-control-problem}
    V_{p,\delta}^{\infty} \coloneqq \inf_{\alpha\in\mathbb{A}} \Big\{ \|X^\alpha(1)\|_{p} + \frac{\delta}{2}\E\int_0^1\alpha(t)^2\,dt \Big\}.
\end{equation}
Note in particular that the values $p=\infty$ and $\delta=0$ are allowed, so that $V^{\infty}_{\infty,0} =V^\infty$. For large $p<\infty$ and small $\delta>0$, the problem $V_{p,\delta}^{\infty}$ regularizes $V^\infty$ by approximating the $L^\infty$ norm with the $L^p$ norm and adding a quadratic cost on the control. Note that $V_{p,\delta}^{\infty}$ is finite since it is bounded from above by $V^\infty + \delta c_0^2 / 2<\infty$. We will see that $V^\infty$ is finite in Section~\ref{sec.upperbounds}.

To state the general duality result, we define a function $\varphi_\delta : \R_+ \to \R$ by
\begin{align*}
    \varphi_\delta(y)
    \coloneqq
    \sup_{|a| \le c_0 }\Big(ay - \frac{\delta}{2}a^2\Big).
\end{align*}
If $\delta>0$, this supremum is attained uniquely at $a = \min(c_0, x/\delta)$, and we have the explicit formula
\begin{equation}\label{eq:varphi-delta-explicit}
    \varphi_\delta(y) =
    \begin{cases}
        c_0  y - \frac{\delta c_0^2}{2} &\text{if }y > \delta c_0, \\
        \frac{1}{2\delta}y^2 &\text{if } y \leq \delta c_0.
    \end{cases}
\end{equation}
If $\delta=0$, then $\varphi_0(y) = c_0  y$, and the supremum is attained at $a=c_0$ (uniquely when $y>0$).

\begin{theorem}\label{thm:duality-regularized}
Let $p\in[2,\infty]$ and $\delta\in[0,\infty)$. Let $q=p/(p-1)$ denote the conjugate exponent, with $q=1$ when $p=\infty$. We have
\begin{align*}
    V_{p,\delta}^{\infty} =
    \sup
    \bigg\{
        \E[YB(1)] - \int_0^1\varphi_\delta\big(\big\|\E[Y\,|\,\F(t)]\big\|_{2}\big)\,dt
        :
        \begin{array}{c}
            Y:\Omega\to \R \mathrm{\ s.t.\ } \|Y\|_{q} \leq 1,\\
            \,
            \E|Y|\sqrt{\log(1+|Y|)}<\infty
        \end{array}
    \bigg\}
    \eqqcolon P_{q,\delta}.
\end{align*}
Moreover, there exists $\alpha\in\mathbb{A}$ such that the minimization problem $V_{p,\delta}^{\infty}$ is attained.
\end{theorem}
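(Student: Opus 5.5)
We will prove the duality by Fenchel--Rockafellar, writing $V^\infty_{p,\delta}$ as the inf of $F(\Lambda\alpha)+G(\alpha)$. The control space is $E=L^2([0,1]\times\Omega)$ restricted to progressive processes. Let
\[
G(\alpha)=\frac{\delta}{2}\E\int_0^1\alpha(t)^2\,dt+\iota_{\mathbb A}(\alpha),
\]
where $\iota_{\mathbb A}$ is the convex indicator of $\mathbb A$. The state map is the linear operator $\Lambda\alpha=\int_0^1\alpha(t)\,dt$. The terminal cost is $F(\xi)=\|\xi+B(1)\|_p$, which is convex and finite on a suitable space $L$ of random variables. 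For $p<\infty$ we take $L=L^p$. For $p=\infty$ the natural dual $(L^\infty)^*$ is too large. So we instead work in the Orlicz space $L=L^{\Phi}$ with $\Phi(x)=e^{x^2}-1$. Gaussians lie in $L^\Phi$, and so does $\Lambda\alpha$. The dual of (the closure of bounded functions in) $L^\Phi$ is the Orlicz space of $\Phi^*(y)\asymp y\sqrt{\log(1+y)}$. This explains the integrability condition $\E|Y|\sqrt{\log(1+|Y|)}<\infty$. In this space $F$ is still $\|\xi+B(1)\|_p$, which may be infinite for $p=\infty$ but is convex and lower semicontinuous.

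The key steps are as follows.

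\textbf{Step 1: conjugates.} We compute $F^*(Y)=-\E[YB(1)]+\iota_{\{\|Y\|_q\le1\}}$. The adjoint is $\Lambda^*Y(t)=\E[Y\,|\,\F(t)]$ as a progressive process, since $\E[Y\int\alpha]=\E\int\alpha(t)\E[Y|\F(t)]dt$.

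Next we compute $G^*$. For a progressive $\beta$,
\[
G^*(\beta)=\sup_{\alpha\in\mathbb A}\E\int_0^1\Big(\alpha\beta-\frac{\delta}{2}\alpha^2\Big)dt.
\]
This supremum decouples in $t$. For each $t$ the problem is to maximise $\E[\alpha\beta]-\frac\delta2\E\alpha^2$ subject to $\E\alpha^2\le c_0^2$. The optimiser is $\alpha=a\,\beta/\|\beta(t)\|_2$ with the scalar $a$ optimised. Hence the pointwise value is $\varphi_\delta(\|\beta(t)\|_2)$, and $G^*(\beta)=\int_0^1\varphi_\delta(\|\beta(t)\|_2)\,dt$.

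\textbf{Step 2: weak duality.} The inequality $V\ge P$ follows directly from Young's inequality and the two conjugate formulas.

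\textbf{Step 3: strong duality and attainment.} We apply Fenchel--Rockafellar in the form
\[
\inf_\alpha\{F(\Lambda\alpha)+G(\alpha)\}=\sup_Y\{-F^*(Y)-G^*(-\Lambda^*Y)\}.
\]
Note that $\varphi_\delta$ depends only on the norm, so the sign of $\Lambda^*Y$ is harmless. This version requires a constraint qualification: some $\alpha_0\in\operatorname{dom}G$ such that $F$ is continuous at $\Lambda\alpha_0$. For $p<\infty$ we take $\alpha_0=0$, since $F$ is continuous everywhere on $L^p$.

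For $p=\infty$ this is the main obstacle, because $\|\cdot\|_\infty$ is nowhere finite-and-continuous on $L^\Phi$. I would try the reverse roles: treat $Y\mapsto$ (primal $P$) as the problem and show the value function $\xi\mapsto\inf_\alpha\{\|\Lambda\alpha+B(1)+\xi\|_\infty+\dots\}$ is convex and lower semicontinuous. Equivalently, use that $G$ has bounded (hence weakly compact) domain in $L^2$, and apply a minimax theorem (Sion) to $\E[Y(\Lambda\alpha+B(1))]-\frac\delta2\|\alpha\|^2$ over $\alpha\in\mathbb A$ and $Y$ in the unit $L^q$ ball intersected with the Orlicz dual. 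Compactness is then on the $\alpha$ side. The representation $\|\xi\|_p=\sup_{\|Y\|_q\le1}\E[Y\xi]$ holds over the restricted $Y$-class by density of bounded $Y$. Weak lower semicontinuity in $\alpha$ of each affine-in-$\alpha$ term justifies exchanging inf and sup.

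Attainment of the minimum follows from the same compactness. $\mathbb A$ is weakly compact in $L^2$, the map $\alpha\mapsto\Lambda\alpha$ is weakly continuous, $\|\cdot\|_p$ is weakly lower semicontinuous (as a supremum of linear functionals), and $\frac\delta2\|\alpha\|^2$ is weakly lower semicontinuous. Hence any minimising sequence has a weak limit point which is a minimiser. The delicate points are this minimax verification for $p=\infty$ and checking that $\E[YB(1)]$ is well defined under the $\sqrt{\log}$ condition.
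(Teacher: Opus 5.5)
Your Step 3 has a genuine gap, and it is not confined to $p=\infty$. Fenchel--Rockafellar in the form you use needs $\Lambda$ to be a bounded operator from the control space into a space on which $F$ is continuous. For $\alpha\in\mathbb{A}$, however, $\Lambda\alpha=\int_0^1\alpha(t)\,dt$ is in general only in $L^2$. Take $\alpha(t)=\xi\mathbf{1}_{\{t>1/2\}}$ with $\xi$ any $\F(1/2)$-measurable variable satisfying $\|\xi\|_2\le c_0$; then $\Lambda\alpha=\xi/2$.

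So $\Lambda$ does not map into $L^p$ for $p>2$, and your assertion that $\Lambda\alpha$ lies in the Orlicz space $L^\Phi$ is false. If you work on $L^2$ instead, $\xi\mapsto\|\xi+B(1)\|_p$ is finite only on a set with empty interior, so it is continuous nowhere. The qualification therefore fails for every $p>2$; your argument is fine only for $p=2$.

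The missing idea, which is what the paper does, is to run Fenchel--Rockafellar in the opposite direction. Minimize over $Y\in L^2(\Omega)$ the function $f(Y)+k(AY)$, where:
\begin{itemize}
\item $f(Y)=-\E[YB(1)]$ on $\{\|Y\|_q\le 1\}$ and $+\infty$ elsewhere;
\item $AY=(\E[Y\,|\,\F(t)])_{t}\in L^{1,2}_{\FF}$;
\item $k(\beta)=\int_0^1\varphi_\delta(\|\beta(t)\|_2)\,dt$, the infimal convolution of $c_0\|\cdot\|_{1,2}$ and $\frac{1}{2\delta}\E\int_0^1\beta(t)^2\,dt$ (just $c_0\|\cdot\|_{1,2}$ when $\delta=0$).
\end{itemize}
This $k$ is $c_0$-Lipschitz on all of $L^{1,2}_{\FF}$, so the qualification is automatic for every $p\in[2,\infty]$. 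The dual variable lives in $(L^{1,2}_{\FF})^*=L^{\infty,2}_{\FF}$, where $\mathbb{A}$ is exactly a ball. One has $f^*(W)=\|W+B(1)\|_p$ on $L^2$ and $A^*\alpha=\int_0^1\alpha\,dt$. The theorem then gives $V^\infty_{p,\delta}=\sup\{\E[YB(1)]-\int_0^1\varphi_\delta(\|\E[Y\,|\,\F(t)]\|_2)\,dt : Y\in L^2,\ \|Y\|_q\le 1\}$, with the infimum over $\alpha$ attained.

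Your Sion fallback can also be made to work for all $p$, with two corrections. First, $Y$ must range over $L^2\cap\{\|Y\|_q\le1\}$: for such $Y$, $\alpha\mapsto\E[Y\Lambda\alpha]$ is a bounded linear functional on progressive $L^2([0,1]\times\Omega)$, but for $Y$ in the Orlicz class and not in $L^2$ it is not. Second, the penalty is $+\frac{\delta}{2}\E\int_0^1\alpha(t)^2\,dt$, not $-$.

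Either route gives only the supremum over $Y\in L^2$, which is at most $P_{q,\delta}$ since $q\le2$. Closing the argument requires weak duality for $Y\notin L^2$, and your Step 2 skips the one nontrivial point there. The adjoint identity $\E[Y\int_0^1\alpha\,dt]=\int_0^1\E[\E[Y\,|\,\F(t)]\alpha(t)]\,dt$ does not follow from Fubini, since $Y\alpha(t)$ need not be jointly integrable.

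The paper proves it as follows. Replace $Y$ with $Y(s)=\E[Y\,|\,\F(s)]\in L^2$. Bound the tail $\int_s^1\E[Y(s)\alpha(t)]\,dt$ by $c_0\int_s^1\|Y(t)\|_2\,dt\to0$. Then let $s\uparrow1$ using uniform integrability of $Y(s)\int_0^1\alpha\,dt$. That uniform integrability comes from $\int_0^1\alpha\,dt\in L^p$ when $\|X^\alpha(1)\|_p<\infty$. For $p=\infty$ it comes from subgaussianity of $\int_0^1\alpha\,dt=X^\alpha(1)-B(1)$ together with $\E|Y|\sqrt{\log(1+|Y|)}<\infty$.

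Your attainment argument via weak compactness of $\mathbb{A}$ and weak lower semicontinuity is fine.
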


Note here that the constraint $\E|Y|\sqrt{\log(1+|Y|)}<\infty$ in the definition of $P_{q,\delta}$ is redundant when $q>1$ (i.e., $p<\infty$). It is needed for $q=1$ to ensure that $YB(1)$ is integrable and to prove the corresponding weak duality.

The proof of Theorem~\ref{thm:duality-regularized} is essentially an application of the Fenchel--Rockafellar duality, with $V_{p,\delta}^{\infty}$ arising as the dual problem of $P_{q,\delta}$. We thus refer to $P_{q,\delta}$ as the primal problem of $V_{p,\delta}^{\infty}$. We first introduce a suitable choice of function spaces. Let $L_\mathbb{F}^{\infty,2}$ denote the Banach space of progressively measurable processes $\alpha=(\alpha(t))_{t\in[0,1]}$ equipped with the norm
\begin{equation*}
	\|\alpha\|_{\infty,2}\coloneqq \esssup_{t\in[0,1]} \|\alpha(t)\|_{2}.
\end{equation*}
The set of controls $\mathbb{A}$ is then exactly the closed ball in $L_\mathbb{F}^{\infty,2}$ of radius $c_0$. We also define the space $L_\mathbb{F}^{1,2}$ of progressively measurable processes $\beta:[0,1]\times\Omega\to\R$, with norm given by
\begin{equation*}
	\|\beta\|_{1,2} \coloneqq \int_0^1 \|\beta(t)\|_{2}\,dt.
\end{equation*}
The space $L_\mathbb{F}^{\infty,2}$ can be identified with the topological dual of $L_\mathbb{F}^{1,2}$, the duality pairing being
\begin{equation*}
	L_\mathbb{F}^{\infty,2}\times L_\mathbb{F}^{1,2} \ni (\alpha,\beta) \mapsto \int_0^1 \E[\alpha(t) \beta(t)]\,dt.
\end{equation*}
See \cite[Section 2]{lu-2012} for a proof of this duality result.

Before beginning the proof of Theorem \ref{thm:duality-regularized}, we recall that given a Banach space $\mathcal{X}$ and a convex function $\phi : \mathcal{X} \to \R \cup \{\infty\}$, the Fenchel conjugate of $\phi$ is defined by
\begin{align*}
    \phi^* : \mathcal{X}^* \to \R, \quad \phi^*(x^*) = \sup_{x \in \mathcal{X}} \Big\{ \langle x, x^* \rangle - \phi(x)  \Big\},
\end{align*}
where $\mathcal{X}^*$ denotes the topological dual of $\mathcal{X}$ and $\langle x, x^* \rangle = x^*(x)$ denotes the duality pairing.\color{black}

\begin{proof}[Proof of Theorem~\ref{thm:duality-regularized}]
Let us prove the duality formula for the case $\delta>0$ and explain at the end how to modify the proof for $\delta=0$. We define the following functions:
\begin{alignat*}{6}
    f :\;& L^2(\Omega) \;&\longrightarrow\;& \R \cup \{\infty\}, \qquad
    & f(Z) \;&\coloneqq\;
    \begin{cases}
    -\E[Z B(1)] & \text{if } \|Z\|_{q} \le 1,\\
    \infty & \text{otherwise},
    \end{cases}\\
    g :\;& L_{\mathbb F}^{1,2} \;&\longrightarrow\;& \R, \qquad
    & g(\beta) \;&\coloneqq\;
    c_0 \,\|\beta\|_{1,2},\\
    h :\;& L_{\mathbb F}^{1,2} \;&\longrightarrow\;& \R, \qquad
    & h(\beta) \;&\coloneqq\;
    \frac{1}{2\delta}\E\int_0^1 \beta(t)^2\,dt,\\
    A:\;& L^2(\Omega) \;&\longrightarrow\;& L_\mathbb{F}^{1,2}, \qquad
    & (AZ)(t) \;&\coloneqq\; \E[Z\,|\,\mathcal{F}(t)],\quad t\geq 0.
\end{alignat*}
Note that we can ensure the progressive measurability of $AZ$ by choosing an appropriate modification of the martingale $\E[Z\,|\,\mathcal{F}(t)]$.

The functions $f$, $g$, and $h$ are convex, and $A$ is a bounded linear operator. It is straightforward to compute the Fenchel conjugates of $f$, $g$, and $h$, as well as the adjoint of $A$:
\begin{alignat*}{6}
    f^*:\;& L^2(\Omega) \;&\longrightarrow\;& \R\cup\{\infty\}, \qquad
    & f^*(Y) \;&=\; \|Y + B(1)\|_p,\\
    g^*:\;& L_{\mathbb F}^{\infty,2} \;&\longrightarrow\;& \R\cup\{\infty\}, \qquad
    & g^*(\alpha) \;&=\;
        \begin{cases}
        0 &\text{if } \alpha\in \mathbb{A},\\
        \infty &\text{otherwise},
        \end{cases}\\
    h^*:\;& L_{\mathbb F}^{\infty,2} \;&\longrightarrow\;& \R, \qquad
    & h^*(\alpha) \;&=\; \frac{\delta}{2}\E\int_0^1 \alpha(t)^2\,dt,\\
    A^*:\;& L_{\mathbb F}^{\infty,2} \;&\longrightarrow\;& L^2(\Omega), \qquad
    & A^*\alpha \;&=\; \int_0^1 \alpha(t)\,dt.
\end{alignat*}
Noting that $\|X^\alpha(1)\|_{p} = f^*(A^*\alpha)$ for $\alpha\in\mathbb{A}$, we can now write the problem $V_{p,\delta}^{\infty}$ as
\begin{equation}\label{eq:V-p-delta-fenchel-form}
    V_{p,\delta}^{\infty} = -\sup_{\alpha \in L^{\infty,2}_{\FF}}\Big(-f^*(A^*\alpha) - g^*(-\alpha) - h^*(-\alpha)\Big).
\end{equation}
Note also that $g^*+h^*=(g\square h)^*$, where $g \square h$ is the infimal convolution, which can be computed explicitly:
\begin{align*}
    g \square h(\beta)
    &\coloneqq \inf_{\alpha \in L^{1,2}_{\FF}}\big(g(\alpha) + h(\beta-\alpha) \big) \\
	&= \inf_{\alpha \in L^{1,2}_{\FF}} \int_0^1 \bigg(c_0\|\alpha(t)\|_{2} + \frac{1}{2\delta}\|\beta(t)-\alpha(t)\|_{2}^2\bigg)\,dt \\
	&= \int_0^1\inf_{\eta \in \R}  \bigg(c_0|\eta|\|\beta(t)\|_{2} + \frac{1}{2\delta}(\eta-1)^2\|\beta(t)\|_{2}^2\bigg)\,dt \\
	&= \int_0^1  \varphi_\delta\big( \|\beta(t)\|_{2}\big)\,dt.
\end{align*}
Indeed, the third line follows because it is optimal to choose $\beta(t)=\eta(t)\alpha(t)$ for some non-random $\eta(t) \in \R$, and the fourth follows from an explicit calculation. Since $g\square h: L_{\mathbb{F}}^{1,2}\to\R$ is continuous, we may apply the Fenchel--Rockafellar theorem (see, e.g., condition (4.3.2) and Theorem~4.4.3 in \cite{borwein-2005-variational}), yielding
\begin{align}\label{eq:duality-proof:prelim-duality}
    V_{p,\delta}^{\infty}
    &= -\inf_{Y \in L^2(\Omega)} \big(f(Y) + (g\square h)(AY)\big) \nonumber\\
	&= \sup \bigg\{\E[YB(1)] - \int_0^1\varphi_\delta(\|\E[Y\,|\,\F(t)]\|_{2}) \, dt  : \|Y\|_{q} \le 1, \, Y \in L^2(\Omega) \bigg\}.
\end{align}

The claimed duality formula only differs in the constraint; the right-hand side of \eqref{eq:duality-proof:prelim-duality} is smaller than $P_{q,\delta}$ because $q \in [1,2]$. To complete the proof, we will argue through weak duality that $V^\infty_{p,\delta} \ge P_{p,\delta}$.
Take an arbitrary $\alpha\in\mathbb{A}$, and let $Y\in L^q(\Omega)$ with $\|Y\|_{q}\leq 1$ and $\E|Y|\sqrt{\log(1+|Y|)}<\infty$ (the latter condition being redundant when $q>1$). We now show that the objective for $\alpha$ is at least as large as the objective for $Y$. We may assume that these quantities are finite and so $\|X^\alpha(1)\|_p < \infty$ and $t\mapsto \|\E[Y\,|\,\F(t)]\|_2$ is integrable on $[0,1]$.

We start by noting that
\begin{equation}\label{eq:duality-proof:weak-duality:1}
    \|X^\alpha(1)\|_{p}
        \ge \E[Y X^\alpha(1)]
        = \E[Y B(1)] + \E\bigg[\int_0^1 Y \alpha(t)\,dt\bigg],
\end{equation}
where we used that $YX^\alpha(1)$ and $YB(1)$ are integrable to split the expectation. For the second term, we would now like to swap the expectation and integral, and then condition on $\mathcal{F}(t)$ to replace $Y$ with $Y(t)\coloneqq \E[Y\,|\,\F(t)]$, and thus obtain
\begin{equation}\label{eq:claim:fubini-issue}
    \E\bigg[Y\int_0^1\alpha(t)\,dt\bigg] = \int_0^1 \E[Y(t) \alpha(t)]\,dt.
\end{equation}
However, the use of Fubini's theorem is not justified a priori, since $Y$ may not be in $L^2$, and so $(t,\omega)\mapsto Y(\omega)\alpha(t,\omega)$ may not be jointly integrable. We will resolve this detail at the end of the proof, but for now assume that \eqref{eq:claim:fubini-issue} holds, and we continue the argument. We have
\begin{align}
    \|X^\alpha(1)\|_{p} + \frac{\delta}{2} \int_0^1 \E\,\alpha(t)^2\,dt
    &\geq \E[Y B(1)] + \int_0^1 \E\Big[Y(t)\alpha(t) + \frac{\delta}{2}\alpha(t)^2\Big]\,dt \nonumber\\
    &\geq \E[Y B(1)] - \int_0^1 \Big(\|Y(t)\|_2 \|\alpha(t)\|_2 - \frac{\delta}{2}\|\alpha(t)\|_2^2\Big)\,dt
    \label{eq:duality-proof:weak-duality:2}\\
    &\geq \E[Y B(1)] - \int_0^1 \varphi_\delta\big(\|Y(t)\|_{2}\big)\,dt,
    \label{eq:duality-proof:weak-duality:3}
\end{align}
by the definition of $\varphi_\delta$ and since $\|\alpha(t)\|_2\leq c_0$. In particular, this implies $V_{p,\delta}^{\infty}\geq P_{q,\delta}$, and combined with \eqref{eq:duality-proof:prelim-duality}, we obtain $V_{p,\delta}^{\infty}=P_{q,\delta}$.

The existence of a dual optimizer, i.e., an optimal control for $V_{p,\delta}^{\infty}$, is ensured by the Fenchel--Rockafellar theorem. Alternatively, this may be proved directly from the compactness of $\mathcal D$ and lower semicontinuity with respect to the weak-$*$ topology on $L_{\mathbb F}^{\infty,2}$.

We now justify \eqref{eq:claim:fubini-issue}. Recall that $t\mapsto \|Y(t)\|_2$ is integrable by assumption. Fix $s<1$, and use Fubini's theorem and $Y(s)\in L^2$ to write
\begin{equation}\label{eq:claim:fubini-issue:prelim}
    \E\bigg[Y(s)\int_0^1\alpha(t)\,dt\bigg] = \int_0^1 \E[Y(s)\alpha(t)]\,dt = \int_0^s \E[Y(t)\alpha(t)]\,dt + \int_s^1 \E[Y(s)\alpha(t)]\,dt.
\end{equation}
We now take $s\to 1$. The first term on the right-hand side converges to the right-hand side of \eqref{eq:claim:fubini-issue} since $Y(t,\omega)\alpha(t,\omega)$ is jointly integrable. For the second term, we estimate
\begin{equation*}
    \int_s^1 \E[Y(s)\alpha(t)]\,dt \leq c_0  \int_s^1\|Y(s)\|_{2}\,dt \leq c_0  \int_s^1\|Y(t)\|_{2}\,dt\longrightarrow 0,
\end{equation*}
using that $Y(t)$ is a martingale and the integrability of $t\mapsto \|Y(t)\|_2$. It remains to see that the left-hand side of \eqref{eq:claim:fubini-issue:prelim} converges to that of \eqref{eq:claim:fubini-issue}. Since $\mathbb{F}$ is the Brownian filtration, we have $Y(s)\to Y$ a.s., and so it is enough to argue that $Y(s)\int_0^1\alpha(t)\,dt$ is uniformly integrable in $s$. If $p<\infty$, this follows from $Y\in L^q$ and $\int_0^1\alpha(t)\,dt = X^\alpha(1) - B(1) \in L^p$. Similarly, if $p=\infty$, we use the condition $\E|Y|\sqrt{\log(1+|Y|)}<\infty$ and the fact that $\int_0^1\alpha(t)\,dt$ is subgaussian as the sum of $-B(1)$ and the bounded random variable $X^\alpha(1)$.

The theorem is proved in exactly the same way for the case $\delta=0$, except that we take $h=h^*\equiv 0$ and $\varphi_0(y)=c_0 y$.
\end{proof}

\subsection{Optimality conditions for the regularized problems}

Let us now turn our attention to the existence and characterization of optimizers of $V_{p,\delta}^{\infty}$ and $P_{q,\delta}$ for $p<\infty$.

\begin{proposition}\label{prop:primal-optimizers-finite-p}
    Let $p\in[2,\infty)$ and $\delta\in[0,\infty)$, and write $q=p/(p-1)$ for the conjugate exponent. The optimization problems $V_{p,\delta}^{\infty}$ and $P_{q,\delta}$ admit unique optimizers $\alpha$ and $Y$. Denoting $X=X^\alpha$ and $Y(t)=\E[Y\,|\,\mathcal{F}(t)]$, we have
    \begin{equation}
        0 < \|X(1)\|_p <\infty,\qquad \int_0^1 \|Y(t)\|_2\,dt <\infty,\qquad \|Y(t)\|_2 > 0\quad\text{for $t>0$},
        \label{eq:primal-optimizers-finite-p:Yt-normfinite-positive}
    \end{equation}
    and the following duality relations hold almost surely:
    \begin{equation}
        Y = \frac{X(1) |X(1)|^{p-2}}{\|X(1)\|_p^{p-1}}, \qquad \alpha(t) =-\lambda(t)Y(t)\quad\text{for a.e.\ $t\in(0,1)$},
        \label{eq:primal-optimizers-finite-p:duality-identities}
    \end{equation}
    where $\lambda(t) \in (0,\infty)$ is defined by
    \begin{equation}
        \lambda(t) \coloneqq \min\bigg(\frac{c_0}{\|Y(t)\|_2},\, \frac{1}{\delta}\bigg) ,\qquad t\in(0,1).
        \label{eq:primal-optimizers-finite-p:def-lambda}
    \end{equation}
    If $\delta=0$, we interpret the minimum as $\lambda(t)=c_0/\|Y(t)\|_2$. Lastly, the function $t\mapsto \lambda(t)$ is continuous, and if $\delta>0$, it is bounded from above.

\end{proposition}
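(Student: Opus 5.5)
We prove existence of both optimizers, then extract the identities from equality in the weak-duality chain \eqref{eq:duality-proof:weak-duality:1}--\eqref{eq:duality-proof:weak-duality:3}, and finally read off uniqueness from strict convexity.

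\emph{Existence.} Theorem~\ref{thm:duality-regularized} already gives an optimal $\alpha\in\mathbb{A}$ for $V_{p,\delta}^\infty$. For the primal problem $P_{q,\delta}$ with $q\in(1,2]$, we take a maximizing sequence $Y_n$ in the unit ball of $L^q$ and pass to a weakly convergent subsequence $Y_n\rightharpoonup Y$ in $L^q$. The term $Y\mapsto\E[YB(1)]$ is weakly continuous, since $B(1)\in L^p$. For each $t<1$, the map $Y\mapsto\E[Y\,|\,\F(t)]$ is weakly continuous as well, so lower semicontinuity of the $L^2$ norm gives
\[
\|Y(t)\|_2\le\liminf_n\|Y_n(t)\|_2 .
\]
Because $\varphi_\delta$ is nondecreasing and convex, Fatou's lemma then makes the penalty term lower semicontinuous, and the limit $Y$ is a maximizer.

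\emph{Optimality conditions.} Since $V_{p,\delta}^\infty=P_{q,\delta}$ and both optimizers exist, the optimal pair $(\alpha,Y)$ must give equality in \eqref{eq:duality-proof:weak-duality:1}, \eqref{eq:duality-proof:weak-duality:2} and \eqref{eq:duality-proof:weak-duality:3}; this also requires checking the Fubini step \eqref{eq:claim:fubini-issue}, which was justified there.
\begin{itemize}
\item Finiteness of $\int_0^1\|Y(t)\|_2\,dt$ follows because the primal value is finite and $\varphi_\delta$ grows linearly.
\item Equality in Hölder's inequality $\E[YX(1)]\le\|X(1)\|_p\|Y\|_q$, together with $\|Y\|_q\le1$, forces $\|Y\|_q=1$ and the stated formula $Y=X(1)|X(1)|^{p-2}/\|X(1)\|_p^{p-1}$. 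This needs $\|X(1)\|_p>0$, which holds because $X(1)$ contains the independent-direction Gaussian part: it cannot vanish almost surely, since otherwise $\int_0^1\alpha\,dt=-B(1)$ with $\|\alpha(t)\|_2\le c_0$, and one checks that the value would then exceed the one obtained with $\alpha\equiv0$. More simply, $V^\infty_{p,\delta}>0$ by duality: test with $Y=\epsilon B(1)$.
\item Equality in Cauchy--Schwarz at a.e.\ $t$ gives $\alpha(t)=-\lambda(t)Y(t)$ with $\lambda(t)\ge0$ deterministic.
\item Equality in the scalar maximization defining $\varphi_\delta$ gives $\|\alpha(t)\|_2=\min(c_0,\|Y(t)\|_2/\delta)$. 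Whenever $\|Y(t)\|_2>0$, this yields $\lambda(t)=\min(c_0/\|Y(t)\|_2,1/\delta)$.
\end{itemize}

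\emph{Positivity of $\|Y(t)\|_2$.} Since $Y(t)$ is a martingale, $t\mapsto\|Y(t)\|_2$ is nondecreasing. If it vanished on some interval $[0,t_1]$, we would have $\alpha=0$ there, and we would argue that perturbing $Y$ along $B(t_1)$ increases the primal objective. A cleaner route is as follows. $Y$ is a nonconstant function of $X(1)$ by the Hölder identity. If $Y(t)=0$ with $t>0$, then $\E[Y\,|\,\F(t)]=0$. Heuristically $X(1)$ has a smooth positive density conditional on $\F(t)$ because of the Brownian increment, but $\alpha$ may depend on the future, so we instead use a Girsanov argument on $[0,t]$. I expect this step to be the main obstacle.

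\emph{Uniqueness and continuity.} For uniqueness, $Y$ is unique by strict concavity: the penalty term is concave in $Y$ and the unit ball of $L^q$ is strictly convex. Any two maximizers $Y,Y'$ must then satisfy $\E[(Y-Y')B(1)]$ consistent with equality, and since the $\alpha$-identity determines $X(1)$ from $Y$, the maximizer $Y$ is unique. Uniqueness of $\alpha$ follows because any optimal $\alpha$ must equal $-\lambda(t)Y(t)$ with $Y$ the unique maximizer. Continuity of $\lambda$ follows from continuity of $t\mapsto\|Y(t)\|_2$, which holds by $L^2$ continuity of Brownian martingales for $t<1$ combined with positivity. When $\delta>0$ we have $\lambda\le1/\delta$, which gives the upper bound.
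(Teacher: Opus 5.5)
Your overall structure matches the paper's: weak compactness in $L^q$ for existence of $Y$, the identities read off from equality in the weak-duality chain, and uniqueness via the Hölder identity. However, the one step that carries real content, $\|Y(t)\|_2>0$ for all $t>0$, is left open, and neither route you sketch can close it.

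\textbf{The perturbation along $B(t_1)$ is first-order neutral.} Take $\delta>0$ and $Y(t_1)=0$, so that $\alpha=0$ on $[0,t_1]$, and let $J$ be the objective of $P_{q,\delta}$.
\begin{enumerate}
\item Since $\varphi_\delta'(0)=0$ and $\E[Y(t)B(t_1)]=\E[Y(t_1)B(t_1)]=0$ for $t>t_1$, you get $\frac{d}{d\epsilon}J(Y+\epsilon B(t_1))|_{\epsilon=0}=t_1$.
\item Likewise $\E[B(t_1)X(1)]=t_1$, so $\|Y+\epsilon B(t_1)\|_q=1+\epsilon t_1/\|X(1)\|_p+o(\epsilon)$.
\item Since $\frac{d}{dc}J(cY)|_{c=1}=\E[YX(1)]=\|X(1)\|_p$, renormalizing to the unit ball exactly cancels the gain.
\end{enumerate}
This is just the first-order optimality condition, which every optimizer satisfies, so no contradiction can come from it.

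\textbf{The conditional-density heuristic fails for a similar reason.} Having $\E[g(X(1))\,|\,\F(t)]=0$ with $g$ odd and increasing is exactly what happens when $X(1)$ is conditionally symmetric about $0$ given $\F(t)$. Smoothness of a conditional density therefore yields nothing.

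Also, for $\delta=0$ the relation $\E[Y(t)\alpha(t)]=-\varphi_0(\|Y(t)\|_2)$ is vacuous where $\|Y(t)\|_2=0$. So $\alpha(t)=0$ does not follow there, and positivity is needed even for uniqueness of $\alpha$.

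\textbf{The missing idea is reflection plus uniqueness of $Y$.} Let $t^*=\inf\{t>0:\|Y(t)\|_2>0\}$ and suppose $t^*>0$, so $Y(t^*)=0$ by continuity.
\begin{enumerate}
\item On canonical space, reflect paths after $t^*$: $\rho(\omega)(t)=2\omega(t^*)-\omega(t)$ for $t>t^*$. This preserves Wiener measure and the filtration.
\item Because $Y(t^*)=0$, you have $\E[(Y\circ\rho)B(1)]=\E[Y(2B(t^*)-B(1))]=-\E[YB(1)]$.
\item Hence $\widetilde Y=\tfrac12(Y-Y\circ\rho)$ has the same linear term, $\|\widetilde Y\|_q\le 1$, and $\|\E[\widetilde Y\,|\,\F(t)]\|_2\le\|Y(t)\|_2$. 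So $\widetilde Y$ is also optimal, and uniqueness of $Y$ forces $Y\circ\rho=-Y$.
\item It follows that $X(1)\circ\rho=-X(1)$ and $\alpha(t)\circ\rho=-\alpha(t)$ for $t>t^*$, while $X(t^*)\circ\rho=X(t^*)$.
\item Writing $X(1)=X(t^*)+\int_{t^*}^1\alpha\,dt+B(1)-B(t^*)$ then gives $X(t^*)=0$ almost surely.
\end{enumerate}
This contradicts $\|X(t^*)\|_p>0$. That positivity holds for every $t>0$ because the law of $X$ has finite relative entropy with respect to Wiener measure (Girsanov with $\|\alpha(t)\|_2\le c_0$).

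\textbf{Two smaller repairs.}
\begin{enumerate}
\item $V_{p,\delta}^\infty>0$ does not give $\|X(1)\|_p>0$ when $\delta>0$, because the value includes the quadratic cost. Use instead $\|X(1)\|_2\ge\|B(1)\|_2-\int_0^1\|\alpha(t)\|_2\,dt\ge 1-c_0>0$, or the entropy bound above, which you need at time $t^*$ anyway.
\item Uniqueness of $Y$ does not follow from strict concavity: the objective is concave but not strictly so. It follows because every optimal $Y$ must satisfy the Hölder identity with $X^\alpha(1)$ for one fixed optimal $\alpha$.
\end{enumerate}
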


Before we prove Proposition~\ref{prop:primal-optimizers-finite-p}, we note that the optimality conditions \eqref{eq:primal-optimizers-finite-p:duality-identities} and \eqref{eq:primal-optimizers-finite-p:def-lambda} can equivalently be written in terms of a forward-backward stochastic differential equation. We obtain a corresponding characterization of the (unique) optimizers $\alpha$ and $Y$ as a direct corollary.

\begin{corollary}\label{cor.duality.fbsde}
    Let $p\in[2,\infty)$ and $\delta\in[0,\infty)$, and write $q=p/(p-1)$ for the conjugate exponent. The unique optimizers $\alpha$ and $Y$ of the problems $V_{p,\delta}^{\infty}$ and $P_{q,\delta}$ are given by
    \begin{align*}
        \alpha(t) = - \lambda(t) Y(t),\quad t\in(0,1),\qquad Y=Y(1),
    \end{align*}
    where $(X(t),Y(t),Z(t))$, $0\leq t \leq 1$, is the unique strong solution of the forward-backward stochastic differential equation
    \begin{equation*}
        \left\{
            \begin{alignedat}{2}
                dX(t) &= -\min\bigg(\frac{c_0}{\|Y(t)\|_2},\, \frac{1}{\delta}\bigg) Y(t)\,dt + dB(t),\qquad&& X(0) = 0,\\
                dY(t) &= Z(t)\, dB(t),\qquad&& Y(1) = \frac{X(1) |X(1)|^{p-2}}{\|X(1)\|_p^{p-1}},
            \end{alignedat}
        \right.
    \end{equation*}
    and $\lambda:(0,1)\to[0,\infty)$ is defined as in \eqref{eq:primal-optimizers-finite-p:def-lambda}.
\end{corollary}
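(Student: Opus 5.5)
The plan is to derive Corollary~\ref{cor.duality.fbsde} directly from Proposition~\ref{prop:primal-optimizers-finite-p}, and then separately show that the FBSDE has only one solution.

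\emph{Existence.} Let $\alpha$ and $Y$ be the unique optimizers of $V_{p,\delta}^{\infty}$ and $P_{q,\delta}$, set $X=X^\alpha$ and $Y(t)=\E[Y\,|\,\F(t)]$. Since $\F$ is the Brownian filtration and $Y\in L^q$ with $q>1$, the martingale representation theorem gives a progressive $Z$ with $Y(t)=Y(0)+\int_0^t Z(s)\,dB(s)$. Thus $dY=Z\,dB$, and the terminal condition $Y(1)=Y=X(1)|X(1)|^{p-2}/\|X(1)\|_p^{p-1}$ is the first identity in \eqref{eq:primal-optimizers-finite-p:duality-identities}. The second identity, $\alpha(t)=-\lambda(t)Y(t)$ with $\lambda$ as in \eqref{eq:primal-optimizers-finite-p:def-lambda}, turns $dX=\alpha\,dt+dB$ into the forward equation. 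By \eqref{eq:primal-optimizers-finite-p:Yt-normfinite-positive}, $\lambda$ is well defined on $(0,1)$. So $(X,Y,Z)$ is a strong solution, and it produces the stated formulas for $\alpha$ and $Y$.

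\emph{Uniqueness.} This is the main step. Let $(X',Y',Z')$ be any strong solution, with $\|Y'(t)\|_2>0$ for $t>0$ so that the coefficient makes sense. I would set $\alpha'(t)=-\lambda'(t)Y'(t)$, where $\lambda'$ is defined from $Y'$ as in \eqref{eq:primal-optimizers-finite-p:def-lambda}. The aim is to show that $\alpha'$ and $Y'(1)$ satisfy the optimality conditions, so that uniqueness of the optimizers forces $(X',Y',Z')=(X,Y,Z)$. The tool is to check that equality holds throughout the weak-duality chain in the proof of Theorem~\ref{thm:duality-regularized}.
\begin{enumerate}
\item \emph{Admissibility.} $\|\alpha'(t)\|_2=\lambda'(t)\|Y'(t)\|_2\le c_0$, so $\alpha'\in\mathbb{A}$.
\item \emph{Terminal condition.} $Y'(1)$ has the given form, so $\|Y'(1)\|_q=1$ and $\E[Y'(1)X'(1)]=\|X'(1)\|_p$ by the equality case of H\"older. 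Hence \eqref{eq:duality-proof:weak-duality:1} is an equality.
\item \emph{Cauchy--Schwarz step.} $\alpha'(t)$ is a nonpositive multiple of $Y'(t)$, so \eqref{eq:duality-proof:weak-duality:2} is an equality.
\item \emph{Pointwise supremum.} The scalar $a=\lambda'(t)\|Y'(t)\|_2=\min(c_0,\|Y'(t)\|_2/\delta)$ attains the supremum defining $\varphi_\delta(\|Y'(t)\|_2)$. Hence \eqref{eq:duality-proof:weak-duality:3} is an equality.
\end{enumerate}
The Fubini justification \eqref{eq:claim:fubini-issue} goes through as in that proof, provided $t\mapsto\|Y'(t)\|_2$ is integrable and $X'(1)\in L^p$. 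With all three inequalities tight, the control cost of $\alpha'$ equals the primal value of $Y'(1)$. Strong duality from Theorem~\ref{thm:duality-regularized} then makes $\alpha'$ and $Y'(1)$ optimal, and uniqueness in Proposition~\ref{prop:primal-optimizers-finite-p} gives $\alpha'=\alpha$ and $Y'(1)=Y$. Consequently $X'=X$, $Y'(t)=Y(t)$, and $Z'=Z$ by uniqueness of the martingale representation.

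The main obstacle is these integrability conditions, $\int_0^1\|Y'(t)\|_2\,dt<\infty$ and $X'(1)\in L^p$. For $X'(1)$ they follow because $\alpha'\in\mathbb{A}$ gives $\int_0^1\alpha'\,dt\in L^2$, though not immediately in $L^p$. I would therefore include these conditions in the notion of solution, which is natural since the optimizer satisfies \eqref{eq:primal-optimizers-finite-p:Yt-normfinite-positive}. Alternatively, one can argue that $\E[Y'(1)X'(1)]$ is finite whenever the terminal condition is meaningful.
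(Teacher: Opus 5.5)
Your proposal is correct and follows the same route as the paper. Existence comes from the martingale representation of $Y(t)=\E[Y\,|\,\F(t)]$ together with the duality identities of Proposition~\ref{prop:primal-optimizers-finite-p}, and uniqueness from the fact that any solution yields a pair $(\alpha',Y'(1))$ of optimizers, which is then pinned down by the uniqueness of optimizers. The paper states that last step in one sentence; your check that each inequality in the weak-duality chain is tight is the detailed version of it, with the integrability conditions built into the notion of solution, and implicitly also the requirement that $Y'$ be a true martingale, so that $Y'(t)=\E[Y'(1)\,|\,\F(t)]$.
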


\begin{proof}[Proof of Corollary~\ref{cor.duality.fbsde}]
    Let $\alpha$ and $Y$ be the unique optimizers of $V_{p,\delta}^{\infty}$ and $P_{q,\delta}$, respectively. Write $X(t)=X^\alpha(t)$. Since $Y(t)=\E[Y\,|\,\F(t)]$ is a martingale in the Brownian filtration $\mathbb{F}$, it can be uniquely represented in the form $dY(t) = Z(t)\,dB(t)$ for some progressively measurable process $Z$. The duality relations \eqref{eq:primal-optimizers-finite-p:duality-identities} and \eqref{eq:primal-optimizers-finite-p:def-lambda} immediately imply that $(X(t),Y(t),Z(t))$, $0\leq t \leq 1$, solves the forward-backward system above.

    The uniqueness of the forward-backward system follows from the uniqueness of the optimizers $\alpha$ and $Y$. This is because any solution $(X(t),Y(t),Z(t))$, $0\leq t \leq 1$, to the forward-backward system defines optimizers of $V_{p,\delta}^{\infty}$ and $P_{q,\delta}$ by taking $\alpha(\cdot)$ to be the drift of $X(\cdot)$ and setting $Y\coloneqq Y(1)$.
\end{proof}

\begin{proof}[Proof of Proposition~\ref{prop:primal-optimizers-finite-p}]
    We know that $V_{p,\delta}^{\infty}$ admits an optimal control $\alpha\in\mathbb{A}$ from Theorem~\ref{thm:duality-regularized}. Since $p<\infty$, we can also argue directly that $P_{q,\delta}$ admits an optimizer $Y\in L^q(\Omega)$ with $\|Y\|_q\le 1$. Indeed, the constraint set is weakly compact in $L^q$, and we now show that the objective is upper semicontinuous with respect to the weak topology on $L^q$. Take $Y_n\in L^q$ such that $Y_n\to Y$ weakly. Then, for any $t\in(0,1)$ and $Z\in L^p$, we have
    \begin{equation*}
        \E\big[\E[Y_n\,|\,\F(t)] Z\big] = \E\big[Y_n \E[Z\,|\,\F(t)]\big] \longrightarrow \E\big[Y \E[Z\,|\,\F(t)]\big] = \E\big[\E[Y\,|\,\F(t)] Z\big],
    \end{equation*}
    which implies $\E[Y_n\,|\,\F(t)]\to \E[Y\,|\,\F(t)]$ weakly for every $t$. The $L^2$ norm $\|\cdot\|_2$ is weakly lower semicontinuous and $\phi_\delta$ is continuous, and so Fatou's lemma yields
    \begin{equation*}
        \E[YB(1)] - \int_0^1 \varphi_{\delta}( \| \E[Y \,|\, \cF(t)] \|_2 )\, dt \geq \limsup_{n\to\infty} \E[Y_n B(1)] - \int_0^1 \varphi_{\delta}( \| \E[Y_n \,|\, \cF(t)] \|_2 )\, dt.
    \end{equation*}
    It follows that the objective is weakly upper semicontinuous, and so $P_{q,\delta}$ is attained.

    Let $\alpha$ and $Y$ be optimizers for $V_{p,\delta}^{\infty}$ and $P_{q,\delta}$, respectively, and let us show why \eqref{eq:primal-optimizers-finite-p:Yt-normfinite-positive} and \eqref{eq:primal-optimizers-finite-p:duality-identities} hold, with $X=X^\alpha$ and $Y(t)=\E[Y\,|\,\F(t)]$. The uniqueness of $\alpha$ will follow from the existence of an optimal $Y$ with $\|Y(t)\|_2>0$, since the second identity of \eqref{eq:primal-optimizers-finite-p:duality-identities} uniquely specifies $\alpha$. Similarly, the uniqueness of $Y$ will follow from the existence of an optimal control $\alpha$ and the first identity of \eqref{eq:primal-optimizers-finite-p:duality-identities}.

    The fact that $\|X(1)\|_p$ is finite and $\|Y(t)\|_2$ is integrable in $t$ comes immediately from the finiteness of $V_{p,\delta}^{\infty}$ and $P_{q,\delta}$. By the uniform $L^2$ bound on the drift $\alpha$ and a standard entropy estimate \cite[Proposition 1]{lehec2013representation}, the law of $X$ on path space is absolutely continuous with respect to Wiener measure, which implies that $\|X(t)\|_p > 0$ for any $t>0$, and in particular $\|X(1)\|_p>0$.

    It remains to see that $\|Y(t)\|_2>0$ for $t>0$, and that the duality identities \eqref{eq:primal-optimizers-finite-p:duality-identities} hold. To this end, inspect the inequalities \eqref{eq:duality-proof:weak-duality:1}, \eqref{eq:duality-proof:weak-duality:2}, and \eqref{eq:duality-proof:weak-duality:3} in the proof of Theorem~\ref{thm:duality-regularized}, which we used to establish weak duality. Since $\alpha$ and $Y$ are optimal, these inequalities must collapse to equality, and this occurs precisely when
    \begin{equation*}
        \|X(1)\|_{p} = \E[YX(1)],\qquad \E\Big[Y(t)\alpha(t) + \frac{\delta}{2} \alpha(t)^2\Big] = - \varphi_\delta( \|Y(t)\|_{2})\quad \text{for a.e.\ } t.
    \end{equation*}
    The first identity is equivalent to the first identity in \eqref{eq:primal-optimizers-finite-p:duality-identities}. In particular, this specifies the optimizer $Y$ for $P_{q,\delta}$ uniquely. To see what the second identity implies, consider first the case $\delta>0$. Recalling the definition of $\varphi_\delta$ and \eqref{eq:varphi-delta-explicit}, we have for a.e.\ $t$
    \begin{equation}\label{eq:primal-optimizers-finite-p:proof:delta-positive}
        \alpha(t) \|Y(t)\|_2 = -Y(t)\|\alpha(t)\|_2\quad\text{a.s.},\qquad \|\alpha(t)\|_2 = \min\bigg(c_0,\, \frac{\|Y(t)\|_2}{\delta}\bigg).
    \end{equation}
    Similarly, if $\delta=0$, we have for a.e.\ $t$
    \begin{equation}\label{eq:primal-optimizers-finite-p:proof:delta-zero}
        \alpha(t) \|Y(t)\|_2 = -c_0 Y(t)\quad\text{a.s.}.
    \end{equation}
    In either case, whenever $\|Y(t)\|_2>0$, we obtain $\alpha(t)=-\lambda(t)Y(t)$, where $\lambda(t)$ is given by \eqref{eq:primal-optimizers-finite-p:def-lambda}. Now note that because $Y(\cdot)$ is a martingale, $\|Y(t)\|_2$ is nondecreasing in $t$, and so it only remains to show that $t^*=\inf\{t>0: \|Y(t)\|_2>0\}$ is equal to zero.

    The rest of the proof is devoted to proving $t^*=0$. Since $\mathbb{F}$ is the filtration generated by $B$, we may assume that $(\Omega,\mathcal{F},\mathbb{F},\mathbb{P})$ is the canonical setup. That is, $B(\cdot)$ is the canonical process on $\Omega=C[0,1]$, $\mathbb{F}=(\mathcal{F}(t))_{t\in[0,1]}$ denotes the (completed) filtration generated by $B(\cdot)$, and $\mathbb{P}$ denotes Wiener measure.

    Assume for the sake of contradiction that $t^*>0$. Since $Y(\cdot)$ is a martingale in the Brownian filtration $\mathbb{F}$, the norm $\|Y(t)\|_2$ is continuous in $t$, and we have $\|Y(t^*)\|_2 = 0$. We will show using symmetry arguments that this implies $X(t^*)=0$ a.s., which contradicts the fact that $\|X(t)\|_p>0$ for all $t>0$.

    Define a map $\rho:C[0,1]\to C[0,1]$ that takes $\omega$ to the reflected path $\rho(\omega)$ given by
    \begin{equation*}
        \rho(\omega)(t) =
        \begin{cases}
            \omega(t) &\text{if } t\leq t^*, \\
            2 \omega(t^*) - \omega(t) & \text{if } t > t^*.
        \end{cases}
    \end{equation*}
    It is easy to see that the process $(B\circ\rho)(\omega,t)=\rho(\omega)(t)$ is a Brownian motion under $\mathbb{P}$, and that it generates the same filtration $\mathbb{F}$. We first show that $Y\circ\rho = -Y$ almost surely. Let $\widetilde{Y}=\frac{1}{2}(Y-Y\circ\rho)$. Since $\rho_{\#} \bP = \bP$, $(B\circ\rho) (1) = 2B(t^*) - B(1)$, and $Y(t^*)=0$, we have
    \begin{equation*}
        \E[(Y\circ\rho)B(1)] = \E[Y(B\circ \rho)(1)] = \E[Y(2B(t^*) - B(1))] = - \E[YB(1)],
    \end{equation*}
    which implies $\E[\widetilde{Y} B(1)] = \E[Y B(1)]$. One can check that $Y(t)\circ\rho = \E[Y\circ \rho\,|\,\mathcal{F}(t)]$, and thus
    \begin{equation*}
        \big\|\E[\widetilde{Y}\,|\,\mathcal{F}(t)]\big\|_2 \leq \frac{1}{2} \|Y(t)\|_2 + \frac{1}{2} \big\|\E[Y\circ\rho\,|\,\mathcal{F}(t)]\big\|_2 = \frac{1}{2} \|Y(t)\|_2 + \frac{1}{2} \|Y(t)\circ\rho\|_2 = \|Y(t)\|_2.
    \end{equation*}
    Similarly, $\|\widetilde Y\|_q \leq \| Y\|_q = 1$. It follows that $\widetilde{Y}$ is admissible for $P_{q,\delta}$ and improves over $Y$ on the objective. Thus, $\widetilde{Y}$ is optimal. By the first identity of \eqref{eq:primal-optimizers-finite-p:duality-identities}, the optimizer $Y$ of $P_{q,\delta}$ is unique, which implies $Y=\widetilde{Y}$ and in turn $Y\circ\rho = -Y$. By the oddness of $x\mapsto x|x|^{p-2}$, this identity also gives $X(1)\circ\rho = -X(1)$. In addition, $Y(t)\circ\rho = \E[Y\circ \rho\,|\,\mathcal{F}(t)] = -Y(t)$, and for $t>t^*$,
    \begin{equation*}
        \alpha(t)\circ\rho = -\lambda(t) Y(t)\circ\rho = -\alpha(t).
    \end{equation*}
    Noting that $X(t^*)\circ\rho=X(t^*)$, we get
    \begin{equation*}
        -X(1) = X(1)\circ\rho = X(t^*) - \int_{t^*}^1 \alpha(t)\,dt - (B(1)-B(t^*)) = 2X(t^*) - X(1),
    \end{equation*}
    yielding $X(t^*) = 0$ a.s. We obtain the desired contradiction, and thus $t^*=0$.
\end{proof}

\subsection{Properties of regularized optimizers}
We next study the properties of optimizers for $p<\infty$ and $\delta>0$ in more detail.

\begin{lemma}\label{lemma:optimal-control-structure:finite-p}
    Let $p\in[2,\infty)$ and $\delta>0$. Let $\alpha\in\mathbb{A}$ be the unique optimizer for $V_{p,\delta}^{\infty}$. Then, there exists $\hat{\alpha} : [0,1]\times\R \to \R$ such that
    \begin{enumerate}
        \item $\alpha(t)=\hat{\alpha}(t,X^\alpha(t))$ for all $t\in[0,1]$ almost surely,
        \item $\hat{\alpha}$ points inwards, i.e., $x\hat{\alpha}(t,x) \le 0$ for all $(t,x)\in[0,1]\times\R$,
        \item $\hat{\alpha}$ is odd, i.e., $\alpha(t,x)=-\alpha(t,-x)$ for all $(t,x)\in[0,1]\times\R$.
    \end{enumerate}
\end{lemma}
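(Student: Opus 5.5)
We will work from the characterization of the optimizer in Proposition~\ref{prop:primal-optimizers-finite-p}. There, $\alpha(t) = -\lambda(t) Y(t)$ with $Y(t) = \E[Y\,|\,\F(t)]$ and $Y = X(1)|X(1)|^{p-2}/\|X(1)\|_p^{p-1}$. Since $\delta>0$, $\lambda$ is a continuous, bounded, deterministic function of $t$. The plan is to show that $Y(t)$ is a function of $X(t)$, namely $Y(t) = u(t,X(t))$ with $u(t,\cdot)$ odd and nondecreasing. Then $\hat\alpha(t,x) \coloneqq -\lambda(t)u(t,x)$ gives all three properties, since $u(t,\cdot)$ odd and nondecreasing forces $x\,u(t,x)\ge 0$.

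\textbf{Markov structure.} Define $\Lambda(t)=\int_0^t\lambda(s)\,ds$ and substitute the ansatz. Then $X$ satisfies $dX(t) = -\lambda(t) Y(t)\,dt + dB(t)$, and $Y(t)$ is a martingale with terminal value $g(X(1))$, where $g(x) = x|x|^{p-2}/\|X(1)\|_p^{p-1}$ is odd and increasing. This is a decoupled-in-law FBSDE with Lipschitz dependence on $(X,Y)$, because $\lambda$ is bounded; the terminal function is only locally Lipschitz, but of polynomial growth. I would construct a Markovian solution as follows. Consider the quasilinear PDE
\[
\partial_t u + \tfrac12 \partial_{xx}u - \lambda(t)\, u\, \partial_x u = 0, \qquad u(1,\cdot)=g.
\]
This is a viscous Burgers equation with time-dependent coefficient, so it is solvable via a Cole--Hopf type transform or by standard quasilinear theory. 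Set $\tilde X$ to be the solution of $d\tilde X = -\lambda u(t,\tilde X)\,dt + dB$ and $\tilde Y = u(t,\tilde X)$. By It\^o's formula, $(\tilde X,\tilde Y)$ solves the same system. The uniqueness in Corollary~\ref{cor.duality.fbsde} then identifies $(X,Y)=(\tilde X,\tilde Y)$. This yields item (1).

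An alternative that avoids the PDE would be a Markov projection argument. However, uniqueness via the FBSDE seems cleanest.

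\textbf{Oddness.} Oddness follows from the symmetry $B\mapsto -B$. Under this reflection the system is mapped to itself when $(X,Y)\mapsto(-X,-Y)$, because $g$ is odd and $\|X(1)\|_p$ is invariant. Uniqueness of the optimizer therefore gives $Y\circ(-\mathrm{id}) = -Y$. In the PDE picture, $-u(t,-x)$ solves the same equation with the same terminal data, so $u(t,-x) = -u(t,x)$ by uniqueness. This gives item (3).

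\textbf{Monotonicity (main obstacle).} We need that $u(t,\cdot)$ is nondecreasing, given that $g$ is. I expect this to be the hardest step. My first attempt is a comparison argument: differentiate the PDE in $x$ and set $w=\partial_x u$. This gives
\[
\partial_t w + \tfrac12 \partial_{xx} w - \lambda u\,\partial_x w - \lambda w^2 = 0,
\]
a linear parabolic equation in $w$ with potential $-\lambda w \le$ bounded locally, and with $w(1,\cdot)\ge0$. The minimum principle, or Feynman--Kac, then preserves $w\ge 0$. This requires growth and regularity justification, which I would handle by mollifying and truncating $g$ and passing to the limit.

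A probabilistic alternative is a coupling argument. Start two copies of the forward dynamics from $x\le x'$ at time $t$, driven by the same Brownian motion. Pathwise comparison for one-dimensional SDEs keeps them ordered provided the drift is Lipschitz, so $g(X^{x}(1))\le g(X^{x'}(1))$, and taking expectations gives $u(t,x)\le u(t,x')$. Combined with oddness, this gives item (2).
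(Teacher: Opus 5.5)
\textbf{Gap: the identification step.} Corollary~\ref{cor.duality.fbsde} does not give you $(X,Y)=(\tilde X,\tilde Y)$, because the system there is of McKean--Vlasov type. Its drift coefficient $\min(c_0/\|Y(t)\|_2,1/\delta)$ and the normalization $\|X(1)\|_p^{p-1}$ in the terminal condition are computed from the law of the solution itself. Its uniqueness proof rests on the fact that any solution automatically yields an admissible $\alpha\in\mathbb{A}$ and a $Y$ with $\|Y\|_q=1$ that attain equality in weak duality.

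Your pair $(\tilde X,\tilde Y)$ solves only the \emph{frozen} system, with $\lambda$ and $\|X(1)\|_p$ copied from the true optimizer. Nothing guarantees $\|\tilde Y(t)\|_2=\|Y(t)\|_2$ or $\|\tilde X(1)\|_p=\|X(1)\|_p$. So $-\lambda\tilde Y$ need not lie in $\mathbb{A}$, $\tilde Y(1)$ need not have unit $L^q$ norm, and the corollary says nothing about $(\tilde X,\tilde Y)$. What you need is uniqueness for the frozen system, and this is exactly what the paper supplies. It introduces the strictly convex auxiliary problem $\widetilde V$ with $\lambda$ and $\gamma=1/(p\|X(1)\|_p^{p-1})$ frozen. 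It then shows via Fenchel's inequality that the mean-field optimizer is its unique optimizer (Lemma~\ref{lemma:auxiliary-control-problem:optimality}), and identifies it with the feedback $-\lambda\partial_x U$ by verification for the HJB value function $U$. Your $u$ is $\partial_x U$, since $\partial_x U(1,x)=\gamma p\,x|x|^{p-2}=g(x)$, and your monotonicity of $u(t,\cdot)$ is the paper's convexity of $U(t,\cdot)$.

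\textbf{How to repair it.} You can stay on your route without the auxiliary control problem, in either of two ways.
\begin{enumerate}
\item \emph{Decoupling-field argument.} First establish monotonicity of $u$; your coupling argument only involves the Markov process driven by $-\lambda u$, not the optimizer. Then put $D(t)=Y(t)-u(t,X(t))$ for the true optimizer. It\^o's formula and the PDE give $dD=\lambda u_x D\,dt+(Z-u_x)\,dB$ with $D(1)=0$. Hence $\Gamma D$, with $\Gamma(t)=\exp(-\int_0^t\lambda u_x(s,X(s))\,ds)\in(0,1]$, is a local martingale vanishing at time $1$. Localization plus uniform integrability then gives $D\equiv0$. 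Here $u_x\ge0$ is what keeps $\Gamma$ bounded.
\item \emph{Monotonicity identity.} For two solutions of the frozen system, $\E[\Delta X(1)(g(X(1))-g(X'(1)))]=-\E\int_0^1\lambda|\Delta Y|^2\,dt$. The left side is nonnegative because $g$ is increasing, so $\Delta Y=0$ wherever $\lambda>0$.
\end{enumerate}

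\textbf{Two smaller points.}
\begin{enumerate}
\item Cole--Hopf does not linearize the equation when $\lambda$ depends on $t$: differentiating $e^{-\lambda(t)U}$ in $t$ produces a nonlinear term $-\lambda'(t)\,U e^{-\lambda(t) U}$.
\item Existence for your Burgers equation with terminal data of growth $|x|^{p-1}$ is not off-the-shelf. The paper sidesteps this by \emph{defining} $U$ as the value function of $\widetilde V$, which yields convexity and evenness for free. It then needs only approximation plus parabolic estimates to get a classical solution.
\end{enumerate}
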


The rest of this section is devoted to the proof of this lemma. As such, we fix for the remainder of the section $p \in [2,\infty)$ and $\delta > 0$, and we let $\alpha\in\mathbb{A}$ and $Y \in L^q(\Omega)$ be the unique optimizers for $V_{p,\delta}^{\infty}$ and $P_{q,\delta}$, respectively. Define $Y(t)\coloneqq\E[Y\,|\,\F(t)]$ and $\lambda(t)$ as in Proposition~\ref{prop:primal-optimizers-finite-p}. The key step is to prove that $\alpha$ is the unique optimal control for an auxiliary stochastic control problem that we can analyze using a dynamic programming equation. We assume $\delta>0$ so that $t\mapsto \lambda(t)$ is bounded on $[0,1)$, which is needed for the well-posedness of the auxiliary control problem and the corresponding Hamilton--Jacobi--Bellman equation.

To this end, recall from Proposition~\ref{prop:primal-optimizers-finite-p} that $\alpha(t)=-\lambda(t) Y(t)$ for a.e.\ $t \in (0,1)$, and so we can also assert that
\begin{equation*}
    \alpha(t) = \argmin_{a \in \R}\Big(aY(t) + \frac{1}{2\lambda(t)}a^2\Big).
\end{equation*}
Fixing $p \in [2,\infty)$ and $\delta > 0$, we use the notation
\begin{align} \label{def.gammalambda}
    \gamma \coloneqq 1/(p\|X^\alpha(1)\|_{p}^{p-1}).
\end{align}
Our aim is to show that $\alpha$ is also  optimal for the auxiliary control problem
\begin{equation} \label{eq:auxiliary-control-problem:def}
\widetilde{V} \coloneqq \inf_\beta \E\bigg[ \gamma \big|X^\beta(1)\big|^p + \int_0^1 \frac{1}{2\lambda(t)}\beta(t)^2\,dt\bigg],
\end{equation}
where the infimum is taken over all progressively measurable processes $\beta$ such that
\begin{equation}\label{eq:auxiliary-control-problem:integrability}
    \E\bigg[ \int_0^1 \frac{1}{\lambda(t)} \beta(t)^2\,dt  \bigg] < \infty.
\end{equation}
We stress here that $\lambda$ and the value $\gamma\in(0,\infty)$ are kept fixed. Note also that because $\delta>0$ and $\lambda(t)\leq 1/\delta$ for all $t$, this integrability condition is stronger than square integrability.  The proof that $\beta=\alpha$ is optimal for this control problem could follow from the sufficient condition for optimality coming from Pontryagin's principle, though the textbook references typically only prove this for terminal cost functions of quadratic growth. Here is a direct proof using (weak) duality.
\begin{lemma}\label{lemma:auxiliary-control-problem:optimality}
Fix $p \in [2,\infty)$ and $\delta > 0$, and let $\alpha$ be the unique optimizer for the problem $V_{p,\delta}^{\infty}$. Then, $\alpha$ is also the unique optimizer for the problem $\widetilde{V}$, with $\lambda$ defined as in \eqref{eq:primal-optimizers-finite-p:def-lambda}, and $\gamma$ given by \eqref{def.gammalambda}.
\end{lemma}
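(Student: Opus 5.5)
The plan is a weak-duality (convexity) argument that compares an arbitrary admissible $\beta$ with $\alpha$. It uses the two optimality identities of Proposition~\ref{prop:primal-optimizers-finite-p}: $Y = X(1)|X(1)|^{p-2}/\|X(1)\|_p^{p-1}$ and $\alpha(t) = -\lambda(t)Y(t)$. Write $X = X^\alpha$, and note that $p\gamma |x|^{p-2}x \,/\, \|X(1)\|_p^{-(p-1)}\cdot$ simplifies: with $\gamma = 1/(p\|X(1)\|_p^{p-1})$, the derivative of $x\mapsto \gamma|x|^p$ is exactly $x|x|^{p-2}/\|X(1)\|_p^{p-1}$. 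So $Y$ is the gradient of the terminal cost at $X(1)$.

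First, admissibility of $\alpha$ for $\widetilde V$. Since $\lambda(t)\le 1/\delta$ and $\lambda$ is continuous and positive on $(0,1)$, I have $\alpha(t)^2/\lambda(t) = \lambda(t)Y(t)^2$, and hence $\E\int_0^1 \alpha^2/\lambda\,dt \le \int_0^1 \min(c_0\|Y(t)\|_2, \|Y(t)\|_2^2/\delta)\,dt<\infty$ by \eqref{eq:primal-optimizers-finite-p:Yt-normfinite-positive}. The terminal cost is finite because $\|X(1)\|_p<\infty$.

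Now fix $\beta$ satisfying \eqref{eq:auxiliary-control-problem:integrability}. I may assume $\E|X^\beta(1)|^p<\infty$. Convexity of $x\mapsto\gamma|x|^p$ gives
\[
\gamma|X^\beta(1)|^p \ge \gamma|X(1)|^p + Y\big(X^\beta(1)-X(1)\big) = \gamma|X(1)|^p + Y\int_0^1(\beta(t)-\alpha(t))\,dt.
\]
For the running cost I use the pointwise strict convexity inequality
\[
\frac{\beta^2}{2\lambda}\ge \frac{\alpha^2}{2\lambda} + \frac{\alpha}{\lambda}(\beta-\alpha) = \frac{\alpha^2}{2\lambda} - Y(t)(\beta-\alpha),
\]
which holds with equality iff $\beta=\alpha$.

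Taking expectations, I need $\E[Y\int_0^1(\beta-\alpha)\,dt] = \int_0^1\E[Y(t)(\beta(t)-\alpha(t))]\,dt$. Granting this, the cross terms cancel and I get $\text{cost}(\beta)\ge\text{cost}(\alpha)$, with equality forcing $\beta=\alpha$ $dt\otimes d\mathbb P$-a.e. This gives both optimality and uniqueness.

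The main obstacle is this Fubini/conditioning step, because $Y\in L^q$ only, and $Y(t)\beta(t)$ need not be obviously integrable near $t=0$. I would handle it as in the proof of Theorem~\ref{thm:duality-regularized}. By Cauchy--Schwarz,
\[
\E\int_0^1 |Y(t)(\beta-\alpha)|\,dt \le \Big(\E\int_0^1 \lambda Y(t)^2\,dt\Big)^{1/2}\Big(\E\int_0^1 \frac{(\beta-\alpha)^2}{\lambda}\,dt\Big)^{1/2}.
\]
Both factors are finite by the bound above and by \eqref{eq:auxiliary-control-problem:integrability}, so the right-hand integrand is jointly integrable. For the left-hand side, $Y\int_0^1(\beta-\alpha)\,dt = Y(X^\beta(1)-X(1))$ is integrable by Hölder, since $Y\in L^q$ and $X^\beta(1),X(1)\in L^p$. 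To make the identity itself rigorous I would truncate: replace $Y$ by $Y(s)$ for $s<1$, which is in $L^2$ because $\|Y(s)\|_2$ is finite. For $Y(s)$ Fubini and the tower property are justified. I then let $s\to1$, using uniform integrability of $Y(s)(X^\beta(1)-X(1))$ (from $Y(s)$ being bounded in $L^q$ via martingale convergence in $L^q$) and the tail bound $\int_s^1\E|Y(s)(\beta-\alpha)|\,dt\to0$ from the same Cauchy--Schwarz estimate.
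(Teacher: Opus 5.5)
Your proposal is correct and is essentially the paper's argument: the paper writes the Fenchel inequalities for $x\mapsto\gamma|x|^p$ and for the quadratic running cost against a dual variable $Z$, then checks the equality conditions at $\beta=\alpha$, $Z=Y$. This is exactly your supporting-hyperplane comparison at $\alpha$ with gradient $Y$, and uniqueness comes from the same strictly convex remainder $\E\int_0^1(\beta-\alpha)^2/(2\lambda)\,dt$. Your truncation $Y\to Y(s)$ is more careful than the paper's version. The paper states weak duality only for $Z\in L^2(\Omega)$ and then plugs in $Z=Y$, which is only known to lie in $L^q$, without justifying the Fubini/conditioning step that you handle via the argument from Theorem~\ref{thm:duality-regularized}.
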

\begin{proof}
Uniqueness follows from strict convexity of the cost as a functional of $\beta$.
Note that the convex conjugate of $x \mapsto \gamma |x|^p$ is $x \mapsto |x|^q/(q(\gamma p)^{q-1})$, and we have the Fenchel inequality
\[
\gamma |x|^p + \frac{1}{q(\gamma p)^{q-1}}|z|^q \ge xz,
\]
with equality precisely when $z=\gamma p x^{p-1}$.
Hence, for $\beta$ with \eqref{eq:auxiliary-control-problem:integrability} and any random variable $Z \in L^2(\Omega)$, we have
\begin{align*}
\E\big[\gamma \big|X^\beta(1)\big|^p\big] &= \E\bigg[ \gamma\bigg|B(1) + \int_0^1 \beta(t)\,dt\bigg|^p\bigg] \\
	&\ge \E\bigg[ Z B(1) + \int_0^1 Z\beta(t)\,dt - \frac{1}{q(\gamma p)^{q-1}}|Z|^q\bigg],
\end{align*}
and also
\begin{align*}
\E   \int_0^1 \frac{1}{2\lambda(t)}\beta(t)^2\,dt  &\ge  \E\int_0^1 \Big(-\E[Z\,|\,\F(t)]\beta(t) - \frac{\lambda(t)}{2}\E[Z\,|\,\F(t)]^2\Big)\,dt .
\end{align*}
Combining these inequalities, we deduce the weak duality
\begin{align*}
\widetilde{V} \ge \sup_{Z \in L^2(\Omega)}\E\bigg[Z B(1)  - \frac{1}{q(\gamma p)^{q-1}}Z^q - \int_0^1\frac{\lambda(t)}{2}\E[Z\,|\,\F(t)]^2 \, dt\bigg].
\end{align*}
There is equality precisely when
\begin{align*}
\beta(t) = -\lambda(t)\E[Z\,|\,\F(t)], \qquad Z = \gamma p\big|X^\beta(1)\big|^{p-1}.
\end{align*}
To complete the proof, we show that these identities hold with the choice $\beta=\alpha$ and $Z=Y$. The first is clear, recalling that $\alpha(t)=-\lambda(t) Y(t)=-\lambda(t)\E[Y|\F(t)]$. The definition $\gamma = 1/(p\|X^\alpha(1)\|_p^{p-1})$ and the identity $Y=\sgn(X^\alpha(1))|X^\alpha(1)|^{p-1}/\|X^\alpha(1)\|_p^{p-1}$ show that the second holds as well.
\end{proof}

We now analyze the auxiliary control problem \eqref{eq:auxiliary-control-problem:def} using the corresponding HJB equation
\begin{align} \label{eq:auxiliary-control-problem:hjb}
\partial_tu(t,x) - \frac{\lambda(t)}{2}|\partial_x u(t,x)|^2 +\frac12 \partial_{xx}u(t,x) = 0, \quad u(1,x) = \gamma |x|^p.
\end{align}
The following lemma verifies that the value function of \eqref{eq:auxiliary-control-problem:def} is indeed a classical solution of \eqref{eq:auxiliary-control-problem:hjb}, and that we have an appropriate verification result. We obtain Lemma~\ref{lemma:optimal-control-structure:finite-p} as an immediate corollary.

\begin{lemma} \label{eq:auxiliary-control-problem:verification}
    Fix $p \in [2,\infty)$ and $\delta > 0$, and let $\lambda$ and $\gamma$ be as defined in \eqref{eq:primal-optimizers-finite-p:def-lambda} and \eqref{def.gammalambda}. Define a function $u : [0,1] \times \R \to \R$ via
    \begin{align*}
    u(t_0,x_0) = \inf_{\beta} \E\bigg[ \int_{t_0}^1 \frac{1}{2 \lambda(t)} \beta(t)^2 \, dt + \gamma |X(1)|^p \bigg],
\end{align*}
where the infimum is taken over all progressively measurable processes $\beta$ satisfying
\begin{align*}
    \E\bigg[ \int_{t_0}^1 \frac{1}{\lambda(t)} \beta(t)^2 \, dt  \bigg] < \infty,
\end{align*}
and
\begin{align*}
    X(t) = x_0 + \int_{t_0}^t \beta(s) \, ds + (B(t)  - B(t_0)).
\end{align*}
Then, the following assertions hold:
\begin{enumerate}
    \item $u$ is convex and even in $x$, and there is a constant $C$ such that
    \begin{align*}
        |u(t,x)| \leq C(1 + |x|^p)
    \end{align*}
    for all $(t,x) \in [0,1] \times \R$.
    \item $u$ is a classical solution to \eqref{eq:auxiliary-control-problem:hjb}.
    \item the unique optimizer for the problem \eqref{eq:auxiliary-control-problem:def}  (and hence for the problem $V_{p,\delta}^{\infty}$, thanks to Lemma \ref{lemma:auxiliary-control-problem:optimality}) is give in feedback form by
    \begin{align*}
        \beta(t,x) = - \lambda(t) \partial_x u(t,x),
    \end{align*}
    and in particular is odd and points inwards.
\end{enumerate}
\end{lemma}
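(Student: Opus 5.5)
We plan to prove the three assertions in order, using the probabilistic definition of $u$ and the boundedness $\lambda(t)\le 1/\delta$, together with $\lambda(t)>0$ continuous on $(0,1)$ from Proposition~\ref{prop:primal-optimizers-finite-p}.

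\emph{Assertion (1).} For convexity in $x$ we use the $L$-convexity trick of Lemma~\ref{lem.vinf.convex}. Given near-optimal controls $\beta,\beta'$ from $x_0,x_0'$, the control $\lambda\beta+(1-\lambda)\beta'$ starting from $\lambda x_0+(1-\lambda)x_0'$ has terminal state equal to the convex combination of the two terminal states. Both the running cost $\beta^2/(2\lambda(t))$ and the terminal cost $\gamma|x|^p$ are convex, so the cost of the combined control is at most the convex combination of the costs.

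Evenness follows by replacing $(B,\beta)$ with $(-B,-\beta)$. The upper bound comes from taking $\beta=0$, which gives $u(t,x)\le\gamma\,\E|x+B(1)-B(t)|^p\le C(1+|x|^p)$. The lower bound is $u\ge0$.

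\emph{Assertion (2).} Because $\lambda$ depends only on $t$, we would linearize the HJB equation by a Cole--Hopf transform. Setting $w=e^{-\lambda(t)u}$ does not quite work when $\lambda$ varies in time. Instead, we would use the Hopf--Lax/Cole--Hopf representation valid for time-dependent coefficients: the drift penalty $\int\beta^2/(2\lambda)$ plus Brownian noise corresponds, by the Boué--Dupuis/Girsanov variational formula, to a relative-entropy problem only when the penalty is homogeneous. We therefore expect this direct route to fail and would not rely on it.

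Our fallback, and the main obstacle, is to establish classical regularity via standard theory. The equation is a one-dimensional uniformly parabolic equation with Hamiltonian $-\frac{\lambda(t)}2|p|^2$, coefficients bounded and continuous in $t$ on $[0,1)$, and polynomial-growth convex terminal data. We would obtain local-in-$x$ Lipschitz bounds for $u$ in two ways. First, convexity together with the growth bound gives $|\partial_x u|\le C(1+|x|^{p-1})$ locally. Second, a time-Lipschitz-type estimate comes from the dynamic programming principle. With these bounds in hand, we would show that $u$ is a viscosity solution by the usual DPP argument. We would then upgrade to a classical solution by interior parabolic Schauder theory on bounded cylinders: the equation can be viewed as linear with bounded drift coefficient $-\frac{\lambda}{2}\partial_x u$, which bootstraps from $C^{1+\alpha}$ to $C^{2+\alpha}$ in $x$. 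Continuity up to $t=1$ follows from the growth bounds.

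An alternative route to classical regularity, which we might try first, is to solve \eqref{eq:auxiliary-control-problem:hjb} directly by approximation. We would truncate the terminal data to a smooth function of bounded derivatives and cite existence of smooth solutions for quadratic HJB equations with bounded smooth data. We would then pass to the limit using locally uniform gradient bounds derived from the convexity of the approximating solutions, and identify the limit with $u$ by a verification argument.

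\emph{Assertion (3).} This is a verification argument. For any admissible $\beta$, apply It\^o's formula to $u(t,X^\beta(t))$ and use the HJB equation. The pointwise inequality $\beta\,\partial_xu+\beta^2/(2\lambda)\ge-\frac{\lambda}{2}|\partial_xu|^2$, with equality iff $\beta=-\lambda\partial_x u$, shows that $u(t_0,x_0)$ is at most the cost of $\beta$, with equality for the feedback control. The stochastic integrals are true martingales by the polynomial growth of $\partial_x u$ and moment bounds on $X^\beta$; these moment bounds follow from the integrability condition \eqref{eq:auxiliary-control-problem:integrability} and, for the feedback SDE, from its well-posedness with locally Lipschitz, inward-pointing drift.

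Oddness of the feedback follows since $u(t,\cdot)$ is even, so $\partial_x u$ is odd. The inward-pointing property follows since $u(t,\cdot)$ is even and convex, so $x\,\partial_xu(t,x)\ge0$. Uniqueness from Lemma~\ref{lemma:auxiliary-control-problem:optimality} then identifies this feedback with $\alpha$.
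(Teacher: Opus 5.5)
\emph{Assessment.} Your arguments for assertions (1) and (3) are essentially right. Your direct proof of convexity is cleaner than the paper's appeal to Fleming--Soner for the approximants: you take convex combinations of controls from two initial points, driven by the same Brownian motion. The genuine gap is in assertion (2), the passage to a \emph{classical} solution, which is where the paper's proof does its real work.

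In your first route you obtain local Lipschitz bounds and the viscosity property. You then propose to bootstrap ``from $C^{1+\alpha}$ to $C^{2+\alpha}$'' by reading the equation as linear with drift $-\frac{\lambda}{2}\partial_x u$. However, nothing in your argument produces the starting $C^{1+\alpha}$ (or $W^{2,1}_r$) regularity. With $u$ only Lipschitz in $x$, that drift is merely bounded measurable. A viscosity solution of the HJB equation is also not known to be a strong solution of the linear equation. So Schauder or $W^{2,r}$ theory cannot be applied to $u$ itself without extra machinery that you do not set up, such as $L^r$-viscosity theory, or solving Cauchy--Dirichlet problems on cylinders plus a comparison principle.

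Your second route has the complementary defect. Locally uniform gradient bounds on smooth approximants give only $C^0$ compactness. The limit is therefore a Lipschitz viscosity solution, not a classical one.

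\emph{The missing step.} You need to run your bootstrap on the smooth approximants $u^\eps$, where it is legitimate, with constants independent of $\eps$; this is exactly the paper's argument. The uniform bound on $\partial_x u^\eps$ comes from convexity together with the $\beta=0$ test. With it, $\kappa^R u^\eps$ solves the heat equation with right-hand side $\kappa^R\frac{\lambda^\eps}{2}|\partial_x u^\eps|^2 + 2\partial_x u^\eps\,\partial_x\kappa^R + u^\eps\,\partial_{xx}\kappa^R$, which is bounded uniformly in $\eps$. The chain is then:
\begin{enumerate}
\item apply parabolic $W^{2,r}$ estimates;
\item differentiate in $x$ and apply them a second time;
\item use parabolic Sobolev embedding to get locally uniform H\"older bounds on $u^\eps$, $\partial_x u^\eps$, $\partial_{xx}u^\eps$, and, through the equation, on $\partial_t u^\eps$.
\end{enumerate}
These bounds pass to $u$ by Arzel\`a--Ascoli once $u^\eps\to u$ is known from the control formulation.

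\emph{A smaller point.} Your claim that $\int \partial_x u(t,X^\beta(t))\,dB(t)$ is a true martingale for \emph{every} admissible $\beta$ is unjustified. The condition $\E\int\beta^2/\lambda\,dt<\infty$ gives only second moments of $X^\beta$, while $\partial_x u$ grows like $|x|^{p-1}$. In (3) this direction is unnecessary: $u$ is by definition the infimum, so only the equality for the feedback control is needed, and that state has all moments because its drift is monotone. It does mean that ``identify the limit with $u$ by verification'' in your second route would need more care than you indicate.
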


Before we prove this verification result, let us discuss how it implies Lemma~\ref{lemma:optimal-control-structure:finite-p}. By the definition of $u$, we have $u(0,0) = \widetilde{V}$, and $\beta(t) = -\lambda(t) \partial_x u(t,X^\beta(t))$ is optimal for the problem defining $u(0,0)$. Since $\alpha$ is the unique optimizer for $\widetilde{V}$ by Lemma~\ref{lemma:auxiliary-control-problem:optimality}, we must have $\alpha(t) = -\lambda(t) \partial_x u(t,X^\alpha(t))$ for a.e.\ $t \in (0,1)$, and the properties of $\hat{\alpha}(t,x) = -\lambda(t)\partial_x u(t,x)$ follow from the properties of $u$ stated in the lemma.

\begin{proof}[Proof of Lemma~\ref{eq:auxiliary-control-problem:verification}]
    This result would be standard, except for the fact that the terminal cost $x \mapsto \gamma |x|^p$ has some growth at infinity, and the fact that $\lambda(t) \to 0$ as $t \to 0$. Thus, we only sketch the proof and explain how to circumvent these issues.

    We begin by fixing $(\lambda^{\eps})_{\eps > 0}$ and $(g^{\eps})_{\eps > 0}$ such that $\lambda^{\eps}$ is smooth, $\lambda^{\eps}(t) \geq \lambda(t) \vee \eps$ for each $t \in [0,1]$, and $\lambda^{\eps} \to \lambda$ uniformly as $\eps \to 0$, $g^{\eps}$ is $C^2$ (with second derivatives bounded, locally uniformly in $\eps$), convex and Lipschitz, and satisfies
    \begin{align*}
        0 \leq g^{\eps}(x) \leq 1 + \mu |x|^p, \quad g^{\eps}(x) = \mu|x|^p \text{ on } [-\eps^{-1},\eps^{-1}].
    \end{align*}
    Now define $u^{\eps}$ exactly like $u$, but with $\lambda^{\eps}$ replacing $\lambda$, and $g^{\eps}$ replacing $\mu |x|^p$.
   Then, it is standard to show that $u^{\eps}$ is smooth, and is the unique viscosity solution of linear growth to the equation
    \begin{align*}
        - \partial_t u^{\eps} - u^{\eps}_{xx} + \frac{\lambda^{\eps}(t)}{2} |u^{\eps}_x|^2 = 0, \quad u^{\eps}(1,x) = g^{\eps}(x).
    \end{align*}
    Moreover, the convexity of the cost implies that $u^{\eps}(t,\cdot)$ is convex for each fixed $t$, see, e.g., Lemma 10.6 in \cite{flemingsoner}. In addition, testing the control $\alpha = 0$ from each initial condition $(t,x)$ shows that
    \begin{align} \label{eq:auxiliary-control-problem:verification:proof-uniformupperbound}
      0 \leq  u^{\eps}(t,x) \leq \E\big[g^{\eps}\big(x + B(1) - B(t)\big)\big] \leq 1 + \mu \E[|x + B(1) - B(t)|^p] \leq C(1 + |x|^p).
    \end{align}
   Using the well-known fact that the Lipschitz constant of a convex function on a ball is controlled by its oscillation over a larger ball, the convexity of $u^{\eps}(t,\cdot)$ and \eqref{eq:auxiliary-control-problem:verification:proof-uniformupperbound} together imply the estimate
    \begin{align*}
        |u_x^{\eps}(t,x)| \leq 2\sup_{|y| \leq |x| + 1} |u^{\eps}(t,y)| \leq C(1 + |x|^p),
    \end{align*}
    with $C$ independent of $\eps$. Now, we fix a smooth cut-off function $\kappa : \R \to \R$ satisfying $\kappa(x) = 1$ for $|x| \leq R$ and $\kappa(x) = 0$ for $|x| \geq 2R$. For $R > 0$, we set $\kappa^R(x) = \kappa(x/R)$, and then we consider the function
    \begin{align*}
        u^{\eps,R}(t,x) = \kappa^R(x) u^{\eps}(t,x).
    \end{align*}
    Then by explicit computation,
    \begin{align*}
        \partial_t u^{\eps,R} + \partial_{xx} u^{\eps,R} &= \kappa^R \big(\partial_t u^{\eps} + \partial_{xx} u^{\eps} \big) + 2 \partial_x u^{\eps} \partial_x \kappa^R + u^{\eps} \partial_{xx} \kappa^R
        \\
        &= \kappa^R \frac{\lambda^{\eps}(t)}{2} | \partial_x u^{\eps}|^2 + 2 \partial_x u^{\eps} \partial_x \kappa^R + u^{\eps} \partial_{xx} \kappa^R \eqqcolon f^{\eps,R}.
    \end{align*}
    The estimates on $u^{\eps}$ and $u^{\eps,R}$ above and the fact that $\kappa^R = 0$ for $|x| > 2R$ imply that for each $R > 0$,
    \begin{align*}
        \| f^{\eps, R}\|_{\infty} \leq C_R,
    \end{align*}
    with $C_R$ depending on $R$ but not on $\eps$. Thus, $u^{\eps,R}$ solves the heat equation on $[0,1] \times \R$ with a right-hand side which is bounded in $L^{\infty}$ by $C_R$ and with terminal condition $g^{\eps,R}(x) = \kappa^R(x) g^{\eps}(x)$ which is smooth, uniformly in $\eps$ for each fixed $R$. We deduce from classical parabolic estimates for the heat equation (see, e.g., Chapter 4, Section 1 of \cite{lady})
    \begin{align*}
        \big\| u^{\eps,R} \big\|_{W^{2,p}_{t,x}} \leq C \Big( \|g^{\eps,R}\|_{W^{2,p}_x} + \| f^{\eps,R}\|_{L^p_{t,x}} \Big) \leq C_{R,p},
    \end{align*}
    where $W^{2,p}_x$ and $W^{2,p}_{t,x}$ denote the usual elliptic and parabolic Sobolev spaces of functions with two derivatives in $L^p$, and $C_{R,p}$ can depend on $R$ and $p$ but not on $\eps$. Since $u^{\eps,R} = u^{\eps}$ for $|x| \leq R$, this implies that for each $R > 0$, $1 < p < \infty$,
    \begin{align} \label{firstw2pest}
        \big\| u^{\eps} \big\|_{W^{2,p}_{t,x}([0,1] \times [-R,R] )} \leq C_{R,p}.
    \end{align}
    Differentiating the equation for $u^{\eps,R}$, we find that $v^{\eps,R} = \partial_x u^{\eps,R}$ satisfies
    \begin{align*}
       &\partial_t v^{\eps,R} + \partial_{xx} v^{\eps,R} = \partial_x f^{\eps,R}
       \\
       &\quad = \kappa^R \lambda^{\eps} \partial_x u^{\eps} + 2 \partial_{xx} u^{\eps} \partial_x \kappa^R + 3 \partial_x u^{\eps} \partial_{xx} \kappa^R + u^{\eps} \partial_{xxx} \kappa^R.
    \end{align*}
    We know that $u_x^{\eps}$ is locally bounded and $u_{xx}^{\eps}$ is locally in $L^p_{t,x}$ from \eqref{firstw2pest}, we easily deduce that $\| \partial_x f^{\eps, R}\|_{L^p_{t,x}} \leq C_{p,R}$, and so $W^{2,p}$ estimates for the heat equation again give
    \begin{align*}
        \big\| u_x^{\eps,R} \big\|_{W^{2,p}_{t,x}} = \big\| v^{\eps,R} \big\|_{W^{2,p}_{t,x}} \leq C \Big( \|\partial_x f^{\eps,R}\|_{L^p_{t,x}} + \| \partial_x g^{\eps,R}\|_{W_x^{2,p}} \Big) \leq C_{R,p}.
    \end{align*}
    But now by parabolic Sobolev embedding, we deduce that for each $R > 0$ and $\alpha \in (0,1)$, $u_x^{\eps} \in C_{t,x,\text{loc}}^{(1 + \alpha)/2, 1 + \alpha} ([0,1] \times [-R,R])$, uniformly in $\eps$, i.e., for each $\alpha \in (0,1)$, $R > 0$,
    \begin{align*}
       u^{\eps}, \quad  \partial_x u^{\eps}, \quad \partial_{xx} u^{\eps} \in C_{t,x}^{\alpha/2, \alpha} ([0,1] \times [-R,R]),
    \end{align*}
    with bounds which are uniform in $\eps$. The same must be true for $\partial_t u^{\eps}$, thanks to the equation satisfied by $u^{\eps}$. Sending $\eps \to 0$, one can check (from the control formulation) that $u^{\eps} \to u$ pointwise, and thus by compactness $u, \partial_t u, \partial_x u$, and $\partial_{xx} u$ all inherit the local H\"older bounds from $u^{\eps}$, $\partial_t u^{\eps}$, $\partial_x u^{\eps}$, and $\partial_{xx} u^{\eps}$, and the PDE
    \begin{align*}
        - \partial_t u - u_{xx} + \frac{\lambda}{2} |u_x|^2 = 0, \quad u(1,x) = \mu |x|^p
    \end{align*}
    is satisfied in a classical sense. This proves that (2) holds. Moreover, sending $\eps \to 0$ in \eqref{eq:auxiliary-control-problem:verification:proof-uniformupperbound}, we complete the proof of (1).

    It remains to prove the verification result (3). We first consider the state equation
    \begin{align*}
        dX(t) = - \frac{\lambda(t)}{2} \partial_x u(t,X(t)) \, dt + dB(t), \quad X(0) = 0.
        \end{align*}
    Since $\beta(t,x) = - \lambda(t) \partial_xu(t,x)$ is locally Lipschitz in $x$ and satisfies
    \begin{align*}
      (x- y) \big(  \beta(t,x) - \beta(t,y) \big) = - \lambda(t) (x-y) \big(\partial_x u(t,x) - \partial_x u(t,y) \big) \leq 0
    \end{align*}
    by convexity of $u(t,\cdot)$, this equation has a unique strong solution which satisfies
    \begin{align*}
        \E\Big[ \sup_{0 \leq t \leq 1} |X(t)|^r \Big]  < \infty
    \end{align*}
    for each $r < \infty$. Now, It\^o's formula shows that
    \begin{align*}
        du(t,X(t)) &= \Big( \partial_t u + \partial_{xx} u - \lambda |u_x|^2 \Big)(t,X(t)) \, dt + \partial_x u(t,X(t)) \, dB(t)
        \\
        &= - \frac{\lambda(t)}{2} |u_x(t,X(t))|^2 \, dt + \partial_x u(t,X(t)) \, dB(t)
        \\
        &= - \frac{1}{2 \lambda(t)} |\beta(t,X(t))|^2 \, dt + \partial_x u(t,X(t)) \, dB(t).
    \end{align*}
    Since $X$ has moments of all orders and $\partial_x u(t,X(t)) \leq C(1 + |X(t)|^p)$, we deduce that
    \begin{align*}
        \E\bigg[ \int_0^1 |\partial_x u(t,X(t))|^2 \, dt \bigg] < \infty,
    \end{align*}
    and so $t\mapsto\int_0^t \partial_x u(s,X(s))\, dB(s)$ is a true martingale. It follows that
    \begin{align*}
        u(0,0) = \E\bigg[ \gamma |X(1)|^p + \int_0^1 \frac{1}{2\lambda(t)} |\beta(t,X(t))|^2\,dt \bigg],
    \end{align*}
    and so $\beta$ is optimal.

    Uniqueness, meanwhile, follows from the strict convexity of the problem. Finally, we note that $u_x(t,\cdot)$ is odd because $u(t,\cdot)$ is even, and $\alpha(t,\cdot)$ points inwards because $u(t,\cdot)$ is even and convex, hence attains its minimum at $0$. Thus $\partial_x u(t,\cdot)$ is an increasing function with $\partial_x u(t,0) = 0$.
\end{proof}

\subsection{Properties of optimizers}

Let us now return to the original problem $V^\infty$, written in the formulation of Section \ref{sec:prelim} which we repeat here:
\begin{equation}\label{eq:v-infty:pde-formulation}
    V^\infty = \inf_{(m,\alpha) \in \cA(0,\delta_0)} [m_1]_{\infty},
\end{equation}
where $\cA(0,\delta_0)$ is the set of all pairs $(m, \alpha)$ consisting of a (continuous) curve $[0,1] \ni t \mapsto m_t \in \cP_2(\R)$ and a measurable function $\alpha : [0,1] \times \R \to \R$, satisfying the equation
\begin{equation}\label{eq:v-infty:pde-formulation:fpe}
    \partial_t m = \frac{1}{2}\partial_{xx} m - \partial_x ( m \alpha ), \quad (t,x) \in (0,1) \times \R, \quad m_0 = \delta_0
\end{equation}
in the sense of distributions, as well as the constraint
\begin{equation}\label{eq:v-infty:pde-formulation:constraint}
    \int_{\R} \alpha(t,x)^2\, m_t(dx) \leq \frac{2}{\pi}, \quad \text{for a.e.\ } 0 \leq t \leq 1.
\end{equation}

We next show that the problem \eqref{eq:v-infty:pde-formulation} admits an optimizer, and that this optimizer satisfies some nice properties. We will obtain these properties by passing to the limit from the optimizers of the approximating problems $V_{p,\delta}^{\infty}$, with $p\uparrow \infty$ and $\delta\downarrow 0$. We will state the result in slightly more generality, allowing for any time horizon $T\in(0,1]$ in place of $1$. This will be needed for carrying out the dynamic programming argument in the proof of the upper bound.

\begin{proposition} \label{lem.optproperties}
    For any $0 < T \leq 1$, the problem
    \begin{equation}\label{eq:v-infty:pde-formulation:general-T}
        \inf_{(m,\alpha) \in \cA(0,\delta_0)} [m_T]_{\infty},
    \end{equation}
    admits an optimizer $(m,\alpha)$ with the following properties:
    \begin{enumerate}
        \item $m_t$ is even, in the sense that $(x \mapsto -x)_{\#} m_t = m_t$, for each $0 \leq t \leq T$, \label{lem.optproperties.even}
        \item $\alpha(t,\cdot)$ is odd, for each fixed $t$, \label{lem.optproperties.odddrift}
        \item $\alpha$ ``points inwards", in the sense that $\alpha(t,x)x \leq 0$ for each $(t,x) \in [0,T] \times \R$. \label{lem.optproperties.inward}
    \end{enumerate}
\end{proposition}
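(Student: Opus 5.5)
The plan is to obtain the optimizer as a limit of the regularized optimizers from Lemma~\ref{lemma:optimal-control-structure:finite-p}, taking $p_j\uparrow\infty$ and $\delta_j\downarrow 0$, with the horizon $[0,1]$ replaced by $[0,T]$ (as remarked at the start of Section~\ref{sec:duality-and-optimizers}, everything there carries over). For each $j$, let $\alpha_j$ be the unique optimizer of $V^{\infty}_{p_j,\delta_j}$ on $[0,T]$. Lemma~\ref{lemma:optimal-control-structure:finite-p} gives $\alpha_j(t)=\hat\alpha_j(t,X_j(t))$ with $\hat\alpha_j(t,\cdot)$ odd and inward-pointing. Since $X_j$ is driven by an odd feedback from $X_j(0)=0$ and a symmetric Brownian motion, uniqueness of the SDE solution shows that $-X_j$ has the same law. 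Hence $m^j_t\coloneqq\cL(X_j(t))$ is even, and $(m^j,\hat\alpha_j)\in\cA(0,\delta_0)$.

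\textbf{Compactness.} View the pairs $(\int_0^\cdot\alpha_j\,dt,B)$ as elements of the weak control set $\cA^{\mathrm w}$ on $[0,T]$. Tightness follows as in Lemma~\ref{lemma:tightness-drift}, because the uniform $L^2$ bound places the drift laws in the compact set $\mathcal K_{2/\pi}$. The argument of Theorem~\ref{thm:limit-points} shows that $\cA^{\mathrm w}$ is closed, so along a subsequence the joint laws converge to some $\bar m\in\cA^{\mathrm w}$. Write $X=A+B$ under $\bar m$. Lower semicontinuity of $[\cdot]_\infty$ and of $\mu\mapsto(\int|x|^{p}d\mu)^{1/p}$, together with the monotonicity $\|\cdot\|_{p}\le\|\cdot\|_{p'}$ for $p\le p'$, gives
\begin{equation*}
\|X(T)\|_{p}\le\liminf_j\|X_j(T)\|_{p_j}\le\liminf_j V^\infty_{p_j,\delta_j}\le \inf[m_T]_\infty+\lim_j\delta_jc_0^2T/2
\end{equation*}
for every fixed $p$. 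Letting $p\to\infty$ gives $\|X(T)\|_\infty\le\inf[m_T]_\infty$. Through the Markovian projection of Lemma~\ref{lem.equivstrong}, namely $\hat\alpha(t,x)=\E[\dot A(t)\,|\,X(t)=x]$, this yields a pair $(m,\hat\alpha)\in\cA(0,\delta_0)$ that is optimal for \eqref{eq:v-infty:pde-formulation:general-T}.

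\textbf{Passing the structure to the limit.} Evenness of $m_t$ holds because each $m^j_t$ is even and $m^j_t\to m_t$ weakly. For oddness and inwardness, I would use the weak formulation of these properties: for every bounded continuous odd-free test function $\phi\ge0$,
\begin{equation*}
\E[\dot A_j(t)\,X_j(t)\,\phi(X_j(t))]\le 0,
\end{equation*}
and the joint law of $(\dot A_j,X_j)$ is invariant under $(a,x)\mapsto(-a,-x)$. Both statements are closed under weak convergence once they are integrated against $dt\,h(t)$. Here $\dot A_j$ converges only weakly in $L^2$, but since $X_j(t)\phi(X_j(t))$ converges (bounded after truncating $\phi$) in the joint limit, the term $\int h(t)\,\E[\dot A_j\,g(X_j)]\,dt$ is continuous along the subsequence. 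The argument is the lower semicontinuity/continuity argument used in Lemma~\ref{lemma:limit-points-drift:fixed-path}, applied to $A$ integrated against test functions of the path, together with the identity $\E[\int_s^t g(X)\,dA]\to$ its limit.

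Projecting then gives $\E[\hat\alpha(t,X(t))X(t)\phi(X(t))]\le0$ for all such $\phi$, so $x\hat\alpha(t,x)\le0$ for $m_t$-a.e.\ $x$ and a.e.\ $t$. Symmetry gives $\hat\alpha(t,-x)=-\hat\alpha(t,x)$ $m_t$-a.e. Redefining $\hat\alpha$ on null sets (setting it to $0$ off the support and replacing it by its odd part elsewhere) makes both properties hold everywhere without changing the Fokker--Planck equation \eqref{eq:v-infty:pde-formulation:fpe} or the constraint \eqref{eq:v-infty:pde-formulation:constraint}.

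\textbf{Main obstacle.} I expect the hardest step to be the passage to the limit in products of the weakly converging drifts with functions of the state. It requires uniform integrability of $X_j(t)$, which follows from the $L^2$ drift bound, and care that the limiting drift remains adapted in the sense of $\cA^{\mathrm w}$. I would handle this by working on path space with time-integrated test functionals, as in the proof of Theorem~\ref{thm:limit-points}.
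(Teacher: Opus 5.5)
Your proposal is correct and follows essentially the paper's route: you take limits of the odd, inward-pointing optimizers of $V^\infty_{p_k,\delta_k}$ (with $p_k\uparrow\infty$, $\delta_k\downarrow 0$), identify an optimizer via the Markovian projection of Lemma~\ref{lem.equivstrong}, and pass evenness, oddness and inwardness to the limit through time-integrated test functionals. The differences are minor: you compactify via the laws of $(A,B)$ in $\cA^{\mathrm w}$ rather than via the relaxed controls $\Lambda^k\in\P_1([0,1]\times\R)$, so the limit of $\int h(t)\,g(X_j(t))\,dA_j(t)$ needs, e.g., Skorohod representation plus weak-$L^2$/strong convergence, which the paper gets from joint continuity on $C[0,1]\times\P_1$; and you prove optimality via $\|X(T)\|_p\le\liminf_j\|X_j(T)\|_{p_j}$ followed by $p\to\infty$, instead of the paper's tail bound $R\,\PP[|X^k(1)|>R]^{1/p_k}$.
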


\begin{proof}
    Let us assume $T=1$ for notational simplicity. Fix $\delta_k>0$ converging monotonically toward $0$ and $p_k\in[2,\infty)$ increasing to $\infty$. Let $\alpha^k$ be the optimizer for $V_{p_k,\delta_k}^\infty$ on the time interval $[0,1]$, with $X^k$ its associated state process. We may assume that these are defined on a common probability space, with a common Brownian motion $B$. We know from Lemma~\ref{eq:auxiliary-control-problem:verification} that $\alpha^k(t)=\hat{\alpha}^k(t,X^k(t))$ is ``odd'' and ``pointing inward'' in the sense that
    \begin{equation*}
        \hat{\alpha}^k(t,-x)=-\hat{\alpha}^k(t,x), \quad \hat{\alpha}^k(t,x)x \le 0,\qquad\text{for all } (t,x) \in [0,1] \times \R.
    \end{equation*}
    We next show  how to pass this property to the limit. It could be done at the PDE level, but we will stick here with a probabilistic viewpoint of Lemma \ref{lem.equivstrong}, similar to the compactness arguments used in Section~\ref{sec:lower-bound} to prove the lower bound.

    Roughly speaking, we first show that any subsequential limit of $\alpha^k$ must land on an optimizer of our true control problem \eqref{eq:v-infty:pde-formulation:general-T}, and that this optimizer satisfies the required properties. Let $\Lambda^k$ denote the random probability measure $dt\,\delta_{\alpha^k(t)}(da)$ on $[0,1] \times \R$. The constraint $\alpha^k \in \mathbb{A}$ automatically implies
    \begin{equation}
    \sup_k \int_{[0,1]\times \R} a^2\,\Lambda^k(dt,da) = \sup_k\E\int_0^1(\alpha^k(t))^2\,dt \le \frac{2}{\pi} < \infty. \label{approxcontrol-momentbound}
    \end{equation}
    It follows that $\mathrm{Law}(\Lambda^k)$ is a tight sequence in $\P(\P_1([0,1] \times \R))$, and also that $\mathrm{Law}(X^k)$ is a tight sequence in $\P(C[0,1])$.

    Suppose now that $(X,\Lambda)$ is a random element of $C[0,1] \times \P_1([0,1] \times \R)$ whose law is a limit point of that of $(X^k,\Lambda^k)$ along some subsequence. By extending our probability space if necessary, we can assume that $(X,\Lambda)$ is also defined on $(\Omega,\mathcal{F},\mathbb{P})$. Now, define $\alpha(t)=\int_\R a\, \Lambda_t(da)$, where we have disintegrated $\Lambda(dt,da)=dt\,\Lambda_t(da)$, noting that the first marginal of $\Lambda$ is a.s.\ uniform. (A progressively measurable version of $\alpha$ can be more carefully constructed as the limit of $h^{-1}\int_{[(t-h)_+,t] \times \R}a\,\Lambda(ds,da)$ as $h\downarrow 0$  through the rationals, which exists a.s.) It is straightforward to check that the process
    \begin{equation*}
        X(t) - \int_0^t \alpha(s)\,ds = X(t) - \int_{[0,t] \times \R}a\,\Lambda(ds,da)
    \end{equation*}
    must be a Brownian motion with respect to the filtration $\FF^{X,\Lambda}$ generated by $(X,\Lambda)$, because the same is true of $(X^k,\Lambda^k)$ for each $k$.

    Let us define $m_t \coloneqq \cL(X(t))$ for $t \in [0,1]$.
    As in Lemma~\ref{lem.equivstrong}, we can find a jointly measurable function $\hat{\alpha}:[0,1]\times \R\to\R$ such that
    \begin{equation}\label{eq.lem.approximation.def.alphahat}
        \hat{\alpha}(t,X(t)) = \E[\alpha(t)\,|\,X(t)],\qquad\text{a.s.\ for a.e.\ } t.
    \end{equation}
    We now show that the pair $(m,\hat{\alpha})$ is admissible for the problem \eqref{eq:v-infty:pde-formulation:general-T}, and that it is optimal.

    The first part of admissibility, namely that $(m,\hat{\alpha})$ is a weak solution to the Fokker--Planck equation \eqref{eq:v-infty:pde-formulation:fpe} is shown in exactly the same way as in Lemma~\ref{lem.equivstrong}. To see that the constraint \eqref{eq:v-infty:pde-formulation:constraint} is satisfied, let $\phi\in C[0,1]$ be non-negative. Then, by \eqref{eq.lem.approximation.def.alphahat} and Jensen's inequality
    \begin{align*}
        \int_0^1 \phi(t)\int_\R \hat{\alpha}(t,x)^2 \, m_t(dx)\,dt
        &\leq \int_0^1 \phi(t) \E\big[\alpha(t)^2\big]\,dt \\
        &\leq \int_0^1 \phi(t) \E\bigg[\int_\R a^2\,\Lambda_t(da)\bigg]\,dt \\
        &= \E \int_{[0,1]\times\R} \phi(t) a^2 \,\Lambda(dt, da) \\
        &\leq \liminf_k  \E\int_{[0,1] \times \R} \phi(t)\,a^2\,\Lambda^k(dt,da) \\
        &\leq \frac{2}{\pi}\int_0^1\phi(t)\,dt,
    \end{align*}
    where the last step used \eqref{approxcontrol-momentbound}.
    As $\phi$ was arbitrary, it follows that \eqref{eq:v-infty:pde-formulation:constraint} holds for $(m,\hat{\alpha})$.

    Let us now prove optimality. By Lemma~\ref{lem.equivstrong}, the value of the problem \eqref{eq:v-infty:pde-formulation:general-T} matches the value of the corresponding ``strong'' formulation defined in terms of $(\Omega,\mathcal{F},\mathbb{P})$ and $B$. In particular, $(m,\hat{\alpha})$ is optimal if for all $\beta\in\mathbb{A}$, we have $\|X^\beta(1)\|_{\infty} \geq [m_1]_\infty$. So let $\beta\in\mathbb{A}$ be arbitrary, and assume $\|X^\beta(1)\|_{\infty}<\infty$. We have
    \begin{align*}
        \|X^\beta(1)\|_{\infty} + \frac{\delta_k}{\pi}
        &\ge \|X^\beta(1)\|_{\infty} + \frac{\delta_k}{2}\E\int_0^1\beta(t)^2\,dt \\
        &\ge \|X^\beta(1)\|_{p_k}  + \frac{\delta_k}{2}\E\int_0^1\beta(t)^2\,dt \\
        &\ge \|X^k(1)\|_{p_k}  + \frac{\delta_k}{2}\E\int_0^1(\alpha^k(t))^2\,dt
    \end{align*}
    where the last step used the optimality of $\alpha^k$ for $V_{p_k,\delta_k}^\infty$. Since $\E[\alpha^k(t)^2]\le 2/\pi$, passing to the limit yields
    \begin{equation*}
        \|X^\beta(1)\|_{\infty} \ge \liminf_k \|X^k(1)\|_{p_k},
    \end{equation*}
    with the $\liminf$ taken along the same subsequence constructed above. For any $0 \le R < [m_1]_\infty=\|X(1)\|_{\infty}$, we have
    \begin{equation*}
    \big(\E[|X^k(1)|^{p_k}]\big)^{1/p_k} \ge  \big(\E[|X^k(1)|^{p_k}1_{\{|X^k(1)| > R\}}] \big)^{1/p_k} \ge R\, \PP\big[|X^k(1)| > R\big]^{1/p_k}.
    \end{equation*}
    Since $X^k(1)$ converges in law to $X(1)$, we have
    \begin{equation*}
        \liminf_k \PP\big[|X^k(1)| > R\big] \ge \PP\big[|X(1)| > R\big] > 0,
    \end{equation*}
    the last inequality coming from the definition of $\|X(1)\|_{\infty}$. Hence, sending $k\to\infty$ yields
    \begin{equation*}
        \|X^\beta(1)\|_{\infty} \ge \liminf_k \|X^k(1)\|_{p_k} \ge R.
    \end{equation*}
    As $\beta \in \mathbb{A}$ and $R < [m_1]_\infty$ were arbitrary, we deduce that
    \begin{equation*}
        V^\infty = \inf_{\beta\in\mathbb{A}}\|X^\beta(1)\|_{\infty} \ge [m_1]_\infty.
    \end{equation*}
    In particular, the pair $(m,\hat{\alpha})$ is optimal for \eqref{eq:v-infty:pde-formulation:general-T}.

    It remains to prove that $(m,\hat{\alpha})$ satisfies the desired properties \eqref{lem.optproperties.even}, \eqref{lem.optproperties.odddrift}, and \eqref{lem.optproperties.inward}. We may modify $\hat{\alpha}(t,x)$ on a null set of the measure $dt\,m_t(dx)$ without affecting the admissibility of $(m,\hat{\alpha})$, and so it suffices to show that
    \begin{equation*}
        \hat{\alpha}(t,-X(t)) = -\hat{\alpha}(t,X(t)), \quad \hat{\alpha}(t,X(t))X(t) \le 0,\qquad\text{a.s.\ for a.e.\ } t,
    \end{equation*}
    as well as $X(t) \stackrel{d}{=} -X(t)$ for each $t$.

    For the inward pointing property, recall that $\alpha^k(t)X^k(t) \le 0$ a.s.\ for a.e.\ $t$. Let us argue that similarly $\hat{\alpha}(t,X(t)) X(t) \le 0$ a.s.\ for a.e.\ $t$.
    Fix a Lipschitz function $h : [0,1] \times \R \to [0,\infty)$ of compact support. We have
    \begin{align*}
        \E\int_0^1 h(t, X^k(t)) \alpha^k(t) X^k(t)\,dt = \E\int_{[0,1] \times \R} h(t,X^k(t)) a X^k(t)\,\Lambda^k(dt,da),
    \end{align*}
    as well as the same identity with $(X,\alpha,\Lambda)$ in place of $(X^k,\alpha^k, \Lambda^k)$. Since $\R\ni x\mapsto h(t,x)x$ is bounded, the function
    \begin{equation*}
            C[0,1] \times \P_1([0,1] \times \R) \ni (x,q) \mapsto \int_{[0,1] \times \R} h(t,x(t)) a x(t)\,q(dt,da)
    \end{equation*}
    is jointly continuous. Using the weak convergence of $(X^k,\Lambda^k)$ to $(X,\Lambda)$ in $C[0,1] \times \P_1([0,1] \times \R)$ along with $\E \int_{[0,1]\times \R}a^2\,\Lambda(dt,da) \le 2/\pi$, it follows that
    \begin{equation*}
        \lim_{k\to\infty}\E\int_{[0,1] \times \R} h(t,X^k(t)) a X^k(t)\,\Lambda^k(dt,da) = \E\int_{[0,1] \times \R} h(t,X(t)) a X(t)\,\Lambda(dt,da).
    \end{equation*}
    As $h \ge 0$, the left-hand side is not positive, and hence nor is the right side. As $h \ge 0$ was arbitrary, we deduce that $\alpha(t)X(t) \le 0$ a.s.\ for a.e.\ $t$, and in turn
    \begin{align*}
        \hat{\alpha}(t,X(t)) X(t) = \E\big[ \alpha(t) \,|\, X(t) \big] X(t) = \E\big[ \alpha(t) X(t) \,|\, X(t) \big] \leq 0.
    \end{align*}

    To see that the drift is odd, recall that the approximating problem has an odd drift, i.e., $\hat{\alpha}^k(t,-x)=-\hat{\alpha}^k(t,x)$ for all $t$ and $x$. As $X(0)=0$, this implies that the corresponding state process $X^k$ is symmetric, in the sense that $X^k(t) \stackrel{d}{=} -X^k(t)$ for all $t$. Combining these two properties, for any bounded continuous $h$ we find
    \begin{equation}
    \E\int_0^1 h(t,-X^k(t))\hat{\alpha}^k(t,X^k(t))\,dt = -\E\int_0^1 h(t,X^k(t))\hat{\alpha}^k(t,X^k(t))\,dt. \label{pf:driftodd1}
    \end{equation}
    We take limits, justified as in the previous step:
    \begin{align*}
        \E\int_0^1 h(t,X^k(t))\hat{\alpha}^k(t,X^k(t))\,dt
        &= \E\int_{[0,1] \times \R} h(t,X^k(t))a\,\Lambda^k(dt,da) \\
        &\to \E\int_{[0,1] \times \R} h(t,X(t))a\,\Lambda(dt,da) \\
        &= \E\int_0^1  h(t,X(t))\alpha(t)\,dt \\
        &= \E\int_0^1 h(t,X(t))\hat{\alpha}(t,X(t))\,dt.
    \end{align*}
    Thus, passing to the limit on both sides of \eqref{pf:driftodd1} yields
    \begin{equation*}
       \E\int_0^1 h(t,-X(t))\hat{\alpha}(t,X(t))\,dt = -\E\int_0^1 h(t,X(t))\hat{\alpha}(t,X(t))\,dt.
    \end{equation*}
    The symmetry of the state process $X^k(t)$ obviously passes to the limit, and so $X(t) \stackrel{d}{=} - X(t)$. Apply this to the left-hand side to get
    \begin{equation*}
        \E\int_0^1 h(t,X(t))\hat{\alpha}(t,-X(t))\,dt = -\E\int_0^1 h(t,X(t))\hat{\alpha}(t,X(t))\,dt.
    \end{equation*}
    As $h$ was arbitrary, we finally deduce that
    \begin{equation*}
        \hat{\alpha}(t,-X(t)) = -\hat{\alpha}(t,X(t)), \quad \hat{\alpha}(t,X(t))X(t) \le 0,\qquad\text{a.s.\ for a.e.\ } t. \qedhere
    \end{equation*}
\end{proof}

\subsection{Proof of Lemma \ref{lem.Wpapproximation}}
    To start, we apply Proposition~\ref{lem.optproperties} to produce an optimizer $(m,\alpha)\in\cA(0,\delta_0)$ such that $\alpha$ is odd and inward-pointing, in the sense that
    \begin{equation*}
        \alpha(t,x) = -\alpha(t,-x),\qquad x\,\alpha(t,x) \leq 0,\qquad\text{for all } (t,x)\in[0,T]\times \R.
    \end{equation*}
    In addition, $m_t$ is even for every $t$. We proceed by first mollifying $(m,\alpha)$ to obtain regularized controls $(m^\delta,\alpha^\delta)\in\cA(0,\delta_0)$. We pass to the corresponding strong solutions of the state equation, and truncate the resulting open-loop controls to obtain the desired approximation.

    Let us first fix $\delta > 0$, and let $\gamma_\delta$ denote the Gaussian density with mean zero and variance $\delta$, and define
    \begin{equation}\label{eq.malpha.mollified}
        m_t^{\delta}(x) \coloneqq m_t * \gamma_{\delta}(x), \qquad \alpha^{\delta}(t,x)  \coloneqq \frac{ \big( m_t \alpha(t,\cdot) \big) * \gamma_{\delta}(x) }{m_t * \gamma_{\delta}(x)}
    \end{equation}
    for $(t,x)\in[0,T]\times\R$. Then, $m_t^\delta$ is a probability density on $\R$, and we abuse notation by using the same symbol $m_t^\delta$ for the corresponding measure. By a standard calculation, $(m^\delta, \alpha^\delta)$ satisfies the same Fokker--Planck equation as $(m,\alpha)$. Moreover, the $L^2$ constraint is preserved. Indeed, for $(t,x)\in[0,T]\times\R$
    \begin{equation}
        \label{eq.alpha.mollified.formula}
        \alpha^\delta(t,x) = \frac{\int_\R \alpha(t,y) \gamma_\delta(x-y)\,m_t(dy)}{\int_\R \gamma_\delta(x-y)\,m_t(dy)}.
    \end{equation}
    Thus, by Jensen's inequality,
    \begin{align*}
        \int_\R |\alpha^\delta(t,x)|^2 m_t^\delta(x)\,dx
        &\leq \int_\R \bigg(\frac{\int_\R |\alpha(t,y)|^2 \gamma_\delta(x-y)\,m_t(dy)}{\int_\R \gamma_\delta(x-y)\,m_t(dy)}\bigg)\,m_t^\delta(x)\,dx \\
        &= \int_\R |\alpha(t,y)|^2\,m_t(dy) \leq \cstar^2,
    \end{align*}
    where we used $m_t^\delta(x)=m_t*\gamma_\delta(x)$ to cancel the denominator and applied Fubini's theorem. Hence, $(m^\delta, \alpha^\delta)\in \cA(0,\delta_0)$.

    Next, because the second moment of $m_t$ is uniformly bounded in $t$, it is straightforward to check that the function $m_t^\delta(x) = m_t * \gamma_\delta(x)$ is locally bounded away from zero, uniformly in $t$. On the other hand,
    \begin{equation*}
        \big|\big( m_t \alpha(t,\cdot) \big) * \gamma_{\delta}(x)\big| \leq \|\gamma_\delta\|_{L^\infty(\R)} \int_\R |\alpha(t,y)| \,m_t(dy) \leq \cstar \|\gamma_\delta\|_{L^\infty(\R)}.
    \end{equation*}
    Thus, $\alpha^\delta(t,\cdot)$ is locally bounded, uniformly in $t$. Moreover, $\alpha^\delta(t,\cdot)$ is smooth, inherited from $\gamma_\delta$. Let us also show that $\alpha^\delta$ is odd and inward-pointing. The oddness comes directly from the oddness of $\alpha$, together with the evenness of $m_t$ and $\gamma_\delta$. For the inward-pointing property, it suffices then to show $\alpha^\delta(t,x) \leq 0$ for $x>0$. So let $x>0$, and observe that for every $y>0$,
    \begin{equation*}
        \alpha(t,y) \gamma_\delta(x-y) \leq -\alpha(t,-y) \gamma_\delta(x+y).
    \end{equation*}
    Combining this with the evenness of $m_t$, we get
    \begin{equation*}
        \int_0^\infty \alpha(t,y)\gamma_\delta(x-y)\,m_t(dy) \leq - \int_{-\infty}^0 \alpha(t,y)\gamma_\delta(x-y)\,m_t(dy).
    \end{equation*}
    In view of \eqref{eq.alpha.mollified.formula}, this yields $\alpha^\delta(t,x)\leq 0$.

    Let us now use the properties of $\alpha^\delta$ to obtain a unique strong solution to the state equation
    \begin{equation*}
        dX^{\delta}(t) = \alpha^{\delta}\big(t,X^\delta(t)\big)\, dt + dB(t), \qquad X^{\delta}(0) = X^{\delta}_0,
    \end{equation*}
    where $(B(t))_{t\in[0,T]}$ is a Brownian motion on some fixed probability space and $X_0^\delta\sim m_0^\delta=\gamma_\delta$. The local smoothness guarantees the existence and pathwise uniqueness of a solution up to some possible explosion time. The inward-pointing property rules out the possibility of blow-up. Indeed, define the stopping times
    \begin{equation*}
        \tau_M^\delta \coloneqq \inf\{t \geq 0 : |X^{\delta}(t)| \geq M\}\wedge T,\qquad M\in\N,
    \end{equation*}
    and notice that by It\^{o}'s formula and $m_0^\delta = \gamma_\delta$, for every $t\in[0,T]$
    \begin{equation}\label{eq.xdelta.secondmoment}
        \E\big(X^\delta(t\wedge \tau_M^\delta)\big)^2 =  \delta + \E \int_0^{t\wedge \tau_M^\delta} \big(2 X^\delta(s) \alpha^\delta\big(s, X^\delta(s)\big) + 1\big)\,ds \leq \delta + t.
    \end{equation}
    It follows that $M^2\PP[\tau_M^\delta \leq t] \leq \delta + t$ for every $M$ and $t$, and thus blow-up cannot occur.

    We now define, for each $M\in\N$, an open-loop control $\alpha^{\delta,M}$ by
    \begin{equation*}
        \alpha^{\delta,M}(t) \coloneqq 1_{\{t<\tau_M^\delta\}} \alpha^\delta\big(t, X^\delta(t)\big),\qquad 0\leq t \leq T,
    \end{equation*}
    and the corresponding state process
    \begin{equation*}
        X^{\delta, M}(t) \coloneqq X_0^\delta + \int_0^t \alpha^{\delta,M}(s)\,ds + B(t),\qquad 0\leq t \leq T.
    \end{equation*}
    Notice that $\alpha^{\delta,M}$ is uniformly bounded since $\alpha^\delta(t,\cdot)$ is locally bounded, uniformly in $t$. Moreover, $X^{\delta,M}$ and $X^\delta$ coincide before time $\tau_M^\delta$, and so the inward-pointing property is preserved in the sense that
    \begin{equation*}
        \alpha^{\delta, M}(t) X^{\delta, M}(t) \leq 0,\qquad \text{a.s.\ for a.e.\ $t$.}
    \end{equation*}
    As in \eqref{eq.xdelta.secondmoment}, one can show that this guarantees that for $t\in[0,T]$
    \begin{equation*}
        \E\big|X^{\delta,M}(t)\big|^{2k} \leq \E\big|B(\delta + t)\big|^{2k} = (\delta + t)^{k}\, \E|N(0,1)|^{2k},\qquad k\in\N,
    \end{equation*}
    by using It\^{o}'s formula and induction over $k$. In particular, $X^{\delta,M}(T)$ is bounded in $L^p$, uniformly in $M$. Since also $\PP[\tau_M^\delta < T] \to 0$ as $M\to\infty$, this implies
    \begin{equation}\label{xdeltam}
        \big\|X^{\delta,M}(T) - X^\delta(T)\|_p \longrightarrow 0 \qquad\text{as $M\to\infty$.}
    \end{equation}

    Next, we define a process $\widetilde{X}^{\delta,M}$ exactly like $X^{\delta,M}$, but started at zero:
    \begin{equation*}
        \widetilde{X}^{\delta,M}(t) \coloneqq \int_0^t \alpha^{\delta,M}(s)\,ds + B(t),\qquad 0\leq t \leq T.
    \end{equation*}
    In particular,
    \begin{align} \label{lpdelta}
        \big\| \wt{X}^{\delta, M}(T) - X^{\delta,M}(T)\big\|_p = \|X_0^{\delta}\|_p = \sqrt{\delta}\,\|N(0,1)\|_p.
    \end{align}
    As in the proof of Lemma~\ref{lem.equivstrong}, we can find a jointly measurable $\widetilde{\alpha}^{\delta,M}:[0,T]\times\R\to\R$ such that
    \begin{equation*}
        \wt{\alpha}^{\delta,M}\big(t,\wt{X}^{\delta,M}(t)\big) = \E\big[\alpha^{\delta,M}(t)\,\big|\,\wt{X}^{\delta,M}(t)\big],\qquad\text{a.s.\ for a.e.\ $t$},
    \end{equation*}
    and we set
    \begin{equation*}
        \wt{m}_t^{\delta,M} \coloneqq \cL\big(\wt{X}^{\delta,M}(t)\big),\qquad 0\leq t \leq T.
    \end{equation*}
    Then, $(\wt{m}^{\delta,M},\wt{\alpha}^{\delta,M})$ lies in $\cA(0,\delta_0)$, and we now argue that this gives the desired approximation.

    Fix $\eps>0$. Using $m_T^\delta = m_T * \gamma_\delta$ and \eqref{lpdelta}, we can first choose $\delta>0$ and then $M=M(\delta)$ to ensure that
    \begin{equation*}
        \sqrt{\delta}\,\|N(0,1)\|_p < \frac\eps3,\quad \bd_p\big(m_T^{\delta}, m_T\big) < \frac{\eps}{3},\quad \bd_p\Big(\cL\big(X^{\delta, M}(T)\big), m_T^{\delta}\Big) < \frac{\eps}{3}.
    \end{equation*}
    By \eqref{lpdelta} and the triangle inequality, we conclude
    \begin{equation*}
        \bd_p\big(\wt{m}_T^{\delta, M}, m_T\big) \leq \eps.
    \end{equation*}
    This completes the proof. \hfill \qedsymbol

\section{Bounds and asymptotics for the mean-field value}
\label{sec:bounds_and_asymptotics}

While at present we do not know how to solve the limiting optimization $V^{\infty}$ directly, it is possible to give some fairly explicit upper and lower bounds, which we present in this section. In fact, we will also give upper and lower bounds for the value $V_T^{\infty}$ of the problem with time horizon $T$ as defined in \eqref{def.vTinf}, which will allow us to obtain some asymptotics as $T \to \infty$ or $T \to 0$. As discussed already in the introduction, the proof of our main convergence result easily adapts to show that
\begin{align*}
    V_T^{\infty} \approx V^{n,m}, \quad \text{ for $n \gg 1$ and $\frac{m}{n} \approx T$},
\end{align*}
where $V^{n,m}$ is as in \eqref{def.vnm}. Thus, the asymptotics of $V_T^{\infty}$ give insight into the problem $V^{n,m}$ when either $m \gg n$ or $m \ll n$.

\subsection{First upper bound: A stationary solution}

We now prove the following upper bound which is uniform in $T$.

\begin{proposition} \label{prop.firstupperbound}
    For any $T > 0$, we have  $V_T^{\infty}  \le (\pi/2)^{3/2}$.
\end{proposition}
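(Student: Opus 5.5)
We will construct an explicit control for the horizon-$T$ problem, namely Brownian motion conditioned to stay in a fixed interval $(-L,L)$ forever. This is the Doob $h$-transform of Brownian motion killed on exiting $(-L,L)$, with $h(x)=\cos(\pi x/(2L))$, the principal Dirichlet eigenfunction of $\frac12\partial_{xx}$ with eigenvalue $-\pi^2/(8L^2)$. The drift is
\[
\hat\alpha(x)=\partial_x\log h(x)=-\frac{\pi}{2L}\tan\Big(\frac{\pi x}{2L}\Big),\qquad |x|<L,
\]
and the conditioned process has stationary density
\[
\rho(x)=\frac{1}{L}\cos^2\Big(\frac{\pi x}{2L}\Big)\quad\text{on } (-L,L),
\]
proportional to $h^2$. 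Since the density vanishes at $\pm L$, we will start the process in its stationary law $\rho$ rather than at $0$; the law then stays $\rho$ for all time, so the constraint needs checking only once.

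\textbf{Step 1 (constraint).} We compute
\[
\int \hat\alpha^2\rho\,dx=\frac{\pi^2}{4L^3}\int_{-L}^{L}\sin^2\Big(\frac{\pi x}{2L}\Big)\,dx=\frac{\pi^2}{4L^2}.
\]
Requiring this to be at most $2/\pi$ gives $L=\frac{\pi}{2}\sqrt{\pi/2}=(\pi/2)^{3/2}$. One should verify that $\rho$ is stationary for $\frac12\partial_{xx}m-\partial_x(m\hat\alpha)=0$. Indeed, the zero-flux condition $\frac12\rho'=\rho\hat\alpha$ holds because $\rho\propto h^2$ and $\hat\alpha=(\log h)'$. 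The process never exits $(-L,L)$, since $\tan$ blows up at the boundary, which is the standard fact that the $h$-transform stays inside.

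\textbf{Step 2 (from initial law $\rho$ to initial law $\delta_0$).} We will have $(m_t\equiv\rho,\hat\alpha)\in\cA$ on any horizon $[0,T]$ started from $\rho$, so
\[
\cV^\infty_T(\rho)\le[\rho]_\infty=L.
\]
Here we use the horizon-$T$ analogue of $\cV^\infty$; the Section~2 arguments are horizon-independent. Lemma~\ref{lem.vinf.convex}, rescaled to horizon $T$, says $\cV^\infty(t,\cdot)$ attains its global minimum at $\delta_0$. Hence
\[
V_T^\infty=\cV^\infty_T(\delta_0)\le\cV^\infty_T(\rho)\le(\pi/2)^{3/2}.
\]
This works because $\rho$ is even and in $\cP_2$, so the symmetry-plus-convexity argument applies.

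\textbf{Main obstacle.} The only delicate point is rigour in Step 1: $\hat\alpha$ is unbounded near $\pm L$, so the membership $(m,\hat\alpha)\in\cA(t_0,\rho)$ should be checked in the distributional sense of \eqref{eq.prelim.vinf.fpe}. This is a direct computation, since $m_t$ is constant in time and the flux $\frac12\rho'-\rho\hat\alpha$ vanishes identically, with $\rho\hat\alpha$ integrable. Working at the Fokker--Planck level avoids any SDE well-posedness issues entirely.
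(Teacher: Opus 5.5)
Your proposal is correct and is essentially the paper's own proof. You use the same stationary pair $\rho(x)=\frac{1}{L}\cos^2(\frac{\pi x}{2L})$, $\hat\alpha(x)=-\frac{\pi}{2L}\tan(\frac{\pi x}{2L})$ with $\frac{\pi}{2L}=\sqrt{2/\pi}$, the same constraint computation giving $L=(\pi/2)^{3/2}$, and the same appeal to the horizon-$T$ version of Lemma~\ref{lem.vinf.convex} to pass from initial law $\rho$ to $\delta_0$. Your Doob $h$-transform interpretation matches the paper's Remark~\ref{re:conditionedBM}.
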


\begin{proof}
    For $T > 0$, we introduce the value function $\cV_T^{\infty} : [0,T] \times \cP_2(\R) \to \R$, defined exactly like $\cV^{\infty}$ but over the time interval $[0,T]$, i.e.,
    \begin{align*}
        \cV_T^{\infty}(t_0,m_0) \coloneqq \inf_{(m,\alpha)} [m_T]_{\infty},
    \end{align*}
    with the infimum taken over all pairs $(m,\alpha)$ such that
    \begin{gather*}
        \partial_t m = \frac{1}{2}\partial_{xx} m - \partial_x(m\alpha) \text{ in } (t_0,T) \times \R, \quad m_{t_0} = m_0, \\
        \quad \int_{\R} \alpha(t,x)^2 \, m_t(dx) \leq c_0^2 \quad \text{ for a.e. $t \in [t_0,T]$},
    \end{gather*}
   and we use $c_0 = \sqrt{2/\pi}$ in what follows for clarity. Consider the measure $m^{\text{st}} \in \cP(\R)$ and the function $\alpha^{\text{st}} : \R \to \R$ defined by
   \begin{align*}
       m^{\text{st}}(dx) = \frac{2c_0}{\pi} \cos^2(c_0 x) 1_{\{|x| \leq \pi/(2c_0)\}} \, dx, \quad \alpha^{\text{st}}(x) = \begin{cases}
            - c_0 \tan(c_0 x) & |x| \leq \pi/(2c_0),
            \\
            0 & \text{otherwise.}
       \end{cases}
   \end{align*}
   The value of $\alpha^{\text{st}}$ for $|x| \geq \pi/(2c_0)$ plays no role, and could be set to any arbitrary value. Now, explicit computation shows that $(m^{\text{st}}, \alpha^{\text{st}})$ is a stationary solution of the Fokker--Planck equation, i.e.,
   \begin{align*}
       \frac{1}{2} \partial_{xx} m^{\text{st}} - \partial_x \big( m^{\text{st}} \alpha^{\text{st}} \big) = 0
   \end{align*}
   in a distributional sense. Moreover, we have
   \begin{align*}
       \int_{\R} (\alpha^{\text{st}}(x))^2 \, m^{\text{st}}(dx) &= \frac{2c_0^3}{\pi} \int_{-\pi/(2c_0)}^{\pi/(2c_0)} \tan^2(c_0 x) \cos^2(c_0 x) \,dx
       \\
       &= \frac{2c_0^3}{\pi} \int_{-\pi/(2c_0)}^{\pi/(2c_0)} \sin^2(c_0x)\,dx
       \\
       &= \frac{2c_0^2}{\pi} \int_{-\pi/2}^{\pi/2} \sin^2(y) \,dy = c_0^2.
   \end{align*}
   As a consequence, for any $T > 0$, any $t_0 \in [0,T)$, the pair
   \begin{align*}
       m_t = m^{\text{st}}, \quad \alpha(t,x) = \alpha^{\text{st}}(x)
   \end{align*}
   is admissible for the problem defining $\cV_T^{\infty}(t_0,m^{\text{st}})$, and thus
   \begin{align*}
       \cV^{\infty}_T(t_0, m^{\text{st}}) \leq \big[m^{\text{st}}\big]_{\infty} = \frac{\pi}{2c_0} = (\pi/2)^{3/2}.
   \end{align*}
   Finally, we note that Lemmas \ref{lem.equivstrong} and \ref{lem.vinf.convex} adapt in the obvious way to general $T \neq 1$, and it follows that
   \[
       V_T^{\infty} = \cV_T^{\infty}(0,\delta_0) \leq \cV_T^{\infty}(0,m^{\text{st}}) \leq (\pi/2)^{3/2}. \qedhere
   \]
\end{proof}

\begin{remark} \label{re:conditionedBM}
It is a classic result of Knight \cite{knight1969brownian} that the solution of the SDE
\[
dX(t) = \alpha^{\text{st}}(X(t)) \, dt + dB(t), \quad X(0)=0
\]
has the same law as Brownian motion conditioned to never exit the interval $[-a,a]$ where $a=\pi/(2c_0)=(\pi/2)^{3/2}$. More precisely, for each $t > 0$ the law of $(X_s)_{0 \le s \le t}$ is equal to the weak limit as $T\to\infty$ of the conditional law of $(B_s)_{0 \le s \le t}$ given $\{\sup_{0 \le s \le T}|B_s| < a\}$.
\end{remark}

\subsection{Second upper bound: The F\"ollmer drift construction}

The  F\"ollmer drift construction from Lemma \ref{lem.follmer} leads to another upper bound, which is tighter for small $T$ than that of Proposition \ref{prop.firstupperbound}.
It is given in terms of an eigenvalue problem for the Ornstein--Uhlenbeck operator restricted to a bounded domain.
In the following, let $L_T$ denote the Ornstein--Uhlenbeck infinitesimal generator, defined for smooth $f$ by
\[
L_Tf(x) \coloneqq  f''(x) - T^{-1}xf'(x).
\]
For each $r > 0$, let $H_{r,T} \coloneqq H^1((-r,r);\gamma_T|_{[-r,r]})$ denote the closure, with respect to the norm $\|f\|_{H^1(\gamma_T)} \coloneqq \|f\|_{L^2(\gamma_T)} + \|f'\|_{L^2(\gamma_T)}$, of the set of smooth functions with compact support contained in $(-r,r)$. For smooth compactly supported $f$ we have the integration by parts identity
\[
\int_{\R}f(-L_Tf)\,d\gamma_T = \int_{\R} (f')^2\,d\gamma_T,
\]
and it is standard to check that the domain of the closure of $L_T$ is precisely $H_{r,T}$. Let $\lambda_T(r)$ denote the smallest number $\lambda \ge 0$ such that there exists $f\in H_{r,T}$ satisfying $-L_T f = \lambda f$. Equivalently, this leading eigenvalue $\lambda_T(r)$ admits the Rayleigh--Ritz representation
\begin{equation}
\lambda_T(r) = \inf\big\{ \|f'\|_{L^2(\gamma_T)}^2 : f \in H_{r,T}, \ \|f\|_{L^2(\gamma_T)}=1 \big\}, \label{def:EvalProblem}
\end{equation}
and the infimum is uniquely attained by a solution of $-L_T f=\lambda_T(r) f$.
Clearly $\lambda_T(\cdot)$ is nonincreasing, because $H_{r,T}$ embeds into $H_{r',T}$ for any $r < r'$.
The recent paper \cite{colesanti2024brunn} contains a detailed study of this Dirichlet eigenvalue problem of the Ornstein--Uhlenbeck operator restricted to a convex domain, more generally in $\R^n$. Interestingly, their Theorem 1.2 shows that the leading eigenvalue  is convex with respect to Minkowski addition of the underlying domain, in the spirit of the Brunn--Minkowski inequality; this implies that our function $\lambda_T(\cdot)$ is convex. The equality case shown in \cite[Theorem 1.2]{qin2025strong} implies that $\lambda_T(\cdot)$ is in fact \emph{strictly} convex, and thus it is strictly decreasing. Hence, the inverse function $\lambda_T^{-1}$ is well-defined (though we do not really need this and could instead work with the right-continuous inverse).

\begin{proposition} \label{prop.secondupperbound}
With $\lambda_T(\cdot)$ defined as above, we have $V_T^{\infty} \le \lambda_T^{-1}(1/ (2\pi) )$.
\end{proposition}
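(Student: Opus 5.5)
We would prove this with the F\"ollmer drift construction of Lemma~\ref{lem.follmer}, set on the horizon $[0,T]$ and with the \emph{full} budget $\cstar$ rather than $\cstar/2$. Fix $r$ with $\lambda_T(r) < 1/(2\pi)$; such $r$ exist just above $\lambda_T^{-1}(1/(2\pi))$, since $\lambda_T$ is strictly decreasing and continuous. Let $g \in H_{r,T}$ be the principal eigenfunction, normalized by $\|g\|_{L^2(\gamma_T)}=1$. It is even, since the problem is symmetric and the minimizer is unique. It is nonnegative, since replacing $g$ by $|g|$ does not change the Rayleigh quotient. We then set $f \coloneqq g^2$.

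The first step is to check that $f$ is admissible. It is even, vanishes outside $[-r,r]$, and satisfies $\int f\,d\gamma_T = 1$. The Fisher-type quantity is
\[
\int_{\{f>0\}} \frac{|f'|^2}{f}\,d\gamma_T = 4\int (g')^2\,d\gamma_T = 4\lambda_T(r) < \frac{2}{\pi} = \cstar^2 .
\]
We then run the argument of Lemma~\ref{lem.follmer} with $p=2$ on $[0,T]$, using $\gamma_T$ in place of $\gamma_{1-t_0}$. Write $v(t,x) = \log P_{T-t} f(x)$ and $dX = \partial_x v(t,X)\,dt + dB$. This gives $X(T)\sim f\,d\gamma_T$, so that $\|X(T)\|_\infty = [f]_\infty \le r$. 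It also shows that $\alpha(t)=\partial_x v(t,X(t))$ is a martingale, and Jensen's inequality then yields
\[
\E\,\alpha(t)^2 \le \E\,\alpha(T)^2 = \int \frac{|f'|^2}{f}\,d\gamma_T \le \cstar^2 .
\]
So $\alpha$ is admissible for $V_T^\infty$, and by the time-$T$ version of Lemma~\ref{lem.equivstrong} we get $V_T^\infty \le r$. Letting $r \downarrow \lambda_T^{-1}(1/(2\pi))$ gives the claim.

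The main obstacle is that Lemma~\ref{lem.follmer} assumes $f$ is absolutely continuous and that $(\log f)'\in L^2(f\,d\gamma_T)$, and both need justification. For absolute continuity, the eigenfunction is smooth on $(-r,r)$ by ODE regularity, since it solves $g''-T^{-1}xg'=-\lambda g$, and it vanishes at $\pm r$. Hence $f=g^2$ is $C^1$ on $[-r,r]$ and vanishes at the endpoints, with $f'=2gg'$. The integrability condition then follows from the identity above.

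Two further points must be checked. First, $g>0$ on $(-r,r)$, since a principal Dirichlet eigenfunction does not vanish in the interior, so $\{f>0\}=(-r,r)$. Second, $\alpha$ need not be bounded near time $T$, which is harmless because the argument only uses the $L^2$ bound. Alternatively, one can avoid regularity issues altogether by taking a near-minimizing smooth compactly supported $g$ in the Rayleigh quotient \eqref{def:EvalProblem}, replacing it by the even function $|g|$ smoothed, with quotient still below $1/(2\pi)$; in that case the slack from $\lambda_T(r)<1/(2\pi)$ absorbs the approximation error.
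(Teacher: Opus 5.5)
Your proposal is correct and follows essentially the same route as the paper: take a minimizer $g$ of the Rayleigh quotient on $(-r,r)$, run the F\"ollmer process targeting $g^2\,d\gamma_T$, and use the martingale property of the drift together with Jensen's inequality to get $\E\,\alpha(t)^2 \le 4\|g'\|_{L^2(\gamma_T)}^2 \le 2/\pi$. The differences are cosmetic. The paper works directly with any $r$ satisfying $\lambda_T(r)\le 1/(2\pi)$, so your strict inequality and the limit $r\downarrow\lambda_T^{-1}(1/(2\pi))$ are unnecessary, and it does not spell out the regularity and positivity checks that you include.
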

\begin{proof}
Equivalently, we must show that $V_T^{\infty}\le r$ for any $r > 0$ such that $\lambda_T(r) \le 1/ (2\pi)$. Let $f \in H_{r,T}$ satisfy $\|f\|_{L^2(\gamma_T)}=1$ and $\|f'\|_{L^2(\gamma_T)}^2 \le 1/(2\pi)$; indeed, the existence of an optimal $f$ in \eqref{def:EvalProblem} follows from standard Sturm--Liouville theory. Define $\mu \ll \gamma_T$ by $d\mu=f^2\,d\gamma_T$. Consider the F\"ollmer process constructed as in Lemma \ref{lem.follmer}. Precisely, set $v(t,x) = \log P_{T-t} f^2(x)$, where $P_s$ is again the heat semigroup. As explained in Lemma \ref{lem.follmer}, the SDE
\begin{align*}
dX(t) = \partial_x v(t,X(t))\,dt + dB(t), \quad X(0) = 0,
\end{align*}
admits a pathwise unique strong solution, and it satisfies $X(T) \sim \mu$. Moreover, the drift $\partial_xv(t,X(t))$ is a martingale, and the submartingale $|\partial_xv(t,X(t))|^2$ satisfies
\[
\E|\partial_xv(t,X(t))|^2 \le \E|\partial_xv(T,X(T))|^2 = \int_\R |(\log f^2)'|^2\,d\mu  = 4\int_\R (f')^2\,d\gamma_T \le \frac{2}{\pi}.
\]
This provides an admissible control and thus shows that $V_T^{\infty}\le r$.
\end{proof}

Notice that there is a simple scaling relation between $T$ and $r$: For $s > 0$, since the pushforward of $\gamma_T$ by $x\mapsto \sqrt{s} x$ is $\gamma_{sT}$, by setting $g(x)=f(\sqrt{s}x)$ we have
\begin{align*}
\lambda_T(r) &= \inf\big\{ \|f'\|_{L^2(\gamma_T)}^2 : f \in H_{r,T}, \ \|f\|_{L^2(\gamma_T)}=1 \big\} \\
    &= \inf\big\{ s\|g'\|_{L^2(\gamma_{sT})}^2 : g \in H_{r\sqrt{s},Ts}, \ \|g\|_{L^2(\gamma_{sT})}=1 \big\}  \\
    &= s\lambda_{sT}(r\sqrt{s}).
\end{align*}
In particular, $\lambda_{T}(r)=T^{-1}\lambda_1(rT^{-1/2})$, and so $\lambda_T^{-1}(1/(2\pi)) = \sqrt{T}\lambda_1^{-1}(T/(2\pi))$. We will use this scaling relationship to show that this strategy gives an asymptotically tight upper bound in the $T \to 0$ limit.

\begin{proposition} \label{prop.smalltimeasymptotics}
    With $\lambda_T(\cdot)$ defined as above, we have $\lambda_T^{-1}(1/ (2 \pi)) \sim \sqrt{2T \log(1/T)}$ as $T \to 0$.
\end{proposition}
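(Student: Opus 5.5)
By the scaling identity just derived, $\lambda_T^{-1}(1/(2\pi)) = \sqrt{T}\,\lambda_1^{-1}(T/(2\pi))$. The claim is therefore equivalent to
\[
\lambda_1^{-1}(\epsilon) \sim \sqrt{2\log(1/\epsilon)} \quad \text{as } \epsilon \to 0,
\]
since $\log(2\pi/T)\sim\log(1/T)$. Because $\lambda_1$ is strictly decreasing and continuous, it suffices to show that $\log(1/\lambda_1(r)) \sim r^2/2$ as $r\to\infty$. In other words, the Dirichlet eigenvalue of the standard Ornstein--Uhlenbeck operator on $(-r,r)$ should decay like $e^{-r^2/2+o(r^2)}$. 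I will prove matching upper and lower bounds on $\lambda_1(r)$ and then invert them.

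\emph{Upper bound on $\lambda_1(r)$.} Insert the trapezoidal test function from the proof of Lemma~\ref{lem.follmervalue}: $f=1$ on $[-(r-1),r-1]$, decreasing linearly to $0$ on $r-1\le|x|\le r$. Then
\[
\|f'\|^2_{L^2(\gamma_1)} \le 2\gamma_1([r-1,r]) \le C e^{-(r-1)^2/2},
\]
while $\|f\|^2_{L^2(\gamma_1)}\ge 1/2$. Hence $\lambda_1(r)\le Ce^{-(r-1)^2/2}$, which gives $\liminf_{r\to\infty} \log(1/\lambda_1(r))/(r^2/2)\ge 1$. This yields $\lambda_1^{-1}(\epsilon)\le(1+o(1))\sqrt{2\log(1/\epsilon)}$, i.e.\ the upper bound on $V_T^\infty$-type quantities.

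\emph{Lower bound on $\lambda_1(r)$.} This is the main obstacle. Two routes are available.

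\textbf{Route 1 (via the paper's own results).} Proposition~\ref{prop.secondupperbound} gives $V_T^\infty\le\lambda_T^{-1}(1/(2\pi))$, and Proposition~\ref{prop.firstlowerbound} together with Remark~\ref{rmk.phiinverse.taylor} gives $V_T^\infty \ge (1-o(1))\sqrt{2T\log(1/T)}$. Combining these yields $\lambda_T^{-1}(1/(2\pi))\ge(1-o(1))\sqrt{2T\log(1/T)}$ directly. Those results appear later in the paper, however, so I prefer a self-contained argument.

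\textbf{Route 2 (direct one-dimensional Hardy/Muckenhoupt estimate).} Let $f\in H_{r,1}$ be even with $\|f\|_{L^2(\gamma_1)}=1$, and take $x>0$. Since $f(r)=0$, Cauchy--Schwarz gives
\[
f(x)^2 = \Big(\int_x^r f'\Big)^2 \le \int_x^r (f')^2\varphi\cdot\int_x^r \varphi^{-1},
\]
where $\varphi$ is the Gaussian density. Then
\[
\int_x^r \varphi^{-1}\le \sqrt{2\pi}\,(r-x)\,e^{r^2/2}.
\]
Integrating $f(x)^2$ against $\gamma_1$ over $[0,r]$ gives
\[
\tfrac12 \le \|f'\|^2\cdot C r\, e^{r^2/2}, \qquad\text{so}\qquad \lambda_1(r)\ge c\,r^{-1}e^{-r^2/2}.
\]
This implies $\limsup \log(1/\lambda_1(r))/(r^2/2)\le 1$. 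Symmetrization is harmless, since the ground state is even; alternatively, apply the argument on each half-line separately.

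Combining the two bounds gives $\log(1/\lambda_1(r))= r^2/2+O(r)$. Inverting at level $\epsilon=T/(2\pi)$ gives $\lambda_1^{-1}(\epsilon)=\sqrt{2\log(1/\epsilon)}\,(1+o(1))$. Multiplying by $\sqrt{T}$ completes the proof.
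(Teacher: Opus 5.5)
Your proof is correct. It shares the paper's skeleton: the same scaling reduction to $\lambda_1^{-1}(T/(2\pi))$, and the same target $\lambda_1(r)=e^{-(1+o(1))r^2/2}$. Where you differ is in how the two bounds on $\lambda_1(r)$ are obtained.

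\emph{Upper bound on $\lambda_1(r)$.} The paper's test function is $f_r(x)=e^{r^2/2}-e^{x^2/2}$ rather than your trapezoid. Your trapezoid loses an $e^{O(r)}$ factor, which is harmless at the logarithmic scale.

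\emph{Lower bound on $\lambda_1(r)$.} The paper uses Barta's method. The same $f_r$ is a positive supersolution, $-Lf_r=e^{x^2/2}\ge e^{-r^2/2}f_r$. Integrating this against the ground state $g$, which is positive by Sturm--Liouville theory, and using self-adjointness gives the sharp bound $\lambda_1(r)\ge e^{-r^2/2}$. So a single explicit function controls both sides, with no polynomial loss. Your Cauchy--Schwarz/Muckenhoupt argument instead proves a weighted Hardy inequality $\|f\|_{L^2(\gamma_1)}^2\le C\,r\,e^{r^2/2}\|f'\|_{L^2(\gamma_1)}^2$ directly for every $f\in H_{r,1}$. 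This needs no spectral input: neither existence nor positivity of the eigenfunction. The cost is a factor $r^{-1}$, which is invisible after taking logarithms.

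The half-line variant you mention is the cleanest way to run your argument. Use $f(r)=0$ on $[0,r]$ and $f(-r)=0$ on $[-r,0]$, then sum. This gives the inequality for arbitrary $f$, so you never need evenness of the ground state.

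Your Route 1 is also valid and not circular, since the proof of Proposition~\ref{prop.firstlowerbound} does not use the present result. It shows that the lower half of the asymptotics already follows from $V_T^\infty\le\lambda_T^{-1}(1/(2\pi))$ combined with the entropy lower bound on $V_T^\infty$.
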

\begin{proof}
    By the above scaling relation, it suffices to show that $\lambda_1^{-1}(T/ (2 \pi)) \sim \sqrt{2 \log(1/T)}$ as $T \to 0$.
    This will follow if we can show that
    \begin{equation}\label{eq:asymp:lambda}
        \lambda_1(r) = e^{-(1+o(1))r^2/2}
    \end{equation}
    as $r \to \infty$.
    Fix $r$ large, and define for $|x| \leq r$
    \begin{equation}
        f_r(x) \coloneqq e^{r^2/2} - e^{x^2/2}\,.
    \end{equation}
    This function lies in $H_{r,1}$, and satisfies
    \begin{align*}
        \|f\|_{L^2(\gamma)} & = e^{(1+o(1))r^2/2} \\
        \|f'\|_{L^2(\gamma)}^2 & =  e^{(1+o(1))r^2/2}\,.
    \end{align*}
    Renormalizing, we obtain an upper bound on $\lambda_1(r)$, yielding one direction of \eqref{eq:asymp:lambda}.

    To obtain the other direction, we employ Barta's method.
    Note that with $L=L_1$
    \begin{equation}\label{eq:asymp:barta}
        (- L f_r)(x) = e^{x^2/2} \geq e^{-r^2/2} f_r(x) \quad \quad \forall |x| \leq r\,.
    \end{equation}
    Let $g$ be optimal in \eqref{def:EvalProblem}.
    Sturm--Liouville theory guarantees that $g$ is positive on $(-r, r)$, which, combined with~\eqref{eq:asymp:barta} yields
    \begin{equation*}
        0 \leq \int g ((-L f_r) - e^{-r^2/2} f_r) d \gamma = (\lambda_1(r) - e^{-r^2/2}) \int g f_r d \gamma = (\lambda_1(r) - e^{-r^2/2}) c
    \end{equation*}
    for some $c > 0$.
    We obtain the lower bound $\lambda_1(r) \geq e^{-r^2/2}$, proving the claim.
    \end{proof}

\subsection{A lower bound: Relaxing the constraint}

Simple information-theoretic considerations lead to the following lower bound, which is tight for small $T$:

\begin{proposition} \label{prop.firstlowerbound}
With $\Phi$ denoting the cumulative distribution function of the standard Gaussian, we have
\[
V_T^{\infty} \ge \sqrt{T}\,\Phi^{-1}\Big(\frac{1+e^{-T/\pi}}{2}\Big).
\]
\end{proposition}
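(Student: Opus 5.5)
Fix an admissible control $\alpha$ on $[0,T]$ with $\|\alpha(t)\|_2\le c_0=\sqrt{2/\pi}$ and $\|X(T)\|_\infty = r<\infty$, where $X(t)=\int_0^t\alpha\,ds+B(t)$. The plan is to relax the uniform-in-time constraint to the integrated one, $\E\int_0^T\alpha(t)^2\,dt\le c_0^2T = 2T/\pi$. Then we bound the relative entropy of the law of $X(T)$ against $\gamma_T$ and combine it with the fact that $\cL(X(T))$ is supported in $[-r,r]$. The target is the data-processing inequality for relative entropy, applied to the path-space law. Let $\PP^X$ be the law of $X$ on $C[0,T]$ and $W$ the Wiener measure. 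The standard Girsanov entropy bound (e.g.\ \cite[Proposition 1]{lehec2013representation}, already invoked in the proof of Proposition~\ref{prop:primal-optimizers-finite-p}) gives
\begin{align*}
H(\PP^X\,|\,W) \le \frac12\E\int_0^T \alpha(t)^2\,dt \le \frac{T}{\pi}.
\end{align*}
Pushing forward by $\omega\mapsto\omega(T)$, we get $H(\mu\,|\,\gamma_T)\le T/\pi$, where $\mu=\cL(X(T))$.

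Next we use the support constraint. Since $\mu([-r,r])=1$, the data-processing inequality for the two-point partition $\{[-r,r],[-r,r]^c\}$, or directly the identity $H(\mu|\gamma_T)\ge -\log\gamma_T([-r,r])$ valid for $\mu$ concentrated on a set, gives
\begin{align*}
\frac{T}{\pi} \ge H(\mu\,|\,\gamma_T) \ge -\log \gamma_T([-r,r]) = -\log\big(2\Phi(r/\sqrt{T})-1\big).
\end{align*}
The minimum-entropy measure with this support is $\gamma_T$ conditioned on $[-r,r]$, which is the least-entropy principle mentioned in the introduction. Rearranging yields $2\Phi(r/\sqrt T)-1\ge e^{-T/\pi}$, i.e.\ $r\ge \sqrt T\,\Phi^{-1}\big((1+e^{-T/\pi})/2\big)$. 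Taking the infimum over admissible $\alpha$ with finite cost gives the claim; if no admissible $\alpha$ had finite cost the bound would hold trivially, and in any case Proposition~\ref{prop.firstupperbound} shows the value is finite.

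The only point needing care is the Girsanov entropy bound for general progressively measurable $\alpha$ that is merely in $L^2(dt\otimes d\PP)$, with no Novikov condition available. I would handle this by localizing: stop at $\tau_N=\inf\{t:\int_0^t\alpha^2\ge N\}$, apply Girsanov to the stopped drift, and use lower semicontinuity of relative entropy as $N\to\infty$. Alternatively, I would cite Lehec's representation directly, since $H(\PP^X|W)\le\frac12\E\int\alpha^2$ holds for any adapted finite-energy drift. The remaining steps are elementary.
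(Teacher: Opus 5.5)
Your proposal is correct and follows essentially the same route as the paper: you bound the path-space relative entropy by Girsanov (via \cite[Proposition 1]{lehec2013representation}), push it forward to obtain $H(\mu\,|\,\gamma_T)\le T/\pi$, and combine this with $H(\mu\,|\,\gamma_T)\ge -\log\gamma_T([-r,r])$ for $\mu$ supported in $[-r,r]$. The only cosmetic difference is that the paper takes an arbitrary $r>V_T^{\infty}$, whereas you take $r=\|X(T)\|_\infty$ for each admissible control and then pass to the infimum; the two are equivalent.
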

\begin{proof}
Fix an admissible control $\alpha$, and let $\mathbb{P}$ denote the law on path space $C[0,T]$ of the corresponding process $X$. Let $\mathbb{W}$ denote Wiener measure. Let $\mu$ denote the law of $X_T$. By the data processing inequality and Girsanov's theorem, we have the relative entropy bound \cite[Proposition 1]{lehec2013representation}
\[
    H(\mu\,|\,\gamma_T) \le H(\mathbb{P}\,|\,\mathbb{W}) \le \frac12 \, \E\int_0^T \alpha(t)^2 \, dt \le \frac{T}{\pi}.
    \]
On the other hand, the infimum of $H(\mu\,|\,\gamma_T)$ over all measures $\mu$ supported on $[-r,r]$ is exactly $-\log \gamma_T([-r,r])$, for each $r>0$; indeed, the unique minimizer is just $\gamma_T$ conditioned on $[-r,r]$. If $r > V_T^{\infty}$, then there exists a control achieving this support $[-r,r]$ for $\mu$, and thus
\[
\frac{T}{\pi} \ge -\log \gamma_T([-r,r]).
\]
Noting that $\gamma_T[-r,r]=2\Phi(r/\sqrt{T})-1$, we deduce
\[
r \ge \sqrt{T}\,\Phi^{-1}\Big(\frac{1+e^{-T/\pi}}{2}\Big).
\]
As this holds for every $r > V_T^{\infty}$, the proof is complete.
\end{proof}

Observe from Lemma \ref{lem.nondecreasing} that $T \mapsto V_T^{\infty}$ is nondecreasing. Thus, for all $T > 0$, Proposition \ref{prop.firstlowerbound} implies
\begin{equation}\label{eq.phiinverse.monotonicity}
    V_T^{\infty} \ge \sup_{0 < t < T}\sqrt{t}\,\Phi^{-1}\Big(\frac{1+e^{-t/\pi}}{2} \Big).
\end{equation}

\begin{remark} \label{rmk.phiinverse.taylor}
At leading order, as $T\to 0$, note that the lower bound of Proposition \ref{prop.firstlowerbound} is
\[
\sqrt{T}\,\Phi^{-1}\Big(\frac{1+e^{-T/\pi}}{2}\Big) \sim \sqrt{2T\log(1/T)}.
\]
\end{remark}

The lower bound of Proposition \ref{prop.firstlowerbound} is precisely what arises by relaxing the constraint on the control, from $\sup_{t \in [0,T]}\E[\alpha(t)^2] \le 2/\pi$ to $\int_0^T \E[\alpha(t)^2]\,dt \le 2T/\pi$. Indeed, the latter constraint, by a standard Girsanov calculation, is equivalent to $H(\mathbb{P}\,|\,\mathbb{W}) \le T/\pi$. Hence, recalling the definition of $V^\infty_T$ from \eqref{def.vTinf}, we have
\begin{align*}
    V_T^{\infty} &\ge \inf \bigg\{ \Big\| \int_0^T \alpha(t)\, dt + B(T) \Big\|_\infty : \int_0^T \E[\alpha(t)^2]\,dt \leq \frac{2T}{\pi}\bigg\} \\
        &= \inf\big\{ [\mathbb{P}_T]_\infty : \mathbb{P} \in \cP(C[0,T]), \  H(\mathbb{P}\,|\,\mathbb{W}) \le T/\pi\big\},
\end{align*}
where we recall that $[m]_\infty$ for $m \in \cP(\R)$ was defined in \eqref{mfc}, and here we write $\mathbb{P}_T$ for the time-$T$ marginal of the path measure $\mathbb{P}$. The right-hand side further simplifies to
\begin{equation}
\inf\big\{ [\mu]_\infty : \mu \in \cP(\R), \  H(\mu\,|\,\gamma_T) \le T/\pi\big\}, \label{def:entropyminimization}
\end{equation}
by the data processing inequality; equality holds because any $\mu \ll\gamma_T$ induces a path measure $\mathbb{P}(d\omega) \coloneqq \frac{d\mu}{d\gamma_T}(\omega(T))\mathbb{W}(d\omega)$ satisfying $\mathbb{P}_T=\mu$ and $H(\mu\,|\,\gamma_T)=H(\mathbb{P}\,|\,\mathbb{W})$. (In fact, $\mathbb{P}$ is precisely the F\"ollmer process associated with $\mu$, \cite{lehec2013representation}.) Finally, the proof of Proposition \ref{prop.firstlowerbound} shows that \eqref{def:entropyminimization} is precisely $\sqrt{T}\,\Phi^{-1}\big((1 + e^{-T/\pi}) / 2 \big)$.
This interpretation of Proposition \ref{prop.firstlowerbound} explains why it is sharp for small $T$; the relaxation of the constraint from a supremum over $[0,T]$ to an integral has less of an effect for a small horizon $T$.

\section{An explicitly solvable \texorpdfstring{$L^2$}{L2} variant of the limiting problem}
\label{sec:l2-problem}

In this section, we explicitly solve an $L^2$ version of the limiting problem $V_T^\infty$ where the $L^\infty$ norm in the objective is replaced by the $L^2$ norm. Specifically, fix $T>0$ and consider
\begin{equation*}
    V_T^2 \coloneqq \inf_{\alpha} \|X(T)\|_2,\qquad X(t) = \int_0^t \alpha(s)\,ds + B(t),\quad 0 \le t \le T,
\end{equation*}
where the infimum is taken over all progressively measurable processes $\alpha$ satisfying
\begin{equation*}
    \|\alpha(t)\|_2 \le c_0 \coloneqq \sqrt{\frac{2}{\pi}},\qquad \text{for all } 0 \le t \le T.
\end{equation*}
This control problem naturally arises as the scaling limit of the $\ell^2$-version of the discrete problem $V^{n,m}$ with $m/n\to T$; see Remark~\ref{rmk.l2.scalinglimit}.

We showed in Corollary~\ref{cor.duality.fbsde} that $V_T^2$ admits a unique optimal control $\alpha$, characterized by the solution of a forward-backward stochastic differential equation.\footnote{The statement of Corollary~\ref{cor.duality.fbsde} is given for $T=1$, but the proof extends verbatim to arbitrary $T>0$.} We now solve this system explicitly.

\begin{proposition}\label{prop.l2control.lowerbound}
    For $T>0$, the forward-backward stochastic differential equation
    \begin{equation}\label{eq.l2control.fbsde}
        \left\{
            \begin{alignedat}{2}
                dX(t) &= -c_0 \frac{Y(t)}{\|Y(t)\|_{2}}\,dt + dB(t),\qquad&& X(0) = 0,\\
                dY(t) &= Z(t)\, dB(t),\qquad&& Y(T) = \frac{X(T)}{\|X(T)\|_2},
                \qquad \smash{\raisebox{\normalbaselineskip}{$0 \le t \le T$,}}
            \end{alignedat}
        \right.
    \end{equation}
    has a unique solution $(X,Y,Z)$, given by
    \begin{equation}\label{eq.l2control.solution.explicit}
        X(t) = \int_0^t \frac{w(t)}{w(s)}\,dB(s),\quad Y(t) = \int_0^t Z(s)\,dB(s),\quad Z(t) = \frac{2c_0 w(T)}{w(t)(1-w(T))},
    \end{equation}
    for $0 \le t \le T$, where
    \begin{equation*}
        w(t) \coloneqq -W_0\big(-e^{-2c_0^2 t - 1}\big) \in (0,1].
    \end{equation*}
    Here, $W_0$ denotes the principal branch of the Lambert $W$-function. Moreover, for $0 < t \leq T$,
    \begin{equation}\label{eq.l2control.solution.relations}
        \|X(t)\|_2 = \frac{1-w(t)}{2c_0},\qquad \frac{Y(t)}{\|Y(t)\|_2} = \frac{X(t)}{\|X(t)\|_2}.
    \end{equation}
    Consequently, the process
    \begin{equation}\label{eq.l2control.solution.control}
        \alpha(t) \coloneqq -c_0 \frac{X(t)}{\|X(t)\|_2},\qquad 0 < t \le T.
    \end{equation}
    is the unique optimal control for $V_T^2$.
\end{proposition}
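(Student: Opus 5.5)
The plan is to solve the system by a Gaussian ansatz, verify the explicit formulas directly, and then get uniqueness and optimality from Corollary~\ref{cor.duality.fbsde}. That corollary applies with $p=2$ and $\delta=0$ on the horizon $[0,T]$; there the minimum in $\lambda(t)$ is read as $c_0/\|Y(t)\|_2$. It says the optimizers of $V_T^2$ and of the primal problem are given by the \emph{unique} solution of this forward-backward system. So it suffices to exhibit one solution, and by that uniqueness it is the solution, with $\alpha(t)=-c_0 Y(t)/\|Y(t)\|_2$ the unique optimal control.

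\textbf{Forward equation.} I would guess that $Y(t)$ is a positive deterministic multiple of $X(t)$. Then the forward equation becomes the linear SDE
\[
dX(t) = -\frac{c_0}{\sigma(t)}\,X(t)\,dt + dB(t), \qquad \sigma(t) \coloneqq \|X(t)\|_2 .
\]
By It\^o's formula, $\frac{d}{dt}\sigma^2 = 1-2c_0\sigma$ with $\sigma(0)=0$.

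Next I check that $\sigma=(1-w)/(2c_0)$ solves this ODE. The Lambert relation gives $\log w - w = -2c_0^2t-1$, so $w(0)=1$ and $w' = -2c_0^2 w/(1-w)$. On one side, $\sigma' = -w'/(2c_0) = c_0w/(1-w)$. On the other, $2\sigma\sigma' = 1-2c_0\sigma = w$, which gives the same value. Since $w$ is strictly decreasing, uniqueness of the ODE solution among positive functions with $\sigma(0)=0$ holds, although it is not even needed here. Moreover $w'/w = -2c_0^2/(1-w) = -c_0/\sigma$, so the linear SDE is solved by $X(t)=\int_0^t \frac{w(t)}{w(s)}\,dB(s)$. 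To confirm this, I would apply the product rule to $w(t)\cdot\int_0^t w(s)^{-1}\,dB(s)$ and compute $\|X(t)\|_2^2=\int_0^t w(t)^2/w(s)^2\,ds$, checking that this matches $\sigma^2$ by differentiation.

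\textbf{Backward equation.} Set $Y(t)\coloneqq \E[X(T)\,|\,\F(t)]/\sigma(T)$. Since $X(T)$ is the stochastic integral above, this equals $\frac{w(T)}{\sigma(T)}\int_0^t w(s)^{-1}\,dB(s)$. This yields $Z(t) = \frac{w(T)}{w(t)\sigma(T)} = \frac{2c_0w(T)}{w(t)(1-w(T))}$, and $Y(T)=X(T)/\|X(T)\|_2$. It also gives
\[
Y(t) = \frac{w(T)}{w(t)\sigma(T)}\,X(t),
\]
a positive deterministic multiple of $X(t)$. Hence $Y(t)/\|Y(t)\|_2 = X(t)/\|X(t)\|_2$ for $t>0$, which confirms the ansatz, makes the drift consistent, and gives \eqref{eq.l2control.solution.relations}.

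\textbf{Main obstacle.} The delicate point is the singularity at $t=0$, where $\sigma(0)=0$ and the feedback coefficient $c_0/\sigma(t)$ blows up. I would handle it by working on $(0,T]$. The integrand $w(t)/w(s)$ is bounded by $1$, since $w$ is decreasing, so $X$ is well defined. The drift $\alpha(t)$ has $\|\alpha(t)\|_2=c_0$, so $\int_0^t\alpha\,ds$ converges in $L^2$ and the SDE holds in integrated form from $0$. I would also verify that $\|Y(t)\|_2>0$ for $t>0$, so that the solution meets the hypotheses of Corollary~\ref{cor.duality.fbsde}.
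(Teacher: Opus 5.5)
Your proposal is correct and follows the paper's proof. Both invoke Corollary~\ref{cor.duality.fbsde} with $p=2$ and $\delta=0$ for uniqueness and optimality, then verify the explicit triple using It\^o's product rule, the identity $w'/w=-c_0/\|X(t)\|_2$, and the fact that $Y(t)$ is a positive deterministic multiple of $X(t)$. The only difference is cosmetic: you confirm $\|X(t)\|_2=(1-w(t))/(2c_0)$ by matching the variance ODE $\frac{d}{dt}\sigma^2=1-2c_0\sigma$, whereas the paper evaluates $\int_0^t w(s)^{-2}\,ds=(1-w(t))^2/(4c_0^2w(t)^2)$ directly via the substitution $u=w(s)$.
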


\begin{remark} \label{rmk.l2.longtime}
    Because the $L^2$ norm bounds the $L^\infty$ norm from below, we obtain the alternative lower bound
    \begin{equation*}
        V_T^\infty \geq V_T^2 = \frac{1 - w(T)}{2c_0}.
    \end{equation*}
    However, one can see that this bound is strictly worse than that of \eqref{eq.phiinverse.monotonicity} for all times $T>0$.
\end{remark}

\begin{remark} \label{rmk.l2.scalinglimit}
    As mentioned above, if $n\to\infty$ and $m/n\to T$, the control problem $V_T^2$ is the scaling limit of the $\ell^2$-version of $V^{n,m}$, in which the $\ell^\infty$-objective is replaced by the Euclidean norm. The lower bound is shown exactly as in Section~\ref{sec:lower-bound}. The proof of the upper bound is significantly simpler than in the $\ell^\infty$ setting, especially because the optimal control $(\alpha(t))_{0 \leq t \leq T}$ of $V_T^2$ is bounded in $L^p$ for arbitrary $p>0$.\footnote{In fact, from \eqref{eq.l2control.solution.explicit} and \eqref{eq.l2control.solution.control}, we know that $\alpha(t)\sim \cN(0,c_0^2)$ for every $t>0$.} Following the strategy used for Proposition~\ref{prop.vnupperbound}, one first takes a system $\bX$ of $n$ independent copies of the continuous state process, driven by the optimal control $\alpha$, and then invokes Lemma~\ref{lem.coupling.discrete} in a suitable way to construct Gaussian increments $\bm{Z}$ and an admissible choice $\eps$. One can then show that
    \begin{equation*}
        \frac{1}{\sqrt{n}} \E|\bY(m) - \bX(T)|_2 = o_n(1),
    \end{equation*}
    by following similar estimates as in the proof of Proposition~\ref{prop.vnupperbound}.
\end{remark}

Before proving the proposition, let us recall the Lambert $W$-function and some basic properties. The Lambert $W$-function, also called the product logarithm, is the multivalued inverse of the map $w \mapsto we^w$ on $\mathbb{C}$. On the real line, the function $f(w)=we^w$ is strictly decreasing on $(-\infty,-1)$, strictly increasing on $(-1,\infty)$, and attains its minimum value $f(-1)=-e^{-1}$. Accordingly, it has two real branches: the principal branch $W_0:[-e^{-1},\infty)\to[-1,\infty)$ and the lower branch $W_{-1}:[-e^{-1},0)\to(-\infty,-1]$. Differentiating the identity $W_0(x)e^{W_0(x)}=x$, we obtain
\begin{equation}\label{eq:lambert-ode}
    W_0'(x)=\frac{W_0(x)}{x(1+W_0(x))},\qquad x\notin\Big\{0,-\frac1e\Big\}.
\end{equation}

\begin{proof}[Proof of Proposition~\ref{prop.l2control.lowerbound}]
    By Corollary~\ref{cor.duality.fbsde}, the forward-backward system \eqref{eq.l2control.fbsde} has a unique solution $(X,Y,Z)$, and the corresponding drift $\alpha(t)=-c_0 Y(t)/\|Y(t)\|_2$ is the unique optimal control for $V_T^2$. It therefore remains only to verify that the triple $(X,Y,Z)$ defined in \eqref{eq.l2control.solution.explicit} satisfies \eqref{eq.l2control.fbsde} and \eqref{eq.l2control.solution.relations}.

    First, $X(t)$ is Gaussian with mean zero and variance
    \begin{equation*}
        \E\,X(t)^2 = \int_0^t \frac{w(t)^2}{w(s)^2}\,ds.
    \end{equation*}
    We compute this integral as follows. Differentiating $w(t) = -W_0(x(t))$ with $x(t)=-e^{-2c_0^2 t - 1}$ and using \eqref{eq:lambert-ode}, we obtain
    \begin{equation*}
        w'(t) = -x'(t) W_0'(x(t)) = 2c_0^2\,x(t)W_0'(x(t)) = -2c_0^2\,\frac{w(t)}{1-w(t)}.
    \end{equation*}
    Hence, using the change of variables $u=w(s)$ and the fact that $w(0)=1$, we get
    \begin{align*}
        \int_0^t \frac{1}{w(s)^2}\,ds
        = -\frac{1}{2c_0^2}\int_{w(0)}^{w(t)} \frac{1-u}{u^3}\,du
        = \frac{1}{2c_0^2}\bigg[\frac{1-2u}{2u^2}\bigg]_{u=w(0)}^{u=w(t)}
        = \frac{(1-w(t))^2}{4c_0^2 w(t)^2}.
    \end{align*}
    Rearranging and taking square roots yields
    \begin{equation*}
        \|X(t)\|_2
        = \sqrt{\int_0^t \frac{w(t)^2}{w(s)^2}\,ds}
        = \frac{1-w(t)}{2c_0}
        = -c_0 \frac{w(t)}{w'(t)},
    \end{equation*}
    which proves the first identity in \eqref{eq.l2control.solution.relations}. Applying It\^o's product rule to $X(t)$, we then find
    \begin{align}\label{eq.l2control.xdynamics}
        dX(t)
        = \frac{w'(t)}{w(t)}X(t)\,dt + dB(t)
        = -c_0 \frac{X(t)}{\|X(t)\|_2}\,dt + dB(t).
    \end{align}
    Moreover, by the definitions of $Y$ and $Z$,
    \begin{equation*}
        Y(t)
        = 2 c_0 \frac{w(T)}{1-w(T)} \int_0^t \frac{1}{w(s)}\,dB(s)
        = 2 c_0 \frac{w(T)}{(1-w(T))w(t)} X(t).
    \end{equation*}
    The prefactor is positive and deterministic, so normalizing both sides in $L^2$ gives
    \begin{equation*}
        \frac{Y(t)}{\|Y(t)\|_2} = \frac{X(t)}{\|X(t)\|_2}.
    \end{equation*}
    This proves the second identity in \eqref{eq.l2control.solution.relations}, and together with \eqref{eq.l2control.xdynamics} shows that the triple $(X,Y,Z)$ satisfies the dynamics in \eqref{eq.l2control.fbsde}. It remains only to check the terminal condition for $Y$, but this follows from
    \begin{equation*}
        Y(T) = \frac{2 c_0}{1-w(T)} X(T) = \frac{X(T)}{\|X(T)\|_2}. \qedhere
    \end{equation*}
\end{proof}

\appendix

\section{Miscellaneous technical results and missing proofs}
\label{sec:appendix-misc}

\begin{lemma}\label{lemma:compactness-K}
    For any finite constant $c\geq 0$, the set
    \begin{equation*}
		\mathcal{K}_c \coloneqq \bigg\{\mu\in\P(\C): \mu(H_0^1)=1,\ \esssup_{t\in[0,1]} \int_{H_0^1} (\dot{\phi}(t))^2\,\mu(d\phi) \leq c\bigg\}.
	\end{equation*}
    is compact in $\P(\C)$.
\end{lemma}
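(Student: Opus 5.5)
The plan is to identify $\mathcal{K}_c$ as a closed subset of a tight family in $\P(\C)$ and then apply Prokhorov's theorem.

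\emph{Tightness.} Take $\mu\in\mathcal{K}_c$. By Fubini,
\[
\int \int_0^1 \dot\phi(t)^2\,dt\,\mu(d\phi) = \int_0^1\int \dot\phi(t)^2\,\mu(d\phi)\,dt \le c .
\]
Hence Markov's inequality gives $\mu(\{\phi : \|\dot\phi\|_{L^2}^2 > R\}) \le c/R$. The set
\[
S_R \coloneqq \{\phi\in H_0^1 : \|\dot\phi\|_{L^2}^2\le R\}
\]
is compact in $\C$. Indeed, Cauchy--Schwarz gives $|\phi(t)-\phi(s)|\le \sqrt{R}\,|t-s|^{1/2}$ and $\phi(0)=0$, so $S_R$ is equicontinuous and bounded, and Arzel\`a--Ascoli applies. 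It is also closed in $\C$: if $\phi_k\to\phi$ uniformly with $\phi_k\in S_R$, then along a subsequence $\dot\phi_k\rightharpoonup g$ weakly in $L^2$ with $\|g\|_{L^2}^2\le R$. Passing to the limit in $\phi_k(t)=\int_0^t\dot\phi_k$ gives $\phi(t)=\int_0^t g$, so $\phi\in S_R$. Thus $\mathcal{K}_c$ is uniformly tight, hence relatively compact by Prokhorov.

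\emph{Closedness.} This is the main step. Let $\mu_k\in\mathcal{K}_c$ with $\mu_k\to\mu$ weakly.

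First we show $\mu(H_0^1)=1$. Since $S_R$ is closed, the portmanteau theorem gives $\mu(S_R)\ge\limsup_k\mu_k(S_R)\ge 1-c/R$. Letting $R\to\infty$ yields $\mu(\bigcup_R S_R)=1$.

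Next we show the pointwise-in-time bound. It suffices to prove that for every nonnegative $h\in C[0,1]$,
\[
\int_0^1 h(t)\int\dot\phi(t)^2\,\mu(d\phi)\,dt\le c\int_0^1 h(t)\,dt ,
\]
because the set of such $h$ is rich enough to force the a.e.\ bound on $t\mapsto\int\dot\phi(t)^2\,d\mu$. By Fubini, the left side equals $\int F_h\,d\mu$, where $F_h(\phi)\coloneqq\int_0^1 h\dot\phi^2\,dt$ on $H_0^1$ and $F_h=+\infty$ off $H_0^1$. The key claim is that $F_h:\C\to[0,\infty]$ is lower semicontinuous.

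To prove this, write $F_h$ as a supremum of continuous functionals. For $\psi$ smooth with $\psi(1)=0$,
\[
\int_0^1 \big(2\psi\dot\phi - h^{-1}\psi^2\big)\,dt = -2\int_0^1\dot\psi\,\phi\,dt-\int_0^1 h^{-1}\psi^2\,dt ,
\]
where $\psi$ is taken to be supported where $h>0$ (so $h^{-1}\psi^2$ is finite). The right side is continuous in $\phi\in\C$. Taking the supremum over such $\psi$ recovers $\int h\dot\phi^2$ when $\phi\in H_0^1$, by density and the pointwise maximization $\sup_x(2x\dot\phi - x^2/h)=h\dot\phi^2$. It gives $+\infty$ when $\phi\notin H^1$ on $\{h>0\}$, by the Riesz representation characterization of $H^1$. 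Strictly this yields the localized version of $F_h$ with the off-$H_0^1$ convention adjusted, but since $\mu(H_0^1)=1$ has already been established this discrepancy is harmless.

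Given lower semicontinuity and nonnegativity of $F_h$, weak convergence gives
\[
\int F_h\,d\mu\le\liminf_k\int F_h\,d\mu_k\le c\int_0^1 h\,dt ,
\]
which is the required bound. Hence $\mu\in\mathcal{K}_c$, so $\mathcal{K}_c$ is closed, and together with tightness it is compact. The dual-representation argument for lower semicontinuity is the part that needs the most care.
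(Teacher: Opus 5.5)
Your proof is correct and follows essentially the same route as the paper. Compactness comes from the energy functional $\phi\mapsto\int_0^1\dot\phi(t)^2\,dt$ having compact sublevel sets by Arzel\`a--Ascoli; the paper packages this as compactness of $\mathcal{K}^{(1)}$ via Budhiraja--Dupuis, and you present it as tightness plus Prokhorov. The time-weighted constraints are then closed by lower semicontinuity.

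The differences are minor. The paper tests the esssup against a countable family of $L^1$ weights $g^{(n)}$ and only says the closedness of $\mathcal{K}^{(1)}\cap\mathcal{K}^{(n)}$ is ``straightforward.'' You use continuous weights $h$ and actually give the lower-semicontinuity argument through the dual representation. You also correctly note that the mismatch off $H_0^1$ is harmless once $\mu(H_0^1)=1$ is known.
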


\begin{proof}
    For any non-negative measurable function $h:[0,1]\to[0,\infty)$
	\begin{equation*}
		\esssup_{t\in[0,1]} h(t) = \sup_{\substack{g\in L^1[0,1]:\\ \|g\|_{L^1}\leq 1}} \int_0^1 g(t) h(t)\,dt,
	\end{equation*}
	and by the separability of $L^1[0,1]$, we can restrict the supremum on the right-hand side to a family of non-negative functions $\{g^{(n)}:n\in\N\}\subset L^1[0,1]$ with $\|g^{(n)}\|_{L^1}\leq 1$, independently of $h$. In particular, we can rewrite $\mathcal{K}_c$ as
	\begin{equation*}
		\mathcal{K}_c = \bigcap_{n\in\N} \mathcal{K}^{(n)},
	\end{equation*}
	where
	\begin{equation*}
		\mathcal{K}^{(n)} = \bigg\{\mu\in\P(\C): \mu(H_0^1)=1,\ \int_0^1 g^{(n)}(t) \bigg(\int_{H_0^1} (\dot{\phi}(t))^2\,\mu(d\phi)\bigg) \, dt \leq c\bigg\}.
	\end{equation*}
    Note here that a version of $[0,1]\times\C\ni(t,\phi)\mapsto \dot{\phi}(t)\mathbf{1}_{H_0^1}(\phi)$ can be defined in a Borel measurable way, e.g., as the limit when it exists of difference quotients.
	We can assume without loss of generality that $g^{(1)}(t)\equiv 1$ so that
	\begin{equation*}
		\mathcal{K}^{(1)} = \bigg\{\mu\in\P(\C) : \int_{\C} G^{(1)}(\phi)\,\mu(d\phi) \leq c\bigg\},
	\end{equation*}
	where
	\begin{equation*}
		G^{(1)}(\phi) =
		\begin{cases}
			\int_0^1 (\dot{\phi}(t))^2\,dt, &\text{if } \phi\in H_0^1,\\
			\infty, &\text{if }\phi\notin H_0^1.
		\end{cases}
	\end{equation*}
    It is a classical consequence of the Arzel\`a--Ascoli theorem that the sublevel sets of the function $G^{(1)}:\C\to[0,\infty]$ are compact, and it follows that  sublevel sets of the function $\mu\mapsto\int G^{(1)}\,d\mu$ are compact in $\P(\C)$ (see \cite[Theorem 2.10]{budhiraja-dupuis}). It is straightforward to check that $\mathcal{K}^{(1)} \cap \mathcal{K}^{(n)}$ is closed and thus compact for each $n$. It follows that $\mathcal{K}_c$ is compact as a countable intersection of the compact sets $\mathcal{K}^{(1)}\cap \mathcal{K}^{(n)}$, $n\in\N$.
\end{proof}

\begin{proof}[Proof of Lemma~\ref{lemma:norm-concentration}]
	Denote $\sigma^2\coloneqq\E\,U_1^2$. The statement is trivially true when $\sigma^2=0$. So suppose that $\sigma^2\in(0,1]$. Noting that $\E|\bm{U}|_2^2=\sigma^2 n$, we estimate
	\begin{align*}
		\E\big[\big(|\bm{U}|_2-\sqrt{n}\big)_+^2\big]
		\leq \E\big[\big(|\bm{U}|_2-\sigma \sqrt{n}\big)^2\big]
		&\leq \frac{1}{\sigma^2 n} \E\big[\big(|\bm{U}|_2-\sigma \sqrt{n}\big)^2\big(|\bm{U}|_2+\sigma \sqrt{n}\big)^2\big] \\
		&= \frac{1}{\sigma^2 n} \Var(|\bm{U}|_2^2) = \frac{1}{\sigma^2} \Var(U_1^2).
	\end{align*}
	The last equality holds by independence. Note that this blows up when $\sigma^2$ is small. For $\sigma^2$ close to zero, we instead estimate
	\begin{align*}
		\E\big[\big(|\bm{U}|_2-\sqrt{n}\big)_+^2\big]
		&\leq \E\big[\big(|\bm{U}|_2^2 - n\big)_+\big] \\
		&= \int_0^\infty \PP\big[|\bm{U}|_2^2 > n + t\big]\,dt \\
		&= \int_{(1-\sigma^2)n}^\infty \PP\big[|\bm{U}|_2^2 -\sigma^2 n > s\big]\,ds \\
		&\leq \int_{(1-\sigma^2)n}^\infty \frac{1}{s^2} \Var(|\bm{U}|_2^2)\,ds \\
		&= \frac{1}{(1-\sigma^2)n} \Var(|\bm{U}|_2^2) = \frac{1}{1-\sigma^2} \Var(U_1^2)
	\end{align*}
	We have thus shown that $\E\big[\big(|\bm{U}|_2-\sqrt{n}\big)_+^2\big] \leq (\frac{1}{\sigma^2}\wedge \frac{1}{1-\sigma^2})\Var(U_1^2)$. The factor in front of $\Var(U_1^2)$ is at most 2 (when $\sigma^2=1/2$), and this gives the desired bound.
\end{proof}

\section{The Brownian motion in the coupling construction}
\label{sec:appendix:bm-construction}

This section is devoted to proving that the auxiliary process $\bm{W}$ constructed in \eqref{eq:upper-bound:bm-construction} in Lemma~\ref{lem.coupling.discrete} is indeed a Brownian motion and that $\F^{\bm{W}}(k/n) = \F^{\bm{B}}(k/n)$ for all $k=k_0,\dots,n$. The main idea is to construct $\bm{W}$ from $\bm{B}$ by iteratively applying the following construction on each subinterval $[k/n,(k+1)/n]$.

Given a probability space $(\Omega,\F,\PP)$, let $(\bm{B}(t))_{t\ge 0}$ be a Brownian motion in $\R^n$, and let $(\F^{\bm B}(t))_{t\ge 0}$ denote its canonical filtration. Fix a unit vector $\bm v\in\R^n$, and define the orthogonal projection
\begin{equation*}
    \bm P \coloneqq I_n-\bm v\bm v^\top,
\end{equation*}
where $I_n$ is the identity matrix. Then the processes $t\mapsto \langle \bm v,\bm B(t)\rangle$ and $t\mapsto \bm P\bm B(t)$ are independent Brownian motions in $\R$ and $\bm v^\perp\coloneqq\{\bm y\in\R^n:\langle \bm v,\bm y\rangle=0\}$, respectively. We write their canonical filtrations as
\begin{equation*}
    \F^{\bm B,\bm v}(t)\coloneqq \sigma\big(\langle \bm v,\bm B(u)\rangle:u\le t\big),
    \qquad
    \F^{\bm B,\perp}(t)\coloneqq \sigma\big(\bm P\bm B(u):u\le t\big),
    \qquad t\ge 0.
\end{equation*}
In particular, the filtrations $(\F^{\bm B,\bm v}(t))_{t\ge 0}$ and $(\F^{\bm B,\perp}(t))_{t\ge 0}$ are independent, and
\begin{equation}\label{eq:bm-construction:filtration-decomposition}
    \F^{\bm B}(t)=\F^{\bm B,\bm v}(t)\vee \F^{\bm B,\perp}(t),
    \qquad t\ge 0.
\end{equation}

Fix $T>0$, let $\eps$ be an $\F^{\bm B,\bm v}(T)$-measurable random variable with values in $\{\pm1\}$, and define an $\R^n$-valued process $(\bm W(t))_{t\in[0,T]}$ by
\begin{equation*}
    \bm W(t)\coloneqq \langle \bm v,\bm B(t)\rangle \bm v+\eps\,\bm P\bm B(t),
    \qquad t\in[0,T].
\end{equation*}
We define the filtrations $(\F^{\bm W,\bm v}(t))_{t\in[0,T]}$ and $(\F^{\bm W,\perp}(t))_{t\in[0,T]}$ for $\bm W$ analogously.

\begin{lemma}\label{lemma:bm-construction-step}
    The following statements hold:
    \begin{enumerate}
        \item $\F^{\bm W,\bm v}(t)=\F^{\bm B,\bm v}(t)$ for all $t\in[0,T]$.
        \label{lemma:bm-construction-step:1}

        \item $\F^{\bm W,\perp}(t)\vee \sigma(\eps)=\F^{\bm B,\perp}(t)\vee \sigma(\eps)$ for all $t\in[0,T]$.
        \label{lemma:bm-construction-step:2}

        \item $\F^{\bm W}(T)=\F^{\bm B}(T)$.
        \label{lemma:bm-construction-step:3}

        \item $(\bm W(t))_{t\in[0,T]}$ is a Brownian motion in $\R^n$.
        \label{lemma:bm-construction-step:4}
    \end{enumerate}
\end{lemma}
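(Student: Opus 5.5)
The approach is to prove (1) and (2) by direct inspection, and then derive (3) and (4) from independence and the filtration decomposition \eqref{eq:bm-construction:filtration-decomposition}.

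For (1), since $\bm P\bm v=0$ and $\langle\bm v,\bm v\rangle=1$, we have $\langle\bm v,\bm W(t)\rangle=\langle\bm v,\bm B(t)\rangle$ for every $t$. The two processes coincide, so their canonical filtrations agree.

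For (2), we use $\bm P\bm W(t)=\eps\,\bm P\bm B(t)$ and $\eps^2=1$. Each of $\bm P\bm W(u)$ and $\bm P\bm B(u)$ is then a measurable function of the other together with $\eps$, which gives the equality after joining $\sigma(\eps)$ on both sides.

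For (3), we use that $\eps$ is $\F^{\bm B,\bm v}(T)=\F^{\bm W,\bm v}(T)$-measurable by (1). Hence
\[
\F^{\bm W}(T)=\F^{\bm W,\bm v}(T)\vee\F^{\bm W,\perp}(T)=\F^{\bm W,\bm v}(T)\vee\sigma(\eps)\vee\F^{\bm W,\perp}(T),
\]
and by (1) and (2) the right-hand side equals $\F^{\bm B,\bm v}(T)\vee\sigma(\eps)\vee\F^{\bm B,\perp}(T)=\F^{\bm B}(T)$. The first equality above is the decomposition \eqref{eq:bm-construction:filtration-decomposition} applied to $\bm W$. That decomposition holds for any process, since $\bm W=\langle\bm v,\bm W\rangle\bm v+\bm P\bm W$ and both pieces are functions of $\bm W$.

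For (4), which is the main point, we condition on the whole path $\beta\coloneqq(\langle\bm v,\bm B(t)\rangle)_{t\le T}$. Since $\beta$ is independent of $\bm P\bm B$ and $\eps$ is a function of $\beta$, the conditional law of $\eps\,\bm P\bm B$ given $\beta$ is the law of $\pm\bm P\bm B$, with the sign fixed by $\beta$. By the symmetry $-\bm P\bm B\stackrel{d}{=}\bm P\bm B$, this conditional law is the law of a Brownian motion on $\bm v^\perp$ in either case.

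It follows that $\eps\,\bm P\bm B$ is a Brownian motion on $\bm v^\perp$ independent of $\beta$. Therefore $(\langle\bm v,\bm W\rangle,\bm P\bm W)\stackrel{d}{=}(\langle\bm v,\bm B\rangle,\bm P\bm B)$ as processes, so $\bm W\stackrel{d}{=}\bm B$ and $\bm W$ is a Brownian motion in $\R^n$. To make this rigorous, we compute $\E[F(\beta)G(\eps\bm P\bm B)]$ for bounded measurable $F,G$ by splitting on $\{\eps=\pm1\}$ and using independence and symmetry.

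The anticipated subtlety is that $\eps$ may depend on the future of $\beta$ (it is only $\F^{\bm B,\bm v}(T)$-measurable). For this reason $\bm W$ is claimed to be Brownian in its own filtration, i.e. in law, and not adapted to $\F^{\bm B}(t)$ for $t<T$. The path-level conditioning above handles this without any adaptedness argument.
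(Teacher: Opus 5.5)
Your proposal is correct and follows essentially the same route as the paper: (1)--(2) come from the identities $\langle \bm v,\bm W(t)\rangle=\langle \bm v,\bm B(t)\rangle$ and $\bm P\bm W(t)=\eps\,\bm P\bm B(t)$; (3) follows from the $\F^{\bm B,\bm v}(T)$-measurability of $\eps$ together with the decomposition \eqref{eq:bm-construction:filtration-decomposition}; and (4) is obtained by conditioning on $\F^{\bm B,\bm v}(T)$, using that $\bm P\bm B$ is independent of it. Your explicit splitting on $\{\eps=\pm1\}$, combined with the symmetry $-\bm P\bm B\stackrel{d}{=}\bm P\bm B$, simply spells out the step the paper leaves implicit when it asserts that the conditional law of $\bm P\bm W$ coincides with that of $\bm P\bm B$.
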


\begin{proof}
    The first two statements are immediate from the two identities
    \begin{equation}\label{eq:bm-construction:b-w-identities}
        \langle \bm v,\bm W(t)\rangle=\langle \bm v,\bm B(t)\rangle,\qquad
        \bm P\bm W(t)=\eps\,\bm P\bm B(t),\qquad 0\leq t \leq T.
    \end{equation}
    Statement \eqref{lemma:bm-construction-step:3} follows from the $\F^{\bm B,\bm v}(T)$-measurability of $\eps$ and the relation \eqref{eq:bm-construction:filtration-decomposition} for $t=T$.

    It remains to prove \eqref{lemma:bm-construction-step:4}. By the first identity in \eqref{eq:bm-construction:b-w-identities}, the process $t\mapsto \langle \bm v,\bm W(t)\rangle$ is a one-dimensional Brownian motion. Moreover, by the second identity, the $\F^{\bm B,\bm v}(T)$-measurability of $\eps$, and the independence of $\F^{\bm B,\bm v}(T)$ and $\F^{\bm B,\perp}(T)$, the conditional law of $t\mapsto \bm P\bm W(t)$ given $\F^{\bm B,\bm v}(T)$ coincides with that of $t\mapsto \bm P\bm B(t)$. Hence $t\mapsto \bm P\bm W(t)$ is a Brownian motion in $\bm v^\perp$, independent of $\F^{\bm W,\bm v}(T)=\F^{\bm B,\bm v}(T)$. Therefore, $\bm W$ is a Brownian motion in $\R^n$.
\end{proof}

We now return to the process $\bm W$ from \eqref{eq:upper-bound:bm-construction}. Recall that for each $k=k_0,\dots,n-1$, we are given a random unit vector $\bm v(k)$ and a $\{\pm1\}$-valued random variable $\eps(k+1)$ such that
\begin{gather*}
    \bm v(k)\text{ is }\F^{\bm B}(\tfrac{k}{n})\text{-measurable},\\
    \eps(k+1)\text{ is measurable w.r.t.\ }
    \F^{\bm B}(\tfrac{k}{n})\vee
    \sigma\Big(\big\langle \bm v(k),\bm B(t)-\bm B(\tfrac{k}{n})\big\rangle:\tfrac{k}{n} < t\le \tfrac{k+1}{n}\Big).
\end{gather*}
Write $t_0=k_0/n$. Starting from $\bm W(t_0)=\bm B(t_0)$, we define $(\bm W(t))_{t_0\le t\le 1}$ inductively by
\begin{align*}
    \bm W(t)
    &\coloneqq \bm W(\tfrac{k}{n})+\big\langle \bm v(k),\bm B(t)-\bm B(\tfrac{k}{n})\big\rangle \bm v(k) \\
    &\qquad\quad + \eps(k+1)\bm P(k)\big(\bm B(t)-\bm B(\tfrac{k}{n})\big),
    \qquad \tfrac{k}{n}<t\le \tfrac{k+1}{n},
\end{align*}
where
\begin{equation*}
    \bm P(k)\coloneqq I_n-\bm v(k)\bm v(k)^\top.
\end{equation*}
Let $(\F^{\bm W}(t))_{t_0\le t\le 1}$ be the filtration generated by $\bm W$.

\begin{lemma}\label{lemma:bm-construction-iterated}
    The process $(\bm W(t)-\bm W(t_0))_{t_0\le t\le 1}$ is a Brownian motion in $\R^n$, and
    \begin{equation}\label{eq:bm-construction-iterated:filtrations}
        \F^{\bm W}(\tfrac{k}{n})=\F^{\bm B}(\tfrac{k}{n}),\qquad k=k_0,\dots,n.
    \end{equation}
\end{lemma}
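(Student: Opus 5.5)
The plan is an induction over the grid intervals $[\tfrac{k}{n},\tfrac{k+1}{n}]$, applying Lemma~\ref{lemma:bm-construction-step} conditionally on $\F^{\bm B}(\tfrac{k}{n})$ on each interval. The inductive claim, for each $k$, is that $\F^{\bm W}(\tfrac{k}{n})=\F^{\bm B}(\tfrac{k}{n})$ and that $(\bm W(t)-\bm W(t_0))_{t_0\le t\le k/n}$ is a Brownian motion. The base case $k=k_0$ is trivial: $\bm W(t_0)=\bm B(t_0)$, and the filtrations are compared at time $t_0$ via the increment processes started at $t_0$ together with the common initial value.

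For the inductive step, assume the claim at $k$ and set $\bm B^k(s)\coloneqq \bm B(\tfrac{k}{n}+s)-\bm B(\tfrac{k}{n})$ for $s\in[0,1/n]$. By the Markov property, $\bm B^k$ is a Brownian motion independent of $\cH\coloneqq\F^{\bm B}(\tfrac{k}{n})$. Conditionally on $\cH$, the vector $\bm v(k)$ is a fixed unit vector. By the stated measurability, $\eps(k+1)$ is a measurable function of $\cH$-data and of $(\langle \bm v(k),\bm B^k(s)\rangle)_{s\le 1/n}$. Hence, for $\PP$-a.e.\ value of the $\cH$-data (working with regular conditional distributions, or more concretely writing $\eps(k+1)=F(\omega_{\cH},\langle\bm v(k),\bm B^k\rangle)$ for a jointly measurable $F$), the hypotheses of Lemma~\ref{lemma:bm-construction-step} hold with $T=1/n$, $\bm v=\bm v(k)$ and $\eps=\eps(k+1)$.

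That lemma then gives two facts. First, conditionally on $\cH$, the increment process $\bm W^k(s)\coloneqq\bm W(\tfrac{k}{n}+s)-\bm W(\tfrac{k}{n})$ is a Brownian motion. Its conditional law is therefore Wiener measure, independent of $\cH$, so $\bm W^k$ is a Brownian motion independent of $\cH\supseteq \F^{\bm W}(\tfrac{k}{n})$. Concatenating with the inductive hypothesis shows that $\bm W-\bm W(t_0)$ is a Brownian motion on $[t_0,\tfrac{k+1}{n}]$, since the increments are independent of the past and Gaussian. Second, part~(3) of the lemma gives $\sigma(\bm W^k)\vee\cH=\sigma(\bm B^k)\vee\cH$. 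Combining this with the inductive hypothesis $\F^{\bm W}(\tfrac kn)=\cH$ yields
\[
\F^{\bm W}(\tfrac{k+1}{n})=\F^{\bm W}(\tfrac kn)\vee\sigma(\bm W^k)=\cH\vee\sigma(\bm B^k)=\F^{\bm B}(\tfrac{k+1}{n}).
\]

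The main obstacle is making the conditional application of Lemma~\ref{lemma:bm-construction-step} rigorous, because $\bm v(k)$ and the map defining $\eps(k+1)$ are random. The filtration identity in part~(3) must hold jointly with $\cH$, not merely pathwise for each frozen value. The cleanest route is to avoid conditioning on measure-zero events and instead prove part~(3) directly in the random setting. The two relations $\langle \bm v(k),\bm W^k\rangle=\langle \bm v(k),\bm B^k\rangle$ and $\bm P(k)\bm W^k=\eps(k+1)\bm P(k)\bm B^k$ hold, and since $\bm v(k)$ is $\cH$-measurable, $\cH\vee\sigma(\bm W^k)$ contains $\langle\bm v(k),\bm B^k\rangle$. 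It therefore contains $\eps(k+1)$ and hence $\bm P(k)\bm B^k=\eps(k+1)\bm P(k)\bm W^k$, which gives $\bm B^k$ itself; the reverse inclusion is symmetric. The Brownian property is handled via characteristic functions. For bounded $\cH$-measurable $H$, compute $\E[H\exp(i\sum_j\langle\bm u_j,\bm W^k(s_j)\rangle)]$ by conditioning on $\cH\vee\sigma(\langle\bm v(k),\bm B^k\rangle)$, using that $\bm P(k)\bm B^k$ is conditionally a Brownian motion on $\bm v(k)^\perp$ whose law is invariant under the sign flip $\eps(k+1)$. This shows the joint characteristic function factors as $\E[H]$ times the Gaussian one.
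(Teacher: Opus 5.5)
Your proposal is correct and follows the paper's proof. Both argue by induction over the grid intervals, apply Lemma~\ref{lemma:bm-construction-step} conditionally on $\F^{\bm B}(\tfrac kn)$ with $T=1/n$, $\bm v=\bm v(k)$, $\eps=\eps(k+1)$, and then concatenate. Your extra care is well placed. The filtration identity should indeed read $\F^{\bm B}(\tfrac kn)\vee\sigma(\text{increments of }\bm W)=\F^{\bm B}(\tfrac kn)\vee\sigma(\text{increments of }\bm B)$ on $[\tfrac kn,\tfrac{k+1}n]$. The paper states it without the join, which is only valid conditionally because $\bm v(k)$ is random. Your direct verification of this identity, and of the Brownian property via the sign-flip invariance, makes the conditional step rigorous.
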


\begin{proof}
    We argue by induction on $k=k_0,\dots,n$ that \eqref{eq:bm-construction-iterated:filtrations} holds
    and that the process $t\mapsto \bm W(t)-\bm W(t_0)$ is a Brownian motion on the interval $[t_0,k/n]$. The claim is clear for $k=k_0$.

    Now assume that the claim holds for some $k\in\{k_0,\dots,n-1\}$. Then
    \begin{equation*}
        \big(\bm B(t+\tfrac{k}{n})-\bm B(\tfrac{k}{n})\big)_{0\le t\le 1/n}
    \end{equation*}
    is a Brownian motion on $[0,1/n]$, independent of $\F^{\bm B}(k/n)$. Applying Lemma~\ref{lemma:bm-construction-step} conditionally on $\F^{\bm B}(k/n)$ with $T=1/n$, $\bm v=\bm v(k)$, and $\eps=\eps(k+1)$, we obtain that
    \begin{equation*}
        \big(\bm W(t+\tfrac{k}{n})-\bm W(\tfrac{k}{n})\big)_{0\le t\le 1/n}
    \end{equation*}
    is a Brownian motion on $[0,1/n]$, independent of $\F^{\bm B}(k/n)$, and that
    \begin{equation*}
        \sigma\Big(\bm W(t+\tfrac{k}{n})-\bm W(\tfrac{k}{n}):0\le t\le \tfrac{1}{n}\Big)
        =
        \sigma\Big(\bm B(t+\tfrac{k}{n})-\bm B(\tfrac{k}{n}):0\le t\le \tfrac{1}{n}\Big).
    \end{equation*}
    Since $\F^{\bm B}(k/n)=\F^{\bm W}(k/n)$ by the induction hypothesis, it follows that
    \begin{equation*}
        \F^{\bm W}(\tfrac{k+1}{n})=\F^{\bm B}(\tfrac{k+1}{n}),
    \end{equation*}
    and $t\mapsto \bm W(t)-\bm W(t_0)$ is a Brownian motion on $[t_0, (k+1)/n]$. This completes the induction.
\end{proof}

\clearpage
\printbibliography

\end{document}

\typeout{get arXiv to do 4 passes: Label(s) may have changed. Rerun}